\tikzset{
	thick/.style=      {line width=1.2pt},
	add/.style args={#1 and #2}{to path={%
			($(\tikztostart)!-#1!(\tikztotarget)$)--($(\tikztotarget)!-#2!(\tikztostart)$)%
			\tikztonodes},add/.default={.2 and .2}}
}
\newcommand{\quadtext}[1]{\quad\text{#1}\quad}
\newcommand{\mm}{\mathscr{m}}
\renewcommand{\ll}{\mathscr{l}}
\newcommand{\kk}{\mathscr{k}}
\renewcommand{\to}{\longrightarrow}
\newcommand{\map}{\dashrightarrow}
\def\dim{\operatorname{dim}}
\def\coeff{\operatorname{coeff}}
\def\cf{\operatorname{cf}}
\def\Spec{\operatorname{Spec}}
\def\span{\operatorname{span}}
\def\Pic{\operatorname{Pic}}
\def\Cl{\operatorname{Cl}}
\def\Proj{\operatorname{Proj}}
\newcommand{\F}{\mathbb{F}}
\newcommand{\N}{\mathbb{N}}
\newcommand{\A}{\mathbb{A}}
\newcommand{\Z}{\mathbb{Z}}
\newcommand{\Q}{\mathbb{Q}}
\renewcommand{\P}{\mathbb{P}}
\renewcommand{\tilde}{\widetilde}
\renewcommand{\epsilon}{\varepsilon}
\renewcommand{\phi}{\varphi}
\newcommand{\id}{\mathrm{id}}
\newcommand{\ov}{\overline}
\newcommand{\tip}[1]{\mathrm{tip}(#1)}
\newcommand{\NE}{\operatorname{NE}}
\renewcommand{\path}{\operatorname{path}}
\newcommand{\Exc}{\operatorname{Exc}}
\newcommand{\ctr}{\operatorname{ctr}}
\newcommand{\Supp}{\operatorname{Supp}}
\newcommand{\Bk}{\operatorname{Bk}}
\newcommand{\redd}{_{\mathrm{red}}}
\newcommand{\am}{{\mathrm{am}}}
\renewcommand{\leq}{\leqslant}
\renewcommand{\geq}{\geqslant}
\newcommand{\ld}{\operatorname{ld}}
\newcommand{\tcf}{\operatorname{tcf}}
\newcommand{\mult}{\operatorname{mult}}
\theoremstyle{plain}
\newtheorem{thm}{Theorem}[section]
\newtheorem{lem}[thm]{Lemma}
\newtheorem{prop}[thm]{Proposition}
\newtheorem*{thm*}{Theorem}
\theoremstyle{definition}
\newtheorem{dfn}[thm]{Definition}
\newtheorem{cor}[thm]{Corollary}
\newtheorem{ex}[thm]{Example}
\newtheorem{notation}[thm]{Notation}
\newtheorem{rem}[thm]{Remark}
\theoremstyle{remark}
\newtheorem*{claim*}{Claim}
\newtheorem*{case*}{Case}
\newtheorem*{rem*}{Remark} 
\def\subsection{\@startsection{subsection}{3}
	\z@{.5\linespacing\@plus.7\linespacing}{.5\linespacing}
	{\bfseries\itshape}} \makeatother
\makeatletter \renewenvironment{proof}[1][\proofname]{
	\par\pushQED{\qed}\normalfont
	\topsep6\p@\@plus6\p@\relax
	\trivlist\item[\hskip\labelsep\bfseries#1\@addpunct{.}]
	\ignorespaces}{
	\popQED\endtrivlist\@endpefalse} \makeatother
\def\:{\colon}
\def\.{\cdot}
\numberwithin{equation}{section}
\renewcommand{\bar}{\overline}
\newcommand{\cE}{\mathcal{E}}
\newcommand{\cF}{\mathcal{F}}
\newcommand{\cG}{\mathcal{G}}
\newcommand{\cO}{\mathcal{O}}
\newcommand{\cS}{\mathcal{S}}
\newcommand{\rA}{\mathrm{A}}
\newcommand{\Ast}{\mathrm{A}^{\star}}
\newcommand{\rD}{\mathrm{D}}
\newcommand{\rE}{\mathrm{E}}
\newcommand{\trp}{^{\scriptscriptstyle{\top}}}
\newcommand{\ind}{\operatorname{ind}}
\def\mono{\hookrightarrow}
\def\8{\infty}
\newcommand{\cp}[1]{^{(#1)}}
\newcommand{\cha}{\operatorname{char}}
\author{Karol Palka}
\thanks{The author was supported by the National Science Centre, Poland, grants number 2015/18/E/ST1/00562 and 2021/41/B/ST1/02062. For the purpose of Open Access, the author has applied a CC-BY public copyright license to any Author Accepted Manuscript version arising from this submission.}
\address{Institute of Mathematics, Polish Academy of Sciences, \'{S}niadeckich 8, 00-656 Warsaw, Poland}	\email{palka@impan.pl}
\begin{document}
\title[Almost minimal models of log surfaces]{Almost minimal models of log surfaces}
\begin{abstract} 
We generalize Miyanishi's theory of almost minimal models of log smooth surfaces with reduced boundary to the case of arbitrary log surfaces defined over an algebraically closed field. Given an MMP run of a log surface $(X,D)$ we define and construct its almost minimal model, whose underlying surface has singularities not worse than $X$ and which differs from a minimal model by a contraction of some curves supported in the boundary only. For boundaries of type $rD$, where $D$ is reduced and $r\in [0,1]\cap \mathbb{Q}$, we show that if $X$ is smooth or $r\in [0,\frac{1}{2}]$ then the construction respects $(1-r)$-divisorial log terminality and $(1-r)$-log canonicity. We show that the assumptions are optimal, too.
\end{abstract}

\maketitle

\section{Main results}

An important tool in the study of quasi-projective surfaces and of log surfaces is the logarithmic version of the Minimal Model Program (MMP) \cite{KollarMori-bir_geom}, which finds a birational model whose log canonical divisor has uniform numerical properties. It is well known that in dimensions bigger than two a minimal model of a smooth projective variety can be singular \cite[Example 3.1.3]{Matsuki_MMP_intro}. The same problem appears naturally for quasi-projective surfaces and log surfaces with nonzero boundary \cite[Example 3.49]{KollarMori-bir_geom}. Whenever the MMP is used to understand the geometry of a smooth variety, passing to a singular model too quickly makes the analysis more difficult. To avoid this, for log smooth surfaces with reduced boundary Miyanishi developed the notion of an \emph{almost minimal model} \cite[2.3.11]{Miyan-OpenSurf}. It is related to a minimal model by a well described morphism, which we call a \emph{peeling}, see Definition \ref{def:peel_squeeze}, contracting only some curves supported in the boundary. For a log smooth surface with reduced boundary, an almost minimal model, unlike a minimal model, has always a smooth underlying surface. Moreover, one proves that an almost minimal model it is in fact log smooth. Understanding the process of almost minimalization gives an effective tool to analyze log surfaces. In particular, it helped to obtain various structure theorems, see \cite[\S 2-3]{Miyan-OpenSurf}.

We show that the idea of almost minimalization can be used more widely. Given an MMP run $f\:(X,D) \to (\ov X, \ov D)$ on a log surface defined over an algebraically closed field of arbitrary characteristic we define its \emph{almost minimalization} $f^\#$ as the unique $K_X$-MMP over $\ov X$; we call $(f^\#X,f^\#_*D)$ an \emph{almost minimal model} of $(X,D)$. By construction, when measured in terms of log discrepancies, $f^\#X$ is not more singular than $X$. In particular, if $X$ is smooth then $f^\#X$ is smooth. However, describing log singularities of an almost minimal model requires a detailed analysis of the almost minimalization morphism. This amounts to the analysis of reordering of contractions of log exceptional curves. For this we introduce general notions of peeling, squeezing, redundant and almost log exceptional curves, see Section \ref{ssec:peeling_squeezing_alm_min}. The non-almost-minimality of a log surface is witnessed by the existence of an almost log exceptional curve, necessarily not supported in the boundary, see Corollary \ref{cor:nef_gives_peeling}. Almost log exceptional curves keep some geometric properties of log exceptional curves, which makes them well-behaved and important. In particular, their intersection with the boundary is well controlled.

We extend the theory to the case of MMP runs of the second kind, in which curves intersecting the log canonical divisor trivially may be contracted, too; see Section \ref{ssec:2nd_kind}. This extension requires additional care, as contractions of log exceptional curves of the second kind may destroy $\Q$-factoriality and may lead to non-algebraic surfaces.

As an application of these general ideas we work out several characterizations of redundant boundary components and of almost log exceptional curves for log surfaces with uniform boundaries, that is, the ones of type $rD$, where $D$ is reduced and $r\in [0,1]\cap \Q$, see Section \ref{ssec:Redundant_and_ALE_for_uniform}. It is known that if $(X,rD)$ is $(1-r)$-divisorially log terminal ($(1-r)$-dlt) or $(1-r)$-log canonical ($(1-r)$-lc), see Definition \ref{dfn:eps-dlt}, then so is its image under the contraction of every log exceptional curve, hence its minimal model, too. In general these properties of log singularities are not respected by the process of almost minimalization, see Example \ref{ex:(1-r)-log_terminality}. Nevertheless, we prove that they are inherited by almost minimal models in case $X$ is smooth or $r\in [0,\frac{1}{2}]\cup \{1\}$. For $X$ singular and $r\in (\frac{1}{2},1)$ we construct counterexamples; see Examples \ref{ex:aMM_not_(1-r)dlt} and \ref{ex:aMM_not_(1-r)dlt_2}. For the notion of \emph{intermediate} models see Definition \ref{dfn:MMP1}. 

\begin{thm}\label{thm:aMM_respects_(1-r)-dlt}
Let $(X,D)$ be a log surface with a reduced boundary and let $r\in [0,1]\cap \Q$. Assume that $(X,rD)$ is $(1-r)$-lc and that $X$ is smooth or $r\leq \frac{1}{2}$. Then the following hold.
\begin{enumerate}[(1)]
\item Every almost minimal model of $(X,rD)$ of the second kind is $(1-r)$-lc.
\item If $(X,rD)$ is $(1-r)$-dlt  then every almost minimal model of $(X,rD)$ is $(1-r)$-dlt.
\item If $X$ is smooth and $\frac{1}{r}\in \N\cup \{\8\}$ then every almost minimalization of $(X,rD)$ of the second kind can be decomposed into elementary contractions so that every intermediate model is $(1-r)$-lc.
\end{enumerate}
\end{thm}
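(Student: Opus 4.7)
The overall plan is to reduce all three claims to the analysis of a single elementary contraction occurring inside the almost minimalization $f^\#$. Since by construction $f^\#$ is a $K_X$-MMP over the minimal model $\ov X$, it factors as a sequence of elementary contractions of curves $A$ with $A\cdot K_X<0$ (respectively $\leq 0$ in the second kind). By Corollary \ref{cor:nef_gives_peeling} every such $A$ is an almost log exceptional (ALE) curve for $(X,rD)$, and in particular is not a component of $D$. It therefore suffices to prove that the contraction of one ALE curve $A$ preserves $(1-r)$-lc, respectively $(1-r)$-dlt, under the hypotheses of (1), (2), and, with a suitably chosen decomposition, of (3).

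For the single-step analysis I would use the explicit description of ALE curves on uniform pairs developed in Section \ref{ssec:Redundant_and_ALE_for_uniform}, which controls $A^2$, $A\cdot D$, and the type of singularities of $X$ met by $A$. Writing $\phi_A\colon X\to X'$ and $D'=\phi_{A*}D$, the plan is to compare the $(K_X+rD)$-log discrepancy of each prime divisor $E$ over $X'$ (which is $\geq r$ by assumption) with its $(K_{X'}+rD')$-log discrepancy. The two differ by a correction proportional to $A\cdot(K_X+rD)$ weighted by the multiplicity of $E$ over $A$ in a suitable resolution, so the problem reduces to bounding this correction modulo a controlled error coming from boundary components passing through singular points of $X$ near $A$. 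The $(1-r)$-dlt case requires the additional check that any new exceptional divisor over $X'$ either has log discrepancy strictly bigger than $r$ or sits in a locus where the remaining boundary is simple normal crossing; this again follows from the ALE classification.

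The split in the hypotheses enters precisely in this discrepancy estimate. When $X$ is smooth, $A$ is a $(-1)$-curve and the correction is computed by an ordinary blowup formula, making the bound automatic; when $X$ is singular and $r\leq\tfrac12$, the small coefficient $r$ is just enough to absorb the extra contribution from ALE curves passing through singular points of $X$, while for $r>\tfrac12$ this absorption fails, as Examples \ref{ex:aMM_not_(1-r)dlt} and \ref{ex:aMM_not_(1-r)dlt_2} demonstrate and as the theorem therefore requires. For part (3) the extra subtlety is that second-kind $K_X$-trivial contractions can, under an arbitrary ordering, produce an intermediate model that momentarily fails to be $(1-r)$-lc even when the final model does not; my plan is to order the elementary contractions of $f^\#$ so as to prioritize $K_X+rD$-negative ALE curves over $K_X+rD$-trivial ones, and to exploit the arithmetic constraint $\tfrac{1}{r}\in\N\cup\{\infty\}$, which places all relevant log discrepancies in $\tfrac{1}{r}\Z$, so that the $(1-r)$-lc threshold is not crossed at any intermediate stage.

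The main obstacle throughout is the detailed case analysis of ALE configurations underlying Section \ref{ssec:Redundant_and_ALE_for_uniform}: determining exactly which combinations of $A^2$, $A\cdot D$ and adjacent singularities of $X$ arise, and checking the discrepancy inequality in each. Once that classification is in hand, both the discrepancy estimates of (1)--(2) and the ordering argument of (3) become a matter of induction on the number of elementary contractions plus standard log-resolution bookkeeping, and the role of the hypotheses ``$X$ smooth or $r\leq\tfrac12$'' is to ensure the sharp inequality needed at each step.
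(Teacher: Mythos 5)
There is a genuine gap, and it lies in your very first reduction. You claim that the almost minimalization factors into elementary contractions of curves $A$ that are almost log exceptional and ``in particular not a component of $D$'', and that it therefore suffices to show a single such contraction preserves $(1-r)$-lc. Both halves of this are wrong. First, $f^\#$ is the $K_X$-MMP over $\ov X$, and its elementary contractions include the squeezing steps, i.e.\ contractions of \emph{redundant components of $D$} (see Lemma \ref{lem:effective_almost_minimalization} and Corollary \ref{cor:aMMP_algorithm}); Corollary \ref{cor:nef_gives_peeling} only describes $\Exc f_{\min}$, not $\Exc f^\#$. Second, and more fatally, a single elementary contraction of $f^\#$ does \emph{not} preserve $(1-r)$-lc even when $X$ is smooth: Example \ref{ex:(1-r)-log_terminality} and Example \ref{ex:optimal_ass_1} exhibit $(1-r)$-dlt pairs on smooth surfaces whose image after one elementary step of the almost minimalization fails to be $(1-r)$-lc, while the final almost minimal model is fine. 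This is exactly why part (3) is stated as a separate claim with the extra hypothesis $\frac{1}{r}\in\N\cup\{\8\}$: without it, no ordering of the elementary contractions keeps all intermediate models $(1-r)$-lc. Your proposed induction would therefore be attempting to establish an intermediate statement that is false under the hypotheses of (1) and (2).

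The paper avoids this by inducting on coarser steps: it reduces to $\psi=\ctr_{\alpha(\ll)}\circ\alpha$ with $\alpha$ a pure partial peeling, and controls the almost minimalization of this whole group at once. The crucial mechanism is not a discrepancy estimate over the ALE classification but a structural dichotomy (Lemma \ref{lem:peeling_am-is-MMP}): either $\psi_\am$ is itself an MMP run on $(X,rD)$ (in which case Lemma \ref{lem:ld_increasaes} finishes), or the image point is singular, $\tilde\ll$ is superfluous in the resolved boundary, and $r=1$ or $r\geq\frac12$ — cases that are excluded or trivialized by the hypothesis ``$X$ smooth or $r\leq\frac12$''. A second essential trick you are missing is Lemma \ref{lem:a.l.e._is_redundant}: when $\ll\nleq D$, the morphism $\psi$ is an MMP run on $(X,r(D+\ll))$, so one may assume $(X,r(D+\ll))$ is \emph{not} $(1-r)$-lc and derive a contradiction from the superfluousness of $\tilde\ll$. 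Your case-by-case discrepancy computation over Propositions \ref{prop:ful_description_redundant} and \ref{prop:ful_description_almost_log_exc} is roughly how the paper proves part (3) (together with Corollary \ref{cor:singularities_after_squeezing}(2) and the multiplicity bound of Lemma \ref{lem:multiplicitity_lemma} for the non-snc intersections in case (6)), but it cannot be the engine for parts (1) and (2), where the conclusion holds only for the final model and not stepwise.
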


Part (2) of the theorem  for $r=1$ implies the following result by Miyanishi, see \cite[p.\ 105]{Miyan-OpenSurf}. 

\begin{cor}\label{cor:aMM_for_log_smooth}
An almost minimal model of a log smooth surface with a reduced boundary is log smooth.
\end{cor}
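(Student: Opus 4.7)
The plan is to obtain this corollary as a short consequence of Theorem \ref{thm:aMM_respects_(1-r)-dlt}(2) specialized to $r=1$, combined with the general fact (stated in the introduction) that almost minimalization does not worsen singularities. Since the corollary is advertised as essentially the $r=1$ case of the theorem, the whole argument should be only a few lines.

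First I would verify that the hypotheses of part (2) hold with $r=1$. A log smooth pair $(X,D)$ with $D$ reduced is, by definition, one for which the identity is already a log resolution: no exceptional divisors appear over $X$, and every log discrepancy of a divisor over $X$ is non-negative. Hence such $(X,D)$ is $0$-lc and in fact $0$-dlt in the sense of Definition \ref{dfn:eps-dlt}. Moreover $X$ is smooth, so the assumption ``$X$ smooth or $r\le \tfrac{1}{2}$'' of the theorem is automatic. Applying part (2) then gives that every almost minimal model $(f^\#X,\,f^\#_*D)$ of $(X,D)$ is again $0$-dlt, i.e.\ dlt in the classical sense. Independently, the description of the almost minimalization recalled in the introduction shows that the underlying surface $f^\#X$ has singularities no worse than those of $X$ measured by log discrepancies; since $X$ is smooth, $f^\#X$ is smooth.

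To finish I would invoke the standard surface-theoretic fact that a pair $(X',D')$ with $X'$ smooth and $D'$ reduced is dlt if and only if $D'$ is simple normal crossing, and so log smooth. Indeed, if $D'$ failed to be snc at a point $p$, then a single blowup of $p$ in the case $\mult_p D'\ge 3$, or finitely many successive blowups in the cases of a cusp or a tangency, would produce an exceptional curve with log discrepancy $\le 0$, contradicting dlt. Consequently $(f^\#X,\,f^\#_*D)$ is log smooth, finishing the proof. There is no real obstacle here beyond correctly matching the paper's conventions: the only thing to keep an eye on is that ``$0$-dlt'' in Definition \ref{dfn:eps-dlt} coincides with classical dlt, so that the final reduced-plus-smooth-plus-dlt-equals-snc step applies verbatim.
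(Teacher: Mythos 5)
Your proposal is correct and follows essentially the same route as the paper: smoothness of the underlying surface because almost minimalization is a $K_X$-MMP (the paper phrases this as ``$(X,0)$ terminal, hence $f^\#X$ terminal, hence smooth'' via Lemma \ref{lem:negativity}), and dlt-ness of the model via Theorem \ref{thm:aMM_respects_(1-r)-dlt}(2) with $r=1$, whence log smoothness. The extra details you supply (that log smooth with reduced boundary is $0$-dlt in the sense of Definition \ref{dfn:eps-dlt}, and that a smooth surface with reduced dlt boundary must have snc boundary) are correct and are exactly the facts the paper leaves implicit.
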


\begin{proof} Let $(X,D)$ be a log smooth surface with a reduced boundary. Since $(X,0)$ is terminal, by Lemma \ref{lem:negativity} and by the definition of almost minimalization the underlying surface of an almost minimal model of $(X,D)$ is terminal, hence smooth. Since $(X,D)$ is dlt, Theorem \ref{thm:aMM_respects_(1-r)-dlt}(2) implies that an almost minimal model is dlt, hence log smooth.
\end{proof}

The new parameter $r$ gives additional flexibility to the theory of almost minimal models. Previously we treated the case $r=\frac{1}{2}$ \cite{Palka-minimal_models}. This instance of the construction turned out to be especially useful for the difficult class of rational surfaces of log general type, which share many properties with projective surfaces of general type but at the same time have rich birational geometry. The analysis of the process of almost minimalization for $r=\frac{1}{2}$ and affine $X\setminus D$ was a key tool in the recent proof of the Coolidge-Nagata conjecture \cite{Palka-Coolidge_Nagata1}, \cite{KoPa-CooligeNagata2} and in obtaining classification results for rational cuspidal curves \cite{PaPe_Cstst-fibrations_singularities}, \cite{PaPe_delPezzo}, \cite{KoPa-4cusps}. Recently, it allowed to obtain strong classification results for $\Q$-acyclic surfaces \cite{Pelka_Thesis}, an class of plane-like surfaces studied for a long time, see \cite[\S 3.4]{Miyan-OpenSurf}. 

\smallskip
The content of the article is as follows.  We discuss necessary properties of the MMP in the class of generalized log canonical surfaces introduced by Fujino, which contains $\Q$-factorial and log canonical surfaces. We discuss the geometry of log exceptional curves of the first and second kind; see Definitions \ref{dfn:log_exceptional}, \ref{dfn:log_exc2}. In Section \ref{ssec:relative_MMP_reordering} we discuss the process of reordering of contractions of an MMP run of the first and second kind, including a characterization of MMP runs for surfaces as birational morphisms increasing discrepancies of the contracted curves, see Corollary \ref{cor:improving_ld_gives_MMP}. We discuss the uniqueness of almost minimalization and its properties for compositions in Sections \ref{ssec:relative_minimalization} and \ref{ssec:almost_minimalization}. To effectively describe the process we generalize Miyanishi's 'theory of peeling', defining the peeling of a boundary as a composition of a maximal sequence of contractions of log exceptional curves supported in the boundary and its images, see Definition \ref{def:peel_squeeze}. We then describe how to construct an almost minimal model in steps, successively contracting redundant components of $D$ and almost log exceptional curves, which are proper transforms of log exceptional curves, see Definition \ref{dfn:redundant_and_almost-log-exc} and Corollary \ref{cor:aMMP_algorithm}.  In Section \ref{ssec:2nd_kind} we discuss analogous properties for contractions of log exceptional curves of the second kind, that is, the ones intersecting the log canonical divisor trivially.

In Section \ref{ssec:reduced_boundaries} we review some properties of log canonical surface singularities and in Section \ref{ssec:Reduced boundary} we work out characterizations of peeling, redundant and almost log exceptional curves of the first and second kind for a reduced boundary in a generality which later allows to use it in the analysis of uniform boundaries. Since almost log exceptional and redundant curves are not necessarily log exceptional themselves, log singularities can get worse in the process of almost minimalization. For uniform boundaries we are able to give a complete description of such situations, see Section \ref{ssec:Redundant_and_ALE_for_uniform}, and hence we are able to control the behavior of log singularities under almost minimalization.

Theorem \ref{thm:aMM_respects_(1-r)-dlt} is proved in Section \ref{ssec:proof_of_Thm}, where we also discuss the assumptions in detail constructing several interesting examples. Finally, in Section \ref{ssec:half} we conclude with a complete description of the process of almost minimalization in case $r\leq \frac{1}{2}$.

In the Appendix (Section \ref{sec:d_and_ld}) we reprove some and slightly improve results by Alexeev concerning log canonical singularities and their log discrepancies.

\medskip
\textit{Acknowledgments.} We thank Tomasz Pe{\l}ka for his help in preparing pictures and for a careful reading of the manuscript.


\tableofcontents

\section{Preliminaries}\label{sec:prelim}
We work over an algebraically closed field of arbitrary characteristic. 

\subsection{Log Minimal Model Program for surfaces}\label{ssec:logMMP}

Given a normal surface, a \emph{boundary} is a Weil $\Q$-divisor whose coefficients of irreducible components are between $0$ and $1$. A \emph{log surface} $(X,D)$ consists of a normal projective surface $X$ and a boundary $D$ such that $K_X+D$ is $\Q$-Cartier, where $K_X$ denotes a canonical divisor. We use the Minimal Model Program, which works in the class of $\Q$-factorial log surfaces and in the class of log canonical log surfaces, see \cite{Fujino-logMMP_dim2}, \cite{Tanaka-MMP_char>0} and \cite{Fujino-logMMP2_char>0}, cf.\ \cite{KollarKovacs-2DlogMMP}. Recently it has been generalized to the class of GMRLC log surfaces \cite[Theorem 1.5]{Fujino-MMP_for_algebraic_log_surfaces}, which contains both of them; see Definition \ref{def:GMRLC}.

Given a log surface $(X,D)$ and a proper birational morphism from a normal surface $f\:Y\to X$, the \emph{log pullback} of $D$ is defined as the unique Weil $\Q$-divisor $D_Y$ on $Y$ such that
\begin{equation}\label{dfn:log_pullback}
K_Y+D_Y\sim_{\Q}f^*(K_X+D) \text{\ \  and\ \ } f_*D_Y=D.
\end{equation}
The exceptional divisor of $f$, denoted by $\Exc f$, is the sum of curves contracted by $f$. By a \emph{curve} we mean an irreducible and reduced variety of dimension $1$. 

\begin{dfn}[{\cite[1.4]{Fujino-MMP_for_algebraic_log_surfaces}}]\label{def:GMRLC}
Let $(X,D)$ be a log surface. If there exists a normal $\Q$-factorial algebraic surface $Y$ and a proper birational morphism $Y\to X$ such that $D_Y$ is a boundary then we say that $(X,D)$ is a \emph{generalized MR log canonical (GMRLC) surface}.
\end{dfn}

A log surface is \emph{log smooth} if $X$ is smooth and $D$ has simple normal crossings. A birational morphism $f\:Y\to X$ is a \emph{log resolution} of $(X,D)$ if $(Y,f_*^{-1}D+\Exc f)$ is log smooth. Let $f\:X\to S$ be a projective morphism from a normal surface $X$ onto an algebraic variety $S$ and let $D$ be a $\Q$-Cartier divisor on $X$. We say that $D$ is \emph{$f$-nef} if $D\cdot E\geq 0$ for every curve $E$ contracted by $f$ and we say that it is \emph{$f$-ample} if $D\cdot E>0$ for every $E\in \ov{\NE(X/Y)}\setminus \{0\}$, where $\NE(X/Y)$ is the cone of effective $1$-cycles contracted by $f$, cf.\ \cite[Theorem 1.44]{KollarMori-bir_geom}. We note that if $\dim S=1,2$ then $D$ is $f$-ample if and only if $D\cdot E>0$ for every curve $E$ contracted by $f$. We say that $D$ is \emph{$f$-semi-ample} if there exists a morphism onto an algebraic variety $g\:X\to Y$ over $S$ such that $D\sim g^*A$ for some $f\circ g^{-1}$-ample $\Q$-Cartier divisor $A$ on $Y$ (cf.\ \cite[Lemmas 4.13, 4.14]{Fujino_Fundamentals_logMMP}). We will use the following facts.

\begin{lem}[{\cite[4.3, 4.4]{Fujino-MMP_for_algebraic_log_surfaces}}] \label{lem:GMRLC} Let $(X,D)$ be a GMRLC surface.
\begin{enumerate}[(1)]
\item If $f\:X\to Z$ is a proper birational morphism onto a normal surface such that  $-(K_X + D)$ is $f$-nef then $(Z, f_*D)$ is a GMRLC log surface.
\item If $D'$ is a boundary such that $D'\leq D$ then $(X,D')$ is a GMRLC log surface. In particular, $K_X$ and all components of $D$ are $\Q$-Cartier.
\end{enumerate}
\end{lem}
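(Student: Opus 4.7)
The plan is to exploit the proper birational $\Q$-factorial modification $g\:Y\to X$ provided by the GMRLC hypothesis on $(X,D)$, together with the relative MMP compatibilities for GMRLC surfaces established earlier in Fujino's paper.

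For part (1), I would factor $f\:X\to Z$ through elementary MMP steps: since $-(K_X+D)$ is $f$-nef, every curve contracted by $f$ has $(K_X+D)$-degree at most zero, so the relative cone theorem for GMRLC surfaces produces an extremal ray in $\ov{\NE(X/Z)}$ on which $K_X+D$ is non-positive; contracting it yields an intermediate model which is still GMRLC by the relative contraction theorem. Iterating, this terminates and recovers $f$ as a finite composition of GMRLC-preserving contractions, so $(Z,f_*D)$ is GMRLC. As a sanity check, one can compare divisors on $Y$ via $f\circ g$: the difference $F=(f_*D)_Y-D_Y$ is $(f\circ g)$-exceptional (both push forward to $f_*D$) and $(f\circ g)$-nef (since $-(K_Y+D_Y)=-g^*(K_X+D)$ is $(f\circ g)$-nef while $K_Y+(f_*D)_Y$ is $(f\circ g)$-trivial), so the negativity lemma forces $F\leq 0$, confirming $(f_*D)_Y\leq D_Y\leq 1$ coefficient-wise.

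For part (2), writing $D=\sum d_iD_i$ and $D'=\sum d'_iD_i$ with $0\leq d'_i\leq d_i\leq 1$, I would define the candidate divisor on $Y$
\[
D'_Y\de g_*^{-1}D'+(D_Y-g_*^{-1}D),
\]
which keeps the $g$-exceptional part of $D_Y$ intact and lowers coefficients only on strict transforms. Then $D'_Y\leq D_Y$ is a boundary on $Y$, and $K_Y+D'_Y$ is $\Q$-Cartier since $Y$ is $\Q$-factorial. The crux is promoting $K_X+D'$ to a $\Q$-Cartier divisor on $X$, which (using that $K_X+D$ is already $\Q$-Cartier) reduces to showing each component $D_i$ of $D$ is individually $\Q$-Cartier. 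I would obtain this by running a relative $(K_Y+D'_Y)$-MMP over $X$ and invoking the relative base-point-free theorem for GMRLC surfaces, descending $g$-trivial $\Q$-Cartier classes on $Y$ to $\Q$-Cartier classes on $X$, inducting on the relative Picard number $\rho(Y/X)$. Once $K_X+D'$ is known $\Q$-Cartier, the log pullback relation $K_Y+D'_Y\sim_{\Q}g^*(K_X+D')$ together with $g_*D'_Y=D'$ certifies $(X,D')$ as GMRLC.

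The principal obstacle is the $\Q$-Cartier descent step in part (2), i.e.\ the claim that each component of $D$ is individually $\Q$-Cartier on $X$. This is nontrivial precisely when $X$ itself is not assumed $\Q$-factorial, and ultimately rests on the relative base-point-free theorem for GMRLC surfaces proved in the earlier portions of the source paper; everything else is coefficient bookkeeping and repeated application of the negativity lemma.
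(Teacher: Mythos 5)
First, note that the paper itself gives no proof of this lemma: it is quoted from Fujino \cite[4.3, 4.4]{Fujino-MMP_for_algebraic_log_surfaces}. So your attempt has to be measured against the standard argument, and there it has a genuine gap in part (1). Your main line is to factor $f$ into $(K_X+D)$-negative extremal contractions on $X$ over $Z$. But the hypothesis is only that $-(K_X+D)$ is $f$-nef, so $f$ may contract curves on which $K_X+D$ is \emph{trivial}; for those the relative cone theorem produces no negative ray and your iteration stalls at $X$ without recovering $f$ (e.g.\ the minimal resolution of a cone over an elliptic curve with $D$ the exceptional curve, cf.\ Example \ref{ex:elliptic_contraction}(2)). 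Moreover, even for the genuinely negative steps, the assertion that each intermediate model is again a GMRLC \emph{log surface} requires the contracted curve to be $\Q$-Cartier (Remark \ref{rem:Q-Cartier_under_contraction}(2)), and in this paper that fact for GMRLC surfaces (Remark \ref{rem:Q-Cartier_under_contraction}(3)) is \emph{deduced from} the present lemma — so the argument is circular. The correct route is to work on the $\Q$-factorial model $g\:Y\to X$ throughout: run the $(K_Y+D_Y)$-MMP over $Z$ on $Y$ (where every curve is $\Q$-Cartier), observe that $f$-anti-nefness of $K_X+D$ is inherited so the output $Y^*$ has $K_{Y^*}+D_{Y^*}$ numerically trivial over $Z$, and use the contraction/base-point-free machinery on $Y^*$ to descend $\Q$-Cartierness of $K_Z+f_*D$. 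The witness of GMRLC-ness for $(Z,f_*D)$ is $Y^*$, not $Y$: your negativity-lemma "sanity check" on $Y$ only gives the upper bound $(f_*D)_Y\leq D_Y$, and coefficients of $(f_*D)_Y$ on $Y$ can drop below $0$, which is exactly why one must contract further.

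For part (2) your divisor $D'_Y\de g_*^{-1}D'+(D_Y-g_*^{-1}D)$ is precisely the right object, but you then head into an unnecessary and incomplete induction. The one observation that closes the proof is that
\begin{equation*}
-(K_Y+D'_Y)=-g^*(K_X+D)+g_*^{-1}(D-D')
\end{equation*}
is $g$-nef, since the first summand is $g$-numerically trivial and the second is effective with no $g$-exceptional components. Hence part (1), applied to the GMRLC log surface $(Y,D'_Y)$ and the morphism $g$, immediately yields that $(X,D')=(X,g_*D'_Y)$ is GMRLC; taking $D'=0$ and $D'=D-\epsilon D_i$ then gives $\Q$-Cartierness of $K_X$ and of each component $D_i$. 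Your proposed substitute — a relative $(K_Y+D'_Y)$-MMP over $X$ plus base-point-freeness plus induction on $\rho(Y/X)$ — does not close as stated, because without the $g$-nefness of $-(K_Y+D'_Y)$ you cannot conclude that the end product of that MMP is numerically trivial over $X$, and numerical triviality over a birational base alone does not force descent to a $\Q$-Cartier divisor (the non-$\Q$-Cartier lines on the elliptic cone are the standard obstruction).
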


By a \emph{contraction} we mean a morphism between normal varieties with connected fibers. Writing a birational contraction between log surfaces as $f\:(X,D)\to (X',D')$ we always assume that $D'=f_*D$. 

\begin{dfn}[A log exceptional curve]\label{dfn:log_exceptional} Let $(X,D)$ be a log surface. A curve $\ll\subseteq X$ is \emph{log exceptional} if
\begin{equation}\label{eq:log_exc}
\ll\cdot (K_{X}+D)<0 \quad\mbox{and}\quad \ll^{2}<0.
\end{equation}
\end{dfn}

Note that although $\ll$ is only a Weil divisor, the self-intersection number $\ll^2$ is well defined, see Remark \ref{rem:singularities}. By the logarithmic contraction theorem \cite[Theorem 5.5]{Fujino-MMP_for_algebraic_log_surfaces} there exists a birational contraction of $\ll$, which we denote by $\ctr_\ll$. It is known that $\ll$ is a rational curve and $\ll\cdot (K_{X}+D)\geq -2$ \cite[Theorem 5.6]{Fujino-MMP_for_algebraic_log_surfaces}. We emphasize that the cone theorem and the contraction theorem hold for arbitrary projective log surfaces, also in the relative form. In this generality a log exceptional curve does not have to be $\Q$-Cartier, hence the direct image of a log canonical divisor can be non-$\Q$-Cartier, see Example \ref{ex:log_exc_non_Q-Cartier}.

\begin{rem}[Direct images of $\Q$-Cartier divisors] \label{rem:Q-Cartier_under_contraction} \ 
Let $\ll$ be a log exceptional curve on a log surface $(X,D)$ and let $f$ be its contraction. The following hold.
\begin{enumerate}[(1)]
\item If $\ll$ is $\Q$-Cartier then $f_*$ and $f^{-1}_*$ map $\Q$-Cartier divisors to $\Q$-Cartier divisors.
\item $\ll$ is $\Q$-Cartier if and only if $K_{f(X)}+f_*D$  is $\Q$-Cartier.
\item If $(X,D)$ is GMRLC then $\ll$ and $K_X+\ll$ are $\Q$-Cartier.
\end{enumerate}
\end{rem}

\begin{proof}Put $X'=f(X)$ and $D'=f_*D$. 

(1) If $G$ is $\Q$-Cartier on $X'$ then $f^*(G)$, and hence $f^{-1}_*G$ is $\Q$-Cartier on $X$. Assume $G$ is a $\Q$-Cartier divisor on $X$. Then $G+a\ll$ for some $a\in\Q$ is $\Q$-Cartier and intersects $\ll$ trivially, hence by the contraction theorem \cite[Theorem 5.5(iii)]{Fujino-MMP_for_algebraic_log_surfaces} $G+a\ll=f^*C$ for some $\Q$-Cartier divisor $C$ on $X'$. Then $f_*G=C$ is $\Q$-Cartier.

(2) Since $K_X+D$ is $\Q$-Cartier, the linear equivalence $K_X+D\sim f^*(K_{X'}+D')+a\ll$, where $a>0$, implies that $\ll$ is $\Q$-Cartier if and only if $f^*(K_{X'}+D')$ is $\Q$-Cartier, which by (1) holds if and only if $K_{X'}+D'$ is $\Q$-Cartier.

(3) By Lemma \ref{lem:GMRLC} the divisor $K_{X'}+D'$ is $\Q$-Cartier, hence (2) implies that $\ll$ is $\Q$-Cartier. By Lemma  \ref{lem:GMRLC}(2) $K_X$ is $\Q$-Cartier, hence $K_X+\ll$ is $\Q$-Cartier.
%
\end{proof}

If $\ll$ is a log exceptional curve on a GMRLC log surface $(X,D)$ then, since the coefficient of $\ll$ in $D$ is at most $1$ and $D\geq 0$, we get $\ll\cdot (K_X+\ll)<0$. In case the log surface is log canonical or $\Q$-factorial we have $\ll\cong\P^1$, see \cite[Theorem 3.19]{Tanaka-MMP_char>0}, cf.\ \cite[Lemma 2.3.5]{KollarKovacs-2DlogMMP}. 

\begin{dfn}[A run of a Minimal Model Program]\label{dfn:MMP1}
A \emph{partial MMP run on a log surface $(X,D)$} (a \emph{partial $(K_X+D)$} -MMP) is a birational morphism $\psi\:(X,D)\to (\ov X,\ov D)$ which can be decomposed as a sequence of birational contractions between log surfaces
\begin{equation}\label{eq:MMP}
(X,D)=(X_1,D_1)\xrightarrow{\phi_1}\ldots \xrightarrow{\phi_n}(X_{n+1},D_{n+1})=(\ov X,\ov D)
\end{equation}
such that each $\phi_i$ is a contraction of a log exceptional curve on $(X_i,D_i)$. An MMP run is a maximal partial MMP run. We call $\varphi_i$ the \emph{elementary contractions} and $(X_i,D_i)$ the \emph{intermediate models} of (this decomposition of) $\psi$.
\end{dfn}

By Remark \ref{rem:Q-Cartier_under_contraction}(2) if $(X_i,D_i)$ is a log surface then $(X_{i+1},D_{i+1})$ is a log surface if and only if $\Exc \varphi_i$ is $\Q$-Cartier. Hence every curve contracted by any partial MMP run is $\Q$-Cartier. An MMP run is \emph{complete} if the resulting log surface is \emph{minimal}, that is, it contains no log exceptional curve.

\medskip
Assume that $(X,D)$ is GMRLC. Then every log exceptional curve is automatically $\Q$-Cartier by Remark \ref{rem:Q-Cartier_under_contraction}(3). Its contraction leads to a GMRLC log surface by Lemma \ref{lem:GMRLC}(1). It follows that every MMP run on a GMRLC log surface is complete. Moreover, if  $(X,D)$ is $\Q$-factorial (respectively, log canonical) then each $(X_i,D_i)$ is $\Q$-factorial (respectively log canonical). By \cite[Theorem 1.5]{Fujino-MMP_for_algebraic_log_surfaces} on a minimal GMRLC log surface either the log canonical divisor is semi-ample or its negative is $f$-ample for some contraction of positive relative dimension and relative Picard rank $1$.

\begin{rem}[Projectivity]\label{rem:projectivity}
We note that a $\Q$-factorial algebraic surface is quasi-projective \cite[Lemma 2.2]{Fujino-logMMP_dim2} and that a complete normal surface whose singularities are contained in one affine open subset is projective \cite[Corollary 4]{Kleiman-ampleness}. Also, every log terminal surface is $\Q$-factorial, see Remark \ref{rem:singularities}. 
\end{rem}

\medskip
\subsection{Coefficients and discrepancies}\label{ssec:discrepancies}
Singularities of log surfaces and their changes under a run of an MMP are conveniently measured in terms of log discrepancies, see \cite[\S 2.3]{KollarMori-bir_geom}.

Given a Weil divisor $D$ and its irreducible component $E$ we denote by $\coeff_E(D)$ the coefficient of $E$ in the irreducible decomposition of $D$. Given a log surface $(X,D)$ and a proper birational morphism from a normal surface $f\:Y\to X$, for a prime divisor $E$ on $Y$ we define the \emph{coefficient} of $E$ over $(X,D)$ and the \emph{log discrepancy} of $E$ over $(X,D)$, as, respectively (see \eqref{dfn:log_pullback})
\begin{equation}\label{eq:coefficient}
\cf(E;X,D)=\coeff_E(D_Y)\quadtext{and} \ld(E;X,D)=1- \coeff_E(D_Y).
\end{equation}
They depend only on the valuation of the field of rational functions on $X$ associated with $E$, not on $f$. Let $\cE(f)$ denote the set of curves contracted by $f$. Then $\Exc f=\sum_{E\in\cE(f)} E$. We have a linear equivalence over $\Q$:
\begin{equation}\label{eq:discrepancies}
K_Y+f_*^{-1}D+\sum_{E\in\cE(f)} \cf(E;X,D)E\sim f^*(K_X+D).
\end{equation}
We write $\cf(E)$ instead of $\cf(E;X,D)$ if $X$ and $D$ are clear from the context. We put
\begin{equation}\label{eq:ld_Y_and_cf_Y}
\cf_Y(X,D)=\sum_{E\in \cE(f)} \cf(E;X,D)E \text{\quad and \quad} \ld_Y(X,D)=\sum_{E\in \cE(f)} \ld(E;X,D)E
\end{equation}
and we call $\cf_Y(X,D)$ the \emph{coefficient divisor}. A \emph{divisor over $X$} is a divisor on some $Y$ as above. It is \emph{exceptional} if its image on $X$ has codimension bigger than $1$. We define the coefficient and the log discrepancy of $(X,D)$ as
\begin{equation}\label{eq:ld}
\cf(X,D)=\sup\{\cf(E;X,D):E\text{\ is exceptional over\ } X\} \text{\quad and \quad} \ld(X,D)=1-\cf(X,D).
\end{equation}
The \emph{total coefficient $(X,D)$} is
\begin{equation}\label{eq:tld}
\tcf(X,D)=\sup\{\cf(E;X,D):E\text{\ is a divisor over\ } X\}.
\end{equation}
Thus $\tcf(X,D)=\max(\cf(X,D), \{\cf(E)\: E\text{\ is a component of\ } D \})$. We put $\cf(X)=\cf(X,0)$ and $\ld(X)=\ld(X,0)$. For a set $\cS$ of divisors over $X$ we put $\cf(\cS;X,D)=\sup \{\cf(E;X,D)\: E\in \cS\}$.

\begin{dfn}[$\epsilon$-lc surfaces]\label{dfn:eps-dlt}
Let $\epsilon\in [0,1]\cap \Q$. A log surface $(X,D)$ is \emph{$\epsilon$-log canonical} ($\epsilon$-lc) if $\tcf(X,D)\leq 1-\epsilon$. It is \emph{$\epsilon$-divisorially log terminal} ($\epsilon$-dlt) if it is $\epsilon$-log canonical and $\cf(\cE(f))<1-\epsilon$ for some log resolution $f$.
\end{dfn}

In particular, if $(X,D)$ is $\epsilon$-lc then the coefficients of components of $D$ do not exceed $1-\epsilon$. However, even if $(X,D)$ is $\epsilon$-dlt, the boundary $D$ can have components with coefficient equal to $1-\epsilon$. Note also that for $\epsilon\neq0$ an $\epsilon$-lc log surface is log terminal, hence $\Q$-factorial, see Remark \ref{rem:singularities}.

\begin{rem}\label{rem:dlt_resolution_dependence}
When we blow up the point of intersection of two components of a boundary of a log smooth surface with log discrepancies $u_1$ and $u_2$ then the new exceptional curve has log discrepancy $u_1+u_2$. It follows that the supremum in \eqref{eq:tld} can be computed on the set consisting of components of $D$ and exceptional curves of any chosen log resolution. Hence $\epsilon$-divisorial log terminality for $\epsilon\neq 0$ and $\epsilon$-log canonicity can be verified using any log resolution. For $\epsilon=0$ this is not the case, as the exceptional curve of a blowup of a log smooth surface with reduced boundary at a point belonging to two boundary components has $\ld=0$.
\end{rem}

\begin{samepage}
\begin{rem}[Numerically lc surfaces and their singularities]\label{rem:singularities}\nopagebreak  \ 
\begin{enumerate}[(1)]
\item Let $X$ be a normal surface. Since the intersection matrix of the exceptional divisor of resolution of singularities of $X$ is negative definite \cite{Mumford-surface_singularities}, every Weil $\Q$-divisor has a uniquely determined pullback intersecting trivially all exceptional curves. This determines uniquely a $\Q$-valued intersection product of Weil $\Q$-divisors on $X$ consistent with the projection formula. Then the formula \eqref{eq:discrepancies} defining coefficients extends to pairs $(X,D)$, where $D$ is a boundary, for which $K_X+D$ is just a Weil $\Q$-divisor. In particular, it allows to define \emph{numerically dlt (numerically lc)} surfaces as the ones for which coefficients of some log resolution are smaller (smaller or equal) than $1$, see \cite[\S 4.1]{KollarMori-bir_geom}. However, it turns out that in both cases $K_X+D$ is in fact $\Q$-Cartier, see \cite[4.11]{KollarMori-bir_geom}, \cite[6.3]{Fujino-logMMP2_char>0}. The proof uses classification of
graphs of minimal resolutions of numerically lc surfaces - see \cite[\S 3]{Flips_and_abundance} and \cite[\S 4.1]{KollarMori-bir_geom}.

\item Dlt surfaces have rational singularities and are $\Q$-factorial, see \cite[4.11, 4.12]{KollarMori-bir_geom}, \cite[6.3, 6.4]{Fujino-logMMP2_char>0}. Knowing the resolution graphs, rationality can be in fact verified directly \cite[Theorem 3]{Artin-Rational_sing} and then the $\Q$-factoriality follows by \cite[17.1]{Lipman-rational_sing_2d}, too.
\end{enumerate}
\end{rem}
\end{samepage}

We need the following results. Note that for surfaces intersections of Weil divisors with curves are well defined by Remark \ref{rem:singularities}, so the numerical definitions of $f$-nef and $f$-ample divisors given in Subsection \ref{ssec:logMMP} extend to Weil $\Q$-divisors. We remark that the two lemmas below work in any dimension under the assumption that  $K_X+D$, $K_{X'}+f_*D$  and $A$ are $\Q$-Cartier. For surfaces these assumptions can be dropped.

\begin{lem}[Negativity lemma {\cite[3.39]{KollarMori-bir_geom}}] \label{lem:negativity}
Let $f\:X\to X'$ be a proper birational morphism between normal surfaces and let $A$ be a $\Q$-divisor on $X$. If $f_*A$ is effective and $-A$ is $f$-nef then $A$ is effective and $\Supp A=f^{-1}(\Supp f(A))$.
\end{lem}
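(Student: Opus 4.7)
The plan is to split $A$ into its $f$-exceptional and non-exceptional parts and handle each separately. Write $A = A_e + A_h$, where $A_e$ collects the components contracted by $f$ and $A_h$ the remaining ones. Since $f_*A = f_*A_h$ and $f$ restricts to a birational morphism on each component of $A_h$, the effectiveness of $f_*A$ transfers directly to $A_h \geq 0$. The content of the lemma is therefore the effectiveness of $A_e$ and the propagation of $\Supp A$ along fibers.

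To prove $A_e \geq 0$, first observe that for every $f$-exceptional curve $E$ we have $A_h \cdot E \geq 0$ (distinct irreducible curves meet non-negatively), so the $f$-nefness of $-A$ gives $A_e \cdot E = A \cdot E - A_h \cdot E \leq 0$. Write $A_e = B^+ - B^-$ with $B^\pm \geq 0$ effective and having disjoint supports, and intersect with $B^-$:
\[
(B^-)^2 = B^+ \cdot B^- - A_e \cdot B^- \geq -A_e \cdot B^- \geq 0,
\]
since $B^+ \cdot B^-$ is a non-negative sum of intersections of distinct components, and $A_e \cdot B^- \leq 0$ by the coefficient-wise inequality just established. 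By Mumford's theorem on negative-definiteness of the intersection form on the exceptional divisor of a resolution (extended to Weil $\Q$-divisors via Remark \ref{rem:singularities}(1)), this forces $B^- = 0$. Therefore $A_e \geq 0$ and $A \geq 0$.

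For the support equality, the inclusion $\Supp A \subseteq f^{-1}(f(\Supp A))$ is tautological. For the converse, fix $p \in f(\Supp A)$ and let $F = f^{-1}(p)$; by Zariski's connectedness theorem (using that $X'$ is normal and $f$ proper birational) $F$ is connected. If $F$ is a single point we are done. Otherwise, $F$ is a connected union of $f$-exceptional curves, and either $F$ contains a component of $A_e$ directly, or $F$ meets some component of $A_h$; in the latter case there is an exceptional curve $E \subseteq F$ with $A_h \cdot E > 0$, which forces $A_e \cdot E < 0$, and this is possible for the effective $A_e$ only if $E \subseteq \Supp A_e$. Either way, at least one component of $F$ lies in $\Supp A_e$. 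It remains to propagate: if some $E' \subseteq F$ were not in $\Supp A_e$ but intersected a component $E'' \subseteq F \cap \Supp A_e$, then
\[
A \cdot E' \geq A_e \cdot E' \geq \coeff_{E''}(A_e)\,(E' \cdot E'') > 0,
\]
contradicting $f$-nefness of $-A$. Connectedness of $F$ then gives $F \subseteq \Supp A$.

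The main subtlety is that $A$ is only a Weil $\Q$-divisor, not necessarily $\Q$-Cartier, so the intersection numbers $A\cdot E$ have to be read through Mumford's pullback convention (Remark \ref{rem:singularities}(1)); everything above goes through because the projection formula and negative-definiteness are preserved in that setting. The core inputs are thus Mumford's negative-definiteness, Zariski connectedness of fibers of a proper birational morphism to a normal surface, and the non-negativity of intersections between distinct irreducible curves.
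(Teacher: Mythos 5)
Your proof is correct. The paper itself does not prove this lemma in the body (it cites \cite[3.39]{KollarMori-bir_geom}), but the core step is reproved in graph-theoretic form in the Appendix as Lemma \ref{lem:minimal_res}(1), via exactly your decomposition into positive and negative parts with disjoint supports, the computation $(B^-)^2\geq 0$, and negative definiteness of the exceptional lattice; your treatment of the non-exceptional part $A_h$ and the connectedness/propagation argument for $\Supp A=f^{-1}(f(\Supp A))$ likewise matches the standard argument, so this is essentially the same approach.
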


We note that given a proper birational morphism between normal surfaces $f\:X\to X'$ and a $\Q$-divisor $D$, putting $D'=f_*D$ we can rewrite \eqref{eq:discrepancies} as
\begin{equation}\label{eq:ramification}
(K_X+D)-f^*(K_{X'}+D')=\sum_{E\in\cE(f)} a_E E \text{\ \ where } a_E=\cf(E;X,D)-\cf(E;X',D').
\end{equation}

Applying the above lemma to $A=\sum_{E\in\cE(f)} a_E E$ one obtains the following result.

\begin{lem}[{\cite[3.38]{KollarMori-bir_geom}}]\label{lem:ld_increasaes}
Let $f\:X\to X'$ be a proper birational morphism between normal surfaces and let $D$ be a $\Q$-divisor on $X$.
If $-(K_X+D)$ is $f$-nef then for every exceptional divisor $E$ over $X'$ we have $$\cf(E;X',f_*D)\leq \cf(E;X,D).$$ If additionally $-(K_X+D)$ is $f$-ample and $f$ is not an isomorphism over a general point of the center of $E$ on $X'$ then the inequality is strict.
\end{lem}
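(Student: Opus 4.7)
The plan is to reduce both parts of the statement to Lemma~\ref{lem:negativity} applied to the divisor $A=(K_X+D)-f^*(K_{X'}+D')$, where $D'=f_*D$. By \eqref{eq:ramification} we have $A=\sum_{E\in\cE(f)} a_E E$ with $a_E=\cf(E;X,D)-\cf(E;X',D')$, so $f_*A=0$ is effective; and since $f^*(K_{X'}+D')$ intersects every $f$-exceptional curve trivially, the $f$-nefness of $-(K_X+D)$ propagates to $-A$. The negativity lemma then gives $A\geq 0$, i.e.\ $a_E\geq 0$ for every $E\in\cE(f)$. This is the desired inequality in the case when $E$ is already realized as a divisor on $X$.

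For an arbitrary exceptional divisor $E$ over $X'$, we pick any proper birational morphism $g\:Y\to X'$ on which $E$ appears as a prime divisor and resolve the indeterminacy of $f^{-1}\circ g$, obtaining a normal surface $Z$ together with $p\:Z\to X$ and $q=f\circ p\:Z\to X'$ such that $E$ is a prime divisor on $Z$. Writing $D_Z$ and $D'_Z$ for the log pullbacks of $D$ and $D'$ via $p$ and $q$ as in \eqref{dfn:log_pullback}, and subtracting the two defining relations, we obtain $D_Z-D'_Z=p^*A$. Applying Lemma~\ref{lem:negativity} once more to $p^*A$ itself (its $p$-pushforward is $A\geq 0$ and its negative is $p$-numerically trivial, in particular $p$-nef), we conclude $p^*A\geq 0$, so $\coeff_E(D_Z)\geq\coeff_E(D'_Z)$, which is exactly $\cf(E;X,D)\geq\cf(E;X',D')$.

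For the strict version we first upgrade $a_E\geq 0$ to $a_E>0$ for every $E\in\cE(f)$: the $f$-ampleness of $-(K_X+D)$ transfers to $-A$, and if $a_{E_0}=0$ for some $E_0$ then $(-A)\cdot E_0=-\sum_{E\neq E_0}a_E(E\cdot E_0)$ is a sum of nonpositive terms, contradicting $(-A)\cdot E_0>0$. Hence $\Supp A=\Exc f$. Since $E$ is exceptional over the surface $X'$, its center $q(E)$ is a point, and the assumption that $f$ is not an isomorphism over $q(E)$ means that $f^{-1}(q(E))\subseteq \Exc f$ contains a curve; in particular $p(E)\subseteq f^{-1}(q(E))\subseteq\Supp A$.

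The remaining and, we expect, main technical step is to convert this set-theoretic inclusion into the numerical statement $\coeff_E(p^*A)>0$. For this we invoke the support assertion of Lemma~\ref{lem:negativity} applied to $p^*A$ on $p\:Z\to X$, which gives $\Supp p^*A=p^{-1}(\Supp A)$. Hence $E\subseteq p^{-1}(p(E))\subseteq p^{-1}(\Supp A)=\Supp p^*A$, so $\coeff_E(p^*A)>0$ and the inequality $\cf(E;X,D)>\cf(E;X',D')$ is strict.
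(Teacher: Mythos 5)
Your proposal is correct and follows essentially the same route as the paper, which simply cites \cite[3.38]{KollarMori-bir_geom} and notes that the result follows by applying the Negativity Lemma to $A=\sum_{E\in\cE(f)}a_E E$. Your additional steps — pulling $A$ back to a common resolution $Z$ to handle divisors not living on $X$, and using $f$-ampleness together with $E\cdot E_0\geq 0$ and the support assertion $\Supp p^*A=p^{-1}(\Supp A)$ to get strictness — are exactly the standard details behind that citation and are all sound.
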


\begin{cor}[$f$-nef canonical divisor]\label{cor:pure_peeling}
Let $f\:X\to Y$ be a birational morphism between normal surfaces such that $K_X$ is $f$-nef. Then $f^*K_Y\sim K_X+ E$ for some effective divisor $E$ such that $\Supp E\subseteq \Supp \Exc f$. Moreover, if $\Exc f$ is connected then $\Supp E=\Supp \Exc f$, unless $E=0$.
\end{cor}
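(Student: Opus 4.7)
The plan is to apply the Negativity Lemma (Lemma \ref{lem:negativity}) to the $\Q$-divisor
\begin{equation*}
E \; := \; f^{*}K_{Y} - K_{X},
\end{equation*}
which is well-defined on $X$ via the numerical pullback of Weil $\Q$-divisors described in Remark \ref{rem:singularities}(1). By the ramification formula \eqref{eq:ramification} applied with $D=0$ (and hence $D'=0$), one has explicitly $E=\sum_{C\in\cE(f)}\cf(C;Y,0)\,C$, so $f^{*}K_{Y}\sim_{\Q} K_{X}+E$ and $\Supp E\subseteq \Supp\Exc f$ hold for free; the only nontrivial content is the effectivity of $E$.

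To verify the hypotheses of Lemma \ref{lem:negativity} for $A=E$, first note that $f_{*}E=0$ is trivially effective. Second, for every $C\in\cE(f)$ I have
\begin{equation*}
-E\cdot C \;=\; K_{X}\cdot C - f^{*}K_{Y}\cdot C \;=\; K_{X}\cdot C \;\geq\; 0,
\end{equation*}
since $K_{X}$ is $f$-nef and the numerical pullback $f^{*}K_{Y}$ intersects every $f$-exceptional curve trivially by construction. Hence $-E$ is $f$-nef and Lemma \ref{lem:negativity} yields $E\geq 0$, proving the first assertion.

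For the moreover statement, assume $\Exc f$ is connected and $E\neq 0$. Write $\Exc f=\bigcup_{i}E_{i}$ and $E=\sum_{i}a_{i}E_{i}$ with all $a_{i}\geq 0$. Suppose for contradiction that some $a_{j}=0$. Since $E\neq 0$ and the dual graph of $\Exc f$ is connected, I can choose adjacent components $E_{i},E_{j}$ with $a_{i}>0$ and $a_{j}=0$, so $E_{i}\cdot E_{j}>0$. Then
\begin{equation*}
E\cdot E_{j} \;=\; a_{j}E_{j}^{2}+\sum_{k\neq j}a_{k}(E_{k}\cdot E_{j}) \;\geq\; a_{i}(E_{i}\cdot E_{j})\;>\;0,
\end{equation*}
which contradicts $E\cdot E_{j}=-K_{X}\cdot E_{j}\leq 0$ established in the previous paragraph. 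Hence every $a_{j}>0$, i.e.\ $\Supp E=\Supp\Exc f$.

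I expect no genuine obstacle: the argument is a direct invocation of the Negativity Lemma combined with the elementary observation that an effective divisor supported on a connected exceptional locus with nonpositive intersection against every component must either vanish or have full support, itself a consequence of the negative definiteness of the intersection matrix of a resolution's exceptional divisor.
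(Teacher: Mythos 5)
Your proof is correct and follows exactly the paper's approach: the paper's entire proof is ``Apply Lemma \ref{lem:negativity} to $A=f^*K_Y-K_X$.'' The only difference is that you re-derive the support statement by a hand connectedness argument on the dual graph, whereas the second conclusion of Lemma \ref{lem:negativity}, namely $\Supp A=f^{-1}(\Supp f(A))$, already yields it directly when $\Exc f$ is connected.
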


\begin{proof}Apply Lemma \ref{lem:negativity} to $A=f^*K_Y-K_X$.
\end{proof}

Lemmas \ref{lem:ld_increasaes} and \ref{lem:GMRLC}(1) give the following corollary.

\begin{cor}\label{lem:eps-lc_is_respected}
Let $(X,D)$ be an $\epsilon$-lc log surface for some $\epsilon\geq 0$ and $f\:X\to X'$ a birational morphism onto a normal surface such that $-(K_X+D)$ is $f$-nef. Then $(X,f_*D)$ is an $\epsilon$-lc log surface. If $(X,D)$ is $\epsilon$-dlt and $-(K_X+D)$ is $f$-ample then $(X',f_*D)$ is $\epsilon$-dlt. 
\end{cor}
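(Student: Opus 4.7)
The plan is to verify that $(X',f_*D)$ is a log surface (more precisely GMRLC, so that $K_{X'}+f_*D$ is in particular $\Q$-Cartier) and then to read off the required coefficient bounds for both statements directly from Lemma~\ref{lem:ld_increasaes}.

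First I would observe that any $\epsilon$-lc log surface with $\epsilon\geq 0$ is GMRLC. Indeed, for $\epsilon>0$ the log surface is log terminal, hence $\Q$-factorial by Remark~\ref{rem:singularities}(2), and for $\epsilon=0$ it is log canonical by definition; in either case it is GMRLC. Since $-(K_X+D)$ is $f$-nef, Lemma~\ref{lem:GMRLC}(1) then yields that $(X',f_*D)$ is a GMRLC log surface, so $K_{X'}+f_*D$ is $\Q$-Cartier. This disposes of all $\Q$-Cartier issues in the rest of the argument.

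For the $\epsilon$-lc statement I would bound $\tcf(X',f_*D)$ separately on the two types of divisors. Components of $f_*D$ are images of non-contracted components of $D$, and their coefficients are unchanged, hence are at most $1-\epsilon$ by hypothesis. For a divisor $E$ exceptional over $X'$, Lemma~\ref{lem:ld_increasaes} with $-(K_X+D)$ $f$-nef gives
\[
\cf(E;X',f_*D)\leq \cf(E;X,D)\leq \tcf(X,D)\leq 1-\epsilon,
\]
so $\tcf(X',f_*D)\leq 1-\epsilon$.

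For the $\epsilon$-dlt part the key is to exhibit a single log resolution $g'\:Y\to X'$ on which all coefficients of exceptional curves are strictly less than $1-\epsilon$. I would take a log resolution $g\:Y\to X$ of $(X,D)$, chosen (by further blowups above $\Exc f$ if needed) so that $g'=f\circ g$ is also a log resolution of $(X',f_*D)$. Then $\cE(g')=\cE(g)\cup\{g_*^{-1}E:E\subseteq \Exc f\}$. For $E'\in\cE(g)$, the $\epsilon$-dlt hypothesis on $(X,D)$ together with Remark~\ref{rem:dlt_resolution_dependence} gives $\cf(E';X,D)<1-\epsilon$, and Lemma~\ref{lem:ld_increasaes} propagates this to $(X',f_*D)$. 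The delicate step, and really the only place where the $f$-ampleness of $-(K_X+D)$ is used, concerns the strict transforms of components $E\subseteq \Exc f$: here we only know $\cf(E;X,D)=\coeff_E(D)\leq 1-\epsilon$, not strictly. However, the center of such an $E$ on $X'$ is the single point $f(E)$ over which $f$ is manifestly not an isomorphism, so the strict inequality clause of Lemma~\ref{lem:ld_increasaes} upgrades this to $\cf(E;X',f_*D)<\cf(E;X,D)\leq 1-\epsilon$. Combining, $\cf(\cE(g');X',f_*D)<1-\epsilon$, proving $\epsilon$-dlt. I expect this last verification, namely identifying the centers of components of $\Exc f$ on $X'$ and invoking the strict-inequality case of Lemma~\ref{lem:ld_increasaes}, to be the main (and really the only) technical point of the argument.
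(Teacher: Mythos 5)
Your overall route is exactly the one the paper intends (its proof is the single sentence that Lemmas \ref{lem:ld_increasaes} and \ref{lem:GMRLC}(1) give the corollary), and your treatment of the $\Q$-Cartier issue via GMRLC together with your argument for the $\epsilon$-lc part are correct. The $\epsilon$-dlt part is also correct for $\epsilon>0$.

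There is, however, a genuine gap at $\epsilon=0$ — precisely the case needed for Theorem \ref{thm:aMM_respects_(1-r)-dlt}(2) with $r=1$ and for Corollary \ref{cor:aMM_for_log_smooth}. You justify the bound $\cf(E';X,D)<1-\epsilon$ for \emph{all} $E'\in\cE(g)$ by appealing to Remark \ref{rem:dlt_resolution_dependence}, but that remark explicitly says that for $\epsilon=0$ divisorial log terminality \emph{cannot} be verified on an arbitrary log resolution: blowing up a point where two coefficient-$1$ components of $g_*^{-1}D+\Exc g$ meet produces an exceptional curve with coefficient exactly $1$. So for a poorly chosen $g$ your claimed strict inequality fails for some $E'\in\cE(g)$ whose center lies outside $\Exc f$, and there the ampleness clause of Lemma \ref{lem:ld_increasaes} gives you nothing, since $f$ is an isomorphism over that center. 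The fix is to be more specific about $g$: start from a log resolution $h$ of $(X,D)$ that \emph{witnesses} $\epsilon$-dlt-ness and obtain $g$ from $h$ by further blowups with centers over $\Exc f$ only (which, as you observe, suffices to make $f\circ g$ a log resolution of $(X',f_*D)$). Then $\cE(g)$ splits into $\cE(h)$, for which $\cf(\,\cdot\,;X,D)<1-\epsilon$ holds by the choice of $h$ and Lemma \ref{lem:ld_increasaes} gives $\leq$, and the new exceptional divisors, whose centers on $X'$ lie in the finite set $f(\Exc f)$, so the strict-inequality clause of Lemma \ref{lem:ld_increasaes} applies to them exactly as it does to the strict transforms of the components of $\Exc f$. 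With that modification your argument is complete for all $\epsilon\geq 0$.
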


It follows from Corollary \ref{lem:eps-lc_is_respected} that the outcome of a partial MMP run on an $\epsilon$-dlt log surface is $\epsilon$-dlt. Example \ref{ex:discrepancies_drop} reminds that in the above corollary it is important to have an upper bound on boundary coefficients, as a contraction of a boundary component may increase the coefficient (but not the total coefficient) of the log surface.

\begin{ex}[The coefficient of the log surface may increase under MMP] \label{ex:discrepancies_drop} 
Let $X=\F_n:=\P(\cO_{\P^1}\oplus \cO_{\P^1}(n))$ for $n\geq 3$. Let $X\to X'$ be the contraction of the negative section $D$. We have $(K_X+D)\cdot D<0$, $f_*D=0$ and $\cf(D;X',f_*D)=1-\frac{2}{n}<\coeff_D(D)$, in agreement with Lemma \ref{lem:ld_increasaes}. At the same time we have $\cf(X',f_*D)=1-\frac{2}{n}>0=\cf(X,D)$. 
\end{ex}

\begin{samepage}
\begin{ex}[Non-GMRLC log surfaces] \label{ex:non_GMRLC} \
\begin{enumerate}[(1)]
\item Let $\ov X$ be a projective cone over a smooth normally embedded curve $\bar E\subseteq \P^n$ of genus $g\geq 1$. Then $\ov X$ is GMRLC if an only if $g=1$. To see this let $\pi\:X \to \ov X$ be the minimal resolution and let $E$ be the exceptional curve. It follows from \cite[\S 2, Remark]{Samuel_anneaux_factoriels} that the local ring of the vertex of the cone is $\Q$-factorial if and only if the restriction homomorphism $\Cl(\P^n)\otimes \Q\to \Cl(\ov E)\otimes \Q$ is surjective. The latter fails for curves of positive genus, as then ${\dim (\Cl(\ov E)\otimes \Q)=1+\dim_{\Q} (J(\bar{E})\otimes \Q)}$, where $J(\bar{E})$ is the Jacobian variety of $\ov E$ (having dimension equal to the genus of $\ov E$), viewed as an abelian group. The latter group is divisible \cite[p.\ 62]{Mumford-abelian_var}, in particular it has nonzero rank; cf.\ \cite[0BA0]{stacks-project}. Thus $\pi$ is a minimal $\Q$-factorialization of $\ov X$. Writing $\pi^*K_{\ov X}=K_X+(1+u)E$ and using adjunction we compute $u=\frac{2}{-E^2}(g-1)$, which is positive for $g>1$ and zero for $g=1$.

\item Let $\pi\:X \to \ov X$ be the minimal resolution of a projective cone over an elliptic curve and let $E$ be the exceptional curve. Let $\ov D$ be a nonzero sum of lines through the vertex. Choosing the lines over torsion points of the elliptic curve we may assure that the components of $\ov D$ are $\Q$-Cartier. Since $\pi^*K_{\ov X}=K_X+E$, the cone is GMRLC. Put $\ov D'=\pi_*^{-1}\ov D$. We have $\pi^*(K_{\ov X}+\ov D)=K_X+E+\ov D'+eE$, where $e=E\cdot \ov D'/(-E^2)>0$, so $E$ appears in $\pi^*(K_{\ov X}+\ov D)$ with coefficient bigger than $1$. It follows that $(\ov X,\ov D)$ is not GMRLC.
\end{enumerate}
\end{ex}
\end{samepage}

\medskip
\subsection{Curves of the second kind}\label{ssec:second_kind}

\begin{dfn}\label{dfn:log_exc2}
A curve $\ll\subseteq X$ is \emph{log exceptional of the second kind} on $(X,D)$ if 
\begin{equation}\label{eq:log_exc2}
\ll\cdot (K_{X}+D)=0\quad\mbox{and}\quad \ll^{2}<0.
\end{equation}
\end{dfn}
Such curves appear in the construction of a log canonical model.  By \cite[Theorem 3.19]{Tanaka-MMP_char>0}, on log canonical and on $\Q$-factorial surfaces they are either isomorphic to $\P^1$ or
satisfy $\cO(m(K_X+\ll))|_{\ll}\cong \cO_{\ll}$ for every positive $m$ for which $m(K_X+\ll)$ is Cartier. As an example, \cite[2.4.4-12]{Miyan-OpenSurf} gives a description of log exceptional curves of the second kind on a log smooth surface with reduced snc-minimal boundary.

\begin{rem}\label{rem:2nd_type_Q-Cartier}
A log exceptional curve $\ll$ of the second kind on a GMRLC log surface $(X,D)$ is $\Q$-Cartier, unless $\ll\cap D=\emptyset$ and $\ll\cdot K_X=0$.
\end{rem}

\begin{proof}
By Lemma \ref{lem:GMRLC}(2) $X$ is GMRLC and we may assume that $\ll$ is not a component of $D$. We have $\ll\cdot K_X=-\ll\cdot D\leq 0$. In particular, if $\ll\cdot K_X=0$ then $\ll$ is disjoint from $D$. We may therefore assume that $\ll\cdot K_X<0$, hence $\ll$ is log exceptional on $(X,0)$. By Remark \ref{rem:Q-Cartier_under_contraction}(3) $\ll$ is $\Q$-Cartier.
\end{proof}

\begin{ex}[Non-$\Q$-Cartier log exceptional curve of the second kind]\label{ex:elliptic cone}

Let $X\to \bar X$ be the blowup of a smooth point $x\in \bar X$ of a projective cone over an elliptic curve $\bar E\subseteq \P^2$. Denote the line through $x$ on $\ov X$ by $\ov\ll$. Let $\pi\:Y\to X$ be the minimal resolution of singularities and let $E$ denote the unique exceptional curve. By \cite[V.2.11.4]{Hartshorne_AG} and \cite[Prop V.2.9]{Hartshorne_AG} we have $E^2=-\deg \ov E=-3$. We compute $\pi^*K_{X}=K_Y+E$ and $\pi^*\ll=\ll'+\frac{1}{3}E$, where $\ll'$ is the proper transform of $\ll$, hence $\ll\cdot K_X=0$ and $\ll^2=(\ll')^2+\frac{1}{3}=-\frac{2}{3}<0$. By adjunction the canonical divisor of the cone is $\Q$-Cartier. The curve $\ll$ is log exceptional on $(X,0)$ of the second kind. Let $\eta\:\bar X\setminus\{q\}\to \bar E$, where $q\in \bar X$ is the vertex, be the projection. If $\ov x:=\eta(x)\in \bar E$ is non-torsion then $\ov \ll$, and hence $\ll$, is not $\Q$-Cartier. 

Let us recall an argument for the latter claim. Assume the line $\ov \ll\subset \ov X$ over $\ov x\in \ov E$ is $\Q$-Cartier. If $n\ov \ll$ is Cartier for some $n>0$ then it is trivial in $\Cl(\Spec \cO_{\bar X,q})$, hence is in the kernel of the natural homomorphism $\Cl(\bar E)\to \Cl(\Spec \cO_{\bar X,q})$ induced by the projection. By \cite[Exercise II.6.3(b),(d)]{Hartshorne_AG} the latter homomorphism is surjective and the kernel is generated by the class of $\bar E\cdot L$, where $\bar E$ is identified with the hyperplane section at infinity of $\ov X\subseteq \P^3$ and $L\subseteq \P^2$ is a line. Choosing $L$ to be tangent to a flex point $o\in \bar E$ we have $\bar E\cdot L=3o$, so $n\ov x$ is linearly equivalent to a multiple of $3o$. Then $n\ov x=0$ in the group law of $\bar E$, so $\ov x$ is torsion.
\end{ex}

\begin{ex}[Non-$\Q$-Cartier log exceptional curve]\label{ex:log_exc_non_Q-Cartier}
Let the situation be as in Example \ref{ex:elliptic cone} but choose $x\in \ov X$ so that $\ov x\in \ov E$ is non-torsion. Then $\ov \ll$ is not $\Q$-Cartier. Let $\ov y=-\ov x$ in the group law of $\ov E$ and let $\ov \mm$ be the line over $\ov y$. Put $\ov D=\ov \ll+\ov \mm$. Since $\ov x+\ov y\sim 2o$, we infer that $3\ov D\sim 6\ll_o=2(3\ll_o)$, where $\ll_o$ is the line over $o$. Since $3o$ is principal, $3\ll_0$ is Cartier and hence $\ov D$ is $\Q$-Cartier. Thus $(\ov X,\ov D)$ is a log surface. Let $\ll$, $\mm$, $D$ be the proper transforms of $\ov \ll$, $\ov \mm$ and $\ov D$ on $X$. Consider the log surface $(X,D)$. As above, we have $\ll^2=-\frac{2}{3}<0$. We have $\pi^*\mm=\mm'+\frac{1}{3}E$, where $\mm'$ is the proper transform of $\mm$, hence $\ll\cdot\mm=\frac{1}{3}$ and $\ll\cdot (K_{X}+D)=-\frac{1}{3}$. Thus $\ll$ is a non-$\Q$-Cartier log exceptional curve on the log surface $(X,D)$. Despite that, by the contraction theorem there exists a birational morphism $\ctr_\ll\:X\to Z$ onto an algebraic variety contracting exactly $\ll$.
\end{ex}

We say that a curve $\ll$ on a normal projective surface $X$ is \emph{of elliptic type} if $\ll\cdot (K_X+\ll)=0$, where the intersection is computed for Weil divisors. Given a boundary divisor $D$ on $X$ we say that $\ll$ is an \emph{isolated reduced component of $D$} if $\ll$ is a connected component of $\Supp D$ such that $\coeff_\ll(D)=1$. If $(X,D)$ is GMRLC then $\ll$, being a component of $D$, is necessarily $\Q$-Cartier by Remark \ref{rem:2nd_type_Q-Cartier}.

\begin{lem}[Contractibility of log exceptional curves of the second kind]\label{lem:contractibility_2nd}
Let $\ll$ be a log exceptional curve of the first or second kind on a log surface $(X,D)$ such that $K_X$ is $\Q$-Cartier. Then one of the following holds.
\begin{enumerate}[(1)]
\item $\ll$ is $\Q$-Cartier, log exceptional on $(X,\ll)$, hence there exists a contraction $\ctr_\ll\:(X,D)\to (Y,B)$ and $(Y,B)$ is a log surface with $\Q$-Cartier $K_Y$.
\item $\ll$ is a $\Q$-Cartier isolated reduced component of $D$ of elliptic type (hence is of the second kind), 
\item $\ll$ is non-$\Q$-Cartier. If $(X,D)$ is GMRLC then $\ll$ is $K_X$-trivial and disjoint from $D$.
\end{enumerate} 
Moreover, if the contraction of $\ll$ exists then $\kappa(K_X+D)=\kappa(K_Y+B)$. If $(X,D)$ is GMRLC then $(Y,B)$ is a GMRLC log surface.
\end{lem}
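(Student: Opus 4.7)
The plan is a case analysis based on whether $\ll$ is $\Q$-Cartier, organized around the computation of $\ll \cdot (K_X + \ll)$. I would first write $D = d\ll + D'$ with $d := \coeff_\ll(D) \in [0,1]$ and $\ll \not\subseteq \Supp D'$, and expand
$$\ll \cdot (K_X + \ll) = \ll \cdot (K_X + D) + (1-d)\ll^2 - \ll \cdot D'.$$
Since $d \leq 1$, $\ll^2 < 0$, $\ll \cdot D' \geq 0$ and $\ll \cdot (K_X + D) \leq 0$ (by log exceptionality of the first or second kind), all three summands are non-positive, so $\ll \cdot (K_X + \ll) \leq 0$. Equality forces $\ll \cdot (K_X + D) = 0$, $d = 1$ and $\ll \cdot D' = 0$ simultaneously; that is, $\ll$ is an isolated reduced component of $D$ and is of the second kind, and then $\ll \cdot K_X = -\ll^2$ shows that $\ll$ is of elliptic type.

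Next I would split into subcases. If $\ll$ is $\Q$-Cartier then, since $K_X$ is $\Q$-Cartier by hypothesis, $(X, \ll)$ is a log surface. When $\ll \cdot (K_X + \ll) < 0$ the curve $\ll$ is log exceptional on $(X, \ll)$ in the sense of Definition \ref{dfn:log_exceptional}, so the logarithmic contraction theorem supplies $\ctr_\ll \colon X \to Y$; Remark \ref{rem:Q-Cartier_under_contraction}(2) applied to $(X, \ll)$ yields $K_Y$ $\Q$-Cartier, and Remark \ref{rem:Q-Cartier_under_contraction}(1) then makes $B := f_*D$ $\Q$-Cartier, placing us in case (1). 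The equality subcase $\ll \cdot (K_X + \ll) = 0$ is case (2) by the first paragraph. If $\ll$ is not $\Q$-Cartier, we land in case (3); under the additional GMRLC hypothesis, Remark \ref{rem:Q-Cartier_under_contraction}(3) rules out $\ll$ being of the first kind, so it is of the second kind, and Remark \ref{rem:2nd_type_Q-Cartier} then delivers $\ll \cdot K_X = 0$ and $\ll \cap D = \emptyset$.

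For the moreover clauses I would assume the contraction $f$ exists and that $(Y, B)$ is a log surface (the latter is immediate in cases (1) and (2) from the previous paragraph, while in case (3) it follows from Lemma \ref{lem:GMRLC}(1) under the GMRLC hypothesis, which also settles the GMRLC preservation claim). Setting $A := K_X + D - f^*(K_Y + B)$, one checks that $f_*A = 0$ and $(-A)\cdot \ll = -(K_X + D)\cdot \ll \geq 0$, so the negativity lemma gives $A = a\ll$ with $a \geq 0$; hence $K_X + D \sim_\Q f^*(K_Y + B) + a\ll$ and the equality of Kodaira dimensions follows from standard birational invariance. The main subtlety throughout is the bookkeeping of $\Q$-Cartierness across the three cases via Remark \ref{rem:Q-Cartier_under_contraction}; the key trick is the use of the auxiliary log surface $(X, \ll)$ to bridge log exceptionality of the first and second kinds, which is what lets the usual contraction theorem cover both situations uniformly.
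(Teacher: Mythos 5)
Your proposal is correct and follows essentially the same route as the paper: the same expansion of $\ll\cdot(K_X+\ll)$ against $\ll\cdot(K_X+D)$, the coefficient of $\ll$ in $D$ and $\ll\cdot D'$ to isolate case (2), the same use of the contraction theorem on the auxiliary pair $(X,\ll)$, of Remarks \ref{rem:Q-Cartier_under_contraction} and \ref{rem:2nd_type_Q-Cartier} for the $\Q$-Cartier bookkeeping, and the same $a\geq 0$ observation for the Kodaira dimensions (which the paper makes explicit as an $h^0$ sandwich). One small imprecision: to conclude that $(Y,B)$ is a log surface you should apply Remark \ref{rem:Q-Cartier_under_contraction}(1) to the $\Q$-Cartier divisor $K_X+D$ rather than to $D$ itself, since $D$ alone need not be $\Q$-Cartier.
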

\begin{proof}
Assume that $\ll$ is $\Q$-Cartier. Then $K_X+\ll$ is $\Q$-Cartier. If $\ll\cdot (K_X+\ll)<0$ then the existence of the contraction in case (1) follows from the logarithmic contraction theorem, see \cite[Theorem 5.5]{Fujino-MMP_for_algebraic_log_surfaces}. Since $\ll$ is log exceptional, its contraction maps $\Q$-Cartier divisors to $\Q$-Cartier divisors, hence $K_Y+B$ and $K_Y$ are $\Q$-Cartier. Hence we may assume that $\ll\cdot (K_X+\ll)\geq 0$. Let $c=\coeff_\ll(D)$. Then $$0\geq \ll\cdot (K_X+D)=\ll\cdot(K_X+\ll)+(1-c)(-\ll^2)+\ll\cdot (D-c\ll),$$ so since each term is non-negative, $\ll$ is an isolated reduced component of $D$ of elliptic type, which gives (2). 
Assume that $\ll$ is not $\Q$-Cartier and $(X,D)$ is GMRLC. Then by Remark \ref{rem:2nd_type_Q-Cartier} we obtain (3). 

Let $n$ be an integer such that $n(K_Y+B)$ is Cartier. Since $\pi_*(K_X+D)=K_Y+B$, we get $h^0(n(K_X+D))\leq h^0(n(K_Y+B))$. Write $\pi^*(K_Y+B)=K_X+D-a\ll$ for some $a\in \Q$. Since $\ll$ is log exceptional of the first or second kind, we have $a\geq 0$, so $h^0(n(K_Y+B))\leq h^0(n(K_X+D)-na\ll)\leq h^0(n(K_X+D))$. Thus $\kappa(K_X+D)=\kappa(K_Y+B)$.

If $(X,D)$ is GMRLC then $K_X$ is automatically $\Q$-Cartier and the log surface $(Y,B)$ is GMRLC by Lemma \ref{lem:GMRLC}(1). 
\end{proof}

An example of a curve of type (3) on a GMRLC log surface is given in Example \ref{ex:elliptic cone}. In Example \ref{ex:elliptic_contraction} we show that log exceptional curves of the second kind as in (2) and (3) in Lemma \ref{lem:contractibility_2nd} may indeed be non-contractible.

\smallskip

\begin{dfn}[A run of a Minimal Model Program of the second kind]\label{dfn:MMP2} A \emph{partial MMP run of the second kind on a log surface $(X,D)$} is a birational morphism $\psi\:(X,D)\to (\ov X,\ov D)$ which can be decomposed as a sequence of birational contractions between log surfaces \eqref{eq:MMP} such that each $\phi_i$ is a contraction of a log exceptional curve of the first or second kind on $(X_i,D_i)$ which is $\Q$-Cartier on $X_i$. An MMP run of the second kind is a maximal partial MMP run of the second kind. 
\end{dfn}

If $\psi$ is a partial MMP run of the second kind then it follows by induction with respect to the number of components of $\Exc \psi$ that components of $\Exc \psi$ are all $\Q$-Cartier. Note that even if a minimal log surface $(X,D)$ admits no nontrivial MMP run of the second kind, there can exist log exceptional curves of the second kind on $(X,D)$ which are not $\Q$-Cartier or which are $\Q$-Cartier but are either non-contractible in the category of algebraic varieties or are contractible but the direct image of a log canonical divisor is not $\Q$-Cartier, cf.\ Example \ref{ex:elliptic_contraction}.

To make a clear distinction between an MMP of the second kind and an MMP in the sense of Section \ref{ssec:logMMP}, we sometimes refer to the latter as an MMP \emph{of the first kind}. Lemma \ref{lem:ld_increasaes} gives the following corollary.

\begin{cor}[Coefficients decrease under MMP]\label{cor:cf_decrease_under_MMP}
Let $f\:(X,D)\to (X',D')$ be a partial MMP run of the first or second kind. Then
\begin{equation}\label{eq:ramification_2}
f^*(K_{X'}+D')=(K_X+D)-A, \text{\ \ where } A\geq 0 \text{\ \ and\ \ }f_*A=0.
\end{equation}
In particular, for every curve $L\subseteq X'$ we have
\begin{equation}
L\cdot (K_{X'}+D')\leq f_*^{-1}L\cdot (K_X+D).
\end{equation}
\end{cor}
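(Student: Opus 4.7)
The plan is to proceed by induction on the number $n$ of elementary contractions in the decomposition $(X,D)=(X_1,D_1)\xrightarrow{\varphi_1}\ldots\xrightarrow{\varphi_n}(X_{n+1},D_{n+1})=(\ov X,\ov D)$. The case $n=0$ is vacuous. For the inductive step, factor $f=f'\circ\varphi_1$, where $\varphi_1\:(X,D)\to (X_2,D_2)$ contracts a single log exceptional curve $\ll_1$ of the first or second kind, $\Q$-Cartier on $X$ by Definition \ref{dfn:MMP1} or \ref{dfn:MMP2}, and $f'$ is a partial MMP run of the same kind with $n-1$ steps. Since $K_X+D$, $K_{X_2}+D_2$ and $\ll_1$ are all $\Q$-Cartier, there exists a unique $a_1\in\Q$ with
$$\varphi_1^*(K_{X_2}+D_2)=(K_X+D)-a_1\ll_1.$$
Intersecting with $\ll_1$ gives $a_1=\ll_1\cdot(K_X+D)/\ll_1^2$, which is $\geq 0$ since $\ll_1^2<0$ and $\ll_1\cdot(K_X+D)\leq 0$ (so $a_1>0$ in the first-kind case and $a_1=0$ in the second-kind case).

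By the inductive hypothesis applied to $f'$, there is an effective $\Q$-divisor $A'$ on $X_2$, supported in $\Exc f'$, such that $(f')^*(K_{X'}+D')=(K_{X_2}+D_2)-A'$ and $(f')_*A'=0$. Since $K_{X_2}+D_2$ and $(f')^*(K_{X'}+D')$ are both $\Q$-Cartier, so is $A'$, so the pullback $\varphi_1^*A'$ is well defined. Composing,
$$f^*(K_{X'}+D')=\varphi_1^*(K_{X_2}+D_2)-\varphi_1^*A'=(K_X+D)-(a_1\ll_1+\varphi_1^*A')=(K_X+D)-A,$$
where $A:=a_1\ll_1+\varphi_1^*A'$ is supported in $\Exc f$. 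To see that $A$ is effective, note that $\varphi_1^*A'-(\varphi_1)_*^{-1}A'$ is supported on $\ll_1$, $\varphi_1$-pushes to zero, and is $\varphi_1$-antinef (because $\varphi_1^*A'\cdot\ll_1=0$ while $(\varphi_1)_*^{-1}A'\cdot\ll_1\geq 0$), so Lemma \ref{lem:negativity} gives $\varphi_1^*A'\geq (\varphi_1)_*^{-1}A'\geq 0$. Finally, $f_*A=f'_*((\varphi_1)_*(a_1\ll_1)+(\varphi_1)_*\varphi_1^*A')=f'_*A'=0$.

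For the \emph{in particular} assertion, let $L\subseteq X'$ be a curve. Since $K_{X'}+D'$ is $\Q$-Cartier, the projection formula yields $f_*^{-1}L\cdot f^*(K_{X'}+D')=L\cdot(K_{X'}+D')$. Since $A$ is effective with $\Supp A\subseteq \Exc f$ while $f_*^{-1}L$ is not a component of $\Exc f$, the numerical intersection $f_*^{-1}L\cdot A$ is nonnegative. Adding these gives the claimed inequality. The main subtlety is bookkeeping of $\Q$-Cartierness through the induction: log exceptional curves and their pullbacks/pushforwards need not preserve $\Q$-Cartierness in general (cf.\ Example \ref{ex:log_exc_non_Q-Cartier}), but the definition of a partial MMP run of either kind is precisely what guarantees that every $\ll_i$ and every intermediate $K_{X_i}+D_i$ is $\Q$-Cartier, so all pullbacks employed above are literal equalities of $\Q$-Cartier divisors rather than merely numerical identities.
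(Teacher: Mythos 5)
Your proof is correct and follows essentially the same route as the paper, which derives the corollary in one line from Lemma \ref{lem:ld_increasaes} via the identity \eqref{eq:ramification}: both arguments amount to an induction over the elementary contractions, with the effectivity of $A$ coming down to the Negativity Lemma at each step. Your version merely unpacks this by tracking explicit pullbacks instead of coefficients, and your care about $\Q$-Cartierness is sound though not strictly needed here, since Remark \ref{rem:singularities} gives well-defined numerical pullbacks of arbitrary Weil $\Q$-divisors on normal surfaces.
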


If $(X,D)$ is GMRLC then by Lemma \ref{lem:GMRLC}(1) a contraction of a log exceptional curve of the second kind leads to a GMRLC log surface, hence each $(X_i,D_i)$ is GMRLC (note that the assumption that the curve is $\Q$-Cartier is redundant here).  Since a contraction of a log exceptional curve of the second kind as in Lemma \ref{lem:contractibility_2nd}(2) does not always exist, the MMP of the second kind on a GMRLC log surface may stop  even though a log exceptional curve of the second kind exists (and is $\Q$-Cartier). Since log exceptional curves of the second kind have trivial coefficients, it follows that if the initial log surface is log canonical then so is the final one. On the other hand, the process does not respect $\Q$-factoriality, which is why it is better to work with GMRLC surfaces when contracting log exceptional curves of the second kind.

\smallskip
We will now show that the exceptions in Lemma \ref{lem:contractibility_2nd}(2) may occur. We begin with the following example; part (1) is due to Hironaka, see \cite[Example 5.7.3]{Hartshorne_AG}.

\begin{ex}[Contractibility of some curves of genus $1$]\label{ex:elliptic_setup} \ 
\begin{enumerate}[(1)]
\item  Let $p_1,\ldots, p_n$ for $n\geq 10$ be distinct points on an elliptic curve $\bar E\subseteq \P^2$ and let $\sigma\: X\to \P^2$ be a blowup at all those points. Denote by $E\subseteq X$ the proper transform of the elliptic curve $\bar E$. We have $K_X+E\sim 0$ and $E^2=9-n<0$.

Assume there exists a contraction $\pi\:X\to Y$ of $E$ (onto an algebraic surface). The surface $Y$ is projective by Remark \ref{rem:projectivity}. Let $H$ be a very ample divisor on $Y$ not passing through $\bar E$ and let $\bar H=\sigma_*\pi^*H$. In the sense of intersection theory we have $\bar H\cap\bar E=\sum_{i=1}^n m_i p_i$ for some positive integers $m_1,\ldots, m_n$, which gives $\bigoplus_{i=1}^n m_i p_i=0$ in the group law of $\bar E$. Indeed, since $\bar H\sim d L$, where $d=\deg \bar H$ and $L$ is a line on $\P^2$ tangent to some flex point $o\in \bar E$, we get $\sum_{i=1}^n m_i p_i\sim d(L\cap \bar E)=3do$, hence $\sum_{i=1}^n m_i (p_i-o)\sim 0$. Therefore, the existence of the contraction implies that the points $p_1,\ldots, p_n$ are $\Z$-linearly dependent.

\item Choosing the points in (1) to be  $\Z$-linearly independent we get $E$ which is not contractible. Such a choice is possible as long as the rank of the elliptic curve, $\dim_\Q(\Pic^0(\sigma(E))\otimes \Q)$, is not smaller than $n$, for instance infinite. For algebraically closed fields the latter holds if and only if the field is not an algebraic closure of a finite field, see \cite[Theorem 10.1]{Frey-Jarden_rank_abelian_var} and \cite[Fact 2.3]{Tanaka-MMP_char>0}.

\item Let $C_d$ be curve of degree $d\geq 4$ meeting an elliptic curve $\bar E\subseteq \P^2$ normally and let $p_1,\ldots, p_n$ be the intersection points. Let $\sigma$ be as in (1). Then the linear system of  $C_d':=\sigma_*^{-1}C_d$ contains $E+(d-3)\sigma_*^{-1}L$, where $L\subseteq \P^2$ is a line not passing through the points of $C_d\cap \bar E$, hence it contains no curve in the base locus. It follows that some $|kC_d'|$, $k>0$ has no base points. Since for any curve $U\neq E$ we have $U\cdot C_d'=U\cdot E+(d-3)\sigma(U)\cdot L>0$, we see that $|kC_d'|$ induces the contraction of $E$.
\end{enumerate}
\end{ex}

\begin{ex}[Problems with contractions of curves of the second kind]\label{ex:elliptic_contraction}\ 
\begin{enumerate}[(1)]
\item Let $(X,E)$ and $p_1,\ldots, p_n$ be as in Example \ref{ex:elliptic_setup}(2). Then $(X,E)$ is log smooth, $E$ is $\Q$-Cartier and log exceptional of the second kind, but is not contractible (in the category of algebraic varieties); cf.\ Lemma \ref{lem:contractibility_2nd}(2).

\item A contraction of a log exceptional curve of the second kind may destroy $\Q$-factoriality. Let $X\to Y$ be the minimal resolution of a projective cone over an elliptic curve $\bar E\subseteq \P^2$ and let $E$ be the exceptional divisor. Being smooth, $X$ is $\Q$ factorial. Clearly, $E\cong \bar E$ and $E\cdot (K_X+E)=0$ by adjunction, so $E$ is log exceptional of the second kind (and $\Q$-Cartier). Moreover, $Y$ is not $\Q$-factorial, because the line over any non-torsion point of $\bar E$ is not $\Q$-Cartier. Note that despite $Y$ being non $\Q$-factorial, $K_Y$ is $\Q$-Cartier by adjunction, \cite[Theorem 5.50]{KollarMori-bir_geom}.

\item A log exceptional curve of the second kind on a GMRLC surface with no boundary may be non-contractible. Let $d\geq 1$ and let $p_1,\ldots,p_{3d-1}\in \bar E\subseteq \P^2$ be $\Z$-linearly independent points on an elliptic curve. We claim that may choose and irreducible curve $C_d$ of degree $d$ meeting $\bar E$ normally and passing through $p_i$, $i=1,\ldots, 3d-1$.

To see this we note that the dimension of the space of planar curves $C_d$ of degree $d$ passing through all $p_i$, $i<3d$ is at least $\binom{d+3-1}{3-1}-1-(3d-1)=\frac{1}{2}(d-1)(d-2)$. Suppose that $C_d$ is reducible and $\bar E$ is not one of its components. The intersection cycle $C_d\cap \bar E$ is not supported in $\{p_1,\ldots, p_{3d-1}\}$, as the points would be $\Z$-linearly dependent by the claim in Example \ref{ex:elliptic_setup}(1). Write $C_d\cap \bar E=\sum_{i=1}^{3d}m_i p_i$, where $p_{3d}\in \bar E$ and $m_i$ are positive integers. Since $\deg (C_d\cap \bar E) =3d$, we get $m_i=1$ for each $i$. Since $m_{3d}=1$, one of the components of $C_d$ does not pass through $p_{3d}$, which gives a dependence relation for $\{p_1,\ldots, p_{3d-1}\}$; a contradiction. Therefore, if $C_d$ is reducible then $\bar E$ is one of its components, hence the space of reducible $C_d$ has dimension $\binom{d-3+2}{2}-1=\frac{1}{2}(d-3)d$, hence has positive codimension.

We have $C_d\cap \bar E=\sum_{i=1}^{3d}p_i$, where all $p_i\in \bar E$ are distinct. Assume that $d\geq 4$. Let $\sigma\: X\to \P^2$ and $E$ be as in Example \ref{ex:elliptic_setup}(1) and let $\pi\:X\to Y$ be the contraction of $E$; it exists by Example \ref{ex:elliptic_setup}(3). Let $\ll'\subseteq X$ be the exceptional curve over $p_{3d}$ and let $\ll=\pi(\ll')$. Since $-(K_X+E)$ is $\pi$-nef, $(Y,0)$ is GMRLC by Lemma \ref{lem:GMRLC}(1), so $K_Y$ is $\Q$-Cartier. We have $\pi^*K_Y=K_X+E$ and $\pi^*\ll=\ll'+\frac{1}{e}E$, where $e=\frac{1}{-E^2}=\frac{1}{3d-9}>0$. It follows that $\ll\cdot K_Y=0$ and $\ll^2=\frac{1}{e}-1<0$, so $\ll$ is a log exceptional curve of the second kind on $(Y,0)$. It is easy to see that $\ll$ cannot be contracted. Indeed, if there exists a contraction $\eta\:Y\to Y'$ of $\ll$ onto a complete (algebraic) surface then by Remark \ref{rem:projectivity} $Y'$ has a very ample irreducible divisor $A$ not passing through the image of $\ll$ and then the intersection of $\sigma_*\pi^*\eta^*A$ and $\bar E$ is supported on $p_1,\ldots,p_{3d-1}$. But this gives a non-trivial dependence relation between the latter points in the group law of the cubic, contrary to the assumption. We conclude also that $\ll$ is not $\Q$-Cartier, as otherwise $\ll$, being a log exceptional curve of the first kind on $(X,\ll)$, could be contracted according to the contraction theorem. 
\end{enumerate}
\end{ex}


\section{Almost minimalization}

\subsection{Relative MMP and reordering contractions}\label{ssec:relative_MMP_reordering}

Although a run of an MMP on a log surface $(X,D)$ improves log singularities (Lemma \ref{lem:ld_increasaes}), singularities of the underlying surface $X$ may easily get worse for contractions of exceptional curves contained in the boundary. For instance, even if $X$ is smooth, its image may be singular, see Example \ref{ex:peeling}. Passing to almost minimal models allows to delay such contractions. Before going into details, we need some preparations.

\begin{lem}[Sub-contractions]\label{lem:sub_contractions}
Let $f_i\:X\to X_i$ for $i=1,2$ be birational contractions between normal surfaces such that $\Exc f_1\leq \Exc f_2$. Then $f_2\circ(f_1)^{-1}\:X_1\map X_2$ is a birational contraction.
\end{lem}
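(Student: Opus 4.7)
The strategy is to show that $f_2$ factors set-theoretically through $f_1$, and then invoke the rigidity lemma (equivalently, Zariski's Main Theorem applied to $f_{1*}\cO_X=\cO_{X_1}$) to upgrade this factorization to a morphism $g\:X_1\to X_2$ with $g\circ f_1=f_2$; this $g$ will be the desired $f_2\circ f_1^{-1}$.

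First I would verify the key fiber condition: for every $x_1\in X_1$, the fiber $F\de f_1^{-1}(x_1)$ is mapped by $f_2$ to a single point of $X_2$. Since $f_1$ is a contraction, $F$ is connected. If $x_1$ lies outside $f_1(\Exc f_1)$, then $F$ is a single point and there is nothing to prove. Otherwise $F$ is a connected union of irreducible curves, each of which is a component of $\Exc f_1$ and hence, by the hypothesis $\Exc f_1\leq \Exc f_2$, a component of $\Exc f_2$. Thus $f_2$ contracts every irreducible component of $F$ to a point. The remaining observation is that whenever two components $C,C'\subseteq \Exc f_2$ meet in a point $p$, one has $f_2(C)=f_2(p)=f_2(C')$; applying this transitively along a chain of pairwise intersecting components connecting any two irreducible components of $F$ (which exists because $F$ is connected and one-dimensional), we conclude that $f_2(F)$ is a single point.

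Next I would produce the morphism. Because $f_1$ is a proper birational morphism with $X_1$ normal, Zariski's Main Theorem gives $f_{1*}\cO_X=\cO_{X_1}$. The rigidity lemma then implies that any morphism out of $X$ whose restriction to each fiber of $f_1$ is constant factors uniquely through $f_1$. By the previous paragraph $f_2$ has this property, so there exists a unique morphism $g\:X_1\to X_2$ with $g\circ f_1=f_2$. On the dense open set where $f_1$ is an isomorphism this $g$ agrees with $f_2\circ f_1^{-1}$, hence $g=f_2\circ f_1^{-1}$ as rational maps and $f_2\circ f_1^{-1}$ is a morphism.

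Finally, I would check that $g$ is a contraction in the sense of the paper, that is, that it has connected fibers (normality of $X_1,X_2$ is given). For $x_2\in X_2$ one has $g^{-1}(x_2)=f_1(f_2^{-1}(x_2))$; this is the image of the connected fiber $f_2^{-1}(x_2)$ under the continuous surjection $f_1$, hence is connected. The step I expect to be the main subtlety is the one in the second paragraph, namely justifying that a connected union of components of $\Exc f_2$ is contracted by $f_2$ to a single point; everything else is a direct application of Zariski's Main Theorem/rigidity.
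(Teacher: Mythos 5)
Your proof is correct, but it follows a genuinely different route from the paper's. The paper reduces to the case where $\Exc f_2$ is connected, so that its image is a single point and $X_2$ may be replaced by an affine neighbourhood; then $f_2\circ f_1^{-1}$ becomes a tuple of rational functions on the normal surface $X_1$ defined off the finite set $f_1(\Exc f_1)$, and these extend to regular functions by normality (codimension-$2$ extension, citing Eisenbud). Your argument instead verifies that $f_2$ contracts each fiber of $f_1$ (using $\Exc f_1\leq\Exc f_2$ plus connectedness of fibers) and then factors $f_2$ through $f_1$ via the rigidity lemma, relying on $f_{1*}\cO_X=\cO_{X_1}$ from Zariski's Main Theorem. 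Both are valid; the paper's is shorter and exploits the surface-specific fact that the image of the exceptional divisor is a finite set, while yours is dimension-free and, as a bonus, actually checks that the induced morphism has connected fibers (writing $g^{-1}(x_2)=f_1(f_2^{-1}(x_2))$), a point the paper's proof leaves implicit. The one step worth stating carefully if you write this up in full is the identification of the positive-dimensional fibers of $f_1$ with connected unions of components of $\Exc f_1$; your justification (an irreducible component of dimension $0$ of a connected fiber would be an isolated point) is fine. Note also that both arguments implicitly use properness of $f_1$, which is the intended reading of ``birational contraction'' here.
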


\begin{proof}
We may assume that $\Exc f_2$ is connected, so its image is a point. Hence we may assume that $X_2$ is affine. The rational map $f_2\circ(f_1)^{-1}$ is defined off the image of the exceptional divisor of $f_1$, which has codimension $2$, hence is regular by \cite[Corollary 11.4]{Eisenbud_comm-alg}. 
\end{proof}

Recall that given a log variety $(X,D)$ and a projective morphism of normal varieties $f\:X\to Y$ one defines the \emph{relative MMP over $Y$}. For surfaces we simply require additionally that in the sequence \eqref{eq:MMP} the contracted curves are contained in the fibers of $f$. Then each $(X_i,D_i)$ has an induced projective morphism $f_i\:X_i\to Y$. The following result will be used frequently, cf.\ \cite[1.35]{Kollar_singularities_of_MMP} and \cite{Fujino-semi_stable_MMP_K=0}.
Recall that $\cE(f)$ is the set of curves contracted by $f$.

\begin{lem}[The minimal model of a birational morphism]\label{lem:relativeMMP_unique}
Let $(X,D)$ be a log surface and $f\:X\to Y$ be a birational morphism onto a normal surface.  Then there is a unique $(K_X+D)$-MMP run over $Y$, which we denote by $f^\#_D$. Moreover, if all curves in $$\cE^+_D(f):=\{E\in \cE(f)\: \cf(E;Y,f_*D)<\cf(E;X,D)\}$$ are $\Q$-Cartier, then $f^\#_D$ factors through a partial $(K_X+D)$-MMP which contracts $\cE^+_D(f)$. The latter condition holds for instance if $(X,D)$ is GMRLC. 
\end{lem}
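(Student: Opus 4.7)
The plan is to prove existence of $f^\#_D$ via iterated relative contractions, to identify the contracted divisor with $\cE^+_D(f)$ under the $\Q$-Cartier hypothesis using two applications of the negativity lemma, and to deduce uniqueness from this identification.

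For existence, I start with $X_1=X$ and iteratively contract a $\Q$-Cartier log-exceptional curve in a fiber of the induced morphism $f_i\colon X_i\to Y$. Such a curve exists by the relative cone and contraction theorems \cite[Theorem 5.5]{Fujino-MMP_for_algebraic_log_surfaces} whenever $K_{X_i}+D_i$ is not $f_i$-nef, and its contraction yields the next intermediate log surface by Remark~\ref{rem:Q-Cartier_under_contraction}(2). The sequence must terminate because the relative Picard number strictly drops at each step, producing a maximal partial MMP run $f^\#_D\colon X\to X^\#$ over $Y$.

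Next I show $\cE(f^\#_D)\subseteq \cE^+_D(f)$ by induction on the number of elementary steps. The strict form of Lemma~\ref{lem:ld_increasaes} applied to the ample contraction of each $\ll_i$ gives $\cf(\ll_i;X_{i+1},D_{i+1})<\cf(\ll_i;X_i,D_i)$, and subsequent contractions as well as the residual morphism $g\colon X^\#\to Y$ can only preserve or decrease the coefficient (by the non-strict form of the same lemma), so the strict inequality propagates to $\cf(\ll_i;Y,f_*D)<\cf(\ll_i;X,D)$. Under the hypothesis that all curves in $\cE^+_D(f)$ are $\Q$-Cartier I obtain the reverse inclusion by first arguing that $K_{X^\#}+D^\#$ is $g$-nef: a log-exceptional curve $\mm\subset X^\#$ in a fiber of $g$ would have, by Lemma~\ref{lem:ld_increasaes} applied to its contraction, a strict transform on $X$ lying in $\cE^+_D(f)$; the hypothesis together with iterated application of Remark~\ref{rem:Q-Cartier_under_contraction}(1) would make $\mm$ itself $\Q$-Cartier, hence contractible, contradicting maximality of $f^\#_D$. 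Given that $K_{X^\#}+D^\#$ is $g$-nef, the negativity lemma applied to $-B$ with $B:=K_{X^\#}+D^\#-g^*(K_Y+f_*D)$ (whose pushforward vanishes and whose negative is $g$-nef) yields $-B\geq 0$, so $\cf(E;X^\#,D^\#)\leq \cf(E;Y,f_*D)$ for every $g$-exceptional $E$; if $E\in \cE^+_D(f)$ were not contracted by $f^\#_D$, this would contradict $\cf(E;X^\#,D^\#)=\cf(E;X,D)>\cf(E;Y,f_*D)$. Therefore $\cE(f^\#_D)=\cE^+_D(f)$, and uniqueness of $f^\#_D$ follows because any two maximal partial MMPs over $Y$ contract the same exceptional divisor and are therefore isomorphic by Lemma~\ref{lem:sub_contractions}.

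The GMRLC case is a consequence of Remark~\ref{rem:Q-Cartier_under_contraction}(3), which guarantees that every log-exceptional curve arising in the MMP is $\Q$-Cartier, combined with iterated application of Remark~\ref{rem:Q-Cartier_under_contraction}(1) to propagate $\Q$-Cartierness of the strict transforms of the contracted curves back to $X$. The main obstacle I anticipate is the reverse inclusion: showing that a maximal partial MMP necessarily reaches a $g$-nef log canonical divisor under the $\Q$-Cartier hypothesis alone, which requires verifying that log-exceptional curves on $X^\#$ in fibers of $g$ always correspond to elements of $\cE^+_D(f)$ on $X$, with the delicate point being the stability of the strict decrease of coefficients under the interleaving of log-exceptional contractions and the residual morphism.
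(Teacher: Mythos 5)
Your proof hinges on the identification $\cE(f^\#_D)=\cE^+_D(f)$, and this equality is false; both of your inclusions break at the residual morphism $g\:X^\#\to Y$. For the forward inclusion you propagate the strict drop of $\cf(\ll_i)$ through $g$ ``by the non-strict form of Lemma~\ref{lem:ld_increasaes}'', but that lemma requires $-(K_{X^\#}+D^\#)$ to be $g$-nef, which is exactly what fails after a maximal run: the curves surviving to $X^\#$ are the ones that are \emph{not} log exceptional, and coefficients can increase along $g$. Concretely, in Example~\ref{ex:composition_of_nef} with $D=0$, the chain $[4,1,5]$ and $f$ the contraction of the whole chain, the $(-1)$-curve $A$ is contracted by $f^\#_0$ yet $\cf(A;Y,0)=\tfrac{2}{11}>0=\cf(A;X,0)$, so $A\notin\cE^+_0(f)$ (consistently, the lemma only claims that $f^\#_D$ \emph{factors through} the contraction of $\cE^+_D(f)$, not that it contracts nothing else). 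For the reverse inclusion, the step ``a log exceptional curve $\mm$ in a fiber of $g$ has strict transform in $\cE^+_D(f)$'' does not follow from Lemma~\ref{lem:ld_increasaes} applied to $\ctr_\mm$: that only compares $X^\#$ with $\ctr_\mm(X^\#)$, not with $Y$, and since other curves in the same fiber may meet $K_{X^\#}+D^\#$ positively, the difference $\cf(\mm;X^\#,D^\#)-\cf(\mm;Y,f_*D)$ need not be positive. The argument has to run in the opposite direction: writing $K_X+D-f^*(K_Y+f_*D)=\sum a_EE$ with $a_E=\cf(E;X,D)-\cf(E;Y,f_*D)$, negative definiteness of $\cE(f)$ applied to $A=\sum_{a_E>0}a_EE$ produces a curve $E$ with $E\cdot(K_X+D)\leq E\cdot A<0$ \emph{already inside} $\cE^+_D(f)$, hence $\Q$-Cartier by hypothesis and contractible; iterating yields a partial MMP contracting exactly $\cE^+_D(f)$. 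One neither needs nor has that every log exceptional curve in a fiber of $g$ lies in $\cE^+_D(f)$.

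The uniqueness assertion is unconditional, whereas your proof of it is conditional on the $\Q$-Cartier hypothesis and, worse, rests on the false identification above. What actually makes uniqueness work is a commutation statement: if $\ll,\mm$ are distinct $\Q$-Cartier log exceptional curves contracted by $f$ and $g=\ctr_\mm$, then $g(\ll)$ is again $\Q$-Cartier and log exceptional (using $g^*(K+g_*D)=K_X+D-u\mm$ with $u>0$, Remark~\ref{rem:Q-Cartier_under_contraction}(1) and negative definiteness for $g(\ll)^2<0$). Since the curves contracted in \emph{any} MMP run are automatically $\Q$-Cartier by the definition of a run, this shows the first curve contracted by one maximal run must be contracted by every other, and uniqueness follows by cancelling that contraction and inducting on $\#\cE(f)$ --- no hypothesis on $\cE^+_D(f)$ is needed. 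Your existence argument and the reduction of the GMRLC case to the $\Q$-Cartier hypothesis are essentially fine, but the two substantive claims of the lemma need to be reworked along these lines.
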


\begin{proof}
Assume first that all curves in $\cE^+_D(f)$ are $\Q$-Cartier. We show that there exists a partial $(K_X+D)$-MMP run over $Y$ which contracts exactly $\cE^+_D(f)$. Put $B=f_*D$ (it does not have to be $\Q$-Cartier). By \eqref{eq:ramification}
\begin{equation}
K_X+D-f^*(K_{Y}+B)=\sum_{E\in \cE(f)}a_EE,
\end{equation}
where $a_E=\cf(E;X,D)-\cf(E;Y,B)$. We may assume that $\cE^+_D(f)$ is nonempty. Since the intersection matrix of the divisor $A:=\sum_{E\in \cE^+_D(f)} a_E E$ is negative definite, we get $0>A^2=\sum_{E\in \cE^+_D(f)}a_EE\cdot A$, hence $a_EE\cdot A<0$ for some $E\in \cE^+_D(f)$. Then $E\cdot (K_X+D)\leq E\cdot A<0$, so $E$ is log exceptional on $(X,D)$. By assumption the curve $E$ is $\Q$-Cartier. Then by Remark \ref{rem:Q-Cartier_under_contraction}(1) the direct image of $K_X+D$ under the contraction of $E$ is $\Q$-Cartier, too. By induction we obtain a partial $(K_X+D)$-MMP over $Y$ contracting exactly $\cE^+_D(f)$.

Let $\ll$, $\mm$ be two distinct log exceptional curves on $(X,D)$ which are $\Q$-Cartier and are contracted by $f$. Let $g\:(X,D) \to (X',D')$ be the contraction of $\mm$. By Remark \ref{rem:Q-Cartier_under_contraction} $g(\ll)$ is $\Q$-Cartier and $(X',D')$ is a log surface. Since $\Exc f$ is negative definite, $g(\ll)^2<0$. We have $g^*(K_{X'}+D')=K_X+D-u\mm$, where $u=(\mm\cdot (K_X+D))/\mm^2>0$, hence $$g(\ll)\cdot (K_{X'}+D')=\ll\cdot g^*(K_{X'}+D')\leq \ll\cdot (K_X+D)<0.$$ Thus $g(\ll)$ is $\Q$-Cartier and log exceptional on $(X',D')$. 

Let $f_i\:X\to X_i$, $i=1,2$ be two $(K_X+D)$-MMP runs over $Y$. Write $f_2=f_2'\circ \sigma$, where $\sigma$ is a contraction of a log exceptional curve and $f_2'$ is an MMP run over $Y$. The above argument shows that $\Exc \sigma\leq \Exc f_1$. Put $f_1':=f_1\circ \sigma^{-1}$. We have $\cE^+_{\sigma(D)}(f_1')=\cE(f_1')$ by Lemma \ref{lem:ld_increasaes}. By the definition of an MMP run the successive contractions are allowed between log surfaces only, which by Remark \ref{rem:Q-Cartier_under_contraction} implies that the log exceptional curves and their proper transforms are $\Q$-Cartier. Thus all curves in $\cE^+_{\sigma(D)}(f_1')$ are $\Q$-Cartier, hence $f_1'$ is an MMP run over $Y$. By induction on the number of curves contracted by $f$ we have $f_1'=f_2'$, hence $f_1=f_2$. This proves that $f_D^\#$ is unique and factors through the contraction of $\cE^+_D(f)$.

Finally, assume that $(X,D)$ is GMRLC. We argue that curves in $\cE^+_D(f)$ are $\Q$-Cartier. We may assume that $\cE^+_D(f)$  is non-empty. Then the log exceptional curve $E\in \cE^+_D(f)$ found by the argument in the first paragraph of the proof is automatically $\Q$-Cartier by Lemma \ref{rem:Q-Cartier_under_contraction} and hence the image of $(X,D)$ under its contraction is GMRLC by Lemma \ref{lem:GMRLC}(1). Write $f=f'\circ \ctr_E$. Then $\cE^+_D(f)\setminus\{E\}$ is the proper transform of $\cE^+_D(f')$, so we proceed by induction using Remark \ref{rem:Q-Cartier_under_contraction}(1).
\end{proof}

A birational morphism of log surfaces $f\:(X,D)\to (Y,B)$ is called \emph{log crepant} (respectively, crepant) if $f^*(K_Y+B)=K_X+D$ (respectively, $f^*K_Y=K_X$). For the definition of an MMP run see Definition \ref{dfn:MMP2}.  

\begin{cor}[A characterization of MMP runs]\label{cor:improving_ld_gives_MMP}
Let $(X,D)$ be a log surface and let $f\:X\to Y$ be a birational morphism onto a normal surface. Put $B=f_*D$. Then the following hold.
\begin{enumerate}[(1)]
\item $f$ is a partial MMP run on $(X,D)$ if and only if every component $E$ of $\Exc f$ is $\Q$-Cartier and satisfies $\cf(E;Y,B)<\cf(E;X,D)$.
\item Assume that $K_X$ and $K_Y+B$ are $\Q$-Cartier or $(X,D)$ is GMRLC. Then $f$ is a partial MMP run on $(X,D)$ of the second kind if and only if every component $E$ of $\Exc f$ is $\Q$-Cartier and satisfies $\cf(E;Y,B)\leq \cf(E;X,D)$.
\item If $f$ is a partial MMP run of the second kind and $K_X$ is $\Q$-Cartier then $f=f_2\circ f_1$, where $f_1$ is a partial MMP run (of the first kind), $K_{f_1(X)}$ is $\Q$-Cartier and $f_2$ is a log crepant partial MMP run of the second kind.
\end{enumerate}
\end{cor}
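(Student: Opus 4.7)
The plan is to treat all three parts uniformly through Lemma \ref{lem:relativeMMP_unique}, together with Lemma \ref{lem:ld_increasaes} and Lemma \ref{lem:sub_contractions}. For part~(1), the forward direction is direct: along any decomposition of $f$ into elementary contractions, each step contracts a $\Q$-Cartier log exceptional curve by the log-surface preservation in Remark \ref{rem:Q-Cartier_under_contraction}(2), and Lemma \ref{lem:ld_increasaes} applied to each $-(K+D)$-ample elementary step strictly increases $\cf$ at the contracted valuation while leaving the others unchanged; composing yields $\cf(E;Y,B)<\cf(E;X,D)$. Conversely, the hypotheses give $\cE^+_D(f)=\cE(f)$ with all curves $\Q$-Cartier, so Lemma \ref{lem:relativeMMP_unique} produces a partial MMP $g\:X\to X^\#$ over $Y$ with the same exceptional set as $f$; Lemma \ref{lem:sub_contractions} then gives an induced morphism $X^\#\to Y$ with no exceptional divisor, which must be an isomorphism by Zariski's Main Theorem, identifying $f$ with $g$.

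For part~(2) the forward direction adapts: a first-kind contraction increases $\cf$ strictly at the contracted curve by Lemma \ref{lem:ld_increasaes}, while a second-kind contraction $\ctr_\ll$ satisfies $\ctr_\ll^*(K_{X'}+D')=K_X+D$ and is therefore log crepant, preserving every coefficient; composing gives $\cf(E;Y,B)\leq \cf(E;X,D)$ for each $E\in\Exc f$. For the reverse direction I would induct on $\#\cE(f)$. Set $a_E:=\cf(E;X,D)-\cf(E;Y,B)\geq 0$ and $A:=\sum_{E\in\cE(f)} a_E E\geq 0$. If $A\neq 0$, then negative definiteness of $\Exc f$ gives $A^2<0$, forcing $E\cdot A<0$ for some $E$ with $a_E>0$; since $f_*E=0$ implies $E\cdot f^*(K_Y+B)=0$, we get $E\cdot(K_X+D)=E\cdot A<0$, so $E$ is log exceptional of the first kind. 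If $A=0$, then $f$ is log crepant and every $E\in\Exc f$ satisfies $E\cdot(K_X+D)=0$, hence is log exceptional of the second kind. In either case the chosen $E$ is $\Q$-Cartier by hypothesis, contractible by the logarithmic contraction theorem, and the contraction $\pi\:X\to X'$ produces a log surface by Remark \ref{rem:Q-Cartier_under_contraction}(2); since coefficients depend only on the valuation, the inequality transports as $\cf(\pi(E);X',\pi_*D)=\cf(E;X,D)\geq\cf(E;Y,B)=\cf(\pi(E);Y,B)$ for $E\in\Exc f\setminus\{E_0\}$, closing the induction.

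For part~(3) I would split $f$ along $\cE^+_D(f)$. Applying Lemma \ref{lem:relativeMMP_unique} to $f$ produces a first-kind partial MMP $f_1\:X\to X^\#$ over $Y$ contracting exactly $\cE^+_D(f)$; this is legitimate because all components of $\Exc f$ are $\Q$-Cartier, an inductive observation noted after Definition \ref{dfn:MMP2}. Lemma \ref{lem:sub_contractions} then provides $f_2\:X^\#\to Y$ with $f=f_2\circ f_1$, whose exceptional curves are precisely the $E$ for which $\cf(E;Y,B)=\cf(E;X,D)$; valuation-invariance yields $\cf(\cdot;X^\#,f_{1*}D)=\cf(\cdot;X,D)$ on these curves, so $f_2$ is log crepant. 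To apply part~(2) to $f_2$ one needs $K_{X^\#}$ to be $\Q$-Cartier: since $K_X$ is $\Q$-Cartier by assumption, iterated applications of Remark \ref{rem:Q-Cartier_under_contraction}(1) along $f_1$ give this, while $K_Y+B$ is automatically $\Q$-Cartier as the target of a partial second-kind MMP. Part~(2) then identifies $f_2$ as a log crepant partial MMP of the second kind.

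The main obstacle is the reverse direction of~(2): after contracting $E_0$, one must transport the coefficient hypothesis to the new morphism $X'\to Y$. The key observation is that $\cf$ is intrinsic to the valuation, so $\cf(\pi(E);X',\pi_*D)=\cf(E;X,D)$ for every $E\in\Exc f\setminus\{E_0\}$. This is where one must resist the reflex of invoking Lemma \ref{lem:ld_increasaes} for $\pi$, which compares $\cf(E;X',\pi_*D)$ with $\cf(E;X,D)$ in the wrong direction and would break the induction.
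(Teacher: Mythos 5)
Your treatment of (1) and (3), and the forward direction of (2), follows the paper's route: Lemma \ref{lem:ld_increasaes} plus Remark \ref{rem:Q-Cartier_under_contraction} for necessity, Lemma \ref{lem:relativeMMP_unique} (with Lemma \ref{lem:sub_contractions}) for sufficiency, and in (3) the splitting of $f$ along $\cE^+_D(f)$ followed by an appeal to (2). Those parts are fine.

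There is, however, a genuine gap in your reverse direction of (2), precisely in the case $A=0$. There every $E\leq \Exc f$ is log exceptional of the \emph{second} kind, and you assert it is ``contractible by the logarithmic contraction theorem.'' That theorem produces a contraction of $\ll$ only when $\ll\cdot(K_X+D')<0$ for a suitable boundary $D'$; for a $\Q$-Cartier log exceptional curve of the second kind this works exactly when $\ll\cdot(K_X+\ll)<0$, i.e.\ in case (1) of Lemma \ref{lem:contractibility_2nd}. In case (2) of that lemma --- $\ll$ an isolated reduced component of $D$ of elliptic type, so $\ll\cdot(K_X+\ll)=0$ --- no such inequality is available, and Example \ref{ex:elliptic_contraction}(1) shows that such a $\Q$-Cartier curve can genuinely fail to be contractible in the category of algebraic varieties. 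This situation does occur under the hypotheses of (2): take $E$ a contractible negative elliptic component of $D$ as in Example \ref{ex:elliptic_setup}(3) and $f$ its contraction; then $A=0$ and your cited theorem gives nothing. The contraction of such an $E$ must instead be extracted from the \emph{given} morphism $f$: since $E$ is a connected component of $D$ (hence may be separated from the rest of $\Exc f$), one glues $X-E$ with $Y-f(\Exc f-E)$ to produce $\ctr_E$, and log crepancy together with the $\Q$-Cartierness of $K_Y+B$ shows the image is a log surface. This is how the paper closes the induction; your argument as written does not. (The paper also first contracts all second-kind curves of type (1) of Lemma \ref{lem:contractibility_2nd} and rules out type (3) using $\Q$-Cartierness, so that only the gluing case remains --- a reduction you would need as well.)
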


\begin{proof}
(1) If $f$ is a partial MMP run then the above properties follow from Lemma \ref{lem:ld_increasaes} and Remark \ref{rem:Q-Cartier_under_contraction}(1),(2). The converse implication follows from Lemma \ref{lem:relativeMMP_unique}.

(2) Assume $f$ is an MMP run of the second kind. By Lemma \ref{lem:ld_increasaes} coefficients do not increase. We argue that $f_*^{-1}$ maps $\Q$-Cartier divisors to $\Q$-Cartier divisors and that components of $\Exc f$ are $\Q$-Cartier. By induction we may assume that $\Exc f$ is an irreducible curve, in which case it is $\Q$-Cartier by Definition \ref{dfn:MMP2}, so the claim is clear.

Conversely, assume that components of $\Exc f$ are $\Q$-Cartier and satisfy $\cf(E;Y,B)\leq \cf(E;X,D)$. By Lemma \ref{lem:GMRLC}(2) $K_X$ is $\Q$-Cartier. By Lemmas \ref{lem:relativeMMP_unique}, \ref{rem:Q-Cartier_under_contraction} and \ref{lem:GMRLC}(1) we may assume that $f$ is log crepant. Contracting log exceptional curves of the second kind which are of type (1) in Lemma \ref{lem:contractibility_2nd} we may assume that each log exceptional curve of the second kind on $(X,D)$ contracted by $f$ is as in part (2) or (3) of that lemma. Since components of $\Exc f$ are $\Q$-Cartier, they are of type (2), i.e.\ they are connected components of $D$ of elliptic type. Let $E$ such a component. By Lemma \ref{lem:GMRLC}(1) we may assume that $K_Y+B$ is $\Q$-Cartier. Gluing $X-E$ with $Y-f(\Exc f-E)$ we see that there exists a contraction of $E$. Since $f$ is log crepant and $K_Y+B$ is $\Q$-Cartier, the contraction is log crepant and its image is a log surface. Thus we may contract components of $\Exc f$ one by one, obtaining an MMP run of the second kind. 

(3) Since components of $\Exc f$ are $\Q$-Cartier, the decomposition is given by Lemma \ref{lem:relativeMMP_unique} and the fact that $f_2$ is a partial MMP run of the second kind follows from (2).
\end{proof}

With the same arguments, using Lemma \ref{lem:contractibility_2nd} we prove the following version of Corollary \ref{cor:improving_ld_gives_MMP}(2). 

\begin{rem}
Let $f\:(X,D)\to (Y,B)$ be a birational morphism between log surfaces such that $\cf(E;Y,B)\leq \cf(E;X,D)$ for all $E\in \cE(f)$. If $(X,D)$ is GMRLC, then $f=\sigma\circ f'$, where $f'$ is an MMP run of the second kind and $\sigma\:(X',f'_*D)\to (Y,B)$ is a crepant morphism whose exceptional curves are non-$\Q$-Cartier and disjoint from $f'_*D$. 
\end{rem}

\begin{cor}[Reordering MMP contractions]\label{cor:reordering_MMP}
Let $f_i\:(X,D)\to (X_i, D_i)$, $i=1,2$ be birational morphisms between log surfaces such that $\Exc f_1\leq \Exc f_2$. Put $\alpha:=f_2\circ(f_1)^{-1}\:(X_1,D_1)\to (X_2, D_2)$. Then 
\begin{equation}\label{eq:kappa_is_invariant}
\kappa(K_X+D)=\kappa(K_{X_1}+D_1)=\kappa(K_{X_2}+D_2).
\end{equation}
Moreover, if $(f_1)_*$ maps $\Q$-Cartier divisors to $\Q$-Cartier divisors (for instance it is an MMP run on $(X,D')$ for some boundary divisor $D'$) then the following hold.
\begin{enumerate}[(1)]
\item If $f_2$ is a partial MMP run then $\alpha$ is a partial MMP run.
\item If $f_2$ is a partial MMP run of the second kind and $K_X$ is $\Q$-Cartier then $\alpha$ is a partial MMP run of the second kind.
\end{enumerate}
\end{cor}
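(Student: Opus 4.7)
The plan is as follows. Existence of $\alpha$ as a birational morphism between the log surfaces $(X_1,D_1)$ and $(X_2,D_2)$ is immediate from Lemma \ref{lem:sub_contractions} applied to $f_1$ and $f_2$, using the hypothesis $\Exc f_1 \leq \Exc f_2$. For the Kodaira equality I would decompose each $f_i$ via the relative MMP (Lemma \ref{lem:relativeMMP_unique}) into a sequence of contractions of log exceptional curves followed by a log crepant morphism. Iterated application of Lemma \ref{lem:contractibility_2nd} handles the log exceptional contractions (of the first or second kind), while the projection formula applied to the $\Q$-Cartier identity $g^*(K_{X_i}+D_i) = K+D'$ handles the log crepant tail; combining yields $\kappa(K_X+D) = \kappa(K_{X_i}+D_i)$.

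For parts (1) and (2) the plan is to verify the characterizations in Corollary \ref{cor:improving_ld_gives_MMP} for $\alpha$. Fix a component $E$ of $\Exc \alpha$ and set $E' := (f_1)^{-1}_*E$, which lies in $\cE(f_2) \setminus \cE(f_1)$. Since $f_2$ is a partial MMP (of the first or, in case (2), second kind), every curve contracted by $f_2$ is $\Q$-Cartier on $X$, by inducting along the MMP decomposition of $f_2$ and using Remark \ref{rem:Q-Cartier_under_contraction}(1); in particular $E'$ is $\Q$-Cartier on $X$. Under the hypothesis that $(f_1)_*$ preserves $\Q$-Cartier divisors, $E = (f_1)_*E'$ is therefore $\Q$-Cartier on $X_1$.

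To compare log discrepancies, note that the valuations $E'$ on $X$, $E$ on $X_1$, and the corresponding valuation on $k(X_2)$ coincide, so $\cf(E;X_2,D_2) = \cf(E';X_2,D_2)$; since $E'$ is not contracted by $f_1$, pushing forward preserves its coefficient and gives
\[
\cf(E;X_1,D_1) = \coeff_E(D_1) = \coeff_{E'}(D) = \cf(E';X,D).
\]
Applying Corollary \ref{cor:improving_ld_gives_MMP} to $f_2$ yields $\cf(E';X_2,D_2) < \cf(E';X,D)$ (strict in case (1), weak inequality $\leq$ in case (2)), which, via the identifications above, transfers to $\cf(E;X_2,D_2) < \cf(E;X_1,D_1)$ (respectively $\leq$). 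For (2) one further needs $K_{X_1}$ to be $\Q$-Cartier, which follows from $K_X$ being $\Q$-Cartier combined with the hypothesis on $(f_1)_*$. Corollary \ref{cor:improving_ld_gives_MMP}(1) (respectively (2)) then concludes the proof. The main delicate point is precisely this coefficient bookkeeping: the correct identification of $E$ on $X_1$ with its strict transform $E'$ on $X$ as the same divisorial valuation is what allows the MMP-defining inequality for $f_2$ to be transferred into the required inequality for $\alpha$.
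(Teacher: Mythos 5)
Your treatment of parts (1) and (2) is essentially the paper's own proof: you identify $E\in\cE(\alpha)$ with its proper transform $E'=(f_1)^{-1}_*E$ as the same divisorial valuation, observe that $\cf(E;X_1,D_1)=\coeff_{E'}(D)=\cf(E';X,D)$ because $E'$ is not contracted by $f_1$, transfer the inequality $\cf(E';X_2,D_2)<\cf(E';X,D)$ (respectively $\leq$) coming from $f_2$ being a partial MMP run, and close with Corollary \ref{cor:improving_ld_gives_MMP}, using the hypothesis on $(f_1)_*$ to get $\Q$-Cartierness of the components of $\Exc\alpha$ and of $K_{X_1}=(f_1)_*K_X$. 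The use of Lemma \ref{lem:sub_contractions} for the existence of $\alpha$ also matches the paper. This part is correct.

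The gap is in your argument for the Kodaira dimension equality, specifically for the middle term $\kappa(K_{X_1}+D_1)$. You propose to decompose \emph{each} $f_i$, via Lemma \ref{lem:relativeMMP_unique}, as a sequence of contractions of log exceptional curves followed by a log crepant morphism. For $f_2$ this is legitimate (it is a partial MMP run, and for the second kind Corollary \ref{cor:improving_ld_gives_MMP}(3) provides exactly such a factorization). But $f_1$ is an arbitrary birational morphism of log surfaces: the residual morphism left after running the relative $(K_X+D)$-MMP over $X_1$ is only a morphism along which $K+D$ is relatively nef, so the coefficients of the remaining contracted curves can strictly \emph{increase}; it need not be log crepant. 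Concretely, $f_1$ may contract a curve $E$ with $E\cdot(K_X+D)>0$, which is neither log exceptional of either kind nor crepant for $(X,D)$ --- this already happens for the almost minimalization $\psi_\am$ when it contracts a $(-1)$-curve meeting $D$ several times. So Lemma \ref{lem:contractibility_2nd} and the projection formula do not apply along $f_1$, and your route does not yield $\kappa(K_X+D)=\kappa(K_{X_1}+D_1)$. The paper avoids this by a sandwich: pushing forward pluri-log-canonical sections along any birational contraction gives $\kappa(K_X+D)\leq\kappa(K_{X_1}+D_1)\leq\kappa(K_{X_2}+D_2)$, and only the outer equality $\kappa(K_X+D)=\kappa(K_{X_2}+D_2)$ is proved using the structure of $f_2$ via Lemma \ref{lem:contractibility_2nd}; the two together force all three values to coincide. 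You should replace your direct argument for $f_1$ with this sandwich.
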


\begin{proof}
Given a birational morphism $\sigma\:X_1\to X_2$ between normal surfaces and an effective Cartier divisor $B$ on $X_2$ we have an induced injection $H^0(X_1,\sigma^{-1}_*B)\mono H^0(X_1,\sigma^*B)=H^0(X_2,B)$. The rational map $\alpha\:(X_1,D_1)\to (X_2, D_2)$ is a morphism by Lemma \ref{lem:sub_contractions}, so we get $\kappa(K_X+D)\leq \kappa(K_{X_1}+D_1)\leq \kappa(K_{X_2}+D_2)$. Now since $f_2$ is a composition of contractions of log exceptional curves of the first kind and of log crepant morphisms, we have $\kappa(K_X+D)=\kappa(K_{X_2}+D_2)$ by Lemma \ref{lem:contractibility_2nd}, hence all three Kodaira dimensions are equal. 

(1) Let $E\in \cE(\alpha)$. We have $\cf(E;X_2,D_2)<\cf(E;X,D)$ by Lemma \ref{lem:ld_increasaes}. Since $D_1=(f_1)_*D$, we have $\cf(E;X_1,D_1)=\coeff_{E}(D_1)=\coeff_{E}(D)=\cf(E;X,D)$, so $\cf(E;X_2,D_2)<\cf(E;X_1,D_1)$. The claim follows now from Corollary \ref{cor:improving_ld_gives_MMP}(1).

(2) We have now $\cf(E;X_2,D_2)\leq \cf(E;X_1,D_1)$ for all $E\in \cE(\alpha)$. Moreover, $\Exc \alpha=(f_1)_*\Exc f_1$ and $K_{X_1}=(f_1)_*K_X$, so components of $\Exc \alpha$ and $K_{X_1}$ are $\Q$-Cartier. Then the claim is a consequence of Corollary \ref{cor:improving_ld_gives_MMP}(2).
\end{proof}

The following example reminds that in the situation of Corollary \ref{cor:reordering_MMP} in general we cannot expect $f_1$ to be an MMP run on $(X,D)$. 

\begin{ex}[Reordering contractions] \label{ex:reordering1}
Let $\ll_1$ be a $(-1)$-curve on a smooth projective surface $X$ and let $\ll_2$ be a $(-2)$-curve meeting $\ll_1$ once. The contraction of $\ll_1+\ll_2$ is a composition of the contraction of $\ll_1$ followed by the contraction of the image of $\ll_2$, which is a $(-1)$-curve, hence both resulting surfaces are smooth. But it is also a composition of the contraction of $\ll_2$ onto a singular (canonical) surface $Y$ followed by the contraction of $\ll_1'$, the image of $\ll_1$. Here $\ll_2$ is not log exceptional on $X$, but $\ll_1'$ is log exceptional on $Y$.
\end{ex}

\medskip
\subsection{Relative minimalization}\label{ssec:relative_minimalization}

Given a normal surface $X$ with $\Q$-Cartier $K_X$ (and no boundary) and a birational morphism onto a normal surface $f\:X\to Y$ we denote the unique relative MMP run given by Lemma \ref{lem:relativeMMP_unique} by $f^\#=f^\#_0\:X\to X_f$ and the resulting minimal model of $f$ by $f_{\min}:=f_{0,\min}\:X_f\to Y$. This gives a commutative diagram:
\begin{equation}\label{diag:f_hash}
\begin{gathered}
\xymatrix{	X \ar[rr]^-{f^\#} \ar[rd]_{f}&{} & X_f\ar[ld]^{f_{\min}} \\
	{}& {Y} &{} }
\end{gathered}
\end{equation}
Given a divisor $D$ on $X$ we put $D_f:=f^\#_*D$. If all components of $\Exc f$  are $\Q$-Cartier or if $X$ is GMRLC then $X_f$ is relatively minimal over $Y$, that is, $K_{X_f}$ is $f_{\min}$-nef, and hence $f_{\min}^*K_Y\geq K_{X_f}$ by Corollary \ref{cor:pure_peeling}.

\begin{lem}[Relative minimalization and composition]\label{lem:aMMP_properties}\ 
Let $f\:X\to Y$ and $g\:Y\to Z$ be birational morphisms between normal projective surfaces such that $K_X$ and $K_Y$ are $\Q$-Cartier.  Then
\begin{equation}\label{eq:composing_almost_min}
(g\circ f)^\#=(g\circ f_{\min})^\#\circ f^\#. 
\end{equation} and $(g\circ f_{\min})^\#$ factors through $(g^\#\circ f_{\min})^\#$. If all components of $\Exc f$ are $\Q$-Cartier or $X$ is GMRLC then $(g\circ f)^\#$ contracts the proper transform of  $\Exc g^\#$. 
\end{lem}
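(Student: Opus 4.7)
My overall strategy is to lean on the uniqueness part of Lemma \ref{lem:relativeMMP_unique}. For (1), the composition $(g\circ f_{\min})^\#\circ f^\#$ is a birational morphism $X\to X_{g\circ f_{\min}}$ over $Z$, and I will check that it is a $K_X$-MMP run over $Z$; by uniqueness, it must then equal $(g\circ f)^\#$. The first factor $f^\#$ is a $K_X$-MMP over $Y$, hence over $Z$ as well: each of its elementary contractions is of a log exceptional curve on some $(X_i,0)$ which is also contracted by $g\circ f$. The target $X_f$ has $\Q$-Cartier canonical divisor, because each elementary step in $f^\#$ contracts a $\Q$-Cartier curve and Remark \ref{rem:Q-Cartier_under_contraction}(2) propagates the $\Q$-Cartier property of $K$. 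The second factor $(g\circ f_{\min})^\#$ is then by definition a $K_{X_f}$-MMP over $Z$; concatenation yields the desired run, whose maximality over $Z$ is inherited from that of $(g\circ f_{\min})^\#$.

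For (2), I apply (1) to $g^\#\circ f_{\min}\:X_f\to Y_g$ and $g_{\min}\:Y_g\to Z$, whose composition is $g\circ f_{\min}$; the hypothesis of (1) holds since $K_{X_f}$ and $K_{Y_g}$ are $\Q$-Cartier. The conclusion reads
\begin{equation*}
(g\circ f_{\min})^\# \;=\; \bigl(g_{\min}\circ (g^\#\circ f_{\min})_{\min}\bigr)^\#\circ (g^\#\circ f_{\min})^\#,
\end{equation*}
which exhibits $(g\circ f_{\min})^\#$ as factoring through $(g^\#\circ f_{\min})^\#$.

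For (3), combining (1) and (2) we write $(g\circ f)^\# = \alpha\circ (g^\#\circ f_{\min})^\#\circ f^\#$ for a suitable birational morphism $\alpha$. Since $f^\#$ contracts only curves in $\cE(f)$, for any $L\subseteq \Exc g^\#$ the proper transform $f^{-1}_*L$ pushes forward via $f^\#$ to $L':=(f_{\min})_*^{-1}L$ on $X_f$. It therefore suffices to show $(g^\#\circ f_{\min})^\#$ contracts each such $L'$. Clearly $L'\in \cE(g^\#\circ f_{\min})$; since $L$ and $L'$ induce the same valuation, $\cf(L';Y_g,0)=\cf(L;Y_g,0)$, and Corollary \ref{cor:improving_ld_gives_MMP}(1) applied to the partial MMP $g^\#$ gives $\cf(L;Y_g,0) < \cf(L;Y,0)=0=\cf(L';X_f,0)$, so $L'\in \cE^+_0(g^\#\circ f_{\min})$. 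Lemma \ref{lem:relativeMMP_unique} will then force $L'$ to be contracted, provided one checks its $\Q$-Cartier hypothesis for $\cE^+_0(g^\#\circ f_{\min})$.

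The main obstacle is this $\Q$-Cartier bookkeeping across the three birational morphisms $f^\#$, $f_{\min}$, and $g^\#$. If $X$ is GMRLC then iterating Lemma \ref{lem:GMRLC}(1) along $f^\#$ makes $X_f$ GMRLC, and the argument from the proof of Lemma \ref{lem:relativeMMP_unique} (using Remark \ref{rem:Q-Cartier_under_contraction}(3)) supplies the required $\Q$-Cartier-ness as each curve becomes log exceptional. When instead all components of $\Exc f$ are $\Q$-Cartier, I would propagate this through $f^\#_*$ by iterated application of Remark \ref{rem:Q-Cartier_under_contraction}(1), so that $\Exc f_{\min}$ has $\Q$-Cartier components on $X_f$; likewise each $L\subseteq\Exc g^\#$ is $\Q$-Cartier on $Y$ because $g^\#$ is an MMP (its elementary contracted curves are $\Q$-Cartier by definition, propagated backwards along $g^\#$ by Remark \ref{rem:Q-Cartier_under_contraction}(1)), and consequently $L'=(f_{\min})_*^{-1}L$ is $\Q$-Cartier on $X_f$. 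This covers all curves in $\cE(g^\#\circ f_{\min})$, and in particular those in $\cE^+_0(g^\#\circ f_{\min})$, completing the verification.
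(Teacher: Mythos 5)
Your proof is correct; parts (1) and (2) match the paper, and part (3) takes a genuinely different route. For (1) the paper uses the same uniqueness argument, and for (2) it simply notes that $(g^\#\circ f_{\min})^\#\circ f^\#$ is a partial $K_X$-MMP over $Z$ and invokes uniqueness, whereas you obtain the factorization by feeding the pair $\bigl(g^\#\circ f_{\min},\, g_{\min}\bigr)$ back into (1) --- a legitimate variant, since $K_{X_f}$ and $K_{Y_g}$ are $\Q$-Cartier. The real divergence is in (3): the paper proceeds by induction on the components of $G=\Exc g^\#$, picking a $K_Y$-negative component $\mm$, using $K_{X_f}\leq f_{\min}^*K_Y$ (Corollary \ref{cor:pure_peeling}) to see that $(f_{\min})_*^{-1}\mm$ is log exceptional on $X_f$, checking it is $\Q$-Cartier by the same two-case analysis you perform, contracting it by a relative MMP, and iterating; this explicitly builds a partial $K_{X_f}$-MMP over $Y_g$ through which $(g^\#\circ f_{\min})^\#$ must factor. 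You instead compute coefficients, $\cf(L';Y_g,0)=\cf(L;Y_g,0)<0=\cf(L';X_f,0)$, to place every proper transform $L'$ in $\cE^+_0(g^\#\circ f_{\min})$, and then invoke the clause of Lemma \ref{lem:relativeMMP_unique} asserting that the relative MMP contracts $\cE^+$, after verifying that all of $\cE^+_0(g^\#\circ f_{\min})$ is $\Q$-Cartier --- which you do correctly in both cases (GMRLC via Lemma \ref{lem:GMRLC}(1); otherwise by writing $L'$ as $f_{\min}^*L$ minus an exceptional divisor supported on the $\Q$-Cartier components of $\Exc f_{\min}$). Your route is shorter and avoids the step-by-step reordering; the paper's construction has the mild advantage of exhibiting the contracting morphism concretely. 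Both arguments ultimately rest on the same uniqueness lemma and the same $\Q$-Cartier bookkeeping, so either is acceptable.
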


\begin{proof}
By definition $(g\circ f_{\min})^\#\circ f^\#$ is a $K_X$-MMP run over $Z$, hence by uniqueness, see Lemma \ref{lem:relativeMMP_unique}, we have $(g\circ f)^\#=(g\circ f_{\min})^\#\circ f^\#$. Moreover $(g^\#\circ f_{\min})^\#\circ f^\#$ is a $K_X$-MMP run over $Y_g$, hence a partial $K_X$-MMP run over $Z$, so $(g\circ f)^\#$ factors through it by uniqueness. This gives a commutative diagram
\begin{equation}
\begin{gathered}
\xymatrix@C=3em{X \ar[r]^{f^\#}\ar[rd]_{f} & X_f\ar[r]^-{(g^\#\circ f_{\min})^\#}\ar[d]^{f_{\min}}\ar@/^2.5pc/[rr]^{(g\circ f_{\min})^\#} & {X_{g^\#\circ f}} \ar[d]\ar[r]& {X_{g\circ f}} \ar[ddl]\\
{}& Y\ar[r]^{g^\#}\ar[rd]_{g} & Y_g\ar[d] & {}\\
{}&{}&Z & {}}
\end{gathered}
\end{equation}

By the definition of an MMP and by Remark \ref{rem:Q-Cartier_under_contraction} components of $\Exc f^\#$ and $\Exc g^\#$ are $\Q$-Cartier. Assume that $X$ is GMRLC or that components of $\Exc f$ are $\Q$-Cartier. In the first case by Lemma \ref{lem:GMRLC} $X_f$ is GMRLC. In the second case components of $\Exc f_{\min}$ are $\Q$-Cartier. As noted above, in both cases $K_{X_f}$ is $f_{\min}$-nef. 

To show that $(g^\#\circ f_{\min})^\#$ contracts the proper transform of $G:=\Exc g^\#$, it is sufficient to construct a partial $K_{X_f}$-MMP run over $Y_g$ which contracts the proper transform of $G$. Put $\alpha=f_{\min}$. We may assume that $\Exc g^\#\neq 0$. Take a component $\mm$ of $G$ intersecting $K_Y$ negatively (it is $\Q$-Cartier by the definition of $g^\#$) and let $\sigma\:Y\to Y'$ be its contraction. By Corollary \ref{cor:pure_peeling} we have $K_{X_f}=\alpha^*K_Y-E$ for some effective $\alpha$-exceptional divisor $E$. Put $\mm'=\alpha_*^{-1}\mm$. Then $\mm'\cdot K_{X_f}\leq \mm'\cdot \alpha^*K_Y=\mm\cdot K_Y<0$, so $\mm'$ is log exceptional on $X_f$. If $X_f$ is GMRLC then it follows that $\mm'$ is $\Q$-Cartier. Similarly, if all components of $\Exc (f_{\min})$ are $\Q$-Cartier then the total transform of $\mm$, and hence $\mm'$ is $\Q$-Cartier. By Remark \ref{rem:Q-Cartier_under_contraction} it follows that there exists a $K_{X_f}$-MMP over $Y'$ contracting $\mm'$ whose outcome is a birational morphism $\alpha'\:X'\to Y'$ for which $K_{X'}$ is $\alpha'$-nef. Thus we may replace $X_f$, $Y$, $\alpha\:X_f\to Y$, $g^\#\:Y\to Y_g$ with $X'$, $Y'$, $\alpha'\:X'\to Y'$ and $g^\#\circ \sigma^{-1}\:Y'\to Y_g$ and we continue the construction until the whole proper transform of $G$ is contracted.
\end{proof}

\begin{lem}[Relative minimality and composition]\label{lem:pure_composition}
Let $f\:X\to Y$ and $g\:Y\to Z$ be birational morphisms between normal projective surfaces such that $K_X$ and $K_Y$ are $\Q$-Cartier. If $X$ is minimal over $Z$ then $X$ is minimal over $Y$ and $Y$ is minimal over $Z$.
\end{lem}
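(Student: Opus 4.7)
The plan is to establish the two conclusions separately, first deducing $f$-nefness of $K_X$ essentially by definition, and then deducing $g$-nefness of $K_Y$ by a pullback/projection formula argument combined with Corollary \ref{cor:pure_peeling}.

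For the first assertion, every curve $E$ contracted by $f$ is contracted by $g\circ f$ as well, so the assumption that $K_X$ is $(g\circ f)$-nef immediately gives $K_X\cdot E\geq 0$. This is precisely minimality of $X$ over $Y$.

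For the second assertion, now that $K_X$ is $f$-nef, Corollary \ref{cor:pure_peeling} applies to $f$ and yields a decomposition $f^*K_Y\sim K_X+E$ with $E\geq 0$ and $\Supp E\subseteq \Supp\Exc f$ (note that $f^*K_Y$ makes sense because $K_Y$ is assumed $\Q$-Cartier, so the intersection numbers below are all well defined by Remark \ref{rem:singularities}(1)). Let $C$ be any curve on $Y$ contracted by $g$ and let $C':=f_*^{-1}C$ be its proper transform. Since $g(f(C'))=g(C)$ is a point, $C'$ is contracted by $g\circ f$, so $K_X\cdot C'\geq 0$ by the hypothesis on $X/Z$. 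Moreover $C'$ is not $f$-exceptional, so $C'\not\subseteq \Supp E$ and thus $E\cdot C'\geq 0$. By the projection formula
\begin{equation*}
K_Y\cdot C=f^*K_Y\cdot C'=K_X\cdot C'+E\cdot C'\geq 0,
\end{equation*}
which proves that $K_Y$ is $g$-nef, i.e.\ $Y$ is minimal over $Z$.

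There is essentially no obstacle here; the only point requiring mild care is that $f^*K_Y$ and intersection numbers of Weil $\Q$-divisors with curves must be interpreted via pullback to a resolution, which is exactly the framework of Remark \ref{rem:singularities}(1). The $\Q$-Cartier hypotheses on $K_X$ and $K_Y$ in the statement are precisely what is needed so that Corollary \ref{cor:pure_peeling} and the projection formula apply.
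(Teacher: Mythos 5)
Your proof is correct and follows essentially the same route as the paper: the first assertion is immediate from the definitions, and the second uses Corollary \ref{cor:pure_peeling} to write $f^*K_Y\sim K_X+E$ with $E\geq 0$ supported on $\Exc f$, then applies the projection formula to $f_*^{-1}C$ for a curve $C$ contracted by $g$. Your version merely spells out explicitly the two nonnegativity facts ($K_X\cdot C'\geq 0$ and $E\cdot C'\geq 0$) that the paper compresses into a single inequality.
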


\begin{proof}The first part of the statement is obvious. By Lemma \ref{cor:pure_peeling} $f^*K_Y=K_X+E$ for some effective $f$-exceptional divisor $E$, hence for any curve $L$ on $Y$ we have $L\cdot K_Y=f_*^{-1}L\cdot (K_X+E)\geq f_*^{-1}L\cdot K_X$, which gives the second part of the statement.
\end{proof}

\begin{rem}[Relative minimality and composition]\label{rem:composing_almost_minimalizations}
Let the notation be as in Lemma \ref{lem:aMMP_properties}. We note that $(g\circ f)^\#$ may contract more than $(g^\#\circ f_{\min})^\#\circ f^\#$. For instance, if $X$ is minimal over $Y$ and $Y$ is minimal over $Z$  then the latter morphism is the identity. But it can happen that at the same time $X$ is not minimal over $Z$, so $(g\circ f)^\#$ is not an isomorphism; see Example \ref{ex:composition_of_nef}.
\end{rem}

By $[a_1,\ldots,a_n]$ we denote an snc divisor (on a smooth complete surface) which is a chain of smooth rational curves with subsequent self-intersection numbers equal to $-a_1, -a_2,\ldots, -a_n$, respectively; cf.\ Section \ref{ssec:twigs_forks_barks}.

\begin{ex}[Relative minimality and composition] \label{ex:composition_of_nef}
Consider a smooth projective surface $X$ with a divisor $[n_1,1,n_2]$ on it, where $n_1,n_2$ are positive integers such that $\frac{1}{n_1}+\frac{1}{n_2}\leq \frac{1}{2}$. Let $A$ be the $(-1)$-curve and let $D_i$, $i=1,2$ be the component with self-intersection $-n_i$. Let $h\:X\to Z$ be the contraction of $A+D_1+D_2$ and let $f\:X\to Y$ be the contraction of $D_1+D_2$. Then $g=h\circ f^{-1}\:Y\to Z$ is the contraction of $f_*A$. We have $f^*K_Y=K_X+(1-\frac{2}{n_1})D_1+(1-\frac{2}{n_2})D_2,$ hence $f_*A\cdot K_Y=1-\frac{2}{n_1}-\frac{2}{n_2}\geq 0.$ It follows that $K_X$ is $f$-nef and $K_Y$ is $g$-nef, but $K_X$ is not $h=g\circ f$-nef. Still, $h^*K_Z-K_X=f^*(g^*K_Z-K_Y)+(f^*K_Y-K_X)$ is effective by Corollary \ref{cor:pure_peeling}.
\end{ex}

\begin{cor}\label{cor:nef_gives_peeling} Let $f\:(X,D)\to (\ov X,\ov D)$ be a birational morphism of log surfaces. Assume that $K_X$ is $\Q$-Cartier.
\begin{enumerate}[(1)]
\item If $f$ is a partial MMP run then $f_{\min}$ is a partial MMP run, $X_f$ is minimal over $\ov X$ and $\Exc f_{\min}$ consists of some components of $D_f$.

\item Assume that $f$ is a partial MMP run of the second kind. Then  $f_{\min}$ is a partial MMP run the second kind on $(X_f,D_f)$, $X_f$ is minimal over $\ov X$ and  $\Exc f_{\min}$ consists of some components of $D_f$ and $K_X$-trivial $\Q$-Cartier components disjoint from $D_f$. 
\end{enumerate}
\end{cor}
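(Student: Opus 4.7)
The plan is to exploit the canonical factorization $f=f_{\min}\circ f^\#$ from diagram~\eqref{diag:f_hash} and reduce both parts to Corollary~\ref{cor:reordering_MMP} applied with $f_1=f^\#$ and $f_2=f$. First I would verify the prerequisites: in either case components of $\Exc f$ are $\Q$-Cartier by the definition of a partial MMP run (Definitions~\ref{dfn:MMP1} and \ref{dfn:MMP2}) together with Remark~\ref{rem:Q-Cartier_under_contraction}(2). Since $K_X$ is $\Q$-Cartier by hypothesis, Lemma~\ref{lem:relativeMMP_unique} applied with boundary $0$ then guarantees that $f^\#$ exists and factors through the contraction of the $K_X$-negative components of $\Exc f$; in particular $\Exc f^\#\leq\Exc f$. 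Moreover, $f^\#$ is itself an MMP run (of the first kind), so $(f^\#)_*$ preserves $\Q$-Cartierness by Remark~\ref{rem:Q-Cartier_under_contraction}(1); and since all components of $\Exc f$ are $\Q$-Cartier, the remark following diagram~\eqref{diag:f_hash} (via Corollary~\ref{cor:pure_peeling}) gives that $K_{X_f}$ is $f_{\min}$-nef, i.e.\ $X_f$ is minimal over $\ov X$.

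For part (1), I would invoke Corollary~\ref{cor:reordering_MMP}(1) to conclude that $f_{\min}$ is a partial MMP run on $(X_f,D_f)$. To pin down $\Exc f_{\min}$, I would pick $E\in\Exc f_{\min}$ and note that $E$ is log exceptional on $(X_f,D_f)$, so $E\cdot(K_{X_f}+D_f)<0$. If $E$ were not a component of $D_f$, then $E\cdot D_f\geq 0$ because $D_f\geq 0$, forcing $E\cdot K_{X_f}<0$ and contradicting the $f_{\min}$-nefness of $K_{X_f}$ established above.

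For part (2), the same setup combined with the standing assumption that $K_X$ is $\Q$-Cartier will trigger Corollary~\ref{cor:reordering_MMP}(2), giving that $f_{\min}$ is a partial MMP run of the second kind on $(X_f,D_f)$. Classifying $\Exc f_{\min}$ should proceed by the same intersection bookkeeping with the sharp inequality relaxed: for $E\in\Exc f_{\min}$ not a component of $D_f$, log exceptionality of the first or second kind yields $E\cdot(K_{X_f}+D_f)\leq 0$, and together with $E\cdot D_f\geq 0$ and the nefness $E\cdot K_{X_f}\geq 0$ this forces $E\cdot K_{X_f}=0=E\cdot D_f$. Hence $E$ is $K_{X_f}$-trivial and disjoint from $D_f$; the required $\Q$-Cartierness of $E$ is automatic from Definition~\ref{dfn:MMP2}.

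The only real obstacle will be bookkeeping: verifying that the hypotheses of Corollary~\ref{cor:reordering_MMP} (notably that $(f^\#)_*$ preserves $\Q$-Cartier divisors) are in place, and checking in part~(2) that the relative minimality of $X_f$ over $\ov X$ plus log-exceptionality of the second kind really forces $\Exc f_{\min}\setminus\Supp D_f$ to be $K_{X_f}$-trivial and disjoint from $D_f$. Both are absorbed by the cited lemmas once one observes that any partial MMP run has $\Q$-Cartier exceptional divisor.
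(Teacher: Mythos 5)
The first half of your argument is sound and is exactly the paper's: components of $\Exc f$ are $\Q$-Cartier, so $f^\#$ exists by Lemma \ref{lem:relativeMMP_unique}, maps $\Q$-Cartier divisors to $\Q$-Cartier divisors by Remark \ref{rem:Q-Cartier_under_contraction}(1), $K_{X_f}$ is $f_{\min}$-nef by Corollary \ref{cor:pure_peeling}, and Corollary \ref{cor:reordering_MMP} makes $f_{\min}$ a partial MMP run of the appropriate kind.

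The gap is in your identification of $\Exc f_{\min}$. You pick $E\in \cE(f_{\min})$ and assert that $E$ is log exceptional on $(X_f,D_f)$, but a partial MMP run only contracts curves that are log exceptional on the successive \emph{intermediate models}; by Corollary \ref{cor:cf_decrease_under_MMP} the intersection with the log canonical divisor can only increase when one passes back to the proper transform on the source, so a curve contracted at a later stage need not satisfy $E\cdot(K_{X_f}+D_f)<0$. Already for a reduced boundary containing a twig $[2,3]$ the $(-3)$-component has $E\cdot(K_{X_f}+D_f)=0$ although the contraction of the whole twig is a partial MMP run, and for non-reduced boundaries redundant components can even have $E\cdot(K_{X_f}+D_f)>0$ (cf.\ Example \ref{ex:optimal_ass_1}, where $\ll\cdot(K_X+rD)=2r-1>0$ for $r>\tfrac12$). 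So your intersection computation only handles the \emph{first} curve contracted by $f_{\min}$. The paper closes this by induction: the first log exceptional curve $E$ on $(X_f,D_f)$ satisfies $E\cdot D_f<-E\cdot K_{X_f}\leq 0$, hence is a component of $D_f$; then, writing $\tau=\ctr_E$, one checks $\tau^*(\tau_*K_{X_f})=K_{X_f}+uE$ with $u=(E\cdot K_{X_f})/(-E^2)\geq 0$, so $\tau_*K_{X_f}$ is still nef relative to $\ov X$ and the argument repeats on the image. Your part (2) has the same defect; the paper instead factors $f_{\min}=f_2\circ f_1$ with $f_1$ the first-kind part (to which (1) applies, so $\Exc f_1\leq D_f$) and $f_2$ log crepant, and only then runs your computation $E\cdot B=-E\cdot K_Y\leq 0$ on components of $\Exc f_2$.
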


\begin{proof} 
Since almost minimalization is a partial MMP run on $(X,0)$, by Remark \ref{rem:Q-Cartier_under_contraction} it maps $\Q$-Cartier divisors to $\Q$-Cartier divisors. By Corollary \ref{cor:reordering_MMP} it follows that $f_{\min}$ is a partial MMP run of the first kind in (1) and of the second kind in (2). We may thus assume that $f=f_{\min}$ and $X=X_f$. Since components of $\Exc f$ are $\Q$-Cartier, $X$ is now minimal over $\ov X$. 

(1) Let $E$ be a log exceptional curve on $(X,D)$. We have $E\cdot D<-E\cdot K_{X}\leq 0$, so $E$ is a component of $D$. Since it is $\Q$-Cartier, its contraction, call it $\tau$, maps $\Q$-Cartier divisors to $\Q$-Cartier divisors. We have $\tau^*(\tau_*K_{X})=K_{X}+uE$ for some $u\geq 0$, hence $\tau_*K_{X}$ is $f\circ \tau^{-1}$-nef. We obtain the claim by induction. 

(2) Write $f=f_2\circ f_1$, where $f_1$ is the unique $(K_{X}+D)$-MMP over $\ov X$ given by Lemma \ref{lem:relativeMMP_unique}. It contracts $\cE^+_{D}(f)$, so $f_2$ is log crepant. By part (1) we have $\Exc f_1\leq D$. Put $(Y,B)=(f_1(X),(f_1)_*D)$. Then $K_Y$ and the components of $\Exc f_2$ are $\Q$-Cartier, and $K_Y$ is $f_2$-nef. Let $E$ be a component of $\Exc f_2$. We have $E\cdot B=-E\cdot K_Y\leq 0$. In particular, if $E$ is not a component of $B$ then it is $K_Y$-trivial and disjoint from $B$. 
\end{proof}

\medskip
\subsection{The definition of an almost minimalization}\label{ssec:almost_minimalization}
An inductive construction of an almost minimal model and a characterization of almost log exceptional curves were given by Miyanishi in the special case of log smooth surfaces with reduced boundary, see \cite[p.\ 107]{Miyan-OpenSurf}. In \cite{Palka-minimal_models} we generalized this construction in an analogous way to smooth completions of smooth affine surfaces with half-integral boundaries. Here we give a simple general definition. 

\begin{dfn}[Almost minimalization]\label{dfn:almost_minimalization_new}
Let $(X,D)$ be a log surface with $K_X$ being $\Q$-Cartier and let $f\:(X,D)\to (\ov X,\ov D)$ be a partial MMP run. Then $f^\#\:X\to X_f$, that is, the unique $K_X$-MMP over $\ov X$, is called an \emph{almost minimalization of $f$}. If $f$ is an MMP run (of the first kind) then the log surface $(X_f,D_f)$ is called an \emph{almost minimal model} of $(X,D)$. 
\end{dfn}

\begin{rem}\label{rem:(Xf,Df)}
Remark \ref{rem:Q-Cartier_under_contraction}(1) implies that $f^\#$ maps $\Q$-Cartier divisors to $\Q$-Cartier divisors, hence $K_{X_f}$ is $\Q$-Cartier and $(X_f,D_f)$ is a log surface. The morphism $f_{\min}$ is a partial MMP run by Corollary \ref{cor:nef_gives_peeling}(1). If $X$ is GMRLC then by Lemma \ref{lem:GMRLC} the surface $X_f$ is GMRLC. Similarly, if $X$ is $\Q$-factorial (respectively, log canonical) then $X_f$ is $\Q$-factorial (respectively, log canonical).
\end{rem}

The geometry of $D_f$ is to be studied. It follows from the definition that an almost minimalization is a composition of contractions of some of the proper transforms of log exceptional curves on $(X,D)$ and its images under the contractions constituting $f$. The analysis of the resulting boundary $D_f$ comes down to the analysis of reordering MMP contractions for $(X,D)$ constituting $f$. It is well-known that in general changing the order of contractions may lead to worse singularities of the intermediate surfaces, see Example \ref{ex:reordering1}.

\medskip
\subsection{Peeling and squeezing}\label{ssec:peeling_squeezing_alm_min}

Let $(X,D)$ be a log surface with $K_X$ being $\Q$-Cartier. Given a partial MMP run $\psi\:(X,D)\to (\ov X,\ov D)$, by Corollary \ref{cor:nef_gives_peeling} its almost minimalization $\psi_\am:=\psi^\#\:(X,D)\to (X_\psi,D_\psi)$ induces a partial MMP run $\psi_{\min}=\psi\circ \psi_\am^{-1}\:(X_\psi,D_\psi)\to (\ov X, \ov D)$ such that $K_{X_\psi}$ is $\psi_{\min}$-nef and $\Exc \psi_{\min}\leq D_\psi$:
\begin{equation}\label{eq:am-min_decomposition}
\begin{gathered}
\xymatrix{	(X,D) \ar[r]^-{\psi_\am} \ar[rd]_{\psi}& (X_\psi,D_\psi)\ar[d]^{\psi_{\min}} \\
	& (\ov X,\ov D)}
\end{gathered}
\end{equation}
Conversely, using Lemma \ref{lem:aMMP_properties} we can construct an almost minimal model of $(X,D)$ in steps in parallel to a construction of an MMP run $\psi$. We now discuss how to conveniently group these steps. The distinction between log exceptional curves contained  and not contained in the boundary leads to the definition of a peeling, cf.\ \cite[2.3.3.6]{Miyan-OpenSurf}. 

\begin{samepage}
\begin{dfn}[Peeling, almost minimality] \label{def:peel_squeeze} Let $(X,D)$ be a log surface.
\begin{enumerate}[(1)]
\item A \emph{partial peeling} of $(X,D)$ (called also a partial peeling of $D$) is a partial MMP run $\alpha$ on $(X,D)$ for which $\Exc \alpha\subseteq \Supp D$. A \emph{peeling} is a maximal partial peeling, i.e.\ a partial peeling which cannot be extended to a partial peeling with a strictly bigger number of contracted curves.
\end{enumerate}
Let $\alpha$ be a partial peeling of $(X,D)$ with $K_X$ being $\Q$-Cartier. 
Then:
\begin{enumerate}[(1)]
\stepcounter{enumi}
\item $\alpha$ is called \emph{pure} if $K_X$ is $\alpha$-nef. In this case we say that $(X,D)$ is \emph{$\alpha$-squeezed}. 
 
\item If $(X,D)$ is $\alpha$-squeezed and $(\alpha(X),\alpha_*D)$ is minimal then $(X,D)$ is called \emph{$\alpha$-almost minimal}. (In this case $\alpha$ is necessarily a peeling.)
\item $\alpha_\am$ is called an \emph{$\alpha$-squeezing} of $(X,D)$; see \eqref{eq:am-min_decomposition}.
\end{enumerate}
We say that $(X,D)$ is \emph{squeezed} if it is $\alpha$-squeezed for some peeling $\alpha$. It is \emph{almost minimal} if it is $\alpha$-almost minimal for some (pure) peeling $\alpha$.
\end{dfn}
\end{samepage}

\begin{dfn}[Redundant and almost log exceptional curves]\label{dfn:redundant_and_almost-log-exc} 
Let $\alpha$ be a pure partial peeling of a log surface $(X,D)$ and let $\ll\subseteq X$ be a curve. 
\begin{enumerate}[(1)] 
\item We say that $\ll$ is \emph{$\alpha$-almost log exceptional} if $\ll\nleq D$ and $\alpha(\ll)$ is log exceptional on $(\alpha(X),\alpha_*D)$,
\item Assume $K_X$ is $\Q$-Cartier. We say that $\ll$ is \emph{$\alpha$-redundant} if $\ll\leq D$, $\ll\cdot K_X<0$ and $\alpha(\ll)$ is log exceptional on $(\alpha(X),\alpha_*D)$.
\end{enumerate}
A curve is \emph{almost log exceptional} if it is $\alpha$-almost log exceptional for some pure peeling $\alpha$. A curve is \emph{redundant} if it is $\alpha$-redundant for some pure partial peeling $\alpha$. 
\end{dfn}

We use terms like '$\alpha$-redundant' and 'redundant with respect to $\alpha$' interchangeably. A peeling restricts to an isomorphism of $X\setminus D$ onto its image. Since a redundant curve intersects $K_X$ negatively, for smooth $X$ it is in particular a $(-1)$-curve. A redundant component of $D$ or an almost log exceptional curve on $(X,D)$ is in general not log exceptional itself, so even if it can be contracted, the effect of this contraction on (coefficients of) log singularities requires a more careful analysis. Squeezing should be thought of as a useful preparation for running an almost MMP by contracting some components of $D$ without making the singularities of the underlying surface $X$ worse.  

\begin{cor}\label{cor:witnessing_squeezing_or_alm_min}\ Let $(X,D)$ be a log surface for which $K_X$ is $\Q$-Cartier.
\begin{enumerate}[(1)]
\item Let $\alpha$ be a peeling of $(X,D)$. Then $(X,D)$ is $\alpha$-squeezed (i.e.\ $\alpha$ is pure) if $D$ contains no $\alpha'$-redundant component for any pure partial peeling $\alpha'$ with $\Exc \alpha'\leq \Exc \alpha$. An $\alpha$-squeezed $(X,D)$ is $\alpha$-almost minimal if there are no $\alpha$-almost log exceptional curves on $(X,D)$.
\item $(X,D)$ is almost minimal if and only if there exists an MMP run $\psi$ on $(X,D)$ for which $\psi_\am$ is an isomorphism.
\end{enumerate} 
\end{cor}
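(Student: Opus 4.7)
I will treat (1) and (2) separately. The second half of (1) is essentially a maximality argument; (2) follows cleanly from Lemma~\ref{lem:pure_composition}. I expect the main obstacle to lie in the first half of (1), where one must construct a pure partial peeling $\alpha'$ with $\Exc \alpha'\leq \Exc \alpha$ witnessing the $\alpha'$-redundancy of a component of $D$; the key trick is to take $\alpha'$ to be the initial segment of $\alpha$ stopping just before its first $K_X$-negative step.

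For the second half of (1), assume $\alpha$ is a pure peeling of $(X,D)$ admitting no $\alpha$-almost log exceptional curve, and suppose for contradiction that $(\alpha(X),\alpha_*D)$ carries a log exceptional curve $L'$. If $L'\leq \Supp \alpha_*D$, then $\ctr_{L'}\circ \alpha$ is a strictly larger partial peeling than $\alpha$, contradicting maximality of $\alpha$. Otherwise the proper transform $L=\alpha_*^{-1}L'$ is not contained in $\Supp D$ (else its image $L'$ would lie in $\Supp \alpha_*D$), so $L$ is $\alpha$-almost log exceptional by Definition~\ref{dfn:redundant_and_almost-log-exc}, again a contradiction. For the first half, argue by contrapositive: assume $\alpha$ is a peeling but not pure, fix a decomposition $\alpha=\phi_n\circ\cdots\circ\phi_1$ with $\phi_i\:(X_i,D_i)\to (X_{i+1},D_{i+1})$ contracting a log exceptional curve $\ll_i$ on $(X_i,D_i)$, and denote by $\tilde\ll_i\subseteq X$ the proper transform of $\ll_i$. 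Non-pureness of $\alpha$ gives some $i$ with $K_X\cdot\tilde\ll_i<0$; let $i^*$ be the smallest such index and set $\alpha':=\phi_{i^*-1}\circ\cdots\circ\phi_1$ (taking $\alpha'=\id$ when $i^*=1$). By minimality of $i^*$ we have $K_X\cdot\tilde\ll_j\geq 0$ for every $j<i^*$, i.e., for every curve contracted by $\alpha'$, so $\alpha'$ is pure. Further, $\tilde\ll_{i^*}$ is an irreducible component of $\Exc \alpha\leq \Supp D$, hence a component of $D$, intersects $K_X$ negatively, and its image $\alpha'(\tilde\ll_{i^*})=\ll_{i^*}$ is log exceptional on $(X_{i^*},D_{i^*})=(\alpha'(X),(\alpha')_*D)$; thus $\tilde\ll_{i^*}$ is $\alpha'$-redundant as required.

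For (2), given an MMP run $\psi$ on $(X,D)$ for which $\psi_\am$ is an isomorphism, $K_X$ is $\psi$-nef. Applying Lemma~\ref{lem:pure_composition} to each factorization $X\to X_i\to \bar X$ of $\psi$ yields $K_{X_i}\cdot\ll_i\geq 0$; combined with log exceptionality $\ll_i\cdot(K_{X_i}+D_i)<0$, this forces $\ll_i\cdot D_i<0$, so $\ll_i\leq \Supp D_i$. Hence $\Exc\psi\leq \Supp D$ and $\psi$ is a pure partial peeling; maximality of $\psi$ as an MMP run makes $(\bar X,\bar D)$ minimal, so $\psi$ admits no peeling extension and is a pure peeling, showing that $(X,D)$ is $\psi$-almost minimal. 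Conversely, if $(X,D)$ is $\alpha$-almost minimal for a pure peeling $\alpha$, minimality of $(\alpha(X),\alpha_*D)$ leaves no possible further MMP contraction, so $\alpha$ is an MMP run, while pureness of $\alpha$ gives $\alpha_\am=\id$.
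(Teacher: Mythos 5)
Your proposal is correct and takes essentially the same route as the paper, which disposes of part (1) with ``follows from definitions'' --- your truncation of a decomposition of $\alpha$ at the first $K_X$-negative contraction is exactly the content hiding behind that phrase --- and proves part (2) just as you do, via the equivalence of ``$\psi_\am$ is an isomorphism'' with ``$K_X$ is $\psi$-nef''. The only cosmetic difference is that for the step $\Exc \psi\subseteq \Supp D$ the paper cites Corollary \ref{cor:nef_gives_peeling}, whereas you re-derive the same fact inline from Lemma \ref{lem:pure_composition} and the inequality $\ll_i\cdot D_i<0$.
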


\begin{proof}
(1) follows from definitions. (2) If $(X,D)$ is $\alpha$-almost minimal then we take $\psi=\alpha$. Now $\psi_\am$ is an isomorphism, because $K_X$ is $\psi$-nef. Conversely, if $\psi_\am$ is an isomorphism then $K_X$ is $\psi$-nef, hence $\Exc \psi\subseteq \Supp D$ by Corollary \ref{cor:nef_gives_peeling}, so $\psi$ is a pure peeling, hence $(X,D)$ is $\psi$-almost minimal.
\end{proof}

\begin{ex}[Peeling and almost minimality] \label{ex:peeling}
Let $\F_n=\P(\cO_{\P^1}\oplus \cO(n))$ for $n\geq 2$ be a Hirzebruch surface. Let $F_i$, $i=1,\ldots, N$, $N\geq 1$ be distinct fibers of a $\P^1$-fibration of $\F_n$ and let $C$ be the negative section. Put $D=cC+\sum_{i=1}^N w_i F_i$, where $c,w_1,\ldots,w_N\in \Q\cap [0,1]$. The peeling morphism is either the identity or it contracts $C$. In each case the log surface $(\F_n,D)$ is almost minimal. The peeling morphism for $(\F_n,D)$ does contract $C$ if and only if $C\cdot (K+D)<0$, that is, if $n(1-c)+\sum_{i=1}^N w_i<2$. For instance, if all coefficients of $D$ are equal to $r$ then the condition reads as $n(1-r)+Nr<2$, so peeling contracts $C$ if and only if $N=1$ and $r\in [1-\frac{1}{n-1},1]$.
\end{ex}

\begin{ex}[An almost minimal surface with an $\alpha$-almost log exceptional curve] \label{ex:partially_almost_minimal}
Blowing up three times on $\P^1\times \P^1$ we construct a smooth $\P^1$-fibered surface $X$ with a unique degenerate fiber $T_1+T_2+T_3+L$ where $T_i$, $i=1,2,3$ are $(-2)$-curves and $L$ is a $(-1)$-curve meeting $T_2$. Let $F_1, F_2$ be two smooth fibers and $T_0$ be a section meeting $T_3$ and having self-intersection number equal to $0$. Put $D=T_1+T_2+T_3+T_0+F_1+F_2$, see Figure \ref{ex:a-min_surf_with_a-min_curve}.
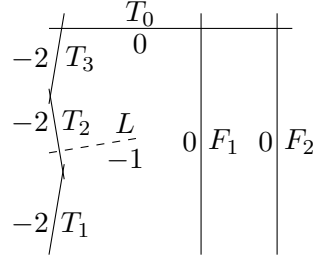
\begin{figure}[H] 
\begin{tikzpicture}
	\draw (0,3) -- (3.2,3);
	\node at (1.2,3.2) {$T_0$};
	\node at (1.2,2.8) {$0$};
	\draw (2,3.2) -- (2,0);
	\node at (2.3,1.5) {$F_1$};
	\node at (1.85,1.5) {$0$};
	\draw (3,3.2) -- (3,0);
	\node at (3.3,1.5) {$F_2$};
	\node at (2.85,1.5) {$0$};
	\draw (0.2,3.2)-- (0,2);
	\node at (-0.25,2.6) {$-2$};
	\node at (0.35,2.6) {\ $T_3$};
	\draw (0,2.2) -- (0.2,1);
	\node at (-0.25,1.75) {$-2$};
	\node at (0.3,1.75) {\ $T_2$};
	\draw (0.2,1.2) -- (0,0);
	\node at (-0.25,0.4) {$-2$};
	\node at (0.35,0.4) {$T_1$};
	\draw[dashed] (0,1.35) -- (1.2,1.55);
	\node at (1, 1.75) {$L$};
	\node at (1, 1.25) {$-1$};
\end{tikzpicture}
\caption{The boundary $D$ in Example \ref{ex:partially_almost_minimal}.}\label{ex:a-min_surf_with_a-min_curve}
\end{figure}
The curve $L$ is a unique $(-1)$-curve on $X$ intersecting $D$ once. Let $\alpha$ be the contraction of $T_1+T_2$. It is a pure partial peeling. The curves
$\alpha(T_3)$ and $\alpha(L)$ are log exceptional on $(\alpha(X),\alpha_*D)$. Let $\psi$ be the contraction of $T_1+T_2+T_3$. It is an MMP run with singular image $\psi(X)$. (Another MMP run $\psi'$ has $\Exc \psi'=T_1+T_2+L$, in which case $\psi'(X)$ is smooth). We infer that $(X,D)$ is a log smooth almost minimal log surface. Since $L$ is $\alpha$-almost log exceptional, $(X,D)$ is not $\alpha$-almost minimal. Thus, on an almost minimal log surface there may exist curves which are almost log exceptional with respect to some pure partial peeling. 
\end{ex}

Almost log exceptional (respectively, $\alpha$-redundant) curves will be also called \emph{almost log exceptional} (respectively, \emph{$\alpha$-redundant}) \emph{curves of the first kind}. Their numerical characterization will be discussed in the next sections. For their analogues of the second kind see Definition \ref{dfn:almost_and_superlfuous_2nd}. For convenience the following result is formulated jointly for both kinds.
 
\begin{lem}[Almost log exceptional vs redundant] \label{lem:a.l.e._is_redundant}
Let $\alpha$ be a pure partial peeling of a log surface $(X,D)$ with $K_X$ being $\Q$-Cartier. If $A\subseteq X$ is a $\Q$-Cartier $\alpha$-almost log exceptional curve of the first or second kind then\begin{enumerate}[(1)]
\item  $A\cdot K_X<0$, 
\item for every $s\in (0,1]\cap \Q$ the morphism $\ctr_{\alpha(A)}\circ\alpha$ is a peeling of $(X,D+sA)$,
\item for every $s\in (0,1]\cap \Q$ the curve $A$ is $\alpha$-redundant on $(X,D+sA)$ of the first kind.
\end{enumerate}
\end{lem}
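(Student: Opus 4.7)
The three assertions all follow from two inputs: purity of $\alpha$ (i.e., $\alpha$-nefness of $K_X$), which via Corollary \ref{cor:pure_peeling} yields $\alpha^*K_{\alpha(X)}\sim K_X+E_0$ with $E_0\geq 0$ and $\Supp E_0\subseteq\Supp\Exc\alpha$; and the log exceptionality of $\alpha(A)$ on $(\alpha(X),\alpha_*D)$. Since $\Exc\alpha\leq D$ and $A\not\leq D$, the curve $A$ is not a component of $E_0$; being $\Q$-Cartier, it then satisfies $A\cdot E_0\geq 0$. For (1), the projection formula
\[
A\cdot K_X=\alpha(A)\cdot K_{\alpha(X)}-A\cdot E_0
\]
reduces the claim to the negativity of the right-hand side. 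The condition $A\not\leq D$ implies $\alpha(A)\not\leq\alpha_*D$, so $\alpha(A)\cdot\alpha_*D\geq 0$; combined with $\alpha(A)\cdot(K_{\alpha(X)}+\alpha_*D)\leq 0$ (strict for the first kind, equality for the second) this yields $\alpha(A)\cdot K_{\alpha(X)}\leq 0$, strictly negative either for the first kind or whenever $\alpha(A)$ meets $\alpha_*D$. In the remaining second-kind edge case, where $\alpha(A)$ is simultaneously disjoint from $\alpha_*D$ and $A$ is disjoint from $\Exc\alpha$, one appeals to the extra conditions embedded in Definition \ref{dfn:almost_and_superlfuous_2nd}; otherwise $A=\alpha(A)$ is already log exceptional of the second kind on $(X,D)$ and plays no role in an almost-minimalization.

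For (2), set $\beta=\ctr_{\alpha(A)}\circ\alpha\:X\to Y$ and verify the criterion of Corollary \ref{cor:improving_ld_gives_MMP}(1) by computing
\[
A_2:=(K_X+D+sA)-\beta^*(K_Y+\beta_*(D+sA)).
\]
Because $\beta_*A=0$, one has $\beta_*(D+sA)=\beta_*D$, so the natural approach is to stack two log pullbacks. Corollary \ref{cor:cf_decrease_under_MMP} applied to $\alpha$ gives $\alpha^*(K_{\alpha(X)}+\alpha_*D)=(K_X+D)-A_1$ with $A_1\geq 0$, $\Supp A_1\subseteq\Exc\alpha$, and every coefficient of $A_1$ strictly positive (since $\alpha$ is a partial MMP on $(X,D)$). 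The contraction of $\alpha(A)$ supplies $\ctr_{\alpha(A)}^*(K_Y+\beta_*D)=(K_{\alpha(X)}+\alpha_*D)-k\alpha(A)$ with $k\geq 0$ ($k>0$ for the first kind, $k=0$ for the second). Pulling back by $\alpha$ and writing $\alpha^*\alpha(A)=A+N_A$ with $N_A\geq 0$ supported on $\Exc\alpha$ (effectiveness of $N_A$ follows from $A\cdot F\geq 0$ for every $F\in\cE(\alpha)$ together with the negative-definite intersection matrix of $\Exc\alpha$), one obtains
\[
A_2=(k+s)A+A_1+kN_A.
\]
The coefficient of $A$ in $A_2$ equals $k+s\geq s>0$, and the coefficient of each $E\in\cE(\alpha)$ is at least its coefficient in $A_1$, which is strictly positive. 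All components of $\Exc\beta=\Exc\alpha+A$ are $\Q$-Cartier ($\Exc\alpha$ by the MMP assumption on $\alpha$, $A$ by hypothesis, and $\alpha(A)$ on $\alpha(X)$ by Remark \ref{rem:Q-Cartier_under_contraction}(1)); so Corollary \ref{cor:improving_ld_gives_MMP}(1) identifies $\beta$ as a partial MMP run on $(X,D+sA)$, and $\Exc\beta\leq\Supp(D+sA)$ makes it a peeling.

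For (3), $A\leq\Supp(D+sA)$ since $s>0$, $A\cdot K_X<0$ is (1), and
\[
\alpha(A)\cdot(K_{\alpha(X)}+\alpha_*D+s\alpha(A))=\alpha(A)\cdot(K_{\alpha(X)}+\alpha_*D)+s\alpha(A)^2<0
\]
because $\alpha(A)^2<0$ and the first summand is non-positive; thus $\alpha(A)$ is log exceptional of the first kind on $(\alpha(X),\alpha_*(D+sA))$, which verifies all three clauses of Definition \ref{dfn:redundant_and_almost-log-exc}(2). The main difficulty lies in (2): one has to track all exceptional corrections and ensure, uniformly across first- and second-kind cases, that $A_2$ is strictly positive on every component of $\Exc\beta$; the role of $s>0$ is precisely to secure positivity of the coefficient of $A$ in $A_2$ in the second-kind situation where $k=0$.
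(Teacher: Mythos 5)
Your argument is correct and lands on the same key inequalities as the paper, but parts (2) and (3) are organized differently. In (1) you reproduce the paper's computation: $A\cdot K_X\leq \alpha(A)\cdot K_{\alpha(X)}\leq \alpha(A)\cdot(K_{\alpha(X)}+\alpha_*D)\leq 0$ via Corollary \ref{cor:pure_peeling} and $A\nleq D$, with the second-kind borderline case excluded by the clause $A\cdot K_X\neq 0$ built into Definition \ref{dfn:almost_and_superlfuous_2nd} --- exactly the paper's move, just phrased as a case analysis. In (2) the paper applies Lemma \ref{lem:ld_increasaes} twice (once to $\alpha$, once to $\ctr_{\alpha(A)}$) to get $\cf(E;Y,\varphi_*\ov D)\leq\cf(E;\ov X,\ov D)<\cf(E;X,D+sA)$ for $E\in\cE(\alpha)$ and $\cf(A;Y,\varphi_*\ov D)\leq 0<s$, and then quotes Corollary \ref{cor:improving_ld_gives_MMP}(1); your explicit decomposition $A_2=(k+s)A+A_1+kN_A$ proves the same strict inequalities by direct divisor arithmetic, at the cost of having to justify $N_A\geq 0$ (which you do correctly from negative definiteness of $\Exc\alpha$). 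Both routes are valid; the paper's is shorter because the monotonicity lemmas already encapsulate this bookkeeping. In (3) your computation $\alpha(A)\cdot(K_{\alpha(X)}+\alpha_*D+s\alpha(A))=\alpha(A)\cdot(K_{\alpha(X)}+\alpha_*D)+s\alpha(A)^2<0$ is the right numerical core, but you only check the three displayed clauses of Definition \ref{dfn:redundant_and_almost-log-exc}(2) and not its standing hypothesis that $\alpha$ be a pure partial peeling \emph{of $(X,D+sA)$}: this is not automatic from purity on $(X,D)$, since adding $sA$ to the boundary can make a curve $E\leq\Exc\alpha$ meeting $A$ fail to be log exceptional at the relevant intermediate stage. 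The paper's proof navigates this by first extracting from (2) an auxiliary pure partial peeling $\alpha'$ with $\Exc\alpha'\leq\Exc\alpha$ that witnesses redundancy of the first kind and then upgrading via Corollary \ref{cor:cf_decrease_under_MMP}; you should add a sentence addressing this prerequisite.
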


\begin{proof}Let $(\ov X,\ov D)$ be the image of $\alpha$. The curve $\alpha(A)$ is $\Q$-Cartier. Let $\varphi\:\ov X\to Y$ be its contraction. By Corollary \ref{cor:pure_peeling} we have $A\cdot K_X\leq A\cdot \alpha^*(K_{\ov X})=\alpha(A)\cdot K_{\ov X}$. Since $A\nleq D$, we have $\alpha(A)\cdot K_{\ov X}\leq \alpha(A)\cdot (K_{\ov X}+\ov D)\leq 0$. For a curve of the first kind the latter inequality is strict and for a curve of the second kind $A\cdot K_X\neq 0$ by definition, which gives (1).  By Lemma \ref{lem:ld_increasaes} for every prime divisor $E$ contracted by $\alpha$ we have $$\cf(E;Y,\varphi_*\ov D)\leq \cf(E;\ov X,\ov D)<\cf(E;X,D)=\cf(E;X,D+sA).$$ On the other hand, $$\cf(A;Y,\varphi_*\ov D)\leq \cf(A;\ov X,\ov D)=0<s=\cf(A,X,D+sA).$$ Since $\alpha$ is a partial MMP run, at each step the contracted curve is $\Q$-Cartier, and hence components of $\Exc \alpha$ are $\Q$-Cartier; see Remark \ref{rem:Q-Cartier_under_contraction}(2),(1). By Corollary \ref{cor:improving_ld_gives_MMP}(1) the morphism $\varphi\circ\alpha\:(X,D+sA)\to (Y,\varphi_*D)$ is a peeling, which gives (2). Since $\alpha$ is pure, $A$ is the only curve in $\Exc \alpha+A$ intersecting $K_X$ negatively, hence there exists $\alpha'$ - a pure partial peeling of $(X,D+sA)$ with $\Exc \alpha'\leq \Exc \alpha$ such that $A$ is $\alpha'$-redundant of the first kind on $(X,D+sA)$. But then it is $\alpha$-redundant of the first kind by Corollary \ref{cor:cf_decrease_under_MMP}, hence (3).
\end{proof}

\medskip
\subsection{Effective almost minimalization}

The following lemma shows how to conveniently construct an almost minimal model of a log surface $(X,D)$ in parallel to a minimal model. It shows also that the construction generalizes Miyanishi's construction for log smooth surfaces (with the squeezing of the boundary being an analogue of an snc-minimalization), see \cite[2.3.11]{Miyan-OpenSurf}, cf.\ Remark \ref{rem:relation_to_Miyanishi}

\begin{lem}[Effective almost minimalization] \label{lem:effective_almost_minimalization} 
Assume that $\ov \psi\:(X,D)\to (\ov X, \ov D)$ is a partial MMP run on a log surface with $K_X$ being $\Q$-Cartier. Let $n$ denote the number of $\ov \psi$-exceptional curves not contained in $D$. Choose a decomposition $\ov \psi=\ov \psi_n\circ \ldots\circ \ov \psi_0$, where $\ov \psi_0\:(X,D)\to (\ov X_1, \ov D_1)$ is a partial peeling and $\ov \psi_i\:(\ov X_i,\ov D_i)\to (\ov X_{i+1},\ov D_{i+1})$ for $i\geq 1$, where $(\ov X_{n+1},\ov D_{n+1})=(\ov X,\ov D)$, is a composition of a contraction of a single log exceptional curve $\ov A_i\nleq \ov D_i$ followed by a partial peeling. Then there exists a commutative diagram
\begin{equation}\label{eq:MMP_diagram}
\begin{gathered}
\xymatrix{	(X,D) \ar[r]^-{\psi_0} \ar[rd]_-{\ov \psi_0}& (X_1,D_1) \ar[d]^-{\alpha_1} \ar[r]^-{\psi_1} & (X_2,D_2) \ar[d]^-{\alpha_2} \ar[r]^-{\psi_2} & \dots \ar[r]^-{\psi_n} & (X_{n+1},D_{n+1})\ar[d]^-{\alpha_{n+1}} \\ 
	& (\ov X_1,\ov D_1) \ar[r]^-{\bar{\psi}_1} & (\ov X_2,\ov D_2) \ar[r]^-{\bar{\psi}_2} & \dots \ar[r]^-{\bar{\psi}_n} & (\ov X_{n+1},\ov D_{n+1})} 
\end{gathered}
\end{equation}
where
\begin{enumerate}[(1)]
\item Each $(X_i,D_i)$, $i\geq 1$ is a log surface with $K_{X_i}$ being $\Q$-Cartier.
\item $\alpha_i$ is a pure partial peeling for $i\geq 1$,
\item $\psi_0=(\ov \psi_0)_\am$ and $\psi_i=(\ov \psi_i\circ \alpha_i)_\am$ for $i\geq 1$, hence $\psi_i$ is a composition of a contraction of an $\alpha_i$-almost log exceptional curve $A_i:=(\alpha_i)_*^{-1}\ov A_i\subseteq X_i$ followed by a $\tau_i$-squeezing,  where $\ov \psi_i\circ \alpha_i=\tau_i\circ \ctr_{A_i}$.
\item $\psi:=\psi_n\circ \ldots\circ \psi_0=\ov \psi_\am$ and $(X_{n+1},D_{n+1})$ is $\alpha_{n+1}$-almost minimal. In particular, if $(\ov X_{n+1}, \ov D_{n+1})$ is minimal then $(X_{n+1},D_{n+1})$ is an almost minimal model of $(X,D)$.
\end{enumerate}
Moreover, for every such diagram we have:
\begin{enumerate}[(1)]
\setcounter{enumi}{4}
\item $\cf(X_{i+1})\leq \cf(X_i)$ for $i\geq 0$, where $X_0=X$,
\item $\kappa(K_{X_i}+D_i)=\kappa(K_X+D)$ for $i\geq 1$,
\end{enumerate}
\end{lem}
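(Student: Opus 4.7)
My plan is to construct the diagram inductively, setting at each step $\psi_i := (\ov\psi_i \circ \alpha_i)_\am$ and $\alpha_{i+1} := (\ov\psi_i \circ \alpha_i)_{\min}$, and to extract the factorization required in (3) via Lemma \ref{lem:aMMP_properties}. For the base case $i = 0$ I define $\psi_0 := (\ov\psi_0)_\am$ and $\alpha_1 := (\ov\psi_0)_{\min}$; Remark \ref{rem:(Xf,Df)} gives that $(X_1, D_1)$ is a log surface with $K_{X_1}$ being $\Q$-Cartier, and Corollary \ref{cor:nef_gives_peeling}(1) says $\alpha_1$ is a pure partial peeling. For $i \geq 1$, the composition $g_i := \ov\psi_i \circ \alpha_i$ is a partial MMP run as a concatenation of such, so $\psi_i := g_i^\#$ and $\alpha_{i+1} := (g_i)_{\min}$ are well-defined, and the commutativity together with properties (1) and (2) follows from the same two results.

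For (3), put $A_i := (\alpha_i)^{-1}_* \ov A_i$. Since $\alpha_i$ is a partial MMP run and $\ov A_i$ is $\Q$-Cartier (as the log exceptional curve contracted by the first step of $\ov\psi_i$), Remark \ref{rem:Q-Cartier_under_contraction}(1) yields that $A_i$ is $\Q$-Cartier. The purity of $\alpha_i$ combined with Corollary \ref{cor:pure_peeling} gives $\alpha_i^* K_{\ov X_i} = K_{X_i} + E$ for some effective $E$ supported on $\Exc \alpha_i \subseteq \Supp D_i$. Since $\ov A_i \not\leq \ov D_i$, the curve $A_i$ is not a component of $D_i$, so intersecting yields $A_i \cdot K_{X_i} \leq \ov A_i \cdot K_{\ov X_i} \leq \ov A_i \cdot (K_{\ov X_i} + \ov D_i) < 0$; combined with $A_i^2 < 0$ (since $A_i$ is contracted by $g_i$), this makes $A_i$ a $\Q$-Cartier log exceptional curve on $(X_i, 0)$, so the contraction $\ctr_{A_i}$ exists. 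By Lemma \ref{lem:sub_contractions} one obtains a factorization $g_i = \tau_i \circ \ctr_{A_i}$. Plugging this into \eqref{eq:composing_almost_min}, and noting that $(\ctr_{A_i})^\# = \ctr_{A_i}$ and $(\ctr_{A_i})_{\min} = \id$ because $A_i$ is $K_{X_i}$-negative, gives $\psi_i = (\tau_i)_\am \circ \ctr_{A_i}$. The curve $A_i$ is $\alpha_i$-almost log exceptional by Definition \ref{dfn:redundant_and_almost-log-exc}(1), so (3) holds.

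For (4), iterate Lemma \ref{lem:aMMP_properties}: applying \eqref{eq:composing_almost_min} with $f = \ov\psi_0$ and $g = \ov\psi_n \circ \cdots \circ \ov\psi_1$ gives $\ov\psi^\# = ((\ov\psi_n \circ \cdots \circ \ov\psi_1) \circ \alpha_1)^\# \circ \psi_0$, and an induction on $n$ unpacks this to $\psi_n \circ \cdots \circ \psi_0$. When $(\ov X_{n+1}, \ov D_{n+1})$ is minimal, the target of the pure partial peeling $\alpha_{n+1}$ is minimal, so $(X_{n+1}, D_{n+1})$ is $\alpha_{n+1}$-almost minimal by Definition \ref{def:peel_squeeze}(3). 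For (5), each $\psi_i$ is a partial $K_{X_i}$-MMP, so $-K_{X_i}$ is $\psi_i$-nef, and Lemma \ref{lem:ld_increasaes} with boundary $0$ gives $\cf(E; X_{i+1}, 0) \leq \cf(E; X_i, 0)$ for every divisor $E$ exceptional over $X_{i+1}$; taking the supremum yields $\cf(X_{i+1}) \leq \cf(X_i)$. For (6), Corollary \ref{cor:reordering_MMP} applied to $f_1 = \psi_i$ and $f_2 = g_i = \alpha_{i+1} \circ \psi_i$ (both partial MMP runs with $\Exc f_1 \leq \Exc f_2$, and $(\psi_i)_*$ preserving $\Q$-Cartier divisors) gives $\kappa(K_{X_i} + D_i) = \kappa(K_{X_{i+1}} + D_{i+1})$, and induction on $i$ gives (6).

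The main obstacle is (3): the key input is the inequality $A_i \cdot K_{X_i} < 0$, which rests on the purity of $\alpha_i$ via Corollary \ref{cor:pure_peeling} and transfers log exceptionality of $\ov A_i$ on $(\ov X_i, \ov D_i)$ to $K$-negativity of $A_i$ on $X_i$. The subsequent bookkeeping via Lemma \ref{lem:aMMP_properties} is then what identifies the residual morphism $(\tau_i)_\am$ as a squeezing rather than a generic MMP.
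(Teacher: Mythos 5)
Your construction is the same as the paper's: set $\psi_i=(\ov\psi_i\circ\alpha_i)_\am$ and $\alpha_{i+1}=(\ov\psi_i\circ\alpha_i)_{\min}$, get (1)--(2) from Corollary \ref{cor:nef_gives_peeling} and Remark \ref{rem:Q-Cartier_under_contraction}(1), get (4) from \eqref{eq:composing_almost_min} and (6) from Corollary \ref{cor:reordering_MMP}. For (3) the paper simply cites Lemma \ref{lem:a.l.e._is_redundant} for $A_i\cdot K_{X_i}<0$; your inline derivation via Corollary \ref{cor:pure_peeling} is exactly the proof of that lemma, and your extra bookkeeping (existence of $\ctr_{A_i}$, the factorization $g_i=\tau_i\circ\ctr_{A_i}$ via Lemma \ref{lem:sub_contractions}, and $(\ctr_{A_i})^\#=\ctr_{A_i}$) is correct. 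All of this is fine.

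The one step that does not hold as written is in (5). First, $-K_{X_i}$ need not be $\psi_i$-nef: a $K$-MMP run can contract a curve that is $K$-positive on the initial surface (e.g.\ the middle curve of a chain $[1,3,1]$ on a smooth surface, contracted after the two $(-1)$-curves), so Lemma \ref{lem:ld_increasaes} cannot be applied to the composition in one shot; you must apply it to each elementary contraction and chain the inequalities. Second, ``taking the supremum'' does not close the argument: the supremum defining $\cf(X_{i+1})$ runs over all divisors exceptional over $X_{i+1}$, which includes the components $E$ of $\Exc\psi_i$, and for these the chained inequality only gives $\cf(E;X_{i+1},0)<\cf(E;X_i,0)=\coeff_E(0)=0$, while $0$ need not be bounded by $\cf(X_i)$ (for $X_i$ terminal, $\cf(X_i)=-1$). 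So your chain yields only $\cf(X_{i+1})\leq\max(\cf(X_i),0)$. To finish one needs the additional observation that either $X_i$ is singular, in which case $\cf(X_i)\geq 0$ because the minimal resolution has non-negative coefficients (Corollary \ref{cor:pure_peeling} and Lemma \ref{lem:negativity}), or $X_i$ is smooth, in which case every curve contracted by the $K$-MMP $\psi_i$ is a $(-1)$-curve and $X_{i+1}$ stays smooth with $\cf(X_{i+1})=\cf(X_i)$. This is a routine repair, but as written the justification of (5) has a hole.
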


\begin{proof} 
We define inductively $\psi_i$ to be an almost minimalization as in (3) and hence by Corollary \ref{cor:nef_gives_peeling} and Remark \ref{rem:Q-Cartier_under_contraction}(1) we get that $K_{X_{i+1}}$ is $\Q$-Cartier and the induced $\alpha_{i+1}$ is as in (2). Note that $\psi_i$ and $\alpha_i$ map $\Q$-Cartier divisors to $\Q$-Cartier divisors by Remark \ref{rem:Q-Cartier_under_contraction}(1). By Lemma \ref{lem:a.l.e._is_redundant} $A_i\cdot K_{X_i}<0$, so $\psi_i$ is a composition of a contraction of $A_i$ with a $\tau_i$-squeezing. Part (4) follows from equation \eqref{eq:composing_almost_min}. Part (5) is a consequence of the definition of almost minimalization and of Lemma \ref{lem:ld_increasaes}. Part (6) follows from Corollary \ref{cor:reordering_MMP}.
\end{proof}

\begin{cor}[Summary] \label{cor:aMMP_algorithm} Let $(X,D)$ be a log surface with $\Q$-Cartier $K_X$, which admits a complete MMP run, for instance $(X,D)$ is GMRLC. The above lemma shows that an almost minimal model of $(X,D)$ can be constructed inductively as follows. All contractions below are assumed to be over the chosen minimal model, so that the intermediate surfaces admit complete MMP runs, too.
\begin{enumerate}[(1)]
\item Choose a maximal pure partial peeling morphism $\alpha$ of $(X,D)$ and consider the triple $(X,D,\alpha)$.
\item If there exists a curve $\ll\subseteq X$ such that $\alpha(\ll)$ is log exceptional then put $D':=D$ in case $\ll\leq D$ and $D':=D+\ll$ otherwise. Let $\sigma$ be an $\ctr_{\alpha(\ll)}\circ \alpha$ -squeezing of $(X,D')$ and let $\alpha'$ be a maximal pure partial peeling extending $\ctr_{\alpha(\ll)}\circ\alpha\circ \sigma^{-1}$. Replace $(X,D,\alpha)$ with $(\sigma(X),\sigma_*D', \alpha')$ and repeat.
\item 
If there is no $\ll$ as in (2) then the resulting $(X,D)$ is an almost minimal model and $\alpha$ is a pure peeling morphism onto a minimal model.
\end{enumerate}
\end{cor}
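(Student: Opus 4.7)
The plan is to recognize the algorithm as a concrete execution of the inductive construction in Lemma \ref{lem:effective_almost_minimalization}. First I would fix once and for all a complete MMP run $\bar\psi\:(X,D)\to(\ov X,\ov D)$ onto a minimal model, which exists under the stated hypotheses by Lemma \ref{lem:GMRLC}(1) together with the relative MMP machinery of Section \ref{ssec:logMMP}. All contractions used by the algorithm will then be relative to $\ov X$, so intermediate surfaces automatically admit complete MMP runs and the $\Q$-Cartier conditions needed to iterate are preserved by Remark \ref{rem:Q-Cartier_under_contraction}.

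Next, I would argue by induction on the loop index that after $i$ iterations of step~(2) the current triple $(X,D,\alpha)$ in the algorithm coincides with the triple $(X_{i+1},D_{i+1},\alpha_{i+1})$ appearing in Lemma \ref{lem:effective_almost_minimalization} for some decomposition of $\bar\psi$. The base case is step~(1): choosing $\alpha$ to be a maximal pure partial peeling makes $\psi_0$ in the lemma the identity, since $K_X$ is $\alpha$-nef, so $(X_1,D_1,\alpha_1)=(X,D,\alpha)$. For the inductive step, a curve $\ll$ with $\alpha(\ll)$ log exceptional supplies the next log-exceptional contraction required by the lemma with $\bar A_i=\alpha(\ll)$; the $D'$-substitution of step~(2) unifies the two subcases $\ll\leq D$ and $\ll\nleq D$ by recognizing $\ll$ either as $\alpha$-redundant or as $\alpha$-almost log exceptional of the first kind in the sense of Definition \ref{dfn:redundant_and_almost-log-exc}. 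Lemma \ref{lem:a.l.e._is_redundant} then ensures that $\ll$ intersects $K_X$ negatively and is $\Q$-Cartier after the replacement, while Corollary \ref{cor:nef_gives_peeling} supplies the existence of the squeezing $\sigma$ and of a maximal pure partial peeling $\alpha'$ extending $\ctr_{\alpha(\ll)}\circ\alpha\circ\sigma^{-1}$.

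For termination, each iteration strictly reduces the relative Picard rank $\rho(X/\ov X)$, since either $\sigma$ contracts a curve or, when $\sigma$ is an isomorphism, the new peeling $\alpha'$ strictly extends $\ctr_{\alpha(\ll)}\circ\alpha$. The loop therefore exits after finitely many passes, and at exit the absence of a curve $\ll\subseteq X$ with $\alpha(\ll)$ log exceptional says precisely that $(\alpha(X),\alpha_*D)$ is minimal. Combined with the purity of $\alpha$, Definition \ref{def:peel_squeeze}(3) yields $\alpha$-almost minimality of the output, and Lemma \ref{lem:effective_almost_minimalization}(4) identifies it as an almost minimal model of $(X,D)$. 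The main subtle point I expect to handle is the $\ll\leq D$ branch: Lemma \ref{lem:effective_almost_minimalization} decomposes $\bar\psi$ only into $\bar A_i$-contractions with $\bar A_i\nleq\bar D_i$, so a redundant $\ll$ must be reinterpreted as a contraction folded into a peeling layer rather than as a fresh $\bar A_i$. I would resolve this by invoking Lemma \ref{lem:a.l.e._is_redundant}(2) and (3) to pass to an enlarged-boundary model, where the distinction becomes formal and the induction proceeds uniformly.
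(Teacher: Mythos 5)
Your proposal is correct and takes the paper's own route: the corollary is presented as an immediate consequence of Lemma \ref{lem:effective_almost_minimalization}, and your induction identifying the algorithm's triples with the lemma's $(X_i,D_i,\alpha_i)$, with the two cases $\ll\leq D$ and $\ll\nleq D$ unified via the $D'$ device and Lemma \ref{lem:a.l.e._is_redundant}, is exactly the intended argument. The only slip is in the termination measure: when $\sigma$ is an isomorphism the rank $\rho(X/\ov X)$ does not drop, so you should instead track $\rho(\alpha(X)/\ov X)$, which decreases by at least one on every pass because $\ctr_{\alpha(\ll)}$ is always performed on the target side.
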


\begin{rem}
Note that in part (2) of Corollary \ref{cor:aMMP_algorithm} $\ll$ is necessarily a redundant component of $D$ or an $\alpha$-almost log exceptional curve, hence a redundant component of $D'$; cf. Lemma \ref{lem:a.l.e._is_redundant}. Also, $\sigma$ is the composition of successive contractions of redundant components of $D'$ and its respective images which are contained in $\ll+\Exc \alpha$ and its respective images.
\end{rem}

\begin{rem}\label{rem:MMP_with_D_priority}
In Lemma \ref{lem:effective_almost_minimalization} assume that $\ov \psi$ is a complete MMP run and choose $\ov \psi_0$ to be a peeling and $\ov \psi_i$ for $i\geq 1$ to be a composition of a contraction of a single log exceptional curve $\ov A_i\nleq \ov D_i$ and a peeling. Then in Lemma \ref{lem:effective_almost_minimalization} 
$\alpha_i$ are (pure) peeling morphisms and $(X_i,D_i)$ are squeezed.
\end{rem}

\begin{rem}[Comparison with Miyanishi's construction] \label{rem:relation_to_Miyanishi}
Let $(X,D)$ be a log smooth surface with a reduced boundary. In this case there is already a definition of an almost minimal model by Miyanishi \cite[2.3.11]{Miyan-OpenSurf} which is very close the one above. The difference is that in our construction $\psi$ (and $\ov \psi$) contracts slightly fewer components of the boundary. To see this difference let $C\leq D$ be a superfluous component, that is, a $(-1)$-curve meeting at most two other components of $D$, each at most once in the sense of intersection theory. In particular $\beta_D(C)\leq 2$. If $\beta_D(C)\leq 1$ or if $\beta_D(C)=2$ and $C$ meets some maximal admissible (rational) twig or fork of $D$ (see Section \ref{ssec:reduced_boundaries}) then it is almost log exceptional and hence is contracted by almost minimalization. However, if $\beta_D(C)=2$ and $C$ meets no maximal admissible (rational) twig or fork of $D$ then in Miyanishi's construction we do contract $C$ (which is reasonable, as this respects log smoothness), but we do not do that according to our definition. Indeed, in the latter case $C$ is not almost log exceptional in our sense. In fact, it is not possible to assure in general that the contraction of $C$ of the latter type is a part of some MMP run. Still, if one wants to contract such curves too, then in our approach this can be done after an almost minimal model in the sense of our definition is reached. Indeed, note that we have $C\cdot (K_X+D)=0$, so $C$ is log exceptional of the second kind and is disjoint form the exceptional locus of a peeling of $(X,D)$, see Definition \ref{eq:log_exc2}. Then it is easy to see that after such additional contraction of $C$ the image of $(X,D)$ remains almost minimal; cf.\ Lemma \ref{lem:reordering_1st_2nd}.
\end{rem}

\medskip
\subsection{Analogues for curves of the second kind}\label{ssec:2nd_kind}

We now discuss analogues of the above definitions for curves of the second kind. The following lemma implies that contractions of log exceptional curves of the second kind (if they exist, see Example \ref{ex:elliptic_contraction}) can be delayed until a minimal model is reached and then they respect minimality.

\begin{lem}[Delaying contractions of the second kind] \label{lem:reordering_1st_2nd}   
Let $\gamma\:(X,D)\to (\ov X,\ov D)$ be a log crepant MMP run of the second kind and let $\ov \psi\:(\ov X,\ov D)\to (\ov Y,\ov B)$ be a partial MMP run (of the first kind). Assume that $K_X$ is $\Q$-Cartier. Then there exists $\psi\:(X,D)\to (Y,B)$, a partial MMP run of the first kind with $\gamma_*^{-1}(\Exc \ov \psi)\leq \Exc \psi$, such that $K_Y$ is $\Q$-Cartier and the induced birational morphism $\ov \gamma\:(Y,B)\to (\ov Y,\ov B)$ is a log crepant MMP run of the second kind. In particular, if $(X,D)$ is minimal then $(\ov X, \ov D)$ is minimal.
\end{lem}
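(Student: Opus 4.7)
The plan is to build $\psi$ by decomposing the composition $\phi := \ov\psi \circ \gamma \colon (X,D) \to (\ov Y, \ov B)$ via Corollary \ref{cor:improving_ld_gives_MMP}(3). First I would verify that $\phi$ is itself a partial MMP run of the second kind on $(X,D)$: concatenating a second-kind decomposition of $\gamma$ with a first-kind decomposition of $\ov\psi$ (each contraction of the latter trivially qualifying as one of ``first or second kind'') produces a valid sequence of contractions of $\Q$-Cartier log exceptional curves between log surfaces. Since $K_X$ is $\Q$-Cartier by hypothesis, Corollary \ref{cor:improving_ld_gives_MMP}(3) then yields a factorization $\phi = \ov\gamma \circ \psi$, with $\psi \colon (X,D) \to (Y,B)$ a partial MMP run of the first kind, $K_Y$ being $\Q$-Cartier, and $\ov\gamma \colon (Y,B) \to (\ov Y, \ov B)$ a log crepant partial MMP run of the second kind.

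Next I would verify $\gamma_*^{-1}(\Exc \ov\psi) \leq \Exc \psi$. As observed in the proof of Corollary \ref{cor:improving_ld_gives_MMP}(3), $\psi$ coincides with the relative MMP over $\ov Y$ supplied by Lemma \ref{lem:relativeMMP_unique}, hence contracts exactly
\[
\cE^+_D(\phi) = \{E \in \cE(\phi) \colon \cf(E; \ov Y, \ov B) < \cf(E; X, D)\}.
\]
Let $\ll$ be a component of $\Exc \ov\psi$ and set $E := \gamma_*^{-1}\ll$, a prime divisor on $X$ because $\ll$ is not $\gamma$-exceptional. The log crepancy $\gamma^*(K_{\ov X} + \ov D) = K_X + D$ identifies $D$ with the log pullback of $\ov D$ via $\gamma$, whose non-$\gamma$-exceptional part is $\gamma_*^{-1}\ov D$; reading off the coefficient of $E$ gives $\cf(E; X, D) = \coeff_\ll(\ov D) = \cf(\ll; \ov X, \ov D)$. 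Writing
\[
\phi^*(K_{\ov Y} + \ov B) = \gamma^*\bigl(\ov\psi^*(K_{\ov Y} + \ov B)\bigr) = \gamma^*(K_{\ov X} + \ov D')
\]
with $\ov D'$ the $\ov\psi$-log pullback of $\ov B$ to $\ov X$, the same argument applied to the right-hand side yields $\cf(E; \ov Y, \ov B) = \coeff_\ll(\ov D') = \cf(\ll; \ov Y, \ov B)$. Since $\ov\psi$ is a first-kind MMP run and $\ll$ is one of its exceptional components, Corollary \ref{cor:improving_ld_gives_MMP}(1) supplies the strict inequality $\cf(\ll; \ov Y, \ov B) < \cf(\ll; \ov X, \ov D)$, so $E \in \cE^+_D(\phi) = \Exc \psi$, as required.

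For the ``in particular'' clause I would argue by contrapositive: given any log exceptional curve $\ll$ on $(\ov X, \ov D)$, take $\ov\psi := \ctr_\ll$ in the construction above. The resulting $\psi$ is a partial MMP run of the first kind on $(X,D)$ with $\gamma_*^{-1}\ll \leq \Exc \psi$, hence nontrivial, so $(X,D)$ admits a log exceptional curve of the first kind and is not minimal.

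The step I expect to be most delicate is the coefficient bookkeeping in the middle paragraph, identifying $\cf(\ll; \ov X, \ov D)$ and $\cf(\ll; \ov Y, \ov B)$ with the corresponding log discrepancies of $E$ through the crepant morphism $\gamma$ with its nontrivial exceptional locus. Once this is tracked carefully through the two log pullbacks, the statement reduces to direct applications of Corollary \ref{cor:improving_ld_gives_MMP} and Lemma \ref{lem:relativeMMP_unique}.
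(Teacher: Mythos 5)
Your argument for the main statement is correct and follows essentially the paper's own route: both pass through the observation that $\ov\psi\circ\gamma$ is a partial MMP run of the second kind with $\Q$-Cartier exceptional components and extract $\psi$ as the unique $(K_X+D)$-MMP over $\ov Y$ from Lemma \ref{lem:relativeMMP_unique} (your invocation of Corollary \ref{cor:improving_ld_gives_MMP}(3) is the same construction, packaged), and your coefficient bookkeeping showing $\gamma_*^{-1}(\Exc \ov\psi)\subseteq \cE^+_D(\ov\psi\circ\gamma)$ is a correct elaboration of a detail the paper leaves implicit. The one weak point is the ``in particular'' clause: taking $\ov\psi=\ctr_\ll$ presupposes that $\ctr_\ll$ is a partial MMP run between log surfaces, which requires $\ll$ to be $\Q$-Cartier, and in the stated generality (only $K_X$ is assumed $\Q$-Cartier, no GMRLC hypothesis) a log exceptional curve need not be $\Q$-Cartier, cf.\ Example \ref{ex:log_exc_non_Q-Cartier}. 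The clause is better proved directly, without the main construction: since $\gamma$ is log crepant, the projection formula for Mumford's intersection product gives $\gamma_*^{-1}\ll\cdot (K_X+D)=\ll\cdot(K_{\ov X}+\ov D)<0$ and $(\gamma_*^{-1}\ll)^2\leq \ll^2<0$ (as $\gamma^*\ll-\gamma_*^{-1}\ll$ is effective by Lemma \ref{lem:negativity}), so the proper transform of any log exceptional curve on $(\ov X,\ov D)$ is log exceptional on $(X,D)$ regardless of $\Q$-Cartierness.
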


\begin{proof} The morphism $f=\ov \psi\circ \gamma$ is an MMP run of the second kind. Since components of $\Exc \ov \psi$ are $\Q$-Cartier, so are the components of $\Exc f$. By Lemma \ref{lem:relativeMMP_unique} $f$ factors through a unique MMP run on $(X,D)$ over $\ov Y$, call it $\psi$, and $\psi$ contracts $\cE^+_D(f)$.  This gives a commutative diagram
\begin{equation}\label{eq:reordering2}
\begin{gathered}
\xymatrix{
	(X,D) \ar[d]_{\gamma} \ar[r]^{\psi} & (Y,B) \ar[d]^{\ov \gamma}\\ 
	(\ov X,\ov D) \ar[r]^{\ov \psi} & (\ov Y,\ov B)
}
\end{gathered}
\end{equation}
where $\ov \gamma$ is log crepant and components of $\Exc \ov \gamma$ are $\Q$-Cartier. Moreover, $K_Y=\psi_*K_X$ is $\Q$-Cartier. By Corollary \ref{cor:improving_ld_gives_MMP} $\ov \gamma$ is an MMP run of the second kind.
\end{proof}

The following example shows that for $\ov \psi$ of the second kind it can happen that $\gamma_*^{-1}(\Exc \ov \psi)$ is non-contractible (in the category of algebraic varieties).

\begin{ex}[Non-permutable order of contractions of log exceptional curves of the second kind] \label{ex:problems_with_reordering_2nd_kind}
Let $\sigma\:\ov X\to \P^2$ be a blowup as in Example \ref{ex:elliptic_setup}(3) and let $\ov \ll$ be the proper transform of the cubic. The centers $p_1,\ldots,p_n$ of $\sigma$ are chosen so that there exists a contraction of $\ov \ll$; denote it by $\ov \psi\:\ov X\to \ov Y$. Now pick a point $p\in \ov \ll$ not lying over any $p_i$, $i=1,\ldots, n$. Denote by $\gamma\:X\to \ov X$ the blowup at $p$ and put $\ll=\gamma_*^{-1}\ov \ll$. Let $E$ be the exceptional curve of $\gamma$. Clearly, $\ll$ and $E$ are log exceptional curves of the second kind on $(X,\ll)$ and by construction we have a log crepant contraction $\gamma$ of $E$ followed by the contraction of $\gamma(\ll)$. If there exists a contraction of $\ll$ then the induced morphism contracts the image of $E$, that is, the order of contractions can be changed and we get a commutative diagram \eqref{eq:reordering2}. But the existence of the contraction of $\ll$ implies that $\sigma(p), p_1,\ldots, p_n$ are $\Z$-linearly dependent in the group law of $\sigma(\ll)$. Assuming that the field is not an algebraic closure of a finite field we can always pick $p$ so that this is impossible; cf.\ Example \ref{ex:elliptic_setup}(2).
\end{ex}

\begin{rem}[Non-contractible components in the exceptional locus]\label{rem:non-contractible_component}
Note that in Example \ref{ex:problems_with_reordering_2nd_kind} we obtain a birational contraction from a smooth surface whose exceptional locus consists of two smooth irreducible curves (which are $\Q$-Cartier, as the surface is smooth), but one of them cannot be contracted onto an algebraic surface.
\end{rem}

\begin{lem}[Advancing contractions of the second kind]\label{lem:reduntant_2nd_kind} 
Let $\alpha\:(X,D)\to (\ov X,\ov D)$ be a pure partial peeling morphism on a log surface and let $\bar \gamma\:(\ov X,\ov D)\to (\ov Y, \ov B)$ be a partial MMP run of the second kind. Assume that $K_X$ is $\Q$-Cartier. Then there exists a partial MMP run of the second kind $\gamma\:(X,D)\to (Y,B)$ with $\alpha_*^{-1}\Exc \ov \gamma\leq \Exc \gamma$ onto a log surface with $K_Y$- $\Q$-Cartier and a pure partial peeling morphism $\ov \alpha$ such that the following diagram commutes
\begin{equation}
\begin{gathered}
\xymatrix{(X,D)\ar[d]^{\alpha} \ar[r]^-{\gamma} & (Y,B) \ar[d]^-{\ov \alpha} \\
 (\ov X,\ov D)\ar[r]^-{\bar{\gamma}} &  (\ov Y,\ov B)}
\end{gathered}
\end{equation}
If $\alpha$ is a peeling then $\ov \alpha$ is a peeling.
\end{lem}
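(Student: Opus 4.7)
My plan is to set $\gamma:=\bar\gamma\circ\alpha\:(X,D)\to(\bar Y,\bar B)$ and $\bar\alpha:=\id_{\bar Y}$ for the main existence statement, after verifying that the composition is itself a partial MMP run of the second kind on $(X,D)$; the conditional peeling claim will then require a finer inductive construction.

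To verify that $f:=\bar\gamma\circ\alpha$ is a partial MMP of the second kind on $(X,D)$ I plan to appeal to Corollary \ref{cor:improving_ld_gives_MMP}(2). The components of $\Exc f=\Exc\alpha\cup\alpha_*^{-1}\Exc\bar\gamma$ are $\Q$-Cartier: those in $\Exc\alpha$ because $\alpha$ is a partial MMP run, and those in $\alpha_*^{-1}\Exc\bar\gamma$ by Remark \ref{rem:Q-Cartier_under_contraction}(1) applied to $\alpha$, using that components of $\Exc\bar\gamma$ are $\Q$-Cartier by the definition of a partial MMP of the second kind. Moreover $K_X$ is $\Q$-Cartier by hypothesis and $K_{\bar Y}+\bar B$ is $\Q$-Cartier since $(\bar Y,\bar B)$ is a log surface. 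The coefficient inequality $\cf(E;\bar Y,\bar B)\le\cf(E;X,D)$ demanded by Corollary \ref{cor:improving_ld_gives_MMP}(2) I would check in two cases: for $E\in\Exc\alpha$, Corollary \ref{cor:improving_ld_gives_MMP}(1) applied to $\alpha$ gives $\cf(E;\bar X,\bar D)<\cf(E;X,D)$, which combined with the weak inequality $\cf(E;\bar Y,\bar B)\le\cf(E;\bar X,\bar D)$ from Corollary \ref{cor:cf_decrease_under_MMP} applied to $\bar\gamma$ suffices; for $E\in\alpha_*^{-1}\Exc\bar\gamma$ the curve $E$ is not $\alpha$-exceptional, hence $\cf(E;\bar X,\bar D)=\coeff_E(D)=\cf(E;X,D)$, and Corollary \ref{cor:cf_decrease_under_MMP} again supplies the required inequality. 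Taking $\gamma=f$, $\bar\alpha=\id_{\bar Y}$, and $(Y,B)=(\bar Y,\bar B)$ then verifies the main claim: $\alpha_*^{-1}\Exc\bar\gamma\subseteq\Exc f=\Exc\gamma$, $K_Y=K_{\bar Y}$ is $\Q$-Cartier, $\id_{\bar Y}$ is trivially a pure partial peeling, and the diagram commutes.

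For the conditional claim that $\bar\alpha$ is a peeling when $\alpha$ is a peeling, the identity choice will not suffice in general, because a first-kind log exceptional curve inside $\Supp\bar B$ on $(\bar Y,\bar B)$ would obstruct maximality. My plan is to replace the ``all-at-once'' construction with an inductive one along a decomposition $\bar\gamma=\bar\gamma_k\circ\cdots\circ\bar\gamma_1$ into elementary contractions. Starting from $\gamma_0=\id_X$ and $\bar\alpha_0=\alpha$, which is a peeling by hypothesis, at step $i$ I would apply Lemma \ref{lem:relativeMMP_unique} (with $(K_X+D)$ replaced by $(K_{Y_{i-1}}+B_{i-1})$) to $\bar\gamma_i\circ\bar\alpha_{i-1}$ in order to obtain a minimal additional partial MMP run of second kind $\sigma_i$ to extend $\gamma_{i-1}$ by, and take $\bar\alpha_i$ to be the induced morphism landing at $\bar X_i$; its pureness is preserved by Corollary \ref{cor:nef_gives_peeling}(1) applied at each step.

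The main obstacle will be verifying that $\bar\alpha_i$ remains \emph{maximal} throughout the induction: a hypothetical first-kind log exceptional curve $\bar M$ in $\Supp\bar D_i$ on $(\bar X_i,\bar D_i)$ should, via the effective $\bar\gamma_i$-exceptional divisor furnished by the negativity lemma (Lemma \ref{lem:negativity}) applied to $\bar\gamma_i$, be shown to descend to a log exceptional boundary curve on $(\bar X_{i-1},\bar D_{i-1})$, contradicting the inductive maximality of $\bar\alpha_{i-1}$. Care is required here because the naive comparison $M\cdot(K+D)=\bar M\cdot(K+B)+M\cdot A$ with $A\ge 0$ has the wrong sign of the correction term, so the argument must exploit purity of the current peeling, the fact that $\bar\gamma_i$ is a single contraction of a log exceptional curve of first or second kind, and $\Q$-Cartierness of $M$ to extract the needed negativity on $(\bar X_{i-1},\bar D_{i-1})$.
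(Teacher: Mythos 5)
Your first step does formally satisfy the letter of the displayed existence claim, but only by trivializing it: with $\gamma=\ov\gamma\circ\alpha$ and $\ov\alpha=\id_{\ov Y}$ nothing has been advanced, and, as you yourself note, this choice is useless for the final assertion. So the whole content of the lemma rests on your second construction, and that is where the proposal has a genuine gap. The crux is the \emph{existence} of a contraction of $\ll:=\alpha_*^{-1}\Exc\ov\gamma$ performed \emph{before} the peeling. This is delicate: since $\alpha$ is a partial MMP run, $\alpha^*(K_{\ov X}+\ov D)=K_X+D-A$ with $A\geq 0$, so $\ll\cdot(K_X+D)\geq \ov\ll\cdot(K_{\ov X}+\ov D)$ — the correction term has the unhelpful sign, exactly as you observe — hence $\ll$ need not be log exceptional of any kind on $(X,D)$, and contractions of second-kind curves can simply fail to exist in the category of algebraic surfaces (Examples \ref{ex:elliptic_contraction} and \ref{ex:problems_with_reordering_2nd_kind}). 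Your appeal to Lemma \ref{lem:relativeMMP_unique} does not fill this hole: that lemma produces the unique $(K+B)$-MMP over the base, which is an MMP of the \emph{first} kind contracting the curves whose coefficients strictly drop; when the elementary contraction $\ov\gamma_i$ is log crepant the coefficient of $\ll$ does not drop, so the resulting morphism need not contract $\ll$ at all, and it certainly is not a ``partial MMP run of the second kind $\sigma_i$''. The paper's proof supplies precisely the idea you are missing: putting $\tilde E=\cf_X(\ov X,\ov D)$, one uses purity of $\alpha$ and Lemma \ref{lem:negativity} to show $0\leq\tilde E\leq \Exc\alpha$ and that $\ll$ \emph{is} log exceptional of the first or second kind on the auxiliary log surface $(X,\alpha_*^{-1}\ov D+\tilde E)$, where it is not an isolated elliptic boundary component; Lemma \ref{lem:contractibility_2nd} then yields the contraction, after which Corollary \ref{cor:reordering_MMP} and an almost minimalization produce the pure $\ov\alpha$.

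The second unresolved point is the one you flag yourself: that $\ov\alpha$ is maximal when $\alpha$ is. You correctly diagnose that the naive comparison of intersection numbers has the wrong sign, but you leave the repair as an announced ``obstacle'' rather than an argument. Both the contractibility of $\ll$ and the maximality of $\ov\alpha$ are the actual mathematical content of the lemma, and neither is established in the proposal.
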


\begin{proof} By induction we may assume that $\Exc \ov \gamma$ consists of a single log exceptional curve $\ov \ll$ of the first or second kind which is $\Q$-Cartier. Since $\alpha$ is a partial MMP run, each component of $E=\Exc \alpha$ is $\Q$-Cartier. Let $\ov D'$ be the proper transform of $\ov D$ on $X$. 

We first prove that there exists $\gamma'$, the contraction of $\ll:=\alpha_*^{-1}\ov \ll$, and that it maps $\Q$-Cartier divisors to $\Q$-Cartier divisors. Since $\ov \ll$ is $\Q$-Cartier, $\ll$ is $\Q$-Cartier. We may assume that $E\neq 0$ and that each connected component of $E$ meets $\ll$. Put $\tilde E=\cf_X(\ov X, \ov D)$. We have $$\tilde E=\alpha^*(K_{\ov X}+\ov D)-(K_X+\ov D'),$$ so for every component $E_0$ of $E$ we get $E_0\cdot \tilde E=-E_0\cdot (K_X+\ov D')\leq -E_0\cdot \ov D'\leq 0$, because $\alpha$ is pure. By Lemma \ref{lem:negativity} we obtain $\tilde E\geq 0$. We have $\cf(E_0;\ov X,\ov D)<\cf(E_0;X,D)=\coeff_{E_0}(D)\leq 1$, so $\cf(\ov X, \ov D)\leq 1$. We deduce that $0\leq \tilde E\leq E$ and $\ll$ is a log exceptional curve of the first or second kind on $(X,\ov D'+\tilde E)$. The latter is a log surface, as $K_X+D$ is $\Q$-Cartier and each component of $E$ is $\Q$-Cartier. 

If $\ll\cdot \tilde E\neq 0$ then $\ll$ is not a connected component of $\ov D'+\tilde E$, hence we are done by Lemma \ref{lem:contractibility_2nd}. We may thus assume that $\ll\cdot \tilde E=0$. Since $\ll$ meets every connected component of $\Supp E$, Lemma \ref{lem:negativity} implies that $\tilde E=0$. We have now $(\alpha^*K_{\ov X}-K_X)+(\alpha^*\ov D-\ov D')=0$, so by Corollary \ref{cor:pure_peeling} we get $\alpha^*K_{\ov X}=K_X$ and $\alpha^*\ov D=\ov D'$. It follows that $\ov D'\cdot E=0$. Since $\ll\cdot E>0$, the curve $\ll$ is not a component of $\ov D'$. Again, we are done by Lemma \ref{lem:contractibility_2nd}.

Let $\alpha'=\ov\gamma\circ \alpha\circ (\gamma')^{-1}$ be the induced rational map. It is regular due to the normality of $\gamma'(X)$. Since $\gamma'$ maps $\Q$-Cartier divisors to $\Q$-Cartier divisors and since for every prime exceptional component $E_0$ of $E$ we have $\cf(E_0;\alpha'(X),\alpha'_*D)<\cf(E_0;X,D)$, by Corollary \ref{cor:reordering_MMP}(1) the morphism $\alpha'$ is a partial peeling. Since $K_{\alpha'(X)}$ is $\Q$-Cartier, we have a decomposition $\alpha'=\ov \alpha\circ \alpha''$, where $\alpha''$ is an almost minimalization of $\alpha'$. Then $\gamma=\alpha''\circ\gamma'$ and $\ov \alpha$ are as required.

We have $\ov \gamma^{*}(K_{\ov Y}+\ov B)=K_{\ov X}+\ov D$, which implies that a proper transform of a log exceptional curve on $(\ov Y, \ov B)$ is log exceptional on $(\ov X,\ov D)$. Therefore, if $\alpha$ is a peeling then $\ov \alpha$ is a peeling.
\end{proof}

In an analogy to Definition \ref{dfn:redundant_and_almost-log-exc} we introduce the following definition.

\begin{dfn}[Redundant and almost log exceptional curves of the second kind]\label{dfn:almost_and_superlfuous_2nd}
Let $\alpha$ be a pure partial peeling of a log surface $(X,D)$ with $K_X$ being $\Q$-Cartier and let $\ll\subseteq X$ be a curve.
\begin{enumerate}[(1)]
\item We say that $\ll$ is \emph{$\alpha$-almost log exceptional of the second kind} if $\ll\nleq D$, $\ll\cdot K_X\neq 0$ and $\alpha(\ll)$ is log exceptional of the second kind on $(\alpha(X),\alpha_*D)$. 
\item We say that $\ll$ is \emph{$\alpha$-redundant of the second kind} if $\ll\leq D$, $\ll\cdot K_X<0$ and $\alpha(\ll)$ is log exceptional of the second kind  on $(\alpha(X),\alpha_*D)$,
\end{enumerate}
A curve is \emph{almost log exceptional of the second kind} if it is $\alpha$-almost log exceptional of the second kind for some pure partial peeling $\alpha$. A curve is \emph{redundant of the second kind} if it is $\alpha$-redundant of the second kind for some pure partial peeling $\alpha$.
\end{dfn}

Recall that if $\ll$ is an almost log exceptional curve of the first or second kind on $(X,D)$ then by Lemma \ref{lem:a.l.e._is_redundant} we have $\ll\cdot K_X<0$. In particular, almost log exceptional curves of the second kind and redundant curves of the second kind can be contracted.

The following example shows in particular that in the situation of Lemma \ref{lem:reduntant_2nd_kind} the morphism $\gamma$ may contract more than just $\alpha_*^{-1}\Exc \ov \gamma$.

\begin{ex}[Non-purity of the induced peeling]\label{ex:a.l.e._2nd_type}
Let $X$ be a smooth surface containing a chain $[1,2]$, that is, two smooth rational curves $\ll$ and $D$, such that $\ll^2=-1$, $D^2=-2$ and $\ll\cdot D=1$. The peeling morphism $\alpha\:(X,D)\to (\ov X, \ov D)$ is the contraction of $D$ (here $\ov D=0$), so it is pure and $\alpha^*(K_{\ov X}+\ov D)\sim K_X$. It follows that $\ll$ is almost log exceptional of the first kind. After the contraction of $\ll$ the image of $D$ is a $(-1)$-curve, so the induced peeling contracting the image of $D$ is not pure.

Similarly, consider a smooth surface $X$ containing a chain $[2,1,0]$. Let $D_1=[2]$, $\ll=[1]$ and $D_2=[0]$ be its components. Put $D=D_1+D_2$. The peeling morphism $\alpha\:(X,D)\to (\ov X, \ov D)$ is the contraction of $D_1$, so it is pure and $\alpha^*(K_{\ov X}+\ov D)\sim K_X+D_2$. It follows that now $\ll$ is almost log exceptional of the second kind. Again, after the contraction of $\ll$ the image of $D$ is $[1,-1]$, so the induced peeling contracts the $(-1)$-curve, hence is not pure.
\end{ex}

\begin{samepage}
\begin{dfn}[Almost minimalization of the second kind]\label{dfn:almost_minimalization_2}
Let $\psi\:(X,D)\to (\ov X,\ov D)$ be a partial MMP run of the second kind with $\Q$-Cartier $K_X$. 
\begin{enumerate}[(1)]
\item We  call $\psi_\am$ a \emph{partial almost minimalization of the second kind of $(X,D)$}.
\item If $\Exc \psi\subseteq \Supp D$ then $\psi$ is a \emph{partial peeling of $(X,D)$ of the second kind} and $\psi_\am$ a \emph{partial $\psi$-squeezing of the second kind}. 
\item If $\psi$ is a maximal partial peeling of the second kind then it is called \emph{peeling of the second kind} and $\psi_\am$ is called a \emph{squeezing of the second kind}.
\item If $\psi$ is maximal then $\psi$ (respectively, $\psi_\am$) is called a \emph{minimalization (respectively, almost minimalization)} of the second kind of $(X,D)$.
\item In the above case $(\psi_\am(X),(\psi_\am)_*D)$ is called an \emph{almost minimal model of $(X,D)$ of the second kind}.
\end{enumerate}
\end{dfn}
\end{samepage}

\begin{rem}\label{rem:aMM_2nd_kind}
By Corollary \ref{cor:improving_ld_gives_MMP} if $K_X$ is $\Q$-Cartier and $\psi$ is a partial MMP run of the second kind then $\psi=\psi_2\circ \psi_1$, where $\psi_1$ is a partial MMP run (of the first kind) and $\psi_2$ is log crepant partial MMP run of the second kind. By \eqref{eq:composing_almost_min} it follows that an almost minimalization of the second kind is an extension of an almost minimalization of the first kind. 
\end{rem}

\medskip
\section{Almost minimalization for reduced boundaries}\label{ssec:reduced_boundaries}

We want to obtain a more explicit description of the process of almost minimalization for log surfaces whose boundary is \emph{uniform}, that is, all coefficients of prime components are equal to some fixed number $r\in [0,1]$. Before we do this we need to review the well-known case $r=1$ in detail.

\medskip
\subsection{Discriminants and log canonical subdivisors}\label{ssec:twigs_forks_barks}

Let $X$ be a smooth projective surface and $D$ a reduced divisor on $X$. We introduce or recall (see \cite{Fujita-noncomplete_surfaces}, \cite{Miyan-OpenSurf}) some notions and notation related to the geometry of divisors and specific subdivisors, which will be used later to discuss coefficient divisors and peeling morphisms. We do not assume that $D$ is a simple normal crossing divisor. We denote the number of components of $D$ by $\#D$.

By $p_a(D)$ we denote the arithmetic genus of $D$, that is, $p_a(D):=\frac{1}{2}D\cdot(K_X+D)+1$. We have $$p_a(D_1+D_2)=p_a(D_1)+p_a(D_2)+D_1\cdot D_2-1$$ for any divisors $D_1$, $D_2$ on $X$. Given a reduced subdivisor $T\leq D$ we call 
\begin{equation}\label{eq:br_number}
\beta_D(T)=T\cdot (D-T)
\end{equation}
the \emph{branching number of $T$ in $D$}. A \emph{tip of $D$} is a component with $\beta_D\leq 1$ and a \emph{branching component of $D$} is a component with $\beta_D\geq 3$. We say that $D$ is \emph{rational} if all its components are rational. A component of $D$ is \emph{admissible} if it is smooth rational and its self-intersection number is at most $(-2)$. 

Let $D_1,\ldots, D_n$ be the components of $D$. We put $Q(D)=[D_i\cdot D_j]_{1\leq i,j\leq n}$ and we call $$d(D):=\det(-Q(D)),$$ where $d(0):=1$, the \emph{discriminant of $D$}; it does not depend on the chosen order of components.

\begin{lem}[Splitting formula for discriminants]\label{lem:d(D_1+D_2)}
Assume $D_1$, $D_2$ are two reduced divisors on a smooth projective surface which have no common component and which intersect in unique components $T_1\leq D_1$ and $T_2\leq D_2$, respectively. Then
\begin{equation}\label{eq:d(D1+D2)}
d(D_1+D_2)=d(D_1)d(D_2)-(T_1\cdot T_2) d(D_1-T_1)d(D_2-T_2).
\end{equation}
\end{lem}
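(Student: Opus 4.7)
The plan is to read $d(D_1+D_2)$ off a block-decomposed intersection matrix. Order the components of $D_1+D_2$ so that those of $D_1$ precede those of $D_2$, with $T_1$ placed last in the $D_1$-block and $T_2$ placed first in the $D_2$-block. With this ordering one has
$$-Q(D_1+D_2)=\begin{pmatrix}-Q(D_1) & -E \\ -E^{\top} & -Q(D_2)\end{pmatrix},$$
where $E$ is the $(\#D_1)\times(\#D_2)$ matrix whose only nonzero entry is $T_1\cdot T_2$, located at the position where the $T_1$-row meets the $T_2$-column. Thus the off-diagonal blocks of $-Q(D_1+D_2)$ contain exactly two nonzero entries, which sit at positions $(T_1,T_2)$ and $(T_2,T_1)$.

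I would then evaluate $\det(-Q(D_1+D_2))$ via the Leibniz formula, sorting permutations by how they interact with the two blocks. Because every off-diagonal-block entry vanishes except at the two positions above, a permutation $\sigma$ contributes a nonzero product only if either (a) $\sigma$ preserves each block separately, or (b) $\sigma$ sends the $T_1$ index to the $T_2$ index and vice versa, while preserving the remainder of each block. The (a)-contributions assemble into $\det(-Q(D_1))\cdot \det(-Q(D_2))=d(D_1)\,d(D_2)$, by the Leibniz formula applied to each block. The (b)-contributions assemble into the product of the two crossed entries, the sign of the transposition exchanging $T_1$ and $T_2$, and the Leibniz sums over the subblocks indexed by $D_1-T_1$ and $D_2-T_2$; combining these yields the correction term $-(T_1\cdot T_2)\,d(D_1-T_1)\,d(D_2-T_2)$, and adding the two contributions gives the claimed identity.

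Equivalently, one can argue by cofactor expansion along the $T_1$-row. Each column inside the $D_1$-block produces a minor whose upper-right block is identically zero, since only the deleted row of $-E$ was nonzero; hence these minors are block-lower-triangular and their determinants factor as a cofactor of $-Q(D_1)$ times $\det(-Q(D_2))$. Summing over such columns reconstructs exactly $d(D_1)\,d(D_2)$ via the cofactor expansion of $d(D_1)$ along the $T_1$-row. The unique $T_1$-row entry outside the $D_1$-block lies in the $T_2$-column, and its cofactor reduces, after a second expansion along the $T_2$-row of the resulting minor, to $d(D_1-T_1)\,d(D_2-T_2)$ up to sign.

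The only delicate step in either route is the sign bookkeeping: in the Leibniz viewpoint it is neatly absorbed into the sign of a single transposition, while in the cofactor viewpoint one must carefully track the product of two cofactor signs coming from successive expansions. Every other ingredient reduces to routine block-matrix arithmetic, so the bulk of the work is purely combinatorial.
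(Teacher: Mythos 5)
Your overall strategy --- reading the identity off the block form of $-Q(D_1+D_2)$, whose off-diagonal blocks have a single nonzero entry at the $(T_1,T_2)$ position --- is exactly the content of the paper's one-line proof (the paper phrases it via additivity of the determinant in a column and block-triangularity rather than via the Leibniz formula, but the substance is the same elementary determinant manipulation). Your classification of the contributing permutations into types (a) and (b) is correct, and the type-(a) terms do assemble into $d(D_1)d(D_2)$.

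The problem is the final assembly of the type-(b) term. The two crossed entries of $-Q(D_1+D_2)$ are both equal to $-(T_1\cdot T_2)$, so ``the product of the two crossed entries'' is $(T_1\cdot T_2)^2$; together with the sign $-1$ of the transposition, the computation you describe produces the correction term $-(T_1\cdot T_2)^2\,d(D_1-T_1)\,d(D_2-T_2)$, not $-(T_1\cdot T_2)\,d(D_1-T_1)\,d(D_2-T_2)$. The $2\times 2$ case already shows this: for $D_i=T_i$ with $T_i^2=-a_i$ and $T_1\cdot T_2=t$ one gets $d(D_1+D_2)=a_1a_2-t^2$, whereas the displayed formula gives $a_1a_2-t$. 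The same slip occurs in your cofactor route, where the second expansion picks up another factor of $-(T_1\cdot T_2)$ that you drop; moreover, after deleting the $T_1$-row and $T_2$-column the remaining $T_2$-row still has nonzero entries in the $D_2$-columns, so the reduction to $d(D_1-T_1)\,d(D_2-T_2)$ requires the observation that the columns indexed by $D_1-T_1$ are supported in the rows indexed by $D_1-T_1$, not just a naive single row expansion. The identity as displayed is therefore valid only when $T_1\cdot T_2\le 1$ --- which happens to hold in every application in the paper (tips of chains, twigs attached to a branching component, and so on) --- but your argument, carried out consistently, proves the version with $(T_1\cdot T_2)^2$. You should either impose $T_1\cdot T_2=1$ as a hypothesis or state the corrected exponent, rather than assert that the computation yields the displayed coefficient.
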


\begin{proof}The proof follows from the additivity of the determinant function with respect to column addition and its behavior on block-triangular matrices. 
\end{proof}

In particular, if $D$ is a reduced divisor with a tip $D_1$ and this tip meets a component $D_2\leq D$ then
\begin{equation}\label{lema:d(D-tip)}
d(D)=(-D_1^2)d(D-D_1)-d(D-D_1-D_2).
\end{equation}

\bigskip
If $\Supp D$ is connected then we call $D$ a \emph{rational tree} if $p_a(D)=0$ and a we call $D$ a \emph{rational cycle} if $p_a(D)=1$, $D$ has no branching component and is not a smooth elliptic curve. A rational tree is simply a rational snc-divisor with a connected support and with no rational cycle as a subdivisor. Each component of a reducible rational cycle is rational and has $\beta_D=2$. A rational cycle is \emph{degenerate} if it is not snc. In this case it is a rational curve with a unique singular point - a node or a simple cusp, a sum of two smooth rational curves intersecting at a unique point with multiplicity two or a triple of smooth rational curves passing through a common point, each two meeting normally, see Figure \ref{fig:degenerate_cycle}.

\medskip 
\begin{figure}[H]
\begin{tikzpicture}[scale=0.8]
	\draw (-1,1) to[out=-60,in=180] (0.5,-0.5) to[out=0,in=-90] (1,0) to[out=90,in=0] (0.5,0.5) to[out=180,in=60] (-1,-1);
\end{tikzpicture}
\hspace{1cm} 
\begin{tikzpicture}[scale=0.8]
	\draw (-1,1) to[out=-60,in=180] (0.5,0) to[out=180,in=60] (-1,-1);
\end{tikzpicture}
\hspace{1cm} 
\begin{tikzpicture}[scale=0.8]
	\draw (-1,1) to[out=0,in=90] (0,0) to[out=-90,in=0] (-1,-1);
	\draw (1,1) to[out=180,in=90] (0,0) to[out=-90,in=180] (1,-1);
\end{tikzpicture}
\hspace{1cm} 
\begin{tikzpicture}[scale=0.8]
	\draw (-1,0) -- (1,0);
	\draw (-1,1) -- (1,-1);
	\draw (-1,-1) -- (1,1);
\end{tikzpicture}
\caption{Degenerate rational cycles.} \label{fig:degenerate_cycle}
\end{figure}
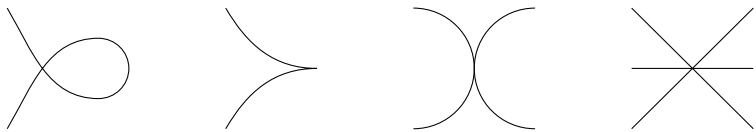

 A rational tree with no branching component is a \emph{rational chain}. If $D$ is a rational chain or a rational cycle, and the components are ordered so that $D_i$ meets $D_{i+1}$ for $i\in\{1,\ldots, n-1\}$, then we write $D=[-D_1^2,\ldots, -D_n^2]$ in the first case and $D=((-D_1^2,\ldots, -D_n^2))$ in the second case. A sequence consisting of an integer $r$ repeated $k$ times will be abbreviated by $(r)_{k}$. A rational chain and a rational cycle are \emph{admissible} if they have admissible components and are negative definite. For chains the second condition is in fact redundant and for reducible rational cycles it is equivalent to one of the inequalities $-D_i^2\geq 2$ being strict. 

A component $T$ of $D$ is called \emph{superfluous} if it is a $(-1)$-curve meeting at most two other components of $D$, each at most once. Equivalently, after the contraction of $T$ the image of $T$ is a simple normal crossing point of the image of $D$. Note that a log smooth completion of a smooth quasi-projective surface is \emph{minimal} (does not dominate non-trivially some other log smooth completion) if and only if the boundary contains no superfluous component which is not its connected component. 

\smallskip
A rational chain $T$ is \emph{ordered} if it has a fixed  ordering on its components $T\cp{1},\dots, T\cp{m}$ such that $T\cp{i}\cdot T\cp{i+1}=1$ for $i\in \{1,\dots, m-1\}$. By $T\trp$ we denote the same chain with the opposite order of components. The \emph{tip} (a \emph{first tip}, if there is any chance of confusion) of a nonzero ordered chain is by definition $\tip{T}=T\cp{1}$. For $i\geq 1$ we put 
\begin{equation}\label{eq:di(T)}
d\cp{i}(T)=d(T\cp{i+1}+\dots+T\cp{m})\quad \text{and}\quad d_{(i)}(T)=d(T\cp{1}+\dots+T\cp{i-1})
\end{equation}
with $d\cp{i}(0)=d_{(i)}(0)=0$. We put also $d'(T)=d\cp{1}(T)=d(T-\tip{T})$. 
Let $T$ be an admissible ordered chain. By Lemma \ref{lem:d(D_1+D_2)}
\begin{equation}\label{eq:d(chain)}
d(T)=(-\tip{T})^2d'(T)-d'(T-\tip{T}).
\end{equation} We have $0\leq d'(T)<d(T)$ and $\gcd(d(T),d'(T))=1$, see \cite[3.5]{Fujita-noncomplete_surfaces}. 
We put 
\begin{equation}
\delta(T)=\frac{1}{d(T)}\in \Q\cap (0,1]\quadtext{and} \ind(T)=\frac{d'(T)}{d(T)}\in \Q\cap [0,1)
\end{equation} 
and we call $\ind(T)$ the \emph{inductance of $T$}. 

\medskip
Given a reduced divisor $D$ we now define some specific subdivisors $T\leq D$ and depict their \emph{extended dual graphs} (see \cite[Definition 4.6]{KollarMori-bir_geom}). White graph vertices correspond to components of $T$ and their weights (if displayed) are negatives of their self-intersection numbers. Black vertices represent $D-T$ (they are not necessarily distinct). The number of edges between two vertices is the intersection number of the represented components, see Fig.\ \ref{fig:dual_graphs}.

A nonzero rational chain $T\leq D$ whose all components are non-branching in $D$, that is $\beta_D(T)\leq 2$, is called a (rational) \emph{twig of $D$} if some component of $T$ is a tip of $D$ ($\beta_D\leq 1$) and a \emph{segment of $D$} otherwise. A segment is \emph{non-degenerate} if $T$ meets $D$ normally (this holds for instance if $T$ has at least three components), for degenerate segments see Figure \ref{fig:degenerate_segment}.
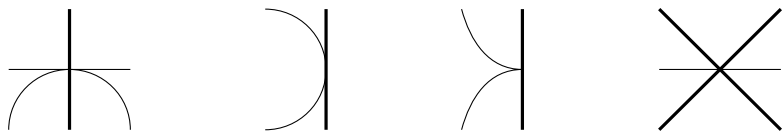
\begin{figure}[h]
\begin{tikzpicture}[scale=0.8]
	\draw (-1,0) -- (1,0);
	\draw (-1,-1) to[out=90,in=180] (0,0) to[out=0,in=90] (1,-1);
	\draw[very thick] (0,-1) -- (0,1);
\end{tikzpicture}
\hspace{1.5cm} 
\begin{tikzpicture}[scale=0.8]
	\draw (-1,1) to[out=0,in=90] (0,0) to[out=-90,in=0] (-1,-1);
	\draw[very thick] (0,-1) -- (0,1);
\end{tikzpicture}
\hspace{1.5cm} 
\begin{tikzpicture}[scale=0.8]
	\draw (-1,1) to[out=-75,in=180] (0,0) to[out=180,in=75] (-1,-1);
	\draw[very thick] (0,-1) -- (0,1);
\end{tikzpicture}
\hspace{1.5cm} 
\begin{tikzpicture}[scale=0.8]
	\draw (-1,0) -- (1,0);
	\draw[very thick] (-1,1) -- (1,-1);
	\draw[very thick] (-1,-1) -- (1,1);
\end{tikzpicture}
\caption{Degenerate segments. Thick lines indicate their components.} \label{fig:degenerate_segment}
\end{figure}
A twig is a \emph{maximal twig} of $D$ if it is not properly contained in another twig of $D$ in the sense of supports. A twig whose support is a connected component of $D$ is called a \emph{rod} of $D$. A twig which is not a rod comes (and will be considered) with a unique order in which $\tip{T}$ is a tip of $D$. 

Let $F$ be a rational tree with a unique branching component $B$ and three maximal twigs $T_1$, $T_2$, $T_3$. Then we call $F$ a \emph{(rational) fork}, we write $F=\langle B;T_1,T_2,T_3\rangle$ and we say that $F$ is of type $(-B^2;d(T_1),d(T_2),d(T_3))$. We put \begin{equation}\label{eq:delta_def}
\delta(F):=\delta(T_1)+\delta(T_2)+\delta(T_3).
\end{equation} By a \emph{$(-2)$-fork (respectively, a $(-2)$-chain)} we mean an fork (respectively, a chain) consisting of $(-2)$-curves. By a \emph{fork of $D$} we mean a fork which is a connected components of $D$. By Lemma \ref{lem:d(D_1+D_2)} 
\begin{equation}\label{eq:fork_d}
d(F)=d(T_1)d(T_2)d(T_3)(-B^2-\ind(T_1\trp)-\ind(T_2\trp)-\ind(T_3\trp)).
\end{equation}
A fork $F$ with admissible components is called an \emph{admissible fork} if $\delta(F)>1$ and is called a \emph{log canonical fork} if $\delta(F)=1$ and not all components of $F$ are $(-2)$-curves. 

A rational tree $T\leq D$ is called a \emph{bench of $D$} if $T$ is a connected component of $D$ which contains a chain (called a \emph{central chain}) $C=C_1+\ldots+C_n$, $n\geq 1$ with tips $C_1$, $C_n$ such that $T-C=T_1+T_2+T_3+T_4$, $T_i=[2]$ for $i=1,2,3,4$, $T_i\cdot C_1=1$ for $i=1,2$ and $T_i\cdot C_n=1$ for $i=3,4$. A bench is \emph{log canonical} if the central chain is admissible and does not consist of $(-2)$-curves only. 

A rational tree $T\leq D$ is called a \emph{half-bench of $D$} if $T$ contains a chain (again called a \emph{central chain}) $C=C_1+\ldots+C_n$, $n\geq 1$ with tips $C_1$, $C_n$ such that $T-C=T_1+T_2$, $T_i=[2]$, $T_i\cdot C_1=1$ for $i=1,2$ and  $T\cdot (D-T)=C_n\cdot (D-T)=1$. In particular, a half-bench of $D$ is a fork of type $(b;2,2,t)$ or a chain $[2,b,2]$ for some $b,t\geq 2$. We call it a \emph{log canonical} half-bench if $C$ is admissible.
\medskip
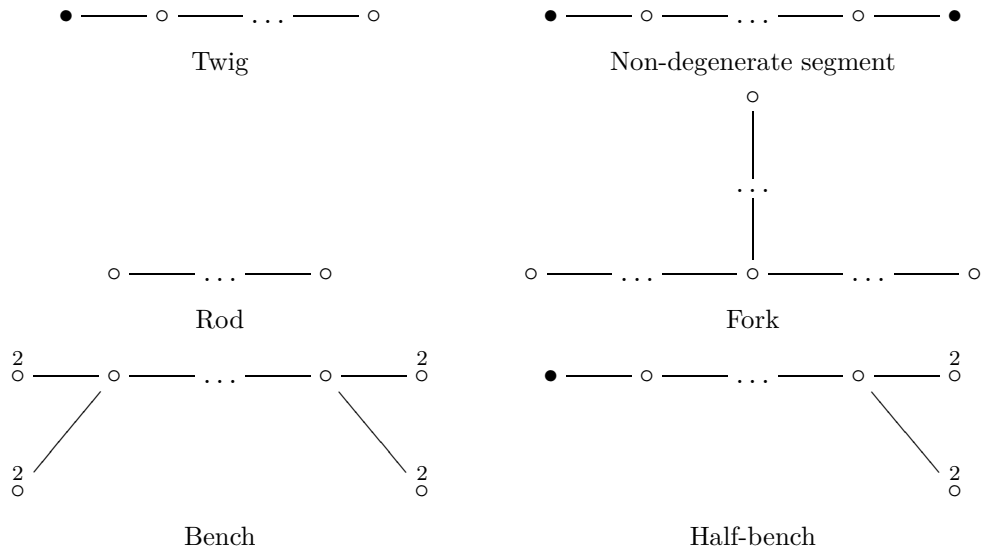
\begin{figure}[H]
\centering
\subcaptionbox*{Twig}[.4\textwidth]
{$\xymatrix{{\bullet}\ar@{-}[r] & {\circ}\ar@{-}[r] & \ldots\ar@{-}[r] & {\circ}}$}
\subcaptionbox*{Non-degenerate segment}[.4\textwidth]
  {$\xymatrix{{\bullet}\ar@{-}[r] & {\circ}\ar@{-}[r] & \ldots\ar@{-}[r] & {\circ}\ar@{-}[r] & {\bullet}}$}
\subcaptionbox*{Rod}[.4\textwidth]
{$\xymatrix{ {\circ}\ar@{-}[r] & \ldots\ar@{-}[r] & {\circ}}$}
\subcaptionbox*{Fork}[.4\textwidth]
{$\xymatrix{
{} & {} & {\circ}\ar@{-}[d]  & {} & {}\\
 {} & {} & {\ldots}\ar@{-}[d]  & {} & {}\\
 {\circ}\ar@{-}[r] & {\ldots}\ar@{-}[r] & {\circ}\ar@{-}[r]& \ldots\ar@{-}[r] & {\circ}}$}
\subcaptionbox*{Bench}[.4\textwidth]
{$\xymatrix{
{\overset{2}{\circ}}\ar@{-}[r] &{\circ}\ar@{-}[r]\ar@{-}[dl] & {\ldots}\ar@{-}[r] & {\circ}\ar@{-}[r]\ar@{-}[dr] &{\overset{2}{\circ}}\\
{\overset{2}{\circ}} & {} & {} &  {} & {\overset{2}{\circ}}}$}
\subcaptionbox*{Half-bench}[.4\textwidth]
{$\xymatrix{
 {\bullet}\ar@{-}[r] &{\circ}\ar@{-}[r]& {\ldots}\ar@{-}[r] & {\circ}\ar@{-}[r]\ar@{-}[dr] &{\overset{2}{\circ}}\\
  {} & {} & {} &  {} & {\overset{2}{\circ}}}$}
\caption{Log canonical subdivisors.}\label{fig:dual_graphs}
\end{figure}

\begin{rem}\label{rem:log_can_graps_neg-def}\ 
\begin{enumerate}[(1)]
\item For every admissible chain $T$ we have $\delta(T)+\ind(T)\leq 1$ with the equality for a $(-2)$-chain only.  Hence for a fork $F$ with admissible twigs and $\delta(F)>1$ we infer from \eqref{eq:fork_d} and from Sylvester's criterion that $F$ is negative definite if and only if $F$ is admissible (equivalently $-B^2\geq 2$).
\item 
If a fork with $\delta=1$ or a bench consists of $(-2)$-curves then its discriminant vanishes. By \eqref{eq:fork_d} and by elementary properties of determinants it follows that a fork with admissible twigs and $\delta=1$ is log canonical (equivalently, is not a $(-2)$-fork) if and only if it is negative definite. Similarly, a bench with an admissible central chain is admissible (equivalently, its central chain is not a $(-2)$-chain) if and only if it is negative definite.
\end{enumerate}
\end{rem}

\begin{lem}[Log terminal and log canonical subdivisors] \label{lem:lc_singularities}
Let $p\in \ov X$ be a germ of a normal singular surface and let $\ov D$ be a reduced divisor on $\ov X$. Let $E$ be the exceptional divisor of a minimal resolution $\pi\:X \to \ov X$. Put $D=\pi_*^{-1}\ov D+E$. We have $\cf(\cE(\pi);\ov X, \ov D)<1$ (see Section \ref{ssec:discrepancies}) if and only if one of the following holds:
\begin{enumerate}[(1)]
\item $\ov D=0$ and $E$ is either an admissible fork or an admissible rod of $D$,
\item $\ov D\neq 0$ and $E$ is an admissible twig of $D$.
\end{enumerate}
We have $\cf(\cE(\pi);\ov X, \ov D)=1$ if and only if one of the following holds:
\begin{enumerate}[(1)]
\setcounter{enumi}{2}
\item $\ov D=0$ and $E$ is one of the following:
\begin{enumerate}[(a)]
\item a smooth elliptic curve with negative self-intersection number,
\item a (possibly degenerate) admissible rational cycle,
\item a log canonical fork,
\item a log canonical bench.
\end{enumerate}
\item $\ov D\neq 0$, $E\cdot (D-E)=1$ and $E$ is a log canonical half-bench of $D$.
\item $\ov D\neq 0$, $E\cdot (D-E)=2$ and $E$ is a segment of $D$ (possibly degenerate). 
\end{enumerate}
Conversely, each nonzero divisor $E$ as above has a negative definite intersection matrix and in cases other than (3a) and (3b) it contracts to a rational singularity.
\end{lem}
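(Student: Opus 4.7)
Since the log discrepancy condition is local at a chosen singularity $p$, I first reduce to the case that $E$ is connected. Write $D'=\pi_*^{-1}\ov D$. By the minimality of $\pi$ every component of $E$ has self-intersection at most $-2$, unless $E$ is smooth rational with $E^2=-1$, in which case $E$ contracts to a smooth point. Since $\pi$ is a minimal resolution of a singularity, irreducible components of $E$ are admissible, smooth elliptic, or irrational/singular with $p_a>0$. The coefficient divisor $C=\cf_X(\ov X,\ov D)=\sum c_i E_i$ is characterized by the $\#E$ linear equations
\[
E_j\cdot(K_X+D'+C)=0 \qquad (E_j\in\cE(\pi)),
\]
which by adjunction become $\sum_i c_i (E_i\cdot E_j)= E_j^2+2-2p_a(E_j)-E_j\cdot D'$. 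Since $Q(E)$ is negative definite \cite{Mumford-surface_singularities}, this linear system has a unique solution with $c_i\in\Q$. Hence the conditions $\cf(\cE(\pi);\ov X,\ov D)<1$ and $=1$ translate into sharp inequalities for this system, which I will analyze case by case using the shape of the dual graph of $E$.

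For each candidate divisor $E$ in the lists (1)--(5), I will first verify that $Q(E)$ (more precisely, the intersection matrix on the set of components of $E$) is negative definite; for chains this is automatic once the components are admissible, for forks with $\delta>1$ it follows from \eqref{eq:fork_d} and Sylvester's criterion, and for the log canonical forks/benches/half-benches it is exactly Remark~\ref{rem:log_can_graps_neg-def}(2). Then I will compute the coefficients $c_i$ directly using the recursive determinant formulas \eqref{eq:d(chain)} and \eqref{eq:fork_d}. Concretely, for an admissible chain attached to $D'$ at one tip (a twig of $D$), Cramer's rule combined with \eqref{lema:d(D-tip)} gives $c\cp{i}=1-\delta_i$ with $\delta_i\in(0,1)$ computable from subdeterminants, hence $c_i<1$, yielding case~(2). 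For an admissible rod or an admissible fork with $\delta(F)>1$ the same computation, together with \eqref{eq:fork_d}, gives all $c_i<1$, yielding case~(1). For forks with $\delta=1$, for benches, and for the half-benches, the analogous computation produces a zero row in the determinantal cofactor and forces exactly $c_i=1$ at a distinguished component, while all other coefficients remain $\leq 1$; this gives cases (3c), (3d), (4). The smooth elliptic case (3a) is immediate from $c\cdot E^2=E\cdot K_X+E^2=0$ yielding $c=1$, and the rational cycle (3b) as well as the degenerate segment cases (5) follow by adjunction together with a splitting argument analogous to Lemma~\ref{lem:d(D_1+D_2)}.

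For the converse, I need to rule out all other graph shapes. The crucial observation is the monotonicity of coefficients under adding components or decreasing self-intersections: if one enlarges a chain by an extra non-admissible component, or passes to a fork with $\delta<1$, or to a bench whose central chain is not admissible, then the corresponding $c_i$ forced by the linear system becomes strictly greater than $1$. This is proved by induction on the number of components, the inductive step using \eqref{lema:d(D-tip)} and \eqref{eq:d(chain)} to show that the inductance of an ordered chain satisfies $\ind(T)<1$ iff $T$ is admissible, and then combining several such inequalities through the fork formula \eqref{eq:fork_d}. The graphs not appearing in the list fail one of these constraints, hence carry a coefficient $>1$.

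The main technical obstacle will be the bench and half-bench cases: there the graph is neither a chain nor a fork, the computation of $c_i$ splits into two nearly symmetric blocks joined by the central chain, and the bookkeeping of $\ind(T_i)$ at the two ends must match precisely to give $c_i=1$ at the four $(-2)$-tips while staying $\leq 1$ on the central chain. I will handle this by applying Lemma~\ref{lem:d(D_1+D_2)} twice to peel off the four $[2]$-twigs, reducing to a linear system on the central chain alone whose right-hand side becomes constant, at which point negative definiteness of the chain gives the result. The classification of which log canonical singularities are rational (all except (3a), (3b)) follows from Artin's criterion \cite{Artin-Rational_sing} once the dual graphs have been described, since in cases (1), (2), (3c), (3d), (4), (5) the fundamental cycle has arithmetic genus zero.
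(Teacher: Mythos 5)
Your overall route---write down the linear system $E_j\cdot(K_X+D'+C)=0$ for the coefficient divisor, solve it on each listed graph, and exclude all other graphs by a monotonicity principle---is precisely the ``direct arithmetic proof'' that the paper itself delegates to \cite[3.2.7, 3.4.1]{Flips_and_abundance}; the paper's own proof is essentially that citation plus the observation that, because $\pi$ is only a minimal resolution and not a minimal log resolution, degenerate cycles and degenerate segments must be admitted in (3b) and (5). So the strategy is sound, but as written the proposal leaves the decisive step unproved. The forward verifications (computing $\cf$ on the listed graphs) are the easy half; the content of the lemma is that \emph{no other} graph has all coefficients $\leq 1$. You compress this into the assertion that enlarging a chain, passing to a fork with $\delta<1$, etc.\ forces some $c_i>1$. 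That monotonicity statement is itself a nontrivial lemma---it is Alexeev's comparison lemma, which the paper records as Lemma \ref{lem:Alexeev_dlt} and proves in the Appendix (Lemmas \ref{lem:minimal_res}, \ref{lem:Alexeev}, \ref{lem:log_terminal}) via the Negativity Lemma applied to the difference of the two coefficient vectors; it does not follow from the determinant recursions \eqref{lema:d(D-tip)} and \eqref{eq:d(chain)} alone, and it needs care outside the log terminal range, where coefficients can stay equal instead of strictly decreasing. Even granting it, the combinatorial exclusion (no vertex of valency $\geq 4$, at most one branch vertex unless the graph is a bench, $\delta\geq 1$ for the twigs of a fork, the precise position where $D-E$ may meet $E$, and the non-snc configurations) still has to be run; this is where the work of \cite[(3.1.4),(3.1.5)]{Flips_and_abundance} lies, and the proposal dispatches it in one sentence.

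There is also a concrete error in one of the few computations you do state. For a log canonical bench the coefficient $1$ is attained on the \emph{central chain}, while the four $(-2)$-tips carry coefficient $\tfrac12$: for the bench with central component $[b]$, $b\geq 3$, and four $(-2)$-tips, the tip equations give $c_{T_i}=\tfrac12 c_{C}$ and the central equation $(2-b)c_{C}=2-b$ gives $c_{C}=1$, $c_{T_i}=\tfrac12$; the same happens for half-benches, where $c=1$ sits at the component meeting $D-E$. Your claim that the computation ``forces exactly $c_i=1$ at the four $(-2)$-tips while staying $\leq 1$ on the central chain'' has this backwards, which indicates the two-block computation you describe for benches was not actually carried out. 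The closing rationality remark is fine in outline (Artin's criterion), but it too requires verifying $p_a=0$ for the fundamental cycle of each graph type rather than asserting it.
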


\begin{proof}A direct arithmetic proof independent of the characteristic of the base field is given in \cite[3.2.7, 3.4.1]{Flips_and_abundance}; cf.\ \cite[3.39, 3.40]{Kollar_singularities_of_MMP}. Since we do not assume that $p\in (\ov X, \ov D)$ is log canonical (we consider a minimal resolution, not a minimal log resolution), in (3b) we allow degenerate cycles; necessary minor corrections of the arguments in \cite[(3.1.4), (3.1.5)]{Flips_and_abundance} can be done easily. Concerning the rationality of log canonical singularities see also \cite[2.3]{Artin-Contractibility} and \cite[2.28]{Kollar_singularities_of_MMP}.
\end{proof}

Let $\kk$ be the base field. In case $\cha \kk=0$ log terminal singularities are locally analytically isomorphic to quotient singularities, that is, the ones obtained as quotients of $\A^{2}$ by the actions of finite subgroups of $\operatorname{GL}(2,\kk)$, see \cite[Satz 2.10]{Brieskorn_rat_sing}, cf.\ \cite[\S 7.4]{Ishii_singularities}. For instance, for an admissible chain $T$, the action of the cyclic group $\langle \zeta \rangle<(\kk^{*},1)$ of order $d(T)$ given by $\zeta \cdot (x,y)=(\zeta x, \zeta^{d'(T)} y)$ produces a singularity with $T$ as an exceptional divisor of the minimal resolution, see \cite[III.5]{BHPV_complex_surfaces}. 

\medskip
\subsection{Comparing graphs of log terminal singularities}

Let $\pi\:X\to \ov X$ be a minimal resolution of a germ of a log surface $(\ov X,\ov D)$ at some point $p$. The associated weighted decorated dual graph $\cG(\ov X,\ov D,p)$ is the dual graph of the exceptional divisor $E$ of $\pi$ with weights of its vertices being negatives of the self-intersection numbers of the corresponding components of $E$ and decorations of vertices being equal to the intersections of these components with the proper transform of $\ov D$. Such a graph is called \emph{du Val} if and only if $\ov D=0$ and the germ is canonical. On the set of vertices of this graph we have also the $\Q$-valued function $\cf_\cG$ computing coefficients of the corresponding components of $E$ over $(\ov X,\ov D)$. By a weighted decorated subgraph of $\cG(\ov X,\ov D,p)$ we mean a subgraph od the dual graph of $E$ with weights and decorations not bigger than the ones of the original graph. 

\begin{lem}[Alexeev]\label{lem:Alexeev_dlt} Let $\cF$ and $\cG$ be the weighted decorated dual graphs of two germs of log terminal log surfaces. If $\cF$ is a subgraph of $\cG$ not equal to $\cG$ then $\cf_\cF<\cf_\cG|_{\cF}$, unless $\cG$ is du Val, in which case $\cf_{\cF}=\cf_{\cG}=0$.
\end{lem}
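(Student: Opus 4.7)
The plan is to realize both $\cf_\cF$ and $\cf_\cG$ as solutions of the linear systems that come from the log-crepant pullback formula on a minimal resolution, and then compare them by a Stieltjes ($M$-matrix) argument --- the discrete counterpart of the negativity lemma (Lemma \ref{lem:negativity}). Concretely, for a log terminal germ $(\ov X,\ov D,p)$ with minimal resolution $\pi\:X\to \ov X$ and exceptional divisor $E=\sum_i E_i$, the $E_i$ are smooth rational and by minimality $w_i:=-E_i^2\geq 2$; intersecting $K_X+\pi_*^{-1}\ov D+\sum_j c_j E_j\sim \pi^*(K_{\ov X}+\ov D)$ with $E_i$ and applying adjunction yields a linear system $M\mathbf{c}=\mathbf{b}$ with $M_{ij}=E_i\cdot E_j$, $b_i=2-w_i-\beta_i\leq 0$, and $c_i=\cf(E_i)$. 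By Lemma \ref{lem:lc_singularities}, $-M$ is a symmetric Stieltjes matrix, so $(-M)^{-1}\geq 0$, strictly positive on each irreducible (connected) block. Applied to $\cG$ itself, this yields the dichotomy: either $\cG$ is du Val (equivalently $\mathbf{b}^\cG=0$) and $\cf_\cG\equiv 0$, or $\cf_\cG(E_i)>0$ for every vertex $i$ of the connected graph $\cG$.

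Next I will set $\mathbf{d}:=\cf_\cG|_\cF-\cf_\cF$ on $V(\cF)$ and compute $M^\cF\mathbf{d}$ by restricting the two systems to $V(\cF)$. Using the subgraph inequalities $w^\cG_i\geq w^\cF_i$, $\beta^\cG_i\geq \beta^\cF_i$, and $M^\cG_{ij}\geq M^\cF_{ij}$ for $i\neq j$ in $V(\cF)$, together with $0\leq \cf_\cG<1$, a direct rearrangement gives
\begin{align*}
-(M^\cF \mathbf{d})_i = {} & (w^\cG_i-w^\cF_i)(1-\cf_\cG(E_i))+(\beta^\cG_i-\beta^\cF_i) \\
& + \sum_{\substack{j\in V(\cF)\\ j\neq i}} (M^\cG_{ij}-M^\cF_{ij})\cf_\cG(E_j) + \sum_{j\in V(\cG)\setminus V(\cF)} M^\cG_{ij}\cf_\cG(E_j),
\end{align*}
with each summand non-negative. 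The Stieltjes property of $-M^\cF$ then yields $\mathbf{d}\geq 0$, i.e.\ $\cf_\cF\leq \cf_\cG|_\cF$. Moreover, on each connected component $\cF_0$ of $\cF$ we have either $\mathbf{d}|_{\cF_0}>0$ everywhere or $\mathbf{d}|_{\cF_0}\equiv 0$, the latter forcing every summand above to vanish on $V(\cF_0)$.

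To upgrade to strict inequality, I will analyze the case $\mathbf{d}|_{\cF_0}\equiv 0$ assuming $\cG$ is not du Val. Since $1-\cf_\cG>0$, vanishing of the first summand gives $w^\cG=w^\cF$ on $V(\cF_0)$; the second gives $\beta^\cG=\beta^\cF$; the third, combined with the dichotomy $\cf_\cG>0$ everywhere on $\cG$, gives $M^\cG_{ij}=M^\cF_{ij}$ for $i,j\in V(\cF_0)$; and the fourth gives that no vertex of $V(\cG)\setminus V(\cF)$ is adjacent to $V(\cF_0)$ in $\cG$. Hence $\cF_0$ is closed under $\cG$-adjacency, so connectedness of $\cG$ (Zariski) forces $V(\cF_0)=V(\cG)$ and $\cF=\cG$, contradicting the hypothesis. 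Thus $\mathbf{d}>0$ pointwise and the strict inequality $\cf_\cF<\cf_\cG|_\cF$ follows. In the remaining du Val case for $\cG$, one directly checks that the subgraph $\cF$ is also du Val (weights $\leq 2$ force $w^\cF_i=2$ by minimality, decorations vanish, and absence of cycles is inherited), so $\cf_\cF\equiv 0=\cf_\cG|_\cF$.

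The main obstacle is the equality-case propagation: it hinges on combining strict positivity of $(-M)^{-1}$ for irreducible Stieltjes matrices with the sharp dichotomy for connected log terminal germs --- all coefficients vanish, or all are strictly positive --- and on Zariski connectedness of the exceptional divisor to rule out partial equality on a proper subcomponent.
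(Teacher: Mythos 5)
Your proof is correct. The paper's own self-contained treatment of this statement lives in the Appendix (Lemmas \ref{lem:minimal_res}, \ref{lem:Alexeev} and \ref{lem:log_terminal}) and runs on the same fuel as yours --- the linear systems \eqref{eq:discrep_equations_with_d} defining the coefficients, non-negativity of the inverse of the negated intersection matrix with strict positivity on connected blocks, the du Val dichotomy, and connectedness of the exceptional divisor to propagate the equality case --- but it organizes the comparison differently: it applies the matrix of the \emph{large} graph to the zero-extended difference $\cf_\cG-\cf_\cF$ and reduces to elementary modifications (delete a vertex, lower a decoration, lower an edge weight, lower a vertex weight) one at a time. In that arrangement the vertex-weight case yields the residual term $u(1-\cf_{\cF}(E_0))$, whose sign is governed by the \emph{subgraph's} coefficient; the paper must then prove separately, via the determinant identity \eqref{ex:Alexeev_formula}, that $\cf_\cF(E_0)\leq 1$ (this is the role of the auxiliary hypotheses in Lemma \ref{lem:Alexeev}(2) and of Lemma \ref{lem:log_terminal}). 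Your choice to apply $M^\cF$ to $\cf_\cG|_\cF-\cf_\cF$ makes every residual term carry a coefficient of the \emph{large} graph --- $(w^\cG_i-w^\cF_i)(1-\cf_\cG(E_i))$ and non-negative multiples of $\cf_\cG(E_j)$ --- whose signs are immediate from log terminality of $\cG$ and the dichotomy; this handles all modifications in one shot and dispenses with the determinant identity. Two cosmetic points: the negative definiteness of $M^\cF$ needed for the Stieltjes property is not really supplied by Lemma \ref{lem:lc_singularities} but by the hypothesis that $\cF$ is the dual graph of an actual resolution of a normal germ, hence negative definite by Mumford (Remark \ref{rem:singularities}(1)); and your $\beta_i$ is the decoration $\vartheta(E_i)$ rather than the branching number $\beta_D$ of Section \ref{ssec:twigs_forks_barks}, so it deserves a different symbol to avoid a clash.
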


\begin{proof}
See \cite[3.1.3]{Flips_and_abundance} and \cite[3.7(ii)-(iv)]{Alexeev-Fractional_del_Pezzo}, cf.\ Lemma \ref{lem:log_terminal} in the Appendix (Section \ref{sec:d_and_ld}).
\end{proof}

\medskip
\subsection{Barks}
To write down explicit compact formulas for the coefficient divisor in case of uniform boundaries we use \emph{barks} (cf.\ \cite[2.3.5.2]{Miyan-OpenSurf}). Let $T=T\cp{1}+\cdots + T\cp{m}$ be an admissible ordered chain. We define the \emph{bark of $T$} as (see \eqref{eq:di(T)})
\begin{equation}\label{eq:bark}
\Bk' T=\sum_{i=1}^{m}\frac{d\cp{i}(T)}{d(T)}T\cp{i}.
\end{equation}
In particular, the coefficients of $\tip{T}$ and $T\cp{m}$ in $\Bk' T$ are $\ind (T)$ and $\delta(T)$, respectively. We put
\begin{equation}\label{eq:Bk_T}
\Bk\trp T=\Bk' (T\trp).
\end{equation}

Denoting by $\delta_i^j$ the Kronecker delta, from \eqref{eq:d(chain)} we infer that 
\begin{equation}\label{eq:Bk(chain)}
T\cp{i}\cdot \Bk' T=-\delta_i^1. 
\end{equation}

\begin{dfn}[The bark] \label{dfn:Bark} 
Assume that $X$ is a smooth surface and $D$ is a reduced divisor on $X$. 
\begin{enumerate}[(1)]
\item If $E$ is a maximal admissible twig of $D$ but not a rod of $D$, we put $\Bk_DE=\Bk' E$.
\item If $E$ is an admissible rod of $D$ then we pick any order which makes it an ordered twig and we put $\Bk_DE=\Bk' E+\Bk\trp E$.
\item If $E$ is an admissible fork of $D$ we denote its maximal twigs by $T_1$, $T_2$, $T_3$ and its central component by $E_0$ and we put 
\begin{equation*}
\Bk_D E=u(E_0+\sum_{i=1}^3 \Bk\trp T_i)+\sum_{i=1}^3\Bk' T_i, \quadtext{where} u=\frac{\sum_{i=1}^3 \delta(T_i)-1}{-E_0^2-\sum_{i=1}^3 \ind (T_i\trp)}.
\end{equation*}
\end{enumerate}
We extend the definition of $\Bk_D$ additively for disjoint sums of admissible twigs, rods and forks. Finally, we define the \emph{bark of $D$} by $\Bk D:=\Bk_D (\Exc \pi)$, where $\pi$ is the unique pure peeling morphism for $(X,D)$, see Lemma \ref{lem:peel_sq_red_bdry}.
\end{dfn}

\begin{rem}\label{rmk:Bark}
In Definition \ref{dfn:Bark}(3) we have $0<u\leq 1$. To see this note that $\ind(T_i\trp)+\delta(T_i)\leq 1$ for $i=1,2,3$, which gives $-E_0^2-\sum_{i=1}^3 \ind (T_i\trp)\geq - E_0^2+\sum_{i=1}^3 \delta(T_i)-3\geq \sum_{i=1}^3 \delta(T_i)-1>0$.
\end{rem}

\medskip
\subsection{Peeling, redundant and almost log exceptional curves}\label{ssec:Reduced boundary}

We now discuss notions related to almost minimality for reduced boundaries. Many results concerning minimal and almost minimal models of log surfaces with reduced boundary were obtained in the 80s in particular by T. Fujita, M. Miyanishi, S. Tsunoda and F. Sakai, see for instance \cite{Fujita-noncomplete_surfaces}, \cite{Fuj-Zariski_prob}, \cite{MiTs-Platonic_fibration},  \cite{Sakai-normal_surfaces}. The results we present below are close to \cite[2.3.3-3.5]{Miyan-OpenSurf}. We do not claim originality, but the proposed formulation and line of reasoning will be used to obtain analogous results for $(1-r)$-log canonical surfaces with uniform boundary $rD$. For a definition of a peeling of the first and second kind see Definitions \ref{def:peel_squeeze}(1) and  \ref{dfn:almost_minimalization_2}(2). 

\begin{lem}[Peeling and squeezing for a reduced boundary] \label{lem:peel_sq_red_bdry}
Assume that $X$ is a smooth surface and $D$ is a reduced divisor on $X$. 
\begin{enumerate}[(1)]
\item A contraction of some number of admissible forks, admissible rods and admissible twigs of $D$ is a pure partial peeling. Every pure partial peeling is of this type.
\item A contraction of some number of divisors $E$ as in Lemma \ref{lem:lc_singularities} (if exists) is a pure partial peeling of the second kind. Every pure partial peeling of the second kind is of this type. 
\item $L\leq D$ is redundant of the first kind if and only if it is a $(-1)$-curve such that either $\beta_D(L)\leq 1$ or $\beta_D(L)=2$ and $L$ meets some admissible twig of $D$. 
\item $L\leq D$ is redundant of the second kind if and only if it is a $(-1)$-curve with $\beta_D(L)=2$ which meets no admissible twig of $D$. 
\end{enumerate}
In particular, if $D$ contains no superfluous $(-1)$-curve (is snc-minimal) then $(X,D)$ is squeezed and if $(X,D)$ is squeezed then it has a unique peeling.
\end{lem}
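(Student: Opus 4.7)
The plan is to reduce parts (1) and (2) to the classification of log terminal and log canonical singularities given by Lemma \ref{lem:lc_singularities}, and to handle (3), (4) by a direct intersection-theoretic calculation using the bark divisor from Definition \ref{dfn:Bark}.

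For (1), in the forward direction an admissible twig, rod or fork $E$ of $D$ has components with self-intersection $\leq -2$, so $K_X$ is nef on $E$, and by Lemma \ref{lem:lc_singularities} each exceptional coefficient over the contracted image is strictly smaller than $1=\coeff_{E_i}(D)$; Corollary \ref{cor:improving_ld_gives_MMP}(1) then yields a partial MMP run, hence a pure partial peeling. Conversely, for any pure partial peeling $\alpha$, purity forces $E_i^2\leq -2$ for every component of $\Exc\alpha$, so $\alpha$ is the minimal resolution of its image singularities; the partial MMP run property gives $\cf(E_i;\alpha(X),\alpha_*D)<1$, and Lemma \ref{lem:lc_singularities} identifies each connected component of $\Exc\alpha$ either as an admissible twig (when it meets $D-\Exc\alpha$) or as an admissible rod or fork (when it is an entire connected component of $D$). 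Part (2) is identical, except that Corollary \ref{cor:improving_ld_gives_MMP}(2) requires only $\cf\leq 1$, which by Lemma \ref{lem:lc_singularities} brings in the log canonical configurations from parts (3)--(5) of that lemma; the ``if exists'' caveat is unavoidable because elliptic curves and admissible cycles need not be algebraically contractible, cf.\ Example \ref{ex:elliptic_setup}(2).

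For (3) and (4), the conditions $L\leq D$ and $L\cdot K_X<0$ together with the existence of a peeling $\alpha$ such that $\alpha(L)^2<0$ force $L$ to be a smooth rational $(-1)$-curve, by adjunction and the inequality $\alpha(L)^2\geq L^2$. For any pure partial peeling $\alpha$ with $L\nleq \Exc\alpha$, Definition \ref{dfn:Bark} gives
\begin{equation*}
\alpha^*(K_{\alpha(X)}+\alpha_*D)=K_X+D-\bigl(\Exc\alpha-\Bk_D(\Exc\alpha)\bigr),
\end{equation*}
and therefore
\begin{equation*}
\alpha(L)\cdot(K_{\alpha(X)}+\alpha_*D)=\beta_D(L)-2-L\cdot\bigl(\Exc\alpha-\Bk_D(\Exc\alpha)\bigr).
\end{equation*}
Using $\ind(T)<1$ for any admissible twig $T$, together with the analogous strict inequalities $\delta(T)+\ind(T)\leq 1$ for rods and forks, the correction $L\cdot(\Exc\alpha-\Bk_D(\Exc\alpha))$ is strictly positive precisely when $L$ meets an admissible subdivisor contracted by $\alpha$. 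Since $L\cdot(K_X+D)=\beta_D(L)-2$, this gives (3) and (4) for $\beta_D(L)=2$ by choosing $\alpha$ to contract such a twig (or not); the cases $\beta_D(L)\leq 1$ in (3) are immediate with $\alpha=\id$, and the self-intersection condition $\alpha(L)^2<0$ follows from the same bark identity.

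For the final assertions, if $D$ has no superfluous $(-1)$-curve then by (3) no component of $D$ is redundant, since a redundant $(-1)$-curve has $\beta_D\leq 2$ and meets its neighbours transversally, hence would be superfluous. Corollary \ref{cor:witnessing_squeezing_or_alm_min}(1), applied inductively along any chosen peeling, then shows that $(X,D)$ is squeezed. Uniqueness of the peeling on a squeezed $(X,D)$ follows from (1): its exceptional locus equals the uniquely determined disjoint union of all maximal admissible twigs, rods and forks of $D$. The main obstacle lies in the bookkeeping in (3) and (4): one must exhibit a pure partial peeling contracting exactly the relevant admissible twigs met by $L$, and verify sharpness from the strict inequalities in $\delta+\ind\leq 1$, with Lemma \ref{lem:Alexeev_dlt} used to rule out that further enlarging $\alpha$ could alter the type of log exceptionality of $\alpha(L)$.
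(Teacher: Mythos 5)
Your treatment of (1), (2) and the closing assertions follows the paper's route: Lemma \ref{lem:lc_singularities} plus Corollary \ref{cor:improving_ld_gives_MMP} in one direction, and the observation that a pure partial peeling from a smooth surface is a minimal resolution of its image in the other. For (3) and (4), however, you replace the paper's structural argument by a direct bark computation, and that computation has two genuine problems.

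First, your pullback identity is wrong. From \eqref{eq:discrepancies} together with Lemma \ref{lem:Bk=ld(pi)} (at $r=1$) one gets $\alpha^*(K_{\alpha(X)}+\alpha_*D)=K_X+(D-\Exc\alpha)+\cf_X(\alpha(X),\alpha_*D)=K_X+D-\Bk_D(\Exc\alpha)$, so the correction term is $L\cdot\Bk_D(\Exc\alpha)$, not $L\cdot\bigl(\Exc\alpha-\Bk_D(\Exc\alpha)\bigr)$; compare Example \ref{ex:ALE_r=1}. The difference is not cosmetic: the component of an admissible twig $T$ adjacent to $L$ carries coefficient $\delta(T)=1/d(T)\leq\frac{1}{2}$ in $\Bk_D(\Exc\alpha)$, whereas in $\Exc\alpha-\Bk_D(\Exc\alpha)$ it carries $1-\delta(T)$, which can be close to $1$. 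With your formula the correction can exceed $\beta_D(L)-2$ when $\beta_D(L)=3$ (take $L$ a $(-1)$-curve in $D$ meeting three tips $[3],[3],[4]$), so your computation would wrongly certify such an $L$ as redundant of the first kind.

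Second, and independently, you never prove the necessity direction when $\beta_D(L)\geq 3$: you only verify the stated criteria for $\beta_D(L)\leq 2$, but ruling out $\beta_D(L)\geq 3$ is the actual content of (3) and (4). The paper does this structurally: the contraction of $E+L$ is again a partial peeling by Lemma \ref{lem:ld_increasaes} and Corollary \ref{cor:improving_ld_gives_MMP}(1), so by part (1) applied to the minimal resolution of the image the total transform of $D$ is snc near the contracted locus; hence $L$ contracts to an snc point, i.e.\ is superfluous, forcing $\beta_D(L)\leq 2$. If you want to stay numerical, you must combine the corrected correction $\sum_i\delta(T_i)$ with the negative-definiteness constraint $\alpha(L)^2=-1+\sum_i\ind(T_i\trp)<0$ and the inequality $\delta(T)\leq\ind(T\trp)$ to conclude $\sum_i\delta(T_i)<1\leq\beta_D(L)-2$; none of this is in your write-up. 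A smaller point of the same flavour: your uniqueness argument for the peeling of a squeezed surface asserts that the exceptional loci are a ``uniquely determined disjoint union'' of maximal admissible twigs, rods and forks, but two maximal admissible twigs of one chain may overlap; the paper resolves this by noting that overlapping twigs force the whole chain to be an admissible rod, which every maximal peeling must then contract.
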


\begin{proof}
(1), (2) If $\alpha$ is a pure partial peeling of the second kind then it is also a minimal resolution of $\ov X$, hence $\Exc \alpha$ is as in Lemma \ref{lem:lc_singularities}. Conversely, let $\alpha\:(X,D)\to (\ov X,\ov D)$ denote the contraction as in (1) or (2). We may assume that $\Exc \alpha$ is connected. By Lemma \ref{lem:lc_singularities} for every component $E$ of $\Exc \alpha$ we have $\cf(E;\ov X,\ov D)\leq 1=\cf(E;X,D)$ and the inequality is strict in case (1). Since $X$ is smooth, $\Exc \alpha$ has $\Q$-Cartier components and $(X,D)$ is GMRLC. Then $\alpha$ is a partial MMP run of the second kind by Corollary \ref{cor:improving_ld_gives_MMP} and it is of the first kind in case (1). 

(3), (4) Assume that $L$ is $\alpha$-redundant of the first or second kind for some pure partial peeling $\alpha$ (of the first kind). It is a $(-1)$-curve, as $L\cdot K_X<0$. Denote by $E$ the sum of connected components of $\Exc \alpha$ meeting $L$. Since $X$ is $\Q$-factorial and $\alpha$ is a partial MMP (of the first kind), $\alpha(X)$ is $\Q$-factorial. Let $\sigma\:X\to \ov X$ be the contraction of $E+L$. Put $\ov D=f_*D$ and $q=\sigma(E+L)$. If $E=0$ then we get $0\geq L\cdot (K_X+D)=-2+\beta_D(L)$, which gives (3) or (4). We may thus assume that $E\neq 0$. By Lemma \ref{lem:ld_increasaes} coefficients of components of $E+L$ with respect to $(\ov X,\ov D)$ are smaller than $1$, hence by Corollary \ref{cor:improving_ld_gives_MMP}(1)  $\sigma$ is a partial peeling of $D+L$. By (1) $D+L$ has simple normal crossings in a neighborhood of $E+L$. In case $\ov X$ is smooth we infer that $\ov D$ has simple normal crossings at $q$. Assume that $\ov X$ is singular. Let $\pi\:\tilde X\to \ov X$ be the minimal resolution of singularities and let $\tilde D$ be the reduced total transform of $\ov D$. Then $\pi$ is a pure partial peeling of $\tilde D$ and by (1) $\tilde D$ has simple normal crossings in a neighborhood of $\Exc \pi$. It follows that $L$ contracts to an snc-point of the image of $D$, which means that $L$ is a superfluous $(-1)$-curve. This gives (3).

Finally, suppose that $(X,D)$ is squeezed and has more than one peeling morphism. By Corollary \ref{cor:reordering_MMP} there are connected components $E_1$ and $E_2$ of exceptional divisors of these peelings such that $E_1\nleq E_2$, $E_2\nleq E_1$ and $E_1$ meets $E_2$. In particular, $E_1$, $E_2$ are not connected components of $D$. By (1) $E_1$ and $E_2$ are admissible twigs of $D$, hence $(E_1+E_2)\redd$ is an admissible rod of $D$. It follows that both peelings contract $(E_1+E_2)\redd$; a contradiction.
\end{proof}

\begin{rem}Assume that $D$ as above is connected and squeezed of the second kind (i.e.,\ contains no $(-1)$-curve with $\beta_D\leq 2$). Then $(X,D)$ has a unique peeling of the second kind except in the following cases:
\begin{enumerate}[(1)]
\item $D$ is a bench consisting of $(-2)$-curves,
\item $D$ is a rational cycle (possibly degenerate) which either consists of $(-2)$-curves or is negative definite but non-contractible algebraically.
\end{enumerate}
\end{rem}

\begin{proof}Suppose that $(X,D)$ has at least two distinct peeling morphisms of the second kind. By Corollary \ref{cor:reordering_MMP} there are connected components $E_1$ and $E_2$ of their exceptional divisors such that $E_1\nleq E_2$, $E_2\nleq E_1$ and $E=(E_1+E_2)\redd$ is connected. In particular, $E_1$, $E_2$ are not connected components of $D$.  It follows that $E_1$, $E_2$ are of type (2), (4) or (5) in Lemma \ref{lem:lc_singularities}. Assume first that, say, $E_2$ contains a non-nc point of $D$. Then it is of type (5) with $\#E_2\leq 2$ and the non-nc point belongs to $E_1$, too. Then $D$ is a degenerate rational cycle with two or three components. By the maximality of peelings $E$ is not algebraically contractible, hence $D=E$. Since its components have self-intersection numbers at most $(-2)$, $D$ is semi-negative definite, which is a special case of (2). We may thus assume that $E$ contains only nc points of $D$. Since $E_1$ and $E_2$ are of type (2), (4) or (5) of Lemma \ref{lem:lc_singularities}, $E$ is a chain, a fork of type $(b;2,2,n)$, a bench or a non-degenerate rational cycle, and its components have self-intersection numbers at most $(-2)$. By the maximality of peelings $E$ is not algebraically contractible, so it is a bench or a rational cycle, hence $D=E$. This gives (1) or (2).
\end{proof}

The following lemma is a generalization of \cite[Section 2.3.6-8]{Miyan-OpenSurf}. The original more computational proof is for $D$ which is snc. 

\begin{lem}[Almost log exceptional curves for a reduced boundary] \label{lem:case_r=1}
Assume that $X$ is a smooth surface and $D$ is a reduced divisor on $X$. Let $A$ be an $\alpha$-almost log exceptional curve of the first or second kind, where $\alpha$ is a pure partial peeling with exceptional divisor $E$. Then $A$ is a $(-1)$-curve and one of the following holds.
\begin{enumerate}[(1)]
\item $A\cdot D\leq 1$ and if $A$ is of the first kind then the inequality is strict or the point of intersection belongs to some admissible twig, rod or fork of $D$,
\item $A\cdot D=2$ and there is a rod $E_1$ of $D$ meeting $A$ once, in a tip of $D$. Moreover, if $A$ meets another connected component $E_2$ of $E$ then one of the following holds:
\begin{enumerate}[(a)]
\item $E_2$ is an admissible twig of $D$. It meets $A$ in a tip of $D$ or $E_1$ consists of $(-2)$-curves.
\item $E_2$ is a rod of $D$.
\end{enumerate}
\end{enumerate}
\end{lem}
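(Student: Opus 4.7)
The proof splits into three stages: showing $A$ is a $(-1)$-curve, extracting a single numerical identity by pulling back via $\alpha$, and combinatorial case analysis. For the first stage, Lemma~\ref{lem:a.l.e._is_redundant}(1) gives $A\cdot K_X<0$. Writing $\alpha^*\alpha(A)=A+F$ with $F\geq 0$ exceptional by the negativity lemma, and using $F\cdot\alpha^*\alpha(A)=0$ to get $A\cdot F=-F^2\geq 0$, we find $\alpha(A)^2=A^2-F^2\geq A^2$. Log-exceptionality of $\alpha(A)$ of the first or second kind forces $A^2<0$, and adjunction on the smooth $X$ then pins down $A^2=A\cdot K_X=-1$ with $p_a(A)=0$.

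For the second stage, Lemma~\ref{lem:peel_sq_red_bdry}(1) identifies each connected component of $E$ as an admissible fork, rod or twig of $D$, so by Lemma~\ref{lem:lc_singularities} every irreducible component $E_j$ of $E$ satisfies $\cf(E_j;\bar X,\bar D)\in[0,1)$. With $\tilde E:=\cf_X(\bar X,\bar D)$, the divisor $E-\tilde E$ is effective with full support $E$, and intersecting
$$K_X+D=\alpha^*(K_{\bar X}+\bar D)+(E-\tilde E)$$
with $A$ yields the master identity
$$-1+A\cdot D=\alpha(A)\cdot(K_{\bar X}+\bar D)+A\cdot(E-\tilde E).$$
Set $D^\#:=D-E$, $f:=A\cdot D^\#$, $\nu_j:=A\cdot E_j$. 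Since $\alpha(A)\cdot(K_{\bar X}+\bar D)\leq 0$ (strict for first kind), the identity rearranges to
$$\sum_j\nu_j\,\cf(E_j)\leq 1-f,$$
whose nonnegative left side forces $f\leq 1$. The subcases $A\cdot D\in\{0,1\}$ give case~(1) directly; the first-kind clause with $A\cdot D=1$ follows from strict negativity of $\alpha(A)\cdot(K_{\bar X}+\bar D)$, which forces $A\cdot(E-\tilde E)>0$ and hence places the intersection point on some admissible twig, rod or fork.

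For the third stage, assume $A\cdot D\geq 2$. The connected divisor $A+E'$, where $E'\leq E$ is the union of connected components of $E$ meeting $A$, contracts under $\alpha\circ\ctr_A$ and hence must be negative definite, which bounds the multiplicities $\nu_j$ against the weights of components of $E'$. Within admissible configurations of $D$, the coefficient $\cf(E_j)$ vanishes only inside a $(-2)$-rod of $D$, is strictly positive throughout every admissible twig of $D$ (with its maximum near the internal end), and behaves analogously on admissible forks. Combined with $\sum_j\nu_j\cf(E_j)\leq 1-f$, these constraints exclude all configurations with $A\cdot D\geq 3$, pin one intersection to a tip of $D$ lying on a rod $E_1$, and restrict any further intersection to the alternatives listed in~(a) and~(b); the residual ambiguity is resolved by comparing the weighted graph of $(\bar X,\bar D)$ at $\alpha(E_1+E_2)$ with that of the surface obtained from $\bar X$ by further contracting $\alpha(A)$, invoking Lemma~\ref{lem:Alexeev_dlt}. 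The main obstacle is precisely this last enumeration: turning the inequality $\sum_j\nu_j\cf(E_j)\leq 1-f$ together with negative-definiteness into the structural dichotomy of case~(2), including the $(-2)$-chain escape valve in~(a) that allows an internal-tip intersection with $E_2$ exactly when $E_1=[(2)_\ell]$.
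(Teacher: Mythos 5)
Your first two stages are sound and track the paper's own reductions: purity of $\alpha$ plus Lemma \ref{lem:a.l.e._is_redundant}(1) and negative definiteness of $\Exc\alpha+A$ give that $A$ is a $(-1)$-curve, and the identity $-1+A\cdot D=\alpha(A)\cdot(K_{\ov X}+\ov D)+A\cdot(E-\tilde E)$ correctly disposes of $A\cdot D\leq 1$, including the first-kind refinement. The problem is stage three, which is where the entire content of case (2) lives, and which you do not actually carry out — you yourself call the enumeration "the main obstacle." Concretely, two things are asserted without proof: (i) that $\sum_j\nu_j\cf(E_j)\leq 1-f$ together with negative definiteness of $E'+A$ excludes every configuration with $A\cdot D\geq 3$; and (ii) that when $A\cdot D=2$ one of the components met by $A$ must be a \emph{rod} of $D$ (a full connected component, not merely a twig) met in a \emph{tip of $D$}, with the second component constrained as in (a)/(b). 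Neither claim is a formal consequence of the displayed inequality: the coefficients $\cf(E_j)$ can be made arbitrarily small (e.g.\ $1/(2k+3)$ at the $(-2)$-end of a rod $[3,(2)_{k-1}]$, or $0$ on any $(-2)$-rod or $(-2)$-fork — note your parenthetical "only inside a $(-2)$-rod" already misses the $(-2)$-forks of Lemma \ref{lem:Bk=ld(pi)}), so the inequality alone admits many configurations, and each must be killed by a separate discriminant computation. For instance, $A$ attached to three rods $[2],[3],[7]$ passes negative definiteness ($d=1$) and fails only because $0+\tfrac13+\tfrac57>1$; $A$ attached to the interior of a $(-2)$-rod with $A\cdot R=1$ passes the coefficient inequality and fails only because the resulting fork has discriminant $0$. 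Distinguishing "rod of $D$" from "twig of $D$ attached to $D-E$" is likewise invisible to the inequality and must come from the classification of the resulting singularity. So the proposal at present proves the statement only for $A\cdot D\leq 1$.

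It is worth contrasting this with the paper's route, which avoids the enumeration altogether: contract $\alpha(A)$ to get $(\ov Y,\ov B)$, which is log terminal at the image point $q$; take the minimal resolution $\alpha'\:(Y,B)\to(\ov Y,\ov B)$, whose exceptional divisor $E'$ is by Lemma \ref{lem:peel_sq_red_bdry} an admissible twig, rod or fork of $B$ and in particular contains no $(-1)$-curve. Since $X$ is smooth, the induced morphism $\gamma\:X\to Y$ with $\gamma_*(E+A)=E'$ is then a composition of contractions of $(-1)$-curves each superfluous in the image of $D$; this immediately forces $A\cdot D\leq 2$, forces $E+A$ to be a sum of rational trees with $A$ meeting each component of $E$ at most once, and reading off which preimages of an admissible twig/rod/fork under superfluous blow-ups are possible yields exactly the rod $E_1$ met in a tip and the alternatives (a), (b). If you want to salvage your numerical approach you would need to carry out the full case analysis over all admissible twigs, rods and forks (including attachment at interior components), and you would still need the structural input from Lemmas \ref{lem:lc_singularities} and \ref{lem:Alexeev_dlt} to convert the surviving numerical configurations into the precise shape of conclusion (2).
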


\begin{proof}  Let $\alpha\:(X,D)\to (\ov X,\ov D)$ be a partial peeling morphism for which $\alpha(A)$ is log exceptional of the first or second kind. Since $\alpha$ is pure, by Corollary \ref{cor:pure_peeling} we have $\alpha(A)\cdot K_{\ov X}\geq A\cdot K_X$. We have also $0\geq \alpha(A)\cdot (K_{\ov X}+\ov D)\geq \alpha(A)\cdot K_{\ov X}$. By Lemma \ref{lem:a.l.e._is_redundant}(1) we have $A\cdot K_X<0$. But $A^2<0$, so in each case $A$ is a $(-1)$-curve. We may assume that $E+A$ is connected. We may also assume that $E\neq 0$, as otherwise (1) holds. If $A\cdot D=1$ and $A$ is of the first kind then we have $\alpha(A)\cdot (K_{\ov X}+\ov D)<0=A\cdot (K_X+D)$, so the point of intersection of $A$ and $D$ belongs to $E$, which gives (1). 

We may thus assume that $A\cdot D\geq 2$. Let $\ov \gamma\:(\ov X, \ov D)\to (\ov Y,\ov B)$ be the contraction of the log exceptional curve $\alpha(A)$ and let $\alpha'\:(Y,B)\to (\ov Y,\ov B)$ be the minimal resolution of singularities. Put $E'=\Exc \alpha'$. Since $X$ is smooth, we have a morphism of log surfaces $\gamma\:(X,D)\to (Y,B)$, such that $\alpha'\circ\gamma=\ov \gamma\circ\alpha$. In particular, $\gamma_*(E+A)=E'$. By Corollary \ref{lem:eps-lc_is_respected}, $(\ov Y,\ov B)$ is log terminal, hence by Lemma \ref{lem:peel_sq_red_bdry}, $E'=0$ or $E'$ is an admissible twig, rod or fork of $B$. In particular, it contains no $(-1)$-curve, hence $\gamma$ is a composition of successive contractions of $(-1)$-curves in $E+A$ and its images. Moreover, since $B$ is a sum of rational trees and is nc in a neighborhood of $E'$, every $(-1)$-curve contracted by a blowdown being a factor of $\gamma$ is superfluous in the respective image of $D$. It follows that $A\cdot D\leq 2$, hence $A\cdot D=2$.

Since $E'$ is a sum of rational trees, so is $E+A$, hence $A$ meets each connected component of $E$ at most once. Since $E'$ is a sum of admissible twigs, rods and forks of $B$, one of the connected components of $E$ met by $A$, say $E_1$, is necessarily a rod of $D$ and it meets $A$ in a tip. Assume that $A$ meets some other connected component of $E$, say $E_2$. If $E_2$ is not a connected component of $D$ then it is a twig of $D$ such that $\gamma_*(E_1+A+E_2)$ is a twig of $B$, which gives (a). We may therefore assume that $E_2$ is an admissible fork or rod of $D$. Since $\gamma_*(E_1+A+E_2)$ is an admissible rod or fork of $B$, we get (a) or (b). 
\end{proof}

\begin{samepage}
\begin{rem}[Almost log exceptional vs log exceptional]\ 
\begin{enumerate}[(1)]
\item Let $A$ be as in Lemma \ref{lem:case_r=1}. If $A\cdot D=0$ then $A$ is log exceptional. If $A\cdot D=1$ then it is log exceptional of the second kind. If $A\cdot D=2$ then we have $A\cdot (K_X+D)>0$.
\item In Lemma \ref{lem:case_r=1}(2) the fact that $A+\Exc \alpha$ is negative definite and $A$ is $\alpha$-almost log exceptional (and hence that after the contraction of $\alpha(A)$ all coefficients decrease) gives additional restrictions on the weights of $E_1$ and $E_2$, see Example \ref{ex:ALE_r=1}.
\end{enumerate} 
\end{rem}
\end{samepage}

\begin{ex}[An almost log exceptional curve]\label{ex:ALE_r=1}
Consider a smooth projective surface $X$ with a rational chain $[n,1,m]$ on it with $n\geq m\geq 2$. Let $A$ be the middle $(-1)$-curve and let $D$ be the sum of the other two components. The peeling morphism $\alpha\:(X,D)\to (\ov X,0)$ is simply the contraction of $D$. We see that $D+A$ is contractible if and only if $n\geq 3$.  The contraction of $A+D$ in each case gives a log terminal singularity and the geometry is as in Lemma \ref{lem:case_r=1}(2b). But for $A$ to be almost log exceptional additional conditions have to be met. Indeed, we have $$\alpha(A)\cdot K_{\ov X}=A\cdot (K_X+D-\Bk D)=1-A\cdot \Bk D=1-\frac{2}{n}-\frac{2}{m},$$ so $A$ is almost log exceptional of the first kind if and only if $m=2$ and $n\geq 3$ or $m=3$ and $n\in\{3,4,5\}$ and it is almost log exceptional of the second kind if and only if $(m,n)\in \{(3,6),(4,4)\}$. 
\end{ex}

\medskip
\section{Almost minimalization for uniform boundaries}\label{ssec:uniform_boundaries}

As follows from computations in Example \ref{ex:peeling}, to develop a reasonable general description of peeling morphisms and almost minimal models for a log surface for $(X,D)$ it is better to add restrictions on the coefficients of $D$. We now study general \emph{uniform} boundaries, that is, boundaries equal to $rD$, where $D$ is a reduced divisor and $r\in\Q\cap [0,1]$. We call $r$ simply the \emph{coefficient} of the boundary. Coefficient $r=0$ means that no boundary is considered, so there is no peeling and almost minimal is the same as minimal. Coefficient $r=1$ was discussed in the previous section in case $X$ is smooth. Below we assume that $r\in\Q\cap (0,1]$. 

\medskip
\subsection{Varying the boundary coefficient}

\begin{lem}[Pure peeling varying with $r$]\  \label{lem:peeling_for_squeezed} 
Assume that $r<1$ and $\alpha\:(X,rD)\to (\ov X,r\ov D)$, where $D$ is reduced and $K_X$ is $\Q$-Cartier, is a pure partial peeling of the first (second) kind. Let $r'>r$. Then the following hold.
\begin{enumerate}[(1)]
\item $\alpha\:(X,r'D)\to (\ov X,r'\ov D)$ is a pure partial peeling of the first (second) kind. 
\item If $D$ is connected and $\cf(\cE(\alpha);\ov X,r\ov D)\leq r$ then $\cf(\cE(\alpha);\ov X,r'\ov D)\leq r'$ and the inequality is strict, unless $\alpha^*K_{\ov X}=K_X$ and $\alpha^*\ov D=D$. 
\item If $X$ is smooth then every connected component of $\Exc \alpha$ is a twig, rod or fork of $D$ or it is a $(-2)$-segment of $D$ (see Lemma \ref{lem:lc_singularities}(5)).
\end{enumerate}
\end{lem}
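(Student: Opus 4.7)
The plan is to parametrize the exceptional coefficients linearly in the boundary parameter. Writing $\alpha^*(K_{\bar X}+t\bar D)=\alpha^*K_{\bar X}+t\alpha^*\bar D$, setting $f_i:=\cf(E_i;\bar X,0)$ and $e_i:=\cf(E_i;\bar X,\bar D)$ for each irreducible component $E_i$ of $\Exc\alpha$, and using $\alpha_*^{-1}\bar D=D-\Exc\alpha$ (since $\alpha$ is a peeling with $\Exc\alpha\le\Supp D$), one derives the linear identity
\[
c_i^{(t)}:=\cf(E_i;\bar X,t\bar D)=f_i+t\,e_i\qquad\text{for every }t\in\Q\cap[0,1].
\]
Purity of $\alpha$ gives $f_i\ge 0$ by Corollary \ref{cor:pure_peeling}, and the partial-MMP hypothesis gives $c_i^{(r)}\le r$ (strict for the first kind) by Corollary \ref{cor:cf_decrease_under_MMP}; together these two bounds force $e_i\le 1$. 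For~(1), I then compute $f_i\le r(1-e_i)\le r'(1-e_i)$ (strict for the first kind, where $e_i<1$), hence $c_i^{(r')}\le r'$ with the required strictness, and Corollary \ref{cor:improving_ld_gives_MMP} converts these numerical inequalities into the assertion that $\alpha$ is a partial MMP run on $(X,r'D)$ of the corresponding kind; purity is preserved since $K_X$ does not depend on~$r$.

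For~(2), the same identity shows that $c_i^{(r')}=r'$ forces $e_i=1$ and $f_i=0$, so the task is to propagate this from a single $i_0$ to every~$i$. Within the connected component $C\subseteq\Exc\alpha$ containing $E_{i_0}$, write $F|_C:=\sum_{E_i\le C}f_iE_i$: purity gives $F|_C\cdot E_j=-K_X\cdot E_j\le 0$ for every $E_j\le C$, and intersecting with $E_{i_0}$ (where $f_{i_0}=0$) produces a sum $\sum_{i\ne i_0}f_i(E_i\cdot E_{i_0})$ of nonnegative terms with value $\le 0$, forcing each term to vanish; iterating along the connected dual graph of $C$ gives $F|_C=0$. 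The analogous argument with $H|_C:=\sum_{E_i\le C}(1-e_i)E_i$, using $H\cdot E_j=D\cdot E_j$ (from $\alpha^*\bar D=D-H$ combined with $(\alpha^*\bar D)\cdot E_j=0$) and the fact that $e_{i_0}=1$, yields $H|_C=0$, i.e.\ $e_i=1$ throughout~$C$. To reach the remaining connected components of $\Exc\alpha$ I invoke connectedness of~$D$: any other connected component $C'$ is linked to $C$ by a chain in $\Supp D$ through some component $B\le D-\Exc\alpha$ adjacent to $C$, and $H|_C=0$ forces $D\cdot E_j=0$ for every $E_j\le C$; combined with $B\cdot E_j>0$ and the decomposition $D\cdot E_j=\Exc\alpha\cdot E_j+(D-\Exc\alpha)\cdot E_j$ this drives the analogous vanishing onto the neighbouring $\Exc\alpha$-component at the other end of~$B$, and induction over the bridging components completes the propagation.

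For~(3), apply~(1) with $r'=1$ to reduce to a pure partial peeling of $(X,D)$ of the same kind; by Lemma \ref{lem:peel_sq_red_bdry} each connected component of $\Exc\alpha$ is an admissible twig, rod, or fork of $D$ in the first-kind case, or additionally one of the second-kind log canonical configurations of Lemma \ref{lem:lc_singularities}(3)--(5). For $r<1$, the identity $c_i^{(r)}=f_i+r\,e_i\le r$ with $f_i\ge 0$ forces $f_i=0$ whenever $e_i=1$, so the corresponding image singularity is canonical, and for $X$ smooth this restricts such~$E_i$ to $(-2)$-curves. A case-by-case inspection of the $(-2)$ versions of the log canonical configurations then finishes: $(-2)$-rods, $(-2)$-benches and $(-2)$-half-benches contain $(-2)$-tips with $(K_X+rD)\cdot T=-r<0$, which peel off as first-kind contractions and are therefore subsumed by the twig/rod/fork case; $(-2)$-cycles are not algebraically contractible; the only genuinely new second-kind configuration left is the $(-2)$-segment.

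The main obstacle is the cross-component propagation in~(2): the within-component step is routine negative-definite linear algebra once the sign information on $F|_C$ and $H|_C$ has been extracted, but transferring the conclusion through bridging components of $D-\Exc\alpha$ requires careful bookkeeping of the identities $D=\alpha^*\bar D+H$ and $F\cdot E_j=-K_X\cdot E_j$, and it is precisely here that the connectedness hypothesis on $D$ becomes indispensable.
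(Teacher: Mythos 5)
Your part (1) is correct and is in substance the paper's own argument written coefficient\-/wise: the paper multiplies the divisor inequality $K_X+rD\geq\alpha^*(K_{\ov X}+r\ov D)$ by $\tfrac{r'}{r}$ and absorbs the error term using $\alpha^*K_{\ov X}-K_X\geq 0$ (Corollary \ref{cor:pure_peeling}), which is exactly your pair of bounds $f_i\geq 0$ and $c_i^{(r)}\leq r$. One slip: with $e_i=\cf(E_i;\ov X,\ov D)$ the correct interpolation is $c_i^{(t)}=(1-t)f_i+te_i$, not $f_i+te_i$ (test $t=1$); every inequality you draw from it survives the correction, so this is cosmetic.

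The genuine gap is the propagation step in (2), and it leaks into (3). The $F|_C$ half of your argument works because purity supplies the sign $F\cdot E_j=-K_X\cdot E_j\leq 0$ that the negativity argument needs. For $H|_C$ you would need the analogous sign $H|_C\cdot E_j=D\cdot E_j\leq 0$, which you never establish and which fails whenever $E_j$ is a branching component of $D$, since then $D\cdot E_j=E_j^2+\beta_D(E_j)$ can be positive. This is not a repairable oversight in your argument: take $C=T_1+C_1+T_2=[2,2,2]$ with $T_1,T_2$ tips of $D$ attached to $C_1$ and $C_1$ meeting exactly one further component of $D$ (a $(-2)$-half-bench in the sense of Lemma \ref{lem:lc_singularities}(4)). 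Contracting $T_1,T_2$ (first kind, $T_i\cdot(K_X+rD)=-r$) and then the image of $C_1$ (second kind) is a pure partial peeling of the second kind with coefficients $(\tfrac r2,r,\tfrac r2)$, so the hypotheses of (2) are met; yet $D\cdot C_1=1>0$, $e=(\tfrac12,1,\tfrac12)$ and $H|_C=\tfrac12(T_1+T_2)\neq 0$, so the vanishing you want to push from $C_1$ to $T_1,T_2$ simply does not hold. The cross-component step is likewise unsupported: nothing in your sketch transfers $e_i=1$ across a bridging component $B\leq D-\Exc\alpha$ (already a $(-2)$-segment and a $[3]$-twig attached to the same $B$ defeat it). In (3), dismissing $(-2)$-half-benches because ``their tips peel off as first-kind contractions'' is a non sequitur: however you order the contractions, the connected component of $\Exc\alpha$ is still the whole half-bench, which is neither a twig, rod, fork nor segment of $D$. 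Be aware that the paper's own proof is equally compressed at exactly this spot (the propagation is attributed to Lemma \ref{lem:ld_increasaes}, which does not apply since $-(K_X+rD)$ is not $\alpha$-nef here), so this is a point where either an additional argument excluding the half-bench configuration or an additional hypothesis seems to be required; in any case your write-up does not close the gap.
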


\begin{proof}Since $K_X$ and $K_X+rD$ is $\Q$-Cartier, $K_X+r'D$ is $\Q$-Cartier. Since $\alpha$ is a peeling of the first or second kind, curves contracted by $\alpha$ are $\Q$-Cartier.

(1) Since $\alpha$ is a partial MMP run of the second kind, we have $K+rD\geq \alpha^*(K_{\ov X}+r\ov D)$ with the difference supported on $\Exc \alpha$. We write this as $$(\tfrac{r'}{r}-1)K_X+K_X+r'D\geq (\tfrac{r'}{r}-1)\alpha^*K_{\ov X}+\alpha^*(K_{\ov X}+r'\ov D),$$ hence $$K_X+r'D\geq (\tfrac{r'}{r}-1)(\alpha^*K_{\ov X}-K_X)+\alpha^*(K_{\ov X}+r'\ov D).$$ Since $\alpha$ is pure, by Corollary \ref{cor:pure_peeling} we have $\alpha^*K_{\ov X}-K_X\geq 0$, so $K_X+r'D\geq \alpha^*(K_{\ov X}+r'\ov D)$. Hence  (1) follows form Corollary \ref{cor:improving_ld_gives_MMP}.

(2) Assume that $\cf(E;\ov X,r'\ov D)=r'$ for some component $E$ of $\Exc(\alpha)$. Then the above inequality shows that $E$ is not contained in the support of $\alpha^*K_{\ov X}- K_X$. Since $D$ is connected, Corollary \ref{cor:pure_peeling} gives $\alpha^*K_{\ov X}=K_X$. Lemma \ref{lem:ld_increasaes} gives $\cf_X(\ov X,r' \ov D)=(1-r')\Exc \alpha$, hence $K_X+r'D=\alpha^*(K_{\ov X}+r'\ov D)$ and we obtain $\alpha^*\ov D=D$.

(3) Assume that $X$ is smooth and $\Exc \alpha$ is connected. Applying part (1) with $r'=1$ we see that the morphism $\alpha$ is a pure partial peeling of $(X,D)$ of the second kind. If $\cf(\cE(\alpha);\ov X,\ov D)<1$ then it is of the first kind and we are done by Lemma \ref{lem:lc_singularities}. We may thus assume that $\cf(\cE(\alpha);\ov X,\ov D)=1$. By (2) $\alpha^*K_{\ov X}=K_X$ and $\alpha^*\ov D=D$. Then $\Exc \alpha$ consists of $(-2)$-curves with $\beta_D=2$. Since $\Exc \alpha$ is negative definite, it is a segment of $D$. 
\end{proof}

\begin{ex}\label{ex:peeling_tip}
Let $(X,D)$ be a log smooth surface with reduced boundary and let $T=[k]$ for some $k\geq 2$ be a tip of $D$ with $\beta_D(T)=1$. By adjunction we have $T\cdot (K_X+rD)=k-2+r(-k+1)$, so the contraction of $T$ is a partial peeling of the first (second) kind of $(X,rD)$ if and only if $r>1-\frac{1}{k-1}$ (respectively, $\geq $). For instance, if $r\leq \frac{1}{2}$ then the contraction of $T$ is a partial peeling of the first kind if and only if $T=[2]$.
\end{ex}

A peeling of the first (second) kind can be decomposed as a squeezing of the first (second) kind followed by a pure peeling, see Corollary \ref{cor:nef_gives_peeling}(1). The part consisting of squeezing will be studied in the next section. As for the part consisting of a pure peeling the following corollary together with Lemma \ref{lem:Bk=ld(pi)} give an explicit description of the coefficient divisor.

\begin{cor}[Uniqueness of peeling for squeezed surfaces]\label{cor:peeling_unique_for_squeezed} Assume that $X$ is a smooth surface, $D$ is reduced and $(X,rD)$ is squeezed. Then $(X,rD)$ has a unique (pure) peeling.
\end{cor}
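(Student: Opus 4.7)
The strategy is to mimic the uniqueness argument at the end of the proof of Lemma \ref{lem:peel_sq_red_bdry}, replacing the classification of peeling exceptional divisors in that lemma with its analogue for a uniform boundary $rD$, namely Lemma \ref{lem:peeling_for_squeezed}(3). Suppose for contradiction that $(X,rD)$ has two distinct pure peelings $\alpha_1$ and $\alpha_2$. First I would reduce to the case where the exceptional divisors are pairwise incomparable: if, say, $\Exc\alpha_1\leq \Exc\alpha_2$, then Corollary \ref{cor:reordering_MMP}(1) produces a nontrivial partial MMP $\gamma\:\ov X_1\to \ov X_2$ with $\alpha_2=\gamma\circ\alpha_1$, and purity of $\alpha_2$ transfers to $\gamma$ (using Corollary \ref{cor:pure_peeling} together with the effectiveness of the total transform $\alpha_1^*C$ for any effective curve $C$ on $\ov X_1$), turning $\alpha_2$ into a strict extension of $\alpha_1$ and violating maximality. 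The same style of argument applied to connected components yields connected components $E_1\leq \Exc\alpha_1$ and $E_2\leq \Exc\alpha_2$ which are incomparable and satisfy $E_1\cdot E_2>0$; disjoint or comparable components would again yield an illegal extension of the opposite peeling.

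Next, I would invoke Lemma \ref{lem:peeling_for_squeezed}(3), which forces each $E_i$ to be an admissible twig, admissible rod, admissible fork, or $(-2)$-segment of $D$. Admissible rods and forks of $D$ are entire connected components of $D$, hence meet no other component of $D$, so neither $E_i$ can be of these types. Each $E_i$ is therefore an admissible twig or a $(-2)$-segment of $D$. Since every component of $E_1\cup E_2$ satisfies $\beta_D\leq 2$, the reduced sum $F:=(E_1+E_2)\redd$ is a chain, and a short case analysis shows that $F$ is either an admissible rod of $D$ (two twigs from distinct tips), an admissible twig of $D$ (a twig combined with a $(-2)$-segment), or a $(-2)$-segment of $D$ (two $(-2)$-segments), and in each case $F$ strictly contains at least one of the $E_i$.

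To finish, I would show that $F$ is contractible by a pure partial peeling of $(X,rD)$, which, being a strict extension of $\alpha_1$, contradicts the maximality of $\alpha_1$. Extract from $\alpha_2$ the pure partial peeling $\beta_2\:X\to Y_2$ contracting exactly the connected component $E_2$ of $\Exc\alpha_2$, and contract $E_1$ via the sub-peeling of $\alpha_1$ supported on $E_1$ to reach $\ov X_1$. The image $\alpha_1(F-E_1)$ in $\ov X_1$ is an admissible subdivisor of $\ov D_1$ of the same type as $F-E_1$, and I would show it is contractible by a pure partial peeling of $(\ov X_1,r\ov D_1)$ by reordering the MMP decomposition of $\beta_2$ through $\alpha_1$ via Corollary \ref{cor:reordering_MMP}, preserving $\Q$-Cartierity along the way by Remark \ref{rem:Q-Cartier_under_contraction}. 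The main obstacle I expect is precisely this transfer step: for $r<1$ the log-exceptionality of an intermediate contraction is a delicate numerical condition that is not automatic from the reduced-boundary case, and one must verify, case by case using the bark formulas and adjunction on the chains involved, that each step of the transferred MMP is still a contraction of a log-exceptional curve on $(\ov X_1,r\ov D_1)$.
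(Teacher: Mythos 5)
Your reduction to two incomparable connected components $E_1$, $E_2$ meeting each other matches the paper's, and so does the identification of $(E_1+E_2)\redd$ as an admissible rod of $D$ (your extra cases involving $(-2)$-segments are vacuous: by the proof of Lemma \ref{lem:peeling_for_squeezed}(3) a $(-2)$-segment component forces the contraction to be log crepant, hence of the second kind, whereas a peeling in the sense of Definition \ref{def:peel_squeeze} is of the first kind and strictly decreases coefficients by Corollary \ref{cor:improving_ld_gives_MMP}(1)). The decisive step, however --- producing a single partial peeling that contracts all of $(E_1+E_2)\redd$ --- is exactly where your argument stops. The tool you name for it, Corollary \ref{cor:reordering_MMP}, is inapplicable here: it requires $\Exc f_1\leq \Exc f_2$, and the whole point of the situation is that $E_1$ and $E_2$ are incomparable, so neither the contraction of $E_1$ nor that of $E_2$ factors through the other. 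You then defer the verification to an unspecified ``case by case'' computation with bark formulas; as written this is an acknowledged gap, not a proof.

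The paper closes this step without any case analysis. It reduces to $D=T_1+C+T_2$ with $E_1=T_1+C$, $E_2=C+T_2$, contracts $T_1$ (giving $p_1\colon X\to Y_1$) and then $p_1(T_2)$ (giving $p_2\colon Y_1\to Y_2$), and uses only the monotonicity \eqref{eq:ramification_2}: since $p_2$ is a partial MMP run, $p_2^*(K_{Y_2}+rC_2)=K_{Y_1}+r(C_1+p_1(T_2))-A$ with $A\geq 0$ supported on $\Exc p_2$, whence by the projection formula $C_2\cdot (K_{Y_2}+rC_2)\leq C_1\cdot\bigl(K_{Y_1}+r(p_1)_*D\bigr)<0$, the last inequality holding because $C_1$ is log exceptional on $(Y_1,r(p_1)_*D)$ (the contraction of $E_1$ by the partial MMP run $\alpha_1$ factors through $p_1$). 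Negative definiteness of the admissible rod gives $C_2^2<0$. So the ``delicate numerical condition'' you anticipate is handled by the soft inequality of Corollary \ref{cor:cf_decrease_under_MMP} applied at each stage, with no recourse to bark formulas or adjunction; to complete your proof you would need to supply precisely this (or an equivalent) computation.
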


\begin{proof} Suppose that $(X,rD)$ has more than one peeling morphism. There are connected components $E_1$ and $E_2$ of exceptional divisors of these peelings such that $E_1\nleq E_2$, $E_2\nleq E_1$ and $E_1$ meets $E_2$. By Corollary \ref{cor:reordering_MMP} we see that $E_1$, $E_2$ are not connected components of $D$. By Lemma \ref{lem:peeling_for_squeezed} $E_1$ and $E_2$ are admissible twigs of $D$, hence $(E_1+E_2)\redd$ is an admissible rod of $D$. 

To reach a contradiction we argue that there exists a peeling contracting $(E_1+E_2)\redd$. Let $C$ be some common component of $E_1$ and $E_2$. Write the rod as $T_1+C+T_2$, where $T_1$ and $T_2$ are disjoint admissible twigs meeting $C$, contained in $E_1$ and $E_2$, respectively. We may assume that $D=T_1+C+T_2$, $E_1=C+T_1$ and $E_2=T_2+C$. Let $p_1\:X\to Y_1$ be the contraction of $T_1$ and $p_2\:Y_1\to Y_2$ the contraction of $p_1(T_2)$. Put $C_1=p_1(C)$ and $C_2=p_2(C_1)$. It is sufficient to show that $C_2$ is log exceptional on $(Y_2,rC_2)$. Since $p_2$ is a partial MMP run, we have $p_2^*(K_{Y_2}+rC_2)=K_{Y_1}+r(C_1+p_1(T_2))-A$ for some $A\geq 0$ with $(p_2)_*A=0$. Since $C_1$ is log exceptional, we get $C_2\cdot (K_{Y_2}+rC_2)\leq C_1\cdot (K_{Y_1}+r(C_1+p_1(T_2)))<0$. Since $T_1+C+T_2$ is an admissible rod, it is negative definite, which gives $C_2^2<0$.
\end{proof}

\medskip
\subsection{Formulas for the coefficient divisor}

\begin{lem}[Coefficients for a uniform boundary] \label{lem:Bk=ld(pi)} 
Assume that $X$ is smooth surface and $D$ is a reduced divisor. Let $\alpha\:(X,D)\to (\ov X,\ov D)$ be a contraction of (some) admissible twigs, rods and forks of $D$. Denote by $T$ the sum of connected components of $\Exc \alpha$ which are twigs but not rods of $D$. Then (see Definition \ref{dfn:Bark})
\begin{equation}\label{eq:Bk_rD}
\cf_X(\ov X, r\ov D)=\Exc \alpha-\Bk_D(\Exc \alpha)-(1-r)\Bk\trp T.
\end{equation}
Assume that $\alpha$ is a pure partial peeling of $(X,rD)$. Then the coefficients of the divisor $\cf_X(\ov X, r\ov D)$ belong to $[0,r)$. Moreover, $\cf(U;\ov X, r\ov D)=0$ for a component $U$ or $\Exc \alpha$ if and only if the connected component of $\Exc \alpha$ containing $U$ consists of $(-2)$-curves and is an admissible rod or an admissible fork of $D$.
\end{lem}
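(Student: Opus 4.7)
My plan is first to establish the formula for $P:=\cf_X(\ov X,r\ov D)$ by a direct intersection computation, and then to derive the range statement from negativity-type arguments. Since $P$ is supported on $E:=\Exc\alpha$ and the intersection form on the components of $E$ is negative-definite (as $E$ is the exceptional divisor of a birational contraction), $P$ is uniquely determined by the intersection numbers $P\cdot E_0$ for $E_0\leq E$. Intersecting the defining relation $K_X+r\alpha_*^{-1}\ov D+P\sim_\Q \alpha^*(K_{\ov X}+r\ov D)$ with $E_0$ kills the pull-back, and adjunction for the smooth rational curve $E_0$ yields $P\cdot E_0=(E_0^2+2)-r(D-E)\cdot E_0$. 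It therefore suffices to verify that the candidate $B^\star:=E-\Bk_D(E)-(1-r)\Bk\trp T$ satisfies the same intersection numbers.

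The verification splits by connected component type of $E$: admissible rod, admissible non-rod twig, admissible fork. All three use the bark identities $T\cp{i}\cdot\Bk' T=-\delta_i^1$ and $T\cp{i}\cdot\Bk\trp T=-\delta_i^m$, together with the tip- and end-coefficient values $\ind(T)$ and $\delta(T)$ of $\Bk' T$. For rods and forks (whole connected components of $D$) one has $(D-E)\cdot E_0=0$, so no $(1-r)$ correction is needed; for a non-rod twig $T$ only its end $T\cp{m}$ meets $D-E$, with intersection $1$, and this is precisely what the term $(1-r)\Bk\trp T$ is calibrated to correct. In the fork case one additionally needs that the scalar $u$ was chosen so that $\Bk_D(E^c)\cdot B=1$: expanding $u(B^2+\sum_k\ind(T_k\trp))+\sum_k\delta(T_k)$ and invoking the defining formula for $u$ from Definition \ref{dfn:Bark}(3), the telescoping is immediate. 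This fork bookkeeping is the most involved part, but it is routine.

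For the range, the lower bound $\coeff_{E_0}(P)\geq 0$ follows from Lemma \ref{lem:negativity}: by adjunction $E_0^2+2\leq 0$ for every admissible $E_0$, while $(D-E)\cdot E_0\geq 0$, so $P\cdot E_0\leq 0$ for every $E_0\leq E$; hence $-P$ is $\alpha$-nef and $\alpha$-exceptional, which forces $P\geq 0$. The strict upper bound $\coeff_{E_0}(P)<r$ is an instance of Lemma \ref{lem:ld_increasaes}: since $\alpha$ is a partial MMP run of the first kind on $(X,rD)$, every exceptional $E_0$ satisfies $(K_X+rD)\cdot E_0<0$, so $-(K_X+rD)$ is $\alpha$-ample, and the strict part of that lemma gives $\cf(E_0;\ov X,r\ov D)<\cf(E_0;X,rD)=r$.

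For the ``moreover'' part, suppose $\coeff_{E_0}(P)=0$ and let $E^c$ be the connected component of $E$ containing $E_0$. Writing $P$ on $E^c$ as $\sum a_{E_0'}E_0'$ with $a_{E_0}=0$ and all $a_{E_0'}\geq 0$, the sum $P\cdot E_0=\sum_{E_0'\neq E_0}a_{E_0'}(E_0'\cdot E_0)$ is non-negative, while $P\cdot E_0\leq 0$ from above; so every neighbor of $E_0$ in $E^c$ has zero coefficient, and connectedness of $E^c$ propagates the vanishing, giving $P|_{E^c}=0$. Then $0=P\cdot E_0'=(E_0'^2+2)-r(D-E)\cdot E_0'$ for every $E_0'\leq E^c$, together with $E_0'^2\leq -2$ and $r>0$, forces $E_0'^2=-2$ and $(D-E)\cdot E_0'=0$. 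Hence $E^c$ is a connected component of $D$ consisting entirely of $(-2)$-curves, i.e., an admissible $(-2)$-rod or an admissible $(-2)$-fork. Conversely, on a $(-2)$-rod the identity $d\cp{i}(T)+d_{(i)}(T)=d(T)$ makes each bark coefficient equal $1$, and on an admissible $(-2)$-fork one computes $u=1$ directly from Definition \ref{dfn:Bark}(3) using $\sum\delta(T_i)-1=-B^2-\sum\ind(T_i\trp)$, again yielding bark coefficients all equal to $1$; in both cases $B^\star|_{E^c}=E^c-E^c=0$, which completes the characterization.
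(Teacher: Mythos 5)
Your derivation of the formula \eqref{eq:Bk_rD} (intersecting the defining relation with each exceptional component and matching against the candidate via the bark identities \eqref{eq:Bk(chain)}), your negativity-lemma argument for $\cf_X(\ov X,r\ov D)\geq 0$, and your propagation argument for the ``moreover'' part are all correct and essentially coincide with the paper's proof; the only cosmetic difference is that you verify the intersection numbers directly for general $r$ where the paper first treats $r=1$ and then checks $\alpha^*\ov D=\alpha_*^{-1}\ov D+\Bk\trp T$ to handle the $(1-r)$ correction.

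There is, however, a genuine flaw in your justification of the strict upper bound. You claim that, because $\alpha$ is a partial MMP run on $(X,rD)$, \emph{every} component $E_0$ of $\Exc\alpha$ satisfies $E_0\cdot(K_X+rD)<0$, so that $-(K_X+rD)$ is $\alpha$-ample and the strict part of Lemma \ref{lem:ld_increasaes} applies in one shot. This is false: only the curve contracted at each elementary step is log exceptional \emph{on the intermediate model}; by Corollary \ref{cor:cf_decrease_under_MMP} its proper transform on $X$ intersects $K_X+rD$ by a number that is $\geq$ the value downstairs, and can well be positive. Concretely, let $T=T_1+T_2=[2,3]$ be an admissible twig of $D$ with $T_1=[2]$ the tip of $D$ and $\beta_D(T_2)=2$, and take $r\in(\tfrac{2}{3},1)$. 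Then $T_1\cdot(K_X+rD)=-r<0$ and, after contracting $T_1$, the image of $T_2$ meets the log canonical divisor by $1-\tfrac{3r}{2}<0$, so the contraction of $T$ is a pure partial peeling; yet $T_2\cdot(K_X+rD)=1-r>0$, so $-(K_X+rD)$ is not even $\alpha$-nef, and Lemma \ref{lem:ld_increasaes} cannot be applied to the composite $\alpha$ as you do. The conclusion you want is nevertheless true: it is exactly Corollary \ref{cor:improving_ld_gives_MMP}(1), i.e.\ one applies Lemma \ref{lem:ld_increasaes} to each elementary contraction separately (strictly at the step where the given component is contracted, non-strictly at the remaining steps) and composes the inequalities to get $\cf(E_0;\ov X,r\ov D)<\cf(E_0;X,rD)=r$. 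With that citation substituted for your ampleness claim, the proof is complete.
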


\begin{proof} Assume first that $r=1$. The coefficient divisor is uniquely determined by the equations $U\cdot (K+D-E+\cf_X(\ov X,\ov D))=0$ where $U$ runs through components of $E:=\Exc \alpha$. We may assume that $E$ is connected. By \eqref{eq:Bk(chain)} in cases (1) and (2) of Definition \ref{dfn:Bark} the equations hold for $\Bk' E$ and $\Bk' E+\Bk\trp E$, respectively. In case (3) we see that the divisors $E_0+\sum_{i=1}^3 \Bk\trp T_i$ and $\sum_{i=1}^3\Bk' T_i$ intersect trivially with all components of $T_1+T_2+T_3$. Then \eqref{eq:Bk_rD} follows from the equation $E_0\cdot \cf_X(\ov X,\ov D)=-E_0\cdot (K+D-E)=0$. For $r\neq 1$ we need to show that $$K_X+r\alpha_*^{-1}\ov D+E=\alpha^*(K_{\ov X}+r\ov D)+\Bk_D(E)+(1-r)\Bk\trp T.$$ Subtracting the equality for $r=1$ and dividing by $1-r$ we see that it is sufficient to prove that 
\begin{equation}\label{eq:D_pullback}
\alpha^*\ov D=\alpha_*^{-1}\ov D+\Bk\trp T,
\end{equation}
and hence that $\alpha_*^{-1}\ov D+\Bk\trp T$ intersects trivially with every component $U$ of $T$. To see this we may assume that $T$ is a single admissible twig with components $T\cp{i}$, $i=1,\ldots,m$ and $U=T\cp{i}$ for some $i$. Then $U\cdot(\alpha_*^{-1}\ov D+\Bk\trp T)=\delta_i^m+U\cdot \Bk\trp T=0$ by \eqref{eq:Bk(chain)}.

Assume that $\alpha$ is a pure partial peeling for $(X,rD)$. Then $\cf(\ov X, r\ov D)<r$ by Corollary \ref{lem:eps-lc_is_respected}. Put $G=E-\Bk_D(E)+(1-r)\Bk\trp T$. For each component $E_j$ of $E$ we have $$0=E_j\cdot \alpha^*(K_{\ov X}+r\ov D)=E_j\cdot (K_X+r\alpha_*^{-1}\ov D+G)\geq E_j\cdot G,$$ because $\alpha$ is pure. By Lemma \ref{lem:negativity} we see that $G=0$ or $G\geq 0$ and $\Supp G=\Supp E$. The case $G=0$ happens when $E_j\cdot K_X=E_j\cdot \alpha_*^{-1}\ov D=0$ for each $j$, that is, when $G$ is an admissible rod or fork of $D$ consisting of $(-2)$-curves only.
\end{proof}

\begin{cor}[Characterization of almost log exceptional curves]\label{cor:almost_log_exceptional} Assume that $X$ is a smooth surface, $D$ is a reduced divisor and $\alpha\:(X,rD)\to (\ov X,r\ov D)$ is a pure partial peeling. Then $A\nleq D$ is $\alpha$-almost log exceptional of the first (second) kind if and only if $A$ is a $(-1)$-curve such that the following hold:
\begin{enumerate}[(1)]
\item $A+\Exc \alpha$ is negative definite,
\item $A\cdot (r\alpha_*^{-1}\ov D+\cf_X(\ov X,r\ov D))<1$ ($=1$).
\end{enumerate}
\end{cor}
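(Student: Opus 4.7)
The strategy is to translate the defining conditions of $\alpha$-almost log exceptionality, which live on the possibly singular surface $\ov X$, into intersection numbers on the smooth surface $X$ via pullbacks. Since $\Exc\alpha$ is negative definite (being contracted by $\alpha$), one has in Mumford's numerical pullback the expression $\alpha^{*}\alpha(A)=A+M$, where $M$ is the unique $\Q$-divisor supported on $\Exc\alpha$ satisfying $(A+M)\cdot E=0$ for every component $E$ of $\Exc\alpha$. Because $M\cdot\alpha^{*}\alpha(A)=0$ and $A$ is not $\alpha$-exceptional, the projection formula gives $\alpha(A)^{2}=A\cdot\alpha^{*}\alpha(A)=A^{2}+A\cdot M$. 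Writing the intersection matrix of $A+\Exc\alpha$ in block form with $\Exc\alpha$-block $C$ negative definite and off-diagonal vector $b=(A\cdot E)_{E}$, one recognizes $\alpha(A)^{2}=A^{2}-b^{\top}C^{-1}b$ as the Schur complement, so the standard linear algebra yields
\begin{equation*}
\alpha(A)^{2}<0\iff A+\Exc\alpha\ \text{is negative definite},
\end{equation*}
which is precisely condition~(1).

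For the other intersection number, the defining relation \eqref{eq:discrepancies} of the coefficient divisor reads
\begin{equation*}
\alpha^{*}(K_{\ov X}+r\ov D)=K_{X}+r\alpha_{*}^{-1}\ov D+\cf_{X}(\ov X,r\ov D),
\end{equation*}
so the projection formula gives
\begin{equation*}
\alpha(A)\cdot(K_{\ov X}+r\ov D)=A\cdot K_{X}+A\cdot\bigl(r\alpha_{*}^{-1}\ov D+\cf_{X}(\ov X,r\ov D)\bigr).
\end{equation*}
If $A$ is a $(-1)$-curve then $A\cdot K_{X}=-1$, and this intersection is $<0$ (respectively $=0$) exactly when the quantity in~(2) is $<1$ (respectively $=1$). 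Note that for the second kind we additionally need $A\cdot K_{X}\neq 0$, which is automatic since $A\cdot K_{X}=-1$.

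It remains to verify, in the forward direction, that $A$ is necessarily a $(-1)$-curve. By Lemma~\ref{lem:a.l.e._is_redundant}(1) the hypothesis gives $A\cdot K_{X}<0$. Together with $\alpha(A)^{2}<0$ this forces, via the Schur complement equivalence above, that $A+\Exc\alpha$ is negative definite, in particular $A^{2}<0$. Adjunction on the smooth surface $X$ then yields $A^{2}+A\cdot K_{X}=2p_{a}(A)-2\geq -2$, and since both summands on the left are negative integers they must each equal $-1$, so $A\cong\P^{1}$ with $A^{2}=-1$. The converse implication is obtained by reversing the equivalences above, using that $A\nleq D$ is part of the data in both directions.

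The main technical point to be careful about is the Schur complement equivalence, which relies on $\Exc\alpha$ being negative definite; this is guaranteed here because $\alpha$ is a (pure) partial peeling and hence a birational contraction between normal surfaces. Apart from that, the computation is a bookkeeping exercise translating between $\ov X$ and $X$ via the pullback formulas, and all boundary terms collect into the divisor $r\alpha_{*}^{-1}\ov D+\cf_{X}(\ov X,r\ov D)$ appearing in~(2).
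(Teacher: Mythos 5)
Your proposal is correct and follows essentially the same route as the paper's proof: the negative-definiteness of $A+\Exc\alpha$ is identified with $\alpha(A)^2<0$ (the paper states this as "clear", you spell out the Schur-complement computation), condition (2) is obtained by the same pullback/projection-formula bookkeeping, and the reduction to showing $A$ is a $(-1)$-curve goes through Lemma \ref{lem:a.l.e._is_redundant}(1) combined with $A^2<0$ and smoothness of $X$, exactly as in the paper.
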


\begin{proof}
Put $\ov A=\alpha(A)$. Clearly, (1) is equivalent to $\ov A^2<0$. We may assume that (1) holds. We have $\ov A\cdot (K_{\ov X}+r\ov D)=A\cdot (K_X+\tilde D)$, where $\tilde D=r\alpha_*^{-1}\ov D+\cf_X(\ov X,r\ov D)$, so we only need to show that if $A$ is almost log exceptional of the first or second kind then it is a $(-1)$-curve. By Lemma \ref{lem:a.l.e._is_redundant}(1) $A\cdot K_X<0$. Since $X$ is smooth and $A^2<0$, $A$ is a $(-1)$-curve. 
\end{proof}

\medskip
\subsection{Redundant and almost log exceptional curves for uniform boundaries}\label{ssec:Redundant_and_ALE_for_uniform}

The following characterization is used in the proof of Theorem \ref{thm:aMM_respects_(1-r)-dlt}.

\begin{lem}[Redundant curves]\label{lem:redundant_curve_numerics}
Let $(X,rD)$ be a log surface with $X$ smooth, $D$ reduced, $r\in [0,1]\cap \Q$. Let $\alpha\:(X,rD)\to (\ov X,r\ov D)$ be a contraction some admissible twigs $T_1, \ldots, T_k$ of $D$ meeting some $(-1)$-curve $\ll\leq D$. Put $\delta=\sum_{i=1}^k\frac{1}{d(T_i)}$, $\ind\trp=\sum_{i=1}^k \ind(T_i\trp)$ and $E=\sum_{i=1}^kT_i$. Then $\alpha(\ll)$ is log exceptional of the first or second kind if and only if the following inequalities hold 
\begin{equation}\label{eq:Ups-curve_main2}
\ind\trp<1,
\end{equation}
\begin{equation}\label{eq:Ups-curve_main1}
r\beta_{D-E}(\ll)\leq 2-k+\delta-(1-r)(1-\ind\trp),
\end{equation}
where the equality in \eqref{eq:Ups-curve_main1} holds exactly when $\alpha(\ll)$ is of the second kind.
\end{lem}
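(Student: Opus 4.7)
The plan is to verify the two conditions by direct computation of the two intersection numbers characterizing whether $\alpha(\ll)$ is log exceptional of the first or second kind, namely $\alpha(\ll)^2$ and $\alpha(\ll)\cdot(K_{\ov X}+r\ov D)$. The setup: each $T_i$ is an admissible twig of $D$ meeting $\ll\leq D$, so by Lemma \ref{lem:peel_sq_red_bdry}(1) $\alpha$ is a pure partial peeling on $(X,D)$, and in the notation of Lemma \ref{lem:Bk=ld(pi)} all $T_i$ are non-rod twigs, so $T=E$ and $\Bk_D(E)=\sum_i\Bk' T_i$; also $\alpha_*^{-1}\ov D=D-E$. Order each $T_i$ so that $T_i\cp{1}$ is a tip of $D$; then $\ll$ meets $T_i$ exactly once, at the opposite tip $T_i\cp{m_i}$, and $\ll\cdot E=k$.

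The self-intersection is computed by finding the pullback $\alpha^*\alpha(\ll)=\ll+\sum_i c_{i}$ with $c_i$ supported on $T_i$ and $\alpha$-numerically trivial. By the characterizing equation \eqref{eq:Bk(chain)}, read for the reversed ordering, the unique choice is $c_i=\Bk\trp T_i$, so $\alpha^*\alpha(\ll)=\ll+\Bk\trp E$. Pairing with $\ll$ and noting that the coefficient of $T_i\cp{m_i}$ in $\Bk\trp T_i=\Bk'(T_i\trp)$ is $\ind(T_i\trp)$ (it is the first tip of $T_i\trp$), we get
\[
\alpha(\ll)^2=\ll\cdot(\ll+\Bk\trp E)=-1+\ind\trp,
\]
which is negative iff \eqref{eq:Ups-curve_main2} holds.

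For the second number, Lemma \ref{lem:Bk=ld(pi)} gives
\[
\alpha^*(K_{\ov X}+r\ov D)=K_X+r(D-E)+E-\Bk_D(E)-(1-r)\Bk\trp E,
\]
and I would compute $\ll$ against each term. Since $\ll$ is a $(-1)$-curve, $\ll\cdot K_X=-1$; since $\ll\leq D-E$, $\ll\cdot(D-E)=-1+\beta_{D-E}(\ll)$; and $\ll\cdot E=k$. For the bark terms, the coefficient of $T_i\cp{m_i}$ in $\Bk' T_i$ is $d\cp{m_i}(T_i)/d(T_i)=\delta(T_i)=1/d(T_i)$, so $\ll\cdot\Bk_D(E)=\delta$, while as already observed $\ll\cdot\Bk\trp E=\ind\trp$. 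Assembling,
\[
\alpha(\ll)\cdot(K_{\ov X}+r\ov D)=-1-r+r\beta_{D-E}(\ll)+k-\delta-(1-r)\ind\trp,
\]
and $\leq 0$ (respectively, $=0$) reorganizes precisely to \eqref{eq:Ups-curve_main1} (respectively, equality there). Combined with the previous paragraph, this gives the stated equivalence with equality distinguishing the second kind.

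There is no real obstacle; the only care needed is the bookkeeping of orientations: the twig $T_i$ is oriented with $T_i\cp{1}$ being a tip of $D$, while $\ll$ attaches at the opposite end, so the contributions of $\Bk'$ and $\Bk\trp$ to the intersections with $\ll$ pick out respectively the $\delta(T_i)$-coefficient and the $\ind(T_i\trp)$-coefficient, which is exactly what produces the two scalars $\delta$ and $\ind\trp$ appearing in the statement.
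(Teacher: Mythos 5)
Your proof is correct and follows essentially the same route as the paper's: both compute $\alpha(\ll)^2=-1+\ind\trp$ from the pullback $\alpha^*\alpha(\ll)=\ll+\Bk\trp E$ via \eqref{eq:Bk(chain)}, and both evaluate $\alpha(\ll)\cdot(K_{\ov X}+r\ov D)$ by pairing $\ll$ with an explicit expression for the pullback of the log canonical divisor in terms of barks (the paper splits it as $\alpha^*(K_{\ov X}+\ov D)+(r-1)\alpha^*\ov D$ rather than quoting \eqref{eq:Bk_rD} directly, but the arithmetic is identical). The resulting expression $k-\delta-1+r(\beta_{D-E}(\ll)-1)-(1-r)\ind\trp$ matches the paper's, so the bookkeeping of $\delta$ versus $\ind\trp$ is right.
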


\begin{proof}
Put $\ov \ll=\alpha_*\ll$, $R=\alpha_*^{-1}\ov D-\ll$, $\delta_i=\delta(T_i)=1/d(T_i)$ and $\ind\trp_i=\ind(T_i\trp)$. By \eqref{eq:Bk(chain)} we have $\alpha^*\ov \ll=\ll+\sum_i \Bk\trp T_i$. We compute $\ov \ll^2=\ll\cdot (\ll+\sum_i \Bk\trp T_i)=-1+\ind\trp,$ and $$\ov \ll\cdot (K_{\ov X}+r\ov D)=\ll\cdot \alpha^*(K_{\ov X}+\ov D)+(r-1)\ll\cdot \alpha^*\ov D=\ll\cdot (K_X+R+\ll+\sum_{i}(T_i-\Bk T_i))+$$ $$+(r-1)\ll\cdot (R+\ll+\sum_i \Bk\trp T_i)=-2+\ll\cdot R+\sum_i(1-\delta_i)+(r-1)(\ll\cdot R-1+\sum_i \ind\trp_i)=$$ $$=k-\delta-1+r(\ll\cdot R-1)+(r-1)\ind\trp.$$ It follows that $\ov \ll$ is log exceptional of the first or second kind if and only if $\ind\trp < 1$ and  $r\ll\cdot R\leq 2-k+\delta-(1-r)(1-\ind\trp)$, with the equality for the second kind only.
\end{proof}

Recall that a divisor on a smooth surface \emph{contracts to a smooth point} if its support is the support of the exceptional divisor of a birational contraction onto a smooth surface. Since a blowup does not change the discriminant, such a divisor is necessarily a rational tree whose discriminant is equal to $1$.

\begin{prop}[Description of redundant components]\label{prop:ful_description_redundant} 
Let $(X,rD)$ be a log surface with $X$ smooth, $D$ reduced, $r\in [0,1]\cap \Q$. Let $\alpha$ be a pure partial peeling of $(X,rD)$ of the second kind and $\ll\leq D$ a $(-1)$-curve such that $\alpha(\ll)$ is log exceptional of the first (second) kind. Denote by $E$ the sum of connected components of $\Exc \alpha$ meeting $\ll$ and put $R=D-\ll-E$. Then one of the following holds (see Figure \ref{Fig:RED_1}).
\begin{enumerate}[(1)]
\item $r=1$ and $\beta_D(\ll)\leq 2$, so $\ll$ is log exceptional of the first or second kind. If the intersection of $E$ and $D-E$ is not normal then $E$ is a degenerate segment of $D$, so $\#E\leq 2$. 
\item $r\neq 1$, $\ll$ is log exceptional of the first (second) kind and one of the following holds:
\begin{enumerate}[(a)]
\item $\ll+E$ is a rational chain which contracts to a smooth point. If $E\cdot R\neq 0$ then  $\alpha$ is log crepant, either $E$ is a $(-2)$-segment of $D$ or $r=\frac{1}{2}$, $\ll$ is of the second kind and $\ll+E=[2,1,3]$.
\item $\ll+E$ is a rod or a twig of $D$, and it does not contract to a smooth point. 
\end{enumerate}
\item $r=\frac{1}{2}$, $\beta_D(\ll)=3$, $E\cdot R=0$  and $\ll+E$ is one of $[3,1,3]$, $[1,(2)_{m-1},3]$, $m\geq 1$, $[3,1,2,3]$. In particular, all components of $\ll+E$ are log exceptional of the second kind.
\item  $r=\frac{2}{3}$, $\beta_D(\ll)=3$, $E\cdot R=0$ and $\ll+E$ is one of $[2,1,4]$, $[2,1,3,2]$. In particular, $\alpha$ is not of the first kind and $\alpha(\ll)$ is log exceptional of the second kind.
\item $r\neq 1$ and $E$ is a $(-2)$-twig such that $0\leq \ll\cdot R-\frac{1}{r}\leq \frac{1}{\#E+1}$.
\item $E$ is a sum of two twigs of $D$, $[2]$ and $[3]$, $\ll\cdot R=1$ and $\frac{1}{2}\leq r \leq \frac{4}{5}$.
\end{enumerate}
Moreover, $\ll\cdot E\leq 2$, and the divisor $\ll+E$ is snc, unless $r=1$ and $\ll+E$ is a degenerate cycle.

In cases (5) and (6) the inequalities on the right (respectively, left) become equalities if and only if $\alpha(\ll)$ (respectively, $\ll$) is log exceptional of the second kind. 
\end{prop}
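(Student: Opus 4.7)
The plan is to combine the structural constraint on $\Exc\alpha$ from Lemma~\ref{lem:peeling_for_squeezed}(3) with the numerical inequality of Lemma~\ref{lem:redundant_curve_numerics}. By Lemma~\ref{lem:peeling_for_squeezed}(3), each connected component of $\Exc\alpha$ is a twig, rod, fork, or $(-2)$-segment of $D$. Since rods and forks are connected components of $D$ and $\ll\leq D$ is not contracted, the connected components of $E$ (those meeting $\ll$) must be twigs or $(-2)$-segments. Write $E=T_1+\cdots+T_k$ accordingly. For the case $r=1$ I will use adjunction $\ll\cdot(K_X+D)=\beta_D(\ll)-2$ together with the log pullback $\alpha^*(K_{\ov X}+\ov D)=K_X+D-\cf_X(\ov X,\ov D)$ and the explicit form of the coefficient divisor provided by Lemma~\ref{lem:Bk=ld(pi)}: each twig component of $E$ contributes $1-\delta(T_i)<1$ to $\ll\cdot\cf_X(\ov X,\ov D)$, while each $(-2)$-segment contributes $\ll\cdot T_i\geq 1$. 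The condition $\alpha(\ll)\cdot(K_{\ov X}+\ov D)\leq 0$ then forces $\beta_D(\ll)\leq 2$, giving case~(1); the assertion about degenerate segments is precisely the shape of $(-2)$-segment subdivisors allowed by Lemma~\ref{lem:peeling_for_squeezed}(3).

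Now assume $r<1$, and first suppose every $T_i$ is an honest twig. Lemma~\ref{lem:redundant_curve_numerics} then gives
\[
r\beta_{D-E}(\ll)\leq 2-k+\delta-(1-r)(1-\ind\trp), \qquad \ind\trp<1,
\]
with equality in the first inequality exactly when $\alpha(\ll)$ is of the second kind. I will case-split on $k$, using the basic bound $\ind(T_i\trp)+\delta(T_i)\leq 1$ with equality if and only if $T_i=[(2)_m]$ is a $(-2)$-twig. \textbf{If $k=0$}, the inequality reduces to $r\beta_D(\ll)\leq 2$, giving case~(2b) with $E=0$. \textbf{If $k=1$}, then either $\ll+T_1$ is a rational chain of discriminant $1$, giving case~(2b) when $T_1\cdot R=0$ and case~(2a) otherwise, or $T_1$ is a $(-2)$-twig $[(2)_m]$, which rearranges the displayed inequality into the two-sided bound of case~(5). \textbf{If $k=2$}, either $\ll+T_1+T_2$ contracts to a smooth point (case~(2a) again), or the admissibility and discriminant constraints, combined with the displayed inequality and $\ll\cdot R\in\{0,1\}$, force the specific pairs $(T_1,T_2)$ listed in cases~(3),(4),(6): the chains $[3,1,3]$, $[1,(2)_{m-1},3]$, $[3,1,2,3]$, $[2,1,4]$, $[2,1,3,2]$, and the mixed pair $\{[2],[3]\}$. \textbf{If $k=3$}, $\ll+T_1+T_2+T_3$ is a fork whose discriminant sum condition on $\sum\delta(T_i)$, combined with the displayed inequality, restricts to the fork-type configurations in cases~(3),(4). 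Finally, if some $T_i$ is a $(-2)$-segment, the proof of Lemma~\ref{lem:peeling_for_squeezed}(3) shows $\alpha$ is log crepant on that $T_i$ ($\alpha^*K_{\ov X}=K_X$, $\alpha^*\ov D=D$ there); inserting this into the analogous computation forces $\ll+E$ to contract to a smooth point with $E\cdot R\neq 0$, which is case~(2a) with $E$ a $(-2)$-segment.

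To conclude, the bound $\ll\cdot E\leq 2$ and the snc-ness of $\ll+E$ (outside the $r=1$ degenerate cycle) follow from the fact that in each case $\ll+E$ admits a further partial peeling and hence inherits its nc structure from Lemma~\ref{lem:peeling_for_squeezed}(3) applied to the contracted image. The equality statements in~(5) and~(6) read off directly from the equality criterion of Lemma~\ref{lem:redundant_curve_numerics}: the right-hand equality corresponds to $\alpha(\ll)$ being of the second kind, the left-hand one to $\ll$ itself being log exceptional of the second kind on $(X,rD)$. The main obstacle is the $k=2,3$ enumeration: one must verify by direct arithmetic that the finite list in cases~(3),(4),(5),(6) exhausts all integer solutions of the tight version of the displayed inequality, when $\delta=\sum 1/d(T_i)$ with each $T_i$ an admissible chain. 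This amounts to a small but careful enumeration of admissible self-intersection sequences saturating the discriminant identity extracted from the inequality.
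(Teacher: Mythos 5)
Your overall strategy (structural constraints on $\Exc\alpha$ plus the numerical inequality of Lemma \ref{lem:redundant_curve_numerics}) matches the spirit of one branch of the paper's argument, but the case analysis you outline is not exhaustive and contains a concrete error. For $k=1$ you assert a dichotomy: either $\ll+T_1$ contracts to a smooth point or $T_1$ is a $(-2)$-twig. This misses the configuration $\ll+E=[1,(2)_{m-1},3]$ of case (3): there $E=[3,(2)_{m-1}]$ is a single admissible twig which is not a $(-2)$-twig, and $d(\ll+E)=2$, so $\ll+E$ does not contract to a smooth point; your $k=1$ analysis as written would therefore fail to produce part of case (3). Similarly, your $k=3$ claim (``restricts to the fork-type configurations in cases (3),(4)'') cannot be right, since cases (3) and (4) contain only chains; in fact $k\geq 3$ is impossible and must be \emph{proved} (it follows, e.g., from $\ind(T_i\trp)\geq\delta(T_i)$ combined with the constraints $\sum_i\delta(T_i)>1$ and $\sum_i\ind(T_i\trp)<1$ extracted from \eqref{eq:Ups-curve_main2}--\eqref{eq:Ups-curve_main1}), not asserted to yield configurations that do not occur.

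Second, the proposal does not explain where the sharp values of $r$ and the ``second kind'' conclusions in cases (3) and (4) come from. In the paper these are obtained by distinguishing whether the point $q=\sigma(\alpha(\ll))$ is smooth or singular on the image surface: in the singular case one passes to the minimal resolution, uses Corollary \ref{cor:reordering_MMP} to see that its exceptional divisor is a degenerate $(-2)$-segment (hence the resolution is log crepant), and concludes that coefficients of the non-contracted components of $E+\ll$ are preserved, which by Lemma \ref{lem:ld_increasaes} forces $\alpha(\ll)$ to be of the second kind; in the smooth case the identity $d(\ll+E)=1$ drives the $k=2$ enumeration. A purely numerical enumeration can in principle recover the same chains, but only if you also impose the peeling condition on each $T_i$ (the coefficients of the contracted twigs must not exceed $r$), which is precisely what produces the lower bounds $r\geq\frac{1}{2}$ and $r\geq\frac{2}{3}$ pinning down cases (3), (4) and (6); your appeal to ``admissibility and discriminant constraints'' does not identify this ingredient. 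Finally, your claim that a $(-2)$-segment component of $E$ always forces case (2a) is too strong: the paper's analysis of that situation also lands in case (2b) when $\ll+E$ is itself a twig or rod of $D$.
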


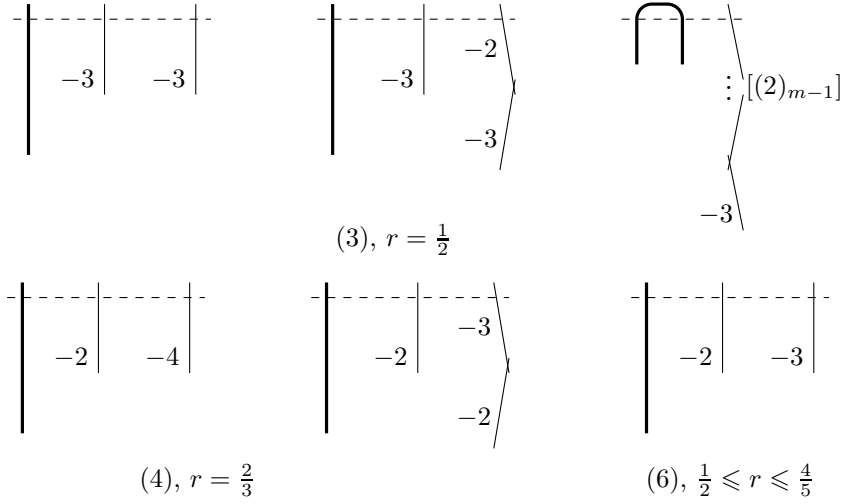
\begin{figure}[h]
	\subcaptionbox*{(3), $r=\frac{1}{2}$}[.8\textwidth]{
		\begin{tikzpicture}
		\path[use as bounding box] (0,-1.5) rectangle (10,1.2);	
		\begin{scope}	
			\draw[dashed] (0,1) -- (2.6,1);
			\draw[thick] (0.2,1.2) -- (0.2,-0.8);
			\draw (1.2,1.2) -- (1.2,0);
			\node[left] at (1.2,0.2) {\small{$-3$}};
			\draw (2.4,1.2) -- (2.4,0);
			\node[left] at (2.4,0.2) {\small{$-3$}};
		\end{scope}
		\begin{scope}[shift={(4,0)}]
			\draw[dashed] (0,1) -- (2.6,1);
			\draw[thick] (0.2,1.2) -- (0.2,-0.8);
			\draw (1.4,1.2) -- (1.4,0);
			\node[left] at (1.4,0.2) {\small{$-3$}};
			\draw (2.4,1.2) -- (2.6,0);
			\node[left] at (2.5,0.6) {\small{$-2$}};
			\draw (2.6,0.2) -- (2.4,-1);
			\node[left] at (2.5,-0.6) {\small{$-3$}};
		\end{scope}
		\begin{scope}[shift={(8,-2)}]
		\draw[dashed] (0,3) -- (1.6,3);
		\draw[thick] (0.2,2.4) -- (0.2,3) to[out=90,in=180] (0.4,3.2) -- (0.6,3.2) to[out=0,in=90] (0.8,3) -- (0.8,2.4);
		\draw(1.4,3.2) -- (1.6,2.2);
		\node[left] at (1.6,2.2) {$\vdots$};
		\node[right] at (1.5,2.1) {\small{$[(2)_{m-1}]$}};
		\draw (1.6,2) -- (1.4,1);
		\draw (1.4,1.2) -- (1.6,0.2);
		\node[left] at (1.6,0.4) {\small{$-3$}};
		\end{scope}
	\end{tikzpicture}
}
\bigskip

\subcaptionbox*{(4), $r=\frac{2}{3}$}[.5\textwidth]{
	\begin{tikzpicture}
		\path[use as bounding box] (0,-1) rectangle (5,1.2);
	\begin{scope}
		\draw[dashed] (0,1) -- (2.6,1);
		\draw[thick] (0.2,1.2) -- (0.2,-0.8);
		\draw (1.2,1.2) -- (1.2,0);
		\node[left] at (1.2,0.2) {\small{$-2$}};
		\draw (2.4,1.2) -- (2.4,0);
		\node[left] at (2.4,0.2) {\small{$-4$}};
	\end{scope}
	\begin{scope}[shift={(4,0)}]
		\draw[dashed] (0,1) -- (2.6,1);
		\draw[thick] (0.2,1.2) -- (0.2,-0.8);
		\draw (1.4,1.2) -- (1.4,0);
		\node[left] at (1.4,0.2) {\small{$-2$}};
		\draw (2.4,1.2) -- (2.6,0);
		\node[left] at (2.5,0.6) {\small{$-3$}};
		\draw (2.6,0.2) -- (2.4,-1);
		\node[left] at (2.5,-0.6) {\small{$-2$}};
	\end{scope}
	\end{tikzpicture}
}	
\subcaptionbox*{(6), $\frac{1}{2}\leq r\leq \frac{4}{5}$}[.3\textwidth]{
	\begin{tikzpicture}
		\path[use as bounding box] (0,-1) rectangle (2.6,1.2);
		\draw[dashed] (0,1) -- (2.6,1);
		\draw[thick] (0.2,1.2) -- (0.2,-0.8);
		\draw (1.2,1.2) -- (1.2,0);
		\node[left] at (1.2,0.2) {\small{$-2$}};
		\draw (2.4,1.2) -- (2.4,0);
		\node[left] at (2.4,0.2) {\small{$-3$}};
	\end{tikzpicture}
}
\caption{Proposition \ref{prop:ful_description_redundant}, cases (3), (4), (6). Thick line indicates $R=D-E-\ll$, dashed line is $\ll$. }\label{Fig:RED_1}
\end{figure}

\begin{proof}
If $E=0$ then $\ll$ is log exceptional of the first (second) kind, so we have $r\beta_D(\ll)\leq r+1$, which is a subcase of (1) or (2a). We may therefore assume that $E\neq 0$, hence $r\neq 0$. To avoid confusion below we refer to the cases of Lemma \ref{lem:lc_singularities} as (L1)-(L5). By Lemma \ref{lem:peeling_for_squeezed} $\alpha$ is a pure partial peeling of the second kind of $D$, and $E$ is as in (L2), (L4) or (L5). Moreover, since $E+\ll$ is negative definite, in case (L4) $E$ does not consist of $(-2)$-curves and in case (L5) it is not a degenerate $(-2)$-segment, so $r=1$. 

Consider the case $r=1$. By Lemma \ref{lem:reordering_1st_2nd} we have $\alpha=\alpha_2\circ\alpha_1$, where $\alpha_1$ is a peeling of the first kind and $\alpha_2$ is log crepant. It follows that $\ll$ is $\alpha_1$-redundant, so $\beta_{D}(\ll)\leq 2$ by Lemma \ref{lem:peel_sq_red_bdry}. If the intersection of $E$ and $D-E$ is not normal then $E$ is a degenerate segment of $D$, which gives (1).

We may further assume that $r<1$. Denote the contraction of $\alpha(\ll)$ by $\sigma\:\alpha(X)\to \ov X$. Put $\ov D=\sigma_*\alpha_*D$ and $q=\sigma(\alpha(\ll))\in \ov X$.

Assume that $q\in \ov X$ is singular. Let $\pi\:\tilde X\to \ov X$ be the minimal resolution of singularities. Since $X$ is smooth, we get a factorization $\sigma\circ \alpha=\pi\circ \varphi$. Put $\tilde D=\varphi_*D$ and $\tilde E=\varphi_*E$. 
\begin{equation}\label{eq:diagram-red_components}
\begin{gathered}
\xymatrix{
	(X,rD) \ar[d]_{\alpha} \ar[r]^{\phi} & (\tilde X,r\tilde D) \ar[d]^{\pi}\\ 
	(\alpha(X),r\alpha_*D) \ar[r]^-{\sigma} & (\ov X,r\ov D)
}
\end{gathered}
\end{equation}
Since $\alpha$ is a partial peeling of $rD$ of the second kind, Corollary \ref{cor:reordering_MMP}(2) implies that $\pi$ is a partial peeling of $r\tilde D$ of the second kind. Since $\pi$ is minimal, it is pure, hence $\tilde E$ is as in Lemma \ref{lem:lc_singularities}. By Lemma \ref{lem:peeling_for_squeezed} $\tilde E$ is a sum of admissible twigs, rods, forks and $(-2)$-segments of $\tilde D$. If $\beta_D(\ll)\leq 2$ then, since $E\neq 0$, $\ll$ is superfluous in $D$ (hence log exceptional of the first kind on $(X,rD)$), which gives (2b). We are left with case  $\beta_D(\ll)\geq 3$. Then $\varphi(\ll)$ is not a point of simple normal crossings of $\tilde D$, so $\tilde E$ is a degenerate $(-2)$-segment of $\tilde D$.  If $G$ is a component of $E+\ll$ not contracted by $\varphi$ then, since $\pi^*K_{\ov X}=K_{\tilde X}$ and $\pi^*\ov D=\tilde D$, we have $\cf(G;\ov X, r\ov D)=\cf(G;\tilde X, r\tilde D)=r=\cf(G;X,rD)$. By Lemma \ref{lem:ld_increasaes} it follows in particular that $\alpha(\ll)$ is of the second kind. The fact that $\ll$ is not superfluous in $D$ restricts possible types of blowing-ups constituting $\varphi^{-1}$. For $\#\tilde E=2$ we obtain $\ll+E=[3,1,3]$, which gives $\cf(\ll;\tilde X,r\tilde D)=3r-1$, so $r=\frac{1}{2}$ and hence case (3) holds. Assume that $\#\tilde E=1$. Then there exists $m\geq 1$ such that $\ll+E=[1,(2)_{m-1},3]$ or $\ll+E=[2,1,3,(2)_{m-2},3]$, where $[3,(2)_{-1},3]:=[4]$. We obtain $\cf(\ll;\tilde X,r\tilde D)=(2m+1)r-m$ and $\cf(\ll;\tilde X,r\tilde D)=2m(2r-1)$, hence $r=\frac{1}{2}$ and $r=\frac{2m}{4m-1}$, respectively. In the second case the coefficient of the component of the twig $T_1=[3,(2)_{m-2},3]$ meeting $\ll$ equals $u=\frac{2m+r(2m-1)}{4m}$, cf.\ formula \eqref{eq:Bk_rD}. But since the contraction of $T_1$ is a peeling of the second kind, we have $u\leq r$, hence $r\geq \frac{2m}{2m+1}$, which gives $m=1$ and $r=\frac{2}{3}$. This gives part of (3) and (4).

We may therefore assume that $q\in \ov X$ is smooth. By Lemma \ref{lem:lc_singularities} and the above remarks each connected component of $E$ is a twig of $D$ or a $(-2)$-segment of $D$. By negative definiteness of $E+\ll$ in the second case the segment meets $R$ and there can be at most one such. Denote it by $E_0$. We work under the assumption that $E_0=0$ and we comment on the case $E_0\neq 0$ at the end of the proof.

Write $E=T_1+\ldots+T_k$, where $T_1,\ldots, T_k$ are admissible twigs of $D$ and $k\geq 1$. It follows that $\ll\cdot E=k$ and $E\cdot R=0$. Put $\delta=\sum_{i=1}^k\frac{1}{d(T_i)}$ and $\ind\trp=\sum_{i=1}^k \ind(T_i\trp)$. Since $\ll$ is superfluous in $E+\ll$, we have $k\in \{1,2\}$. Assume that $k=1$. Then $E=[(2)_m]$ for some $m\geq 1$. In particular, $\ind(E\trp)=1-\frac{1}{m+1}$. The inequality \eqref{eq:Ups-curve_main1} gives $\beta_{D-E}(\ll)\leq \frac{1}{r}+\frac{1}{m+1}$, where the equality holds if $\alpha(\ll)$ is log exceptional of the second kind. Note that $\ll$ is log exceptional of the first kind if and only if $\beta_D(\ll)<1+\frac{1}{r}$, that is, if $\beta_{D-E}(\ll)<\frac{1}{r}$. If the latter inequality holds we get (2a), otherwise we get (5).

We are left with the case $k=2$. Put $d_i=d(T_i)$. We may and shall assume that $d_1\geq d_2$. Since $q\in\ov X$ is smooth, we have $d(\ll+E)=1$. By Lemma \ref{lem:d(D_1+D_2)} we obtain $1=d(\ll+E)=d_1d_2(1-\ind\trp)$. Hence $\gcd(d_1,d_2)=1$ and $1-\ind \trp=\frac{1}{d_1d_2}>0$. The former implies that $d_1-1\geq d_2\geq 2$. By Lemma \ref{lem:redundant_curve_numerics} we obtain $$d_1d_2r\ell\cdot R\leq d_1+d_2-1+r.$$ We may assume that $\ll$ is not log exceptional of the first kind, as otherwise we get (2a). This gives $0\leq \ll\cdot (K+rD)=-1+r(\ll\cdot R+1)$. We obtain
\begin{equation}\label{eq:cor1}
\frac{1}{\ll\cdot R+1}\leq r \leq\frac{d_1+d_2-1}{d_1d_2\ll\cdot R-1}.
\end{equation}
It follows that $(d_1d_2\ll\cdot R-1)\leq (d_1+d_2-1)(\ll\cdot R+1)$, hence $\ll\cdot R(d_1-1)(d_2-1)\leq d_1+d_2$. Since $d_1-1\geq d_2$, we obtain $\ll\cdot R(d_2-1)\leq 2$. Suppose that $\ll\cdot R=2$. Then $d_2=2$ and $d_1\leq 4$, hence $d_1=3$. Negative definiteness of $\ll+E$ implies that $T_1=[3]$. The inequality \eqref{eq:cor1} gives $r\leq \frac{4}{11}$. But then $T_1\cdot (K+rD)=1-2r>0$, which contradicts the fact the $\alpha$ is a peeling of the second kind.

Thus $\ll\cdot R=1$ and the above inequality reads as 
\begin{equation}\label{eq:cor2}
\frac{1}{2}\leq r \leq\frac{d_1+d_2-1}{d_1d_2-1}.
\end{equation}
It follows that $(d_1-2)(d_2-2)\leq 3$, hence $d_2\leq 3$ and if $d_2=3$ then $d_1\in \{4,5\}$. Assume that $d_2=3$. Since $\ll+E$ contracts to a smooth point, we get $T_2=[3]$ and $T_1=[3,2]$ or $T_2=[2,2]$ and $T_1=[4]$. But in the second case we get $r\leq \frac{6}{11}$, so the contraction of $T_1$ is not a partial peeling of the second kind of $rD$. In the first case we have $r=\frac{1}{2}$, which gives the remaining part of (3).

Finally, assume that  $d_2=2$. Since $\ll+E$ contracts to a smooth point, we get $T_1=[(2)_{m-1},3]$ for some $m\geq 1$. The coefficient of the $(-3)$-curve equals $u=\frac{m(r+1)}{2m+1}$. Since the contraction of $T_1$ is a peeling of the second kind, we get $u\leq r$, hence $1-\frac{1}{m+1}\leq r$. Then \eqref{eq:cor2} gives $\frac{m}{m+1}\leq \frac{2m+2}{4m+1}$, so $m\in \{1,2\}$. For $m=2$ we get $r=\frac{2}{3}$, which gives the remaining part of (4). For $m=1$ we get $\frac{1}{2}\leq r\leq \frac{4}{5}$, which gives (6).

Finally, consider the case when $E_0\neq 0$, that is, $E$ has a (unique) connected component $E_0$ which is a segment of $D$. Write $\alpha=\alpha_0\circ \alpha'$, where $\alpha'$ is the contraction of $E':=E-E_0$. Since $\alpha_0$ is log crepant, $\alpha(\ll)$ is log exceptional of the first (second) kind if $\alpha'(\ll)$ is log exceptional of the first (second) kind. Put $R'=D-\ll-E'$. Then $R'=R+E_0$. We apply the lemma to $(X,rD)$ with $(\alpha',E',R')$ instead of $(\alpha,E,R)$ and we make conclusions for $(\alpha,E,R)$. Due to the negative definiteness of $\ll+E$, the divisor $\ll+E'$ is as in case (2a) with $E'=0$, (2b), (3) with $\ll+E'=[1,3]$ and $\#E_0=1$ or (5) with $E'=0$. In the first case $\ll+E$ is as in case (2a) with $\alpha$ being log crepant and $R\cdot E=1$. In the second case, since $E_0\cdot \ll\neq 0$, $\ll+E$ is itself a twig of $D$, hence is as in (2b) or (2a). In the third case $\ll+E=[2,1,3]$ and $R\cdot E=1$, which is part of case (2a). In the last case, namely, case (5) with $E'=0$ we have $\ll$ which is log exceptional of the first (second) kind, $\ll+E=[1,2,2,\ldots,2]$ and $\alpha$ is log crepant, which is case (2a).

In cases (2)-(6) the divisor $\ll+E$ is a rational chain. It follows that in all cases (1)-(6) $\ll\cdot E\leq 2$ and $E$ is snc. If $\ll+E$ is not snc then we are in case (1) with $D=E$ and $E$ is a degenerate segment of $\ll+E$, hence $\ll+E$ is a degenerate cycle. 
\end{proof}

\begin{prop}[Description of almost log exceptional curves]\label{prop:ful_description_almost_log_exc}  
Assume that $X$ is smooth, $D$ is reduced and $r\in [0,1]\cap \Q$. Let $\ll\nleq D$ be an $\alpha$-almost log exceptional curve of the first (second) kind for some pure partial peeling $\alpha$ of $(X,rD)$ of the first (second) kind. Let $E$ be the sum of connected components of $\Exc \alpha$ meeting $\ll$. Put $R=D-E$. Then one of the following holds (see Fig.\ \ref{Fig:ALE_1} and \ref{Fig:ALE_2}):
\begin{enumerate}[(1)]
\item $\ll$ is superfluous in $D+\ll$,
\item $\ll\cdot D=2$, $r=\frac{1}{2}$, $\ll+E=[1,(2)_{m-1},3]$, $m\geq 1$ and $R$ meets $E+\ll$ at the point $\ll\cap E$. In particular, all components of $\ll+E$ are log exceptional of the second kind.
\item $\ll\cdot D=3$, $r=\frac{m}{2m+1}$, $\ll+E=[1,(2)_{m-1},3]$ for some $m\geq 1$, and $R\cdot E=0$. In particular, $\alpha$ is not of the first kind and $\alpha(\ll)$ is of the second kind.
\item  $\ll\cdot D=3$, $E\cdot R=0$  and $\ll+E$ is one of $[3,1,3]$, $[2,1,4]$ or $[2,1,3,(2)_{m-2},3]$ for some $m\geq 2$. In the first case $r=\frac{1}{3}$ and in the remaining cases $r=\frac{1}{2}$. In particular, $\alpha$ is not of the first kind and $\alpha(\ll)$ is of the second kind.
\item $r<1$, $\ll$ is log exceptional of the first (second) kind and $\ll+E$ contracts to a smooth point.
\item $0<r<1$, $\ll+E=[1,(2)_{k-1}]$ for some $k\geq 1$ and $E$ is a twig or a rod of $D$. Moreover, $\frac{1}{r}\leq \ll\cdot D\leq \frac{1}{r}+s$, where at least one of the inequalities is strict, $s=1$ if $E$ is a rod, $s=1-\frac{1}{k}$ if $E$ is a twig and $\ll$ meets it in a tip of $D$ and $s=\frac{1}{k}$ otherwise.
\item $\ll\cdot D=4$, $E\cdot R=0$, $r=\frac{1}{3}$, $\ll+E=[2,1,3]$, $\alpha$ is not of the first kind and $\alpha(\ll)$ is of the second kind.
\item $\ll\cdot D=3$, $E\cdot R=0$, $\ll+E=[2,1,3,(2)_{k-1}]$ and $\frac{1}{3}\leq r\leq \frac{k+1}{2k+1}$ for some $k\geq 1$.  
\item $\ll\cdot D=3$, $E\cdot R=1$, $\frac{k}{2k+1}\leq r\leq \frac{2(k+1)}{3(2k+1)}$ and $\ll+E=[2,1,3,(2)_{k-1}]$ for some $k\in \{1,2\}$, where $R$ meets the rod $[2]$. 
\item $\ll\cdot D=3$, $E\cdot R=1$, $r=\frac{1}{2}$ and $\ll+E=[2,1,3,(2)_{k-1}]$ and  for some $k\geq 1$, where $R$ meets the last component of $[3,(2)_{k-1}]$. Moreover, $\alpha(\ll)$ is of the second kind. 
\item $\ll\cdot D=3$, $E\cdot R=0$, $r=\frac{1}{2}$ and $\ll+E=[3,1,2,3,2]$ or $\ll+E=[4,1,2,2]$. Moreover, $\alpha(\ll)$ is of the second kind. 
\item $\ll\cdot D=3$, $E\cdot R=0$, $\frac{2}{5}\leq r\leq \frac{7}{15}$ and $\ll+E=[3,1,2,3]$. 
\end{enumerate}
In cases (6), (8), (10) and (12) the inequalities on the right (respectively, left) become equalities if and only if $\alpha(\ll)$ (respectively, $\ll$) is log exceptional of the second kind. In case (9) the inequality on right becomes an equality if and only if $\alpha(\ll)$ is log exceptional of the second kind.
\end{prop}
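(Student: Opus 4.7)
The plan is to adapt the argument of Proposition \ref{prop:ful_description_redundant} by passing to the augmented boundary $(X,r(D+\ll))$, on which $\ll$ becomes a redundant $(-1)$-curve of the first kind. First I would invoke Corollary \ref{cor:almost_log_exceptional} and Lemma \ref{lem:a.l.e._is_redundant}(1) to record that $\ll$ is a $(-1)$-curve with $\ll+\Exc\alpha$ negative definite and $\ll\cdot K_X<0$. Since $\alpha(\ll)^2<0$, log exceptionality of $\alpha(\ll)$ (of either kind) on $(\alpha(X),r\alpha_*D)$ strictly upgrades to log exceptionality of the first kind on $(\alpha(X),r\alpha_*(D+\ll))$; combined with $\ll\cdot T=0$ for components $T$ of $\Exc\alpha$ disjoint from $\ll$, this lets one view $\ll$ as redundant on $(X,r(D+\ll))$ in the sense of Section \ref{ssec:Redundant_and_ALE_for_uniform}. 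The configurations in which $\ll$ is superfluous in $D+\ll$ give case (1) directly.

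Next, let $\sigma\colon\alpha(X)\to\ov X$ denote the contraction of $\alpha(\ll)$, put $q:=\sigma(\alpha(\ll))$, and take the minimal resolution $\pi\colon\tilde X\to\ov X$ of $q$. Smoothness of $X$ yields a factorization $\sigma\circ\alpha=\pi\circ\phi$ via a sequence of blowdowns $\phi\colon X\to\tilde X$ supported in (proper transforms of) $\ll+E$. Since $\sigma\circ\alpha$ is a partial MMP run of the second kind on $(X,rD)$ and $\phi_*$ maps $\Q$-Cartier divisors to $\Q$-Cartier divisors, Corollary \ref{cor:reordering_MMP}(2) gives that $\pi$ is a partial MMP run of the second kind on $(\tilde X,r\phi_*D)$, hence a pure partial peeling by minimality. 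By Lemma \ref{lem:peeling_for_squeezed}(3) the exceptional divisor $\tilde E=\Exc\pi$ is either empty or a disjoint union of admissible twigs, rods and forks of $\phi_*D$ and non-degenerate $(-2)$-segments; negative definiteness of $\ll+E$ additionally excludes the elliptic and the degenerate $(-2)$-cycle/fork cases of Lemma \ref{lem:lc_singularities}.

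I would then split according to the dichotomy ``$q$ smooth vs.\ $q$ singular''. In the smooth case $d(\ll+E)=1$ and $\ll+E$ contracts to a smooth point via $\phi$: either $\ll$ is already log exceptional on $(X,rD)$ (yielding case (5)), or $\ll$ is not log exceptional, and then reconstructing $\phi^{-1}$ from the admissible blowup sequences preserving the non-superfluousness of $\ll$ in the proper transforms of $D+\ll$ forces $\ll+E$ to be the chain $[1,(2)_{k-1}]$ of case (6). In the singular case the admissible types of $\tilde E$ from Lemma \ref{lem:lc_singularities} combined with the same non-superfluousness constraint force $\ll+E$ to be one of the finitely many chain configurations appearing in cases (2)--(4) and (7)--(12). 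For every sub-case the almost-log-exceptionality inequality $\alpha(\ll)\cdot(K+r\alpha_*D)\leq 0$ translates, via Lemma \ref{lem:redundant_curve_numerics} applied to the twigs of $E$ meeting $\ll$, into a rational inequality between $r$, the branching number $\ll\cdot R$, and the discriminants $d(T_i)$, with equality characterising the second-kind case.

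The main obstacle will be the combinatorial enumeration of self-intersection sequences for $\ll+E$. After structural restrictions one is reduced to diophantine inequalities in the weights $-D_i^2\geq 2$ and in $r\in\Q\cap(0,1]$, and verifying that the only admissible solutions are the twelve listed configurations requires patient case division analogous to, but more intricate than, the $m=2$/$u\leq r$ argument in Proposition \ref{prop:ful_description_redundant}. The infinite families $[2,1,3,(2)_{k-1}]$ of cases (8)--(10) and $[1,(2)_{m-1},3]$ of cases (2)--(3) sit at the boundary between first- and second-kind regimes, where both endpoints of the interval for $r$ must be tracked simultaneously; the upper bound on $k$ (respectively, the precise value of $r$) in each family will be forced by imposing $\cf(T;\ov X,r\ov D)\leq r$ on an intermediate $(-3)$-curve of the twig meeting $\ll$, exactly as in the original $r=\tfrac{2}{3}$ analysis of Proposition \ref{prop:ful_description_redundant}.
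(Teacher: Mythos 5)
Your overall skeleton (pass to the contraction $\sigma$ of $\alpha(\ll)$, factor $\sigma\circ\alpha=\pi\circ\varphi$ through the minimal resolution of $q=\sigma(\alpha(\ll))$, invoke Corollary \ref{cor:reordering_MMP}(2) and Lemma \ref{lem:peeling_for_squeezed}(3), and split on whether $q$ is smooth) is the right one and matches the paper. But there is a genuine error in how you distribute the cases across that dichotomy, and it would derail the enumeration. You assign cases (7)--(12) to the branch where $q$ is singular; in fact all of (6)--(12) occur with $q$ \emph{smooth}. In every one of those configurations $d(\ll+E)=1$ and $\ll+E$ contracts to a smooth point (e.g.\ $[2,1,3]\to[1,2]\to[1]\to$ smooth point), so if you look for them on the singular branch you will never find them: there the non-superfluousness of $\ll$ in $D+\ll$ forces $\Exc\pi$ to be a degenerate $(-2)$-segment of $\varphi_*D$, which yields only the configurations (2)--(4). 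Dually, your claim that in the smooth branch ``reconstructing $\varphi^{-1}$ \dots forces $\ll+E$ to be the chain $[1,(2)_{k-1}]$'' is false: contracting to a smooth point does not imply $\ll$ is the last curve blown up. The correct organizing data in the smooth branch are $\mu=\mult_q(\sigma_*\alpha_*D)$ and the coefficient $w$ of $\ll$ in $\varphi^*U$, where $U$ is the exceptional curve of the single blowup at $q$; the constraint $\cf(U;\ov X,r\ov D)\le r$ gives $\mu-1\le \tfrac1r\le \ll\cdot D$, and the case split $w=1$ / $w\ge2$ with $\mu\ge4$ / $\mu\in\{2,3\}$ is what produces (6), (7), and (8)--(12) respectively. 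None of this bookkeeping appears in your plan, so the ``patient case division'' you defer to would not terminate in the stated list.

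A secondary, smaller issue: you propose to read off the numerical conditions from Lemma \ref{lem:redundant_curve_numerics} after replacing $D$ by $D+\ll$. That lemma computes $\alpha(\ll)\cdot(K+r\alpha_*(D+\ll))$, which differs from the quantity you actually need, $\alpha(\ll)\cdot(K+r\alpha_*D)$, by $r\,\alpha(\ll)^2=r(\ind^{\scriptscriptstyle\top}-1)$; the thresholds on $r$ in (6), (8)--(12) change accordingly, so the translation is not literal and must be redone (the paper instead computes directly with $\cf_X(\ov X,r\ov D)$ via Corollary \ref{cor:almost_log_exceptional} and \eqref{eq:Bk_rD}). The treatment of $r=1$ (case (1)) via the decomposition $\alpha=\alpha_2\circ\alpha_1$ of Lemma \ref{lem:reordering_1st_2nd} and Lemma \ref{lem:case_r=1} is implicit in your first paragraph but should be made explicit, since the rest of the argument assumes $r<1$ and $\ll$ not superfluous.
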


\begin{figure}[h]
	\begin{minipage}{.5\linewidth}
\subcaptionbox*{(2), $r=\frac{1}{2}$}[0.48\textwidth]{
	\begin{tikzpicture}
		\path[use as bounding box] (0,-0.4) rectangle (2,3.2);
		\draw[thick, name path = R] (0,3) -- (1.6,3);
		\draw[name path = T] (1.4,3.2) -- (1.6,2.2);
		\path [name intersections={of=R and T,by=E}];
		\node[left] at (1.6,2.2) {$\vdots$};
		\node[right] at (1.5,2.1) {\small{$[(2)_{m-1}]$}};
		\draw (1.6,2) -- (1.4,1);
		\draw (1.4,1.2) -- (1.6,0.2);
		\node[left] at (1.6,0.4) {\small{$-3$}};
		\draw[dashed, add = 0.1 and 0] (E) to ($(E)-(1.2,1.2)$);
	\end{tikzpicture}
}
\subcaptionbox*{(3), $r=\frac{m}{2m+1}$}[0.46\textwidth]{
	\begin{tikzpicture}
		\path[use as bounding box] (0,-0.4) rectangle (2,3.2);
		\draw[dashed] (0,3) -- (1.6,3);
		\draw[thick] (0.2,2.4) -- (0.2,3) to[out=90,in=180] (0.4,3.2) -- (0.6,3.2) to[out=0,in=90] (0.8,3) -- (0.8,2.4);
		\draw(1.4,3.2) -- (1.6,2.2);
		\node[left] at (1.6,2.2) {$\vdots$};
		\node[right] at (1.5,2.1) {\small{$[(2)_{m-1}]$}};
		\draw (1.6,2) -- (1.4,1);
		\draw (1.4,1.2) -- (1.6,0.2);
		\node[left] at (1.6,0.4) {\small{$-3$}};
	\end{tikzpicture}
}
\end{minipage}
\begin{minipage}{.2\linewidth}
\subcaptionbox*{(4), $r=\frac{1}{3}$}[\textwidth]{
	\begin{tikzpicture}
		\path[use as bounding box] (0,0) rectangle (2.6,1.2);
		\draw[dashed] (0,1) -- (2.6,1);
		\draw[thick] (0.2,1.2) -- (0.2,0);
		\draw (1.2,1.2) -- (1.2,0);
		\node[left] at (1.2,0.2) {\small{$-3$}};
		\draw (2.4,1.2) -- (2.4,0);
		\node[left] at (2.4,0.2) {\small{$-3$}};
	\end{tikzpicture}
}
\bigskip

\subcaptionbox*{(4), $r=\frac{1}{2}$}[\textwidth]{
	\begin{tikzpicture}
		\path[use as bounding box] (0,0) rectangle (2.6,1.2);
		\draw[dashed] (0,1) -- (2.6,1);
		\draw[thick] (0.2,1.2) -- (0.2,0);
		\draw (1.2,1.2) -- (1.2,0);
		\node[left] at (1.2,0.2) {\small{$-2$}};
		\draw (2.4,1.2) -- (2.4,0);
		\node[left] at (2.4,0.2) {\small{$-4$}};
	\end{tikzpicture}
}
\end{minipage}
\begin{minipage}{.24\linewidth}
\subcaptionbox*{(4), $r=\frac{1}{2}$}[\textwidth]{
	\begin{tikzpicture}
		\path[use as bounding box] (0,-2.4) rectangle (2.6,1.2);
		\draw[dashed] (0,1) -- (2.6,1);
		\draw[thick] (0.2,1.2) -- (0.2,-1);
		\draw (1.2,1.2) -- (1.2,0);
		\node[left] at (1.2,0.2) {\small{$-2$}};
		\draw (2.4,1.2) -- (2.6,0.2);
		\node[left] at (2.5,0.6) {\small{$-3$}};
		\draw (2.6,0.4) -- (2.4,-0.6);
		\node[left] at (2.7,-0.6) {$\vdots$};
		\node[right] at (2.5,-0.7) {\small{$[(2)_{m-1}]$}};
		\draw (2.4,-0.8) -- (2.6,-1.8);
		\draw (2.6,-1.6) -- (2.4,-2.6);
		\node[left] at (2.5,-2.2) {\small{$-3$}};
	\end{tikzpicture}
}
\end{minipage}
\bigskip

\begin{minipage}{.5\linewidth}
\subcaptionbox*{(9), $k=1$, $\frac{1}{3}\leq r\leq \frac{4}{9}$, \\ or $k=2$, $r=\frac{2}{5}$}[0.48\textwidth]{
	\begin{tikzpicture}
		\path[use as bounding box] (0,-1.8) rectangle (2.6,1.2);
		\draw[dashed] (0,1) -- (2.6,1);
		\draw[thick] (0.2,1.2) -- (0.2,0.8) to[out=-90,in=180] (0.4,0.6) -- (1.4,0.6);
		\draw (1.2,1.2) -- (1.2,0);
		\node[left] at (1.2,0.2) {\small{$-2$}};
		\draw (2.4,1.2) -- (2.6,0.2);
		\node[left] at (2.5,0.6) {\small{$-3$}};
		\draw (2.6,0.4) -- (2.4,-0.6);
		\node[left] at (2.7,-0.6) {$\vdots$};
		\node[right] at (2.5,-0.7) {\small{$[(2)_{k-1}]$}};
		\draw (2.4,-0.8) -- (2.6,-1.8);
	\end{tikzpicture}
}
\subcaptionbox*{(10), $r=\frac{1}{2}$}[0.48\textwidth]{
	\begin{tikzpicture}
		\path[use as bounding box] (0,-1.8) rectangle (2.6,1.2);
		\draw[dashed] (0,1) -- (2.6,1);
		\draw[thick] (0.2,1.2) -- (0.2,-1.4) to[out=-90,in=180] (0.4,-1.6) -- (2.8,-1.6);
		\draw (1.2,1.2) -- (1.2,0);
		\node[left] at (1.2,0.2) {\small{$-2$}};
		\draw (2.4,1.2) -- (2.6,0.2);
		\node[left] at (2.5,0.6) {\small{$-3$}};
		\draw (2.6,0.4) -- (2.4,-0.6);
		\node[left] at (2.7,-0.6) {$\vdots$};
		\node[right] at (2.5,-0.7) {\small{$[(2)_{k-1}]$}};
		\draw (2.4,-0.8) -- (2.6,-1.8);
	\end{tikzpicture}
}
\end{minipage}
\begin{minipage}{.24\linewidth}
\bigskip
\subcaptionbox*{(7), $r=\frac{1}{3}$}[\textwidth]{
	\begin{tikzpicture}
		\path[use as bounding box] (0,2) rectangle (2.8,3.2);
		\draw[dashed] (0,3) -- (2.8,3);
		\draw[thick] (0.2,2.4) -- (0.2,3) to[out=90,in=180] (0.4,3.2) -- (0.6,3.2) to[out=0,in=90] (0.8,3) -- (0.8,2.4);
		\draw(1.6,3.2) -- (1.6,2);
		\node[left] at (1.6,2.2) {\small{$-2$}};
		\draw(2.6,3.2) -- (2.6,2);
		\node[left] at (2.6,2.2) {\small{$-3$}};
	\end{tikzpicture}
}
\bigskip

\subcaptionbox*{(11), $r=\frac{1}{2}$}[\textwidth]{
	\begin{tikzpicture}
		\path[use as bounding box] (0,-0.8) rectangle (2.6,1.2);
		\draw[dashed] (0,1) -- (2.6,1);
		\draw[thick] (0.2,1.2) -- (0.2,-0.8);
		\draw (1.4,1.2) -- (1.4,0);
		\node[left] at (1.4,0.2) {\small{$-4$}};
		\draw (2.4,1.2) -- (2.6,0);
		\node[left] at (2.5,0.6) {\small{$-2$}};
		\draw (2.6,0.2) -- (2.4,-1);
		\node[left] at (2.5,-0.6) {\small{$-2$}};
	\end{tikzpicture}
}	
\end{minipage}
\begin{minipage}{.2\linewidth}
\subcaptionbox*{(11), $r=\frac{1}{2}$}[\textwidth]{
	\begin{tikzpicture}
		\path[use as bounding box] (0,-2) rectangle (2.6,1.2);
		\draw[dashed] (0,1) -- (2.6,1);
		\draw[thick] (0.2,1.2) -- (0.2,-1);
		\draw (1.4,1.2) -- (1.4,0);
		\node[left] at (1.4,0.2) {\small{$-3$}};
		\draw (2.4,1.2) -- (2.6,0);
		\node[left] at (2.5,0.6) {\small{$-2$}};
		\draw (2.6,0.2) -- (2.4,-1);
		\node[left] at (2.5,-0.4) {\small{$-3$}};
		\draw (2.4,-0.8) -- (2.6,-2);
		\node[left] at (2.5,-1.6) {\small{$-2$}};
	\end{tikzpicture}
}
\end{minipage}
\caption{Proposition \ref{prop:ful_description_almost_log_exc}, cases (2)-(4), (7), (9)-(11); $\alpha(\ll)$ of the second kind. Thick line indicates $R=D-E$, dashed line is $\ll$. }\label{Fig:ALE_1}
\end{figure}
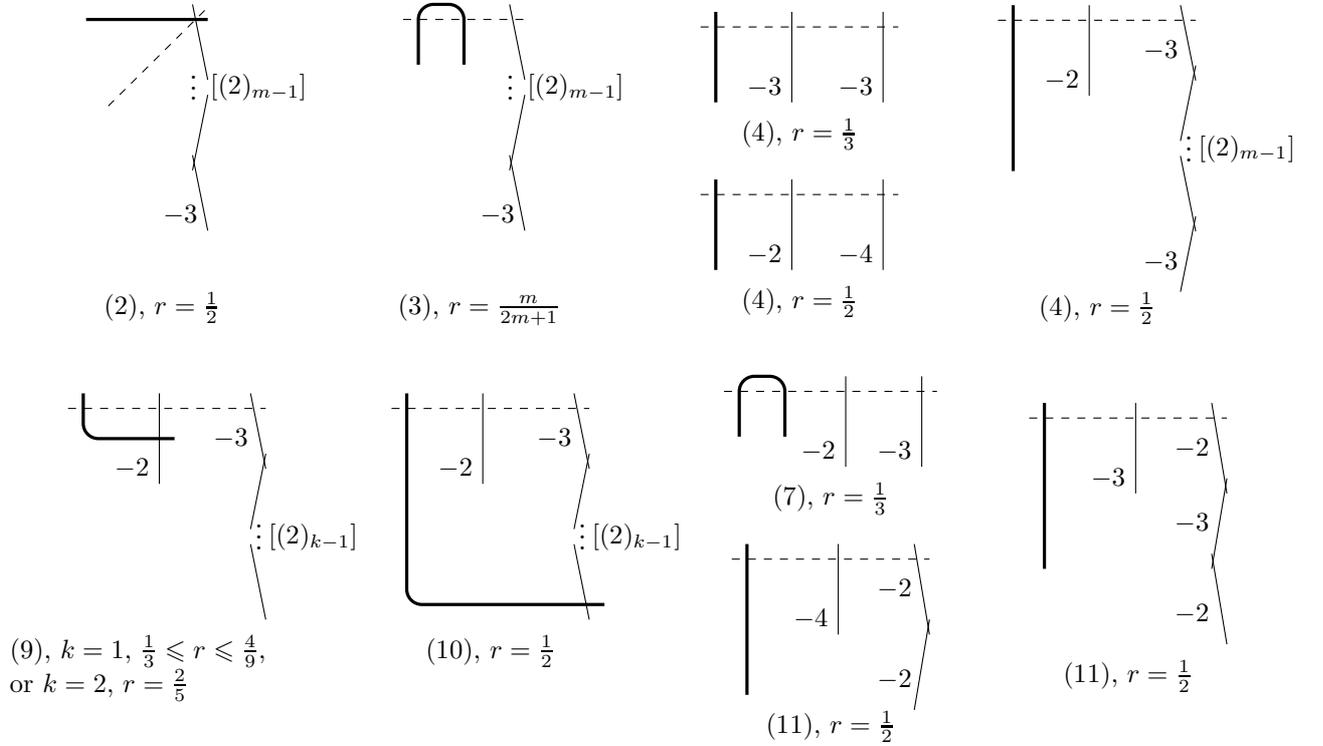

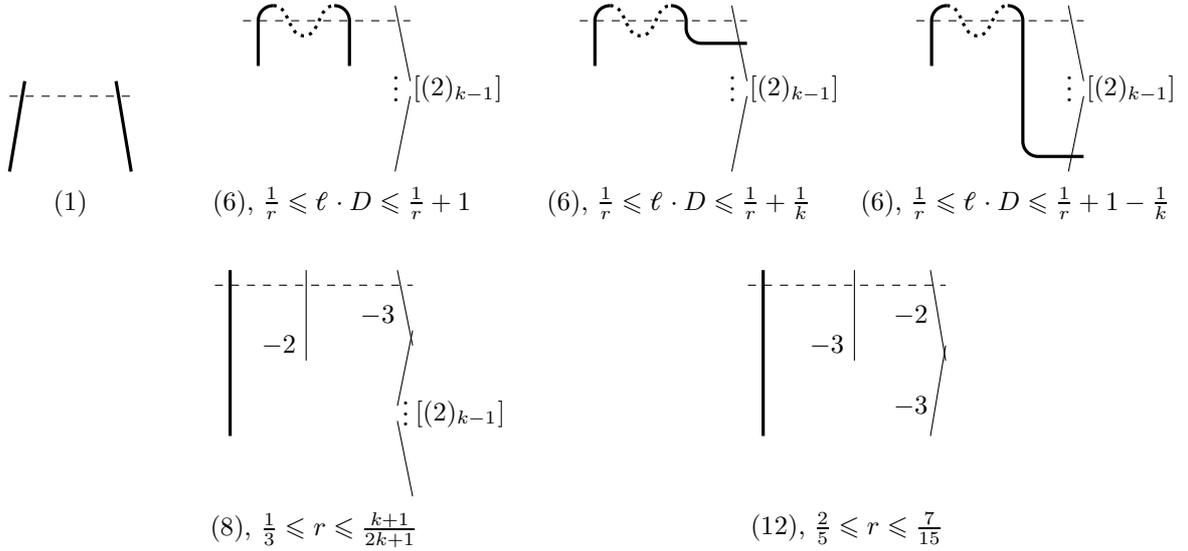
\begin{figure}
	\subcaptionbox*{(1)}[0.15\textwidth]{
		\begin{tikzpicture}
			\path[use as bounding box] (0,0) rectangle (1.6,1.2);
			\draw[thick] (0,0) -- (0.2,1.2);
			\draw[dashed] (0,1) -- (1.6,1);
			\draw[thick] (1.4,1.2) -- (1.6,0);
		\end{tikzpicture}
	}
\subcaptionbox*{(6), $\frac{1}{r}\leq \ll\cdot D \leq \frac{1}{r}+1$}[0.25\textwidth]{
	\begin{tikzpicture}
		\path[use as bounding box] (-0.6,1) rectangle (2,3.2);
		\draw[dashed] (-0.6,3) -- (1.6,3);
		\draw[thick] (-0.4,2.4) -- (-0.4,3) to[out=90,in=180] (-0.2,3.2);
		\draw[thick, dotted] (-0.2,3.2) to[out=0,in=180] (0.2,2.8) to[out=0,in=180] (0.6,3.2);
		\draw[thick] (0.6,3.2) to[out=0,in=90] (0.8,3) -- (0.8,2.4);
		\draw(1.4,3.2) -- (1.6,2.2);
		\node[left] at (1.6,2.2) {$\vdots$};
		\node[right] at (1.5,2.1) {\small{$[(2)_{k-1}]$}};
		\draw (1.6,2) -- (1.4,1);
	\end{tikzpicture}
}
\subcaptionbox*{(6), $\frac{1}{r}\leq \ll\cdot D \leq \frac{1}{r}+\frac{1}{k}$}[0.25\textwidth]{
	\begin{tikzpicture}
		\path[use as bounding box] (-0.6,1) rectangle (2,3.2);
		\draw[dashed] (-0.6,3) -- (1.6,3);
		\draw[thick] (-0.4,2.4) -- (-0.4,3) to[out=90,in=180] (-0.2,3.2);
		\draw[thick, dotted] (-0.2,3.2) to[out=0,in=180] (0.2,2.8) to[out=0,in=180] (0.6,3.2);
		\draw[thick] (0.6,3.2) to[out=0,in=90] (0.8,3) -- (0.8,2.9) to[out=-90,in=180] (1,2.7) -- (1.6,2.7);
		\draw(1.4,3.2) -- (1.6,2.2);
		\node[left] at (1.6,2.2) {$\vdots$};
		\node[right] at (1.5,2.1) {\small{$[(2)_{k-1}]$}};
		\draw (1.6,2) -- (1.4,1);
	\end{tikzpicture}
}
\subcaptionbox*{(6), $\frac{1}{r}\leq \ll\cdot D \leq \frac{1}{r}+1-\frac{1}{k}$}[0.25\textwidth]{
	\begin{tikzpicture}
		\path[use as bounding box] (-0.6,1) rectangle (2,3.2);
		\draw[dashed] (-0.6,3) -- (1.6,3);
		\draw[thick] (-0.4,2.4) -- (-0.4,3) to[out=90,in=180] (-0.2,3.2);
		\draw[thick, dotted] (-0.2,3.2) to[out=0,in=180] (0.2,2.8) to[out=0,in=180] (0.6,3.2);
		\draw[thick] (0.6,3.2) to[out=0,in=90] (0.8,3) -- (0.8,1.4) to[out=-90,in=180] (1,1.2) -- (1.6,1.2);
		\draw(1.4,3.2) -- (1.6,2.2);
		\node[left] at (1.6,2.2) {$\vdots$};
		\node[right] at (1.5,2.1) {\small{$[(2)_{k-1}]$}};
		\draw (1.6,2) -- (1.4,1);
	\end{tikzpicture}
}
\bigskip

\subcaptionbox*{(8), $\frac{1}{3}\leq r\leq \frac{k+1}{2k+1}$}[0.4\textwidth]{
	\bigskip
	\begin{tikzpicture}
		\path[use as bounding box] (0,-1.8) rectangle (2.6,1.2);
		\draw[dashed] (0,1) -- (2.6,1);
		\draw[thick] (0.2,1.2) -- (0.2,-1);
		\draw (1.2,1.2) -- (1.2,0);
		\node[left] at (1.2,0.2) {\small{$-2$}};
		\draw (2.4,1.2) -- (2.6,0.2);
		\node[left] at (2.5,0.6) {\small{$-3$}};
		\draw (2.6,0.4) -- (2.4,-0.6);
		\node[left] at (2.7,-0.6) {$\vdots$};
		\node[right] at (2.5,-0.7) {\small{$[(2)_{k-1}]$}};
		\draw (2.4,-0.8) -- (2.6,-1.8);
	\end{tikzpicture}
}
\subcaptionbox*{(12), $\frac{2}{5}\leq r \leq \frac{7}{15}$}[0.4\textwidth]{
	\begin{tikzpicture}
		\path[use as bounding box] (0,-1.8) rectangle (2.6,1.2);
		\draw[dashed] (0,1) -- (2.6,1);
		\draw[thick] (0.2,1.2) -- (0.2,-1);
		\draw (1.4,1.2) -- (1.4,0);
		\node[left] at (1.4,0.2) {\small{$-3$}};
		\draw (2.4,1.2) -- (2.6,0);
		\node[left] at (2.5,0.6) {\small{$-2$}};
		\draw (2.6,0.2) -- (2.4,-1);
		\node[left] at (2.5,-0.6) {\small{$-3$}};
	\end{tikzpicture}
}
\caption{Proposition \ref{prop:ful_description_almost_log_exc}, cases (1), (6), (8), (12); $\ll$ not log exceptional. Thick line indicates $R=D-E$, dashed line is $\ll$.}\label{Fig:ALE_2}
\end{figure}

\begin{proof}
Consider the case $r=1$. By Lemma \ref{lem:reordering_1st_2nd} we have $\alpha=\alpha_2\circ\alpha_1$, where $\alpha_1$ is a peeling of the first kind and $\alpha_2$ is a log crepant peeling of the second kind. Then $\ll$ is $\alpha_1$-almost log exceptional, so by Lemma \ref{lem:case_r=1} it is superfluous in $D+\ll$. Therefore, we may and will assume that $r<1$ and that $\ll$ is not superfluous in $D+\ll$. Let $\sigma\:\alpha(X)\to \ov X$ be the contraction of $\alpha(\ll)$ and $q\in \ov X$ the image of $\alpha(\ll)$. 

Assume that $q\in \ov X$ is singular. Let $\pi\:\tilde X\to \ov X$ be the minimal resolution of singularities and $\varphi\:X\to \tilde X$ the induced morphism. Put $\ov D=\sigma_*\alpha_*D$, $\tilde E=\Exc \pi$, and $\tilde D=\varphi_*D$; see diagram \eqref{eq:diagram-red_components}. 
By Lemma \ref{lem:peeling_for_squeezed}(3) every connected component of $E$ is a twig, rod or fork of $D$ or it is a $(-2)$-segment of $D$. Since $\ll$ is not superfluous in $D+\ll$, the divisor $\tilde D$ is not snc at $\varphi(\ll)$. Since $\sigma\circ \alpha=\pi\circ \varphi$ is a partial MMP run of the second kind, $\pi$ is a partial peeling of the second kind by Corollary \ref{cor:reordering_MMP}. By Lemma \ref{lem:peeling_for_squeezed} $\tilde E$ is a degenerate $(-2)$-segment of $\tilde D$, cf.\ Figure \ref{fig:degenerate_segment}. Let $G$ be a component of $\varphi_*^{-1}\tilde E$. Since $\pi^*K_{\ov X}=K_{\tilde  X}$ and $\pi^*\ov D=\tilde D$, we have $\cf(G;\ov X, r\ov D)=\cf(G;\tilde X, r\tilde D)=r=\cf(G;X,rD)$. By Lemma \ref{lem:ld_increasaes} it follows that $\alpha(\ll)$ is log exceptional of the second kind. The fact that $\ll$ is not superfluous in $D+\ll$ restricts possible types of blowing-ups constituting $\varphi^{-1}$. If $\#\tilde E=2$ we get $\ll+E=[3,1,3]$ and hence $r=\frac{1}{3}$. Assume that $\#\tilde E=1$. Then $\ll+E=[1,(2)_{m-1},3]$ or $\ll+E=[2,1,3,(2)_{m-2},3]$  for some $m\geq 1$, where $[3,(2)_{-1},3]:=[4]$. In the first case we get $r=\frac{1}{2}$ if $E$ is a twig of $D$ and $r=\frac{m}{2m+1}$ if $E$ is a rod of $D$. In the second case we get $r=\frac{1}{2}$. This gives (2), (3) and (4).

We may therefore assume that $q\in \ov X$ is smooth. We may also assume that we are not in case (5), so $\ll\cdot (K_X+rD)>0$ (respectively, $\ll\cdot( K_X+rD)\geq 0$). In particular, $E\neq 0$ (hence $r\neq 0$) and $\ll\cdot D>\frac{1}{r}$ (respectively, $\ll\cdot D\geq \frac{1}{r}$). Let $\pi\:\tilde X\to \ov X$ be the blowup of $q\in \ov X$, let $U=\Exc \pi$ denote the exceptional component and $\varphi\:X\to \tilde X$ the induced morphism. Again, put $\ov D=\sigma_*\alpha_*D$, $\tilde E=\Exc \pi$, and $\tilde D=\varphi_*D$. We have $\varphi_*^{-1}U\leq E$, as otherwise we would have $\alpha(X)=\tilde X$, which is impossible, as $\alpha$ is pure. Put $\mu=\pi_*^{-1}\ov D\cdot U=\mult_q\ov D$ and write $\varphi^*U=w\ll+\sum w_i E_i$, where the sum runs over all components $E_i$ of $E$. Since $q\in \ov X$ is smooth, $\mu$, $w$ and all $w_i$ are positive integers. We have $\pi^*(K_{\ov X}+r\ov D)=K+r\pi_*^{-1}\ov D+(r\mu-1)U$. Since $\sigma\circ \alpha$ is a partial MMP run of the second kind, $\cf(U;\ov X,r\ov D)\leq \cf (U,X,rD)=r$, hence $\mu \leq 1+\frac{1}{r}$. We have therefore 
\begin{equation}\label{eq:ALE_1}
\mu-1\leq \frac{1}{r}\leq \ll\cdot D.
\end{equation}
Moreover, at least one of the inequalities is strict. Indeed, otherwise $\ll$ is log exceptional of the second kind and at the same time $\cf(U;\ov X,r\ov D)=\cf (U,X,rD)$, so by Lemma \ref{lem:ld_increasaes} $\alpha(\ll)$ is log exceptional of the second kind, contrary to our assumptions. Put $R=D-E$. Since $\alpha(\ll)$ is log exceptional of the first or second kind, we have $0\geq \alpha(\ll)\cdot (K_{\alpha(X)}+r\alpha_*D)=\ll\cdot (K_X+E-\Bk E)+r\ll\cdot \alpha^*\alpha_*(R)$, hence $\ll\cdot (E-\Bk E)+r\ll\cdot \alpha^*\alpha_*(R)\leq 1$. We obtain
\begin{equation}\label{eq:ALE_2}
\frac{1}{r}\ll\cdot (E-\Bk E)+\ll\cdot \alpha^*\alpha_*(R)\leq \frac{1}{r}\leq \ll\cdot D.
\end{equation}
Again, at least one of the inequalities is strict. 

Assume that $w=1$. Then $\ll+E=[1,(2)_{k-1}]$ for some $k\geq 1$, hence $\Bk E=E$. The divisor $E$ is not a segment of $D$, as otherwise $\alpha^*\alpha_*R=D$ and we would have equalities in \eqref{eq:ALE_2}. Thus $E$ is a twig or a rod of $D$. The inequalities \eqref{eq:ALE_2} give additional restrictions on $\ll\cdot D$. We get (6). 

We may now assume that $w\geq 2$.  Assume that $\mu\geq 4$. Then $r\leq \frac{1}{3}$.  For every component $U$ of $E$ we have $\cf(U,\ov X,r\ov D)\leq\cf(X,rD)\leq r$. For a rod $U=[3]$ of $D$ have $\cf(U,\ov X,\ov D)=\frac{1}{3}$, so by Lemma \ref{lem:Alexeev_dlt} connected components of $E$ consist of $(-2)$-curves or are equal to $[3]$, and in the latter case $r=\frac{1}{3}$ and $\alpha$ is not of the first kind. If $r\neq \frac{1}{3}$ then due to negative definiteness of $\ll+E$ we get $\ll+E=[1,2,\ldots,2]$, contrary to the assumption $w\geq 2$. Thus $r=\frac{1}{3}$ and $\mu=4$. Since $w\geq 2$, it follows that $\ll+E=[2,1,3]$ and $\alpha$ is not of the first kind. The inequality \eqref{eq:ALE_2} reads as $1+\ll\cdot \alpha^*\alpha_*(R)\leq 3<\ll\cdot D$, which gives $\ll\cdot (\alpha^*\alpha_*(R)-R)\leq 0$. This is possible only if $\alpha^*\alpha_*(R)=R$, that is, $R\cdot E=0$. Then $R\cdot w\ll=\mu=4$, so $\ll\cdot R=2$. This gives (7).

We may now assume that $\mu\leq 3$. Note that since $E+\ll$ contracts to a smooth point, $\ll$ is superfluous in $E+\ll$. But $\ll$ is not superfluous in $D+\ll$, so we infer that $\ll\cdot R\geq 1$. We have $w\geq 2$ and $\mu=R\cdot(w\ll+\sum w_i E_i)\geq w R\cdot \ll+R\cdot E\geq 2R\cdot\ll+R\cdot E\geq 2$. It follows that $\ll\cdot R=1$ and $\mu\geq w+R\cdot E$. Since $\ll$ is not superfluous in $D$, the divisor $E$ has two connected components. We obtain $\ll\cdot D=3$. Assume that $\mu=2$. Then $R\cdot E=0$ and $w=2$, which gives  $\ll+E=[2,1,3,(2)_{k-1}]$ for some $k\geq 1$. The inequality \eqref{eq:ALE_2} reads as $\frac{1}{r}(1-\frac{k+1}{2k+1})+1\leq \frac{1}{r}\leq 3$. This gives (8). 

We are left with the case $\mu=3$. We have $\ll\cdot R=1$ and $R\cdot \sum w_i E_i=3-w$. By \eqref{eq:ALE_1}  $\frac{1}{3}\leq r\leq \frac{1}{2}$. Since $\sigma\circ \alpha$ is a partial MMP run of the second kind, for every component $E_i$ of $E$ we have  \begin{equation}\label{cf:compute_1}
\cf(E_i;\alpha(X),r\alpha_*D)\leq \cf(E_i;X,rD)=r. 
\end{equation}
Assume that $w=2$. Then again $\ll+E=[2,1,3,(2)_{k-1}]$ for some $k\geq 1$ and $R\cdot E=1$. Let $E_0\leq E$ be the $(-3)$-curve and $E_1=[2]$ the $(-2)$-rod of $E$. If $R$ meets $E_1$ then by \eqref{eq:Bk_rD} we get $\cf(E_0;\alpha(X),r\alpha_*D)=\frac{k}{2k+1}\leq r$. In this case by \eqref{eq:ALE_2} we have $r\leq \frac{2k+2}{6k+3}$,  which gives (9). If $R$ meets $[3,(2)_{k-1}]$ not in $E_0$ then $k\geq 2$ and the above inequalities give $\frac{1}{2}\leq r\leq \frac{k+1}{4k}$, which is impossible. Thus $R$ meets $E_0$. The above inequalities imply that $r=\frac{1}{2}$ and $k=1$, which gives (10). Finally, assume that $w=3$. Now $R\cdot E=0$. Since $E$ has two connected components and $\mu=3$, the divisor $\ll+E$ is a chain, either $[3,1,2,3]$ or $[2,2,1,4,(2)_{k-1}]$ or $[3,1,2,3,(2)_{k}] $ for some $k\geq 1$. In the second and third case we obtain $k=1$ and $r=\frac{1}{2}$, which gives (11). In the first case $\frac{2}{5}\leq r\leq \frac{7}{15}$, which gives (12). 
\end{proof}

\medskip
\subsection{Almost minimalization for uniform $\epsilon$-dlt and $\epsilon$-lc surfaces}

For definitions of GMRLC, $\epsilon$-dlt and $\epsilon$-lc log surfaces see Definitions \ref{def:GMRLC} and \ref{dfn:eps-dlt}. We recall that an $(1-r)$-lc log surface is in particular log canonical, hence GMRLC (for $r\neq 1$ it is dlt, hence $\Q$-factorial). It follows that the canonical divisor, components of the boundary and all log exceptional curves of the first kind are $\Q$-Cartier. Also, the image of a GMRLC log surface after a contraction of any log exceptional curve of the first or second kind is a GMRLC log surface.

\begin{lem}\label{lem:resolution_is_peeling}
Assume 	that $(\ov X,r\ov D)$, where $\ov D$ is reduced and $r\in [0,1]\cap \Q$, is $(1-r)$-lc. Let $\alpha\:X\to \ov X$ be a proper birational morphism from a $\Q$-factorial (normal) surface. Put $D=\alpha_*^{-1}\ov D+\Exc \alpha$. Then $\alpha\:(X,rD)\to (\ov X,r\ov D)$ is a partial peeling of the second kind. If $(\ov X,r\ov D)$ is $(1-r)$-dlt then $\alpha$ is of the first kind, unless $r=1$ and some center of $\alpha$ is a normal crossing point of $(\ov X,\ov D)$. 
\end{lem}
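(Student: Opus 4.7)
The plan is to verify the numerical characterization of partial MMP runs from Corollary~\ref{cor:improving_ld_gives_MMP} for $\alpha$, first in its non-strict form to obtain the ``second kind'' statement and then, under the dlt hypothesis, in its strict form to upgrade to ``first kind''.

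For the second kind statement I would check the hypotheses of Corollary~\ref{cor:improving_ld_gives_MMP}(2). The surface $X$ is $\Q$-factorial by assumption, so every component of $\Exc\alpha$ and $K_X$ itself are $\Q$-Cartier; on the other hand $(\ov X, r\ov D)$ is $(1-r)$-lc, hence log canonical and GMRLC (Remark~\ref{rem:singularities}, Definition~\ref{def:GMRLC}), so Lemma~\ref{lem:GMRLC}(2) gives that $K_{\ov X}$ is $\Q$-Cartier too. The key numerical input is that every component $E$ of $\Exc\alpha$ has $\coeff_E(D)=1$, so $\cf(E;X,rD)=r$, while the $(1-r)$-lc hypothesis forces $\cf(E;\ov X,r\ov D)\le r$. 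Corollary~\ref{cor:improving_ld_gives_MMP}(2) will then yield that $\alpha$ is a partial MMP run of the second kind on $(X,rD)$, and since $\Exc\alpha\subseteq\Supp D$ by construction, Definition~\ref{dfn:almost_minimalization_2}(2) will identify it as a partial peeling of the second kind.

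Under the dlt assumption I would aim to replace $\le$ by $<$ in the coefficient comparison, which by Corollary~\ref{cor:improving_ld_gives_MMP}(1) is exactly what distinguishes the first kind from the second. When $r<1$, so $\epsilon=1-r\neq 0$, this is immediate from Remark~\ref{rem:dlt_resolution_dependence}: $(1-r)$-dlt can be checked on any log resolution, so extending a chosen log resolution to one that extracts a given component $E$ of $\Exc\alpha$ forces $\cf(E;\ov X,r\ov D)<r$. The one genuinely delicate case is $r=1$, where the coefficient of a single divisor depends on the chosen log resolution and the dlt hypothesis only guarantees $\cf<1$ on the exceptional components of some one log resolution. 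Here I plan to exploit the assumption that no center of $\alpha$ is a normal crossing point of $(\ov X,\ov D)$: for each component $E$ of $\Exc\alpha$ with center $p$ on $\ov X$, I will construct a log resolution $g$ extracting $E$ using only blowups whose centers lie in the preimage of the non--normal-crossing locus over $p$, so that $g$ never blows up an intersection of two coefficient-$1$ branches; then the computation recalled in Remark~\ref{rem:dlt_resolution_dependence} forces $\cf(\cE(g);\ov X,\ov D)<1$, and in particular $\cf(E;\ov X,\ov D)<1$. The hard part of the whole argument will be this last selection of $g$ and the verification that the non--normal-crossing hypothesis on centers really does allow the construction to proceed without ever producing a coefficient-$1$ exceptional curve.
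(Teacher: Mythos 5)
Your treatment of the second-kind statement and of the first-kind statement for $r<1$ is correct and coincides with the paper's own proof: every component $E$ of $\Exc\alpha$ has $\cf(E;X,rD)=r$, the $(1-r)$-lc (resp.\ $(1-r)$-dlt, via Remark \ref{rem:dlt_resolution_dependence}) hypothesis gives $\cf(E;\ov X,r\ov D)\leq r$ (resp.\ $<r$), and Corollary \ref{cor:improving_ld_gives_MMP} converts this into the desired conclusion.

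The $r=1$ case, however, contains a genuine gap, and the framing suggests a misconception. The coefficient $\cf(E;\ov X,\ov D)$ depends only on the valuation attached to $E$, not on the resolution used to compute it, so nothing can be gained by ``selecting'' a log resolution $g$: every resolution extracting $E$ returns the same number, and the blowups needed to extract $E$ are dictated by $\alpha$. Moreover the constraint you impose --- centers lying in the preimage of the non-normal-crossing locus over $p=\alpha(E)$ --- is vacuous (that preimage is the entire fiber over $p$) and does not by itself prevent a blowup at a point lying on two coefficient-$1$ branches, which is precisely what must be excluded. The actual content to be proved is that for a dlt pair with reduced boundary \emph{no} exceptional divisor over a non-normal-crossing point has coefficient $1$; you assert this but do not prove it. The paper closes this step by composing $\alpha$ with a resolution so that it becomes a log resolution and factoring it as $\alpha'$ followed by the minimal log resolution $\pi$ of $(\ov X,\ov D)$ at $p$. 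Dlt-ness forces $\cf(F;\ov X,\ov D)<1$ for every $F\in\cE(\pi)$ (these divisors appear on the log resolution witnessing dlt-ness), so a divisor $E$ with $\cf(E;\ov X,\ov D)=1$ must be contracted by $\alpha'$; by Corollary \ref{cor:improving_ld_gives_MMP}(1) the morphism $\pi$ is then a partial MMP run of the first kind on the log smooth model, and Lemma \ref{lem:ld_increasaes} yields the strict inequality $\cf(E;\ov X,\ov D)<1$ unless $\pi$ is an isomorphism over $p$; in that remaining case $p$ is a smooth point at which $\ov D$ is snc, and the blowup computation of Remark \ref{rem:dlt_resolution_dependence} shows that $\cf(E)=1$ forces $p$ to lie on two components of $\ov D$, i.e.\ to be a normal crossing point --- exactly the excluded situation. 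You need to supply an argument of this kind (or the equivalent induction on blowups showing that two coefficient-$1$ branches never meet above a non-normal-crossing point of a dlt pair) to complete your proof.
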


\begin{proof}Let $E$ be a prime component of $\Exc \alpha$. We have $\cf(E;\ov X, r\ov D)\leq r=\cf(E;X,rD)$, hence $\alpha$ is a partial peeling of the second kind by Corollary \ref{cor:improving_ld_gives_MMP}(2). Assume that $(\ov X,r\ov D)$ is $(1-r)$-dlt and that either $r<1$ or $r=1$ and $\alpha(E)$ is not a normal crossing point of $(\ov X,\ov D)$. It is sufficient to argue that $\cf(E;\ov X, r\ov D)<r$, because then $\alpha$ is of the first kind by Corollary \ref{cor:improving_ld_gives_MMP}(1). Composing with a resolution of singularities of $X$ we may assume that $\alpha$ is a log resolution. If $r<1$ then we are done by the definition of a $(1-r)$-dlt surface, cf. Remark \ref{rem:dlt_resolution_dependence}. Assume that $r=1$ and write  $\alpha=\pi\circ \alpha'$, where $\pi$ is the minimal log resolution of the singularity $\alpha(E)\in \ov X$. If $\cf(E;\ov X, r\ov D)=1$ then $\alpha'$ contracts $E$ and by Lemma \ref{lem:ld_increasaes} $\pi$ is the identity, hence $\alpha(E)$ is a normal crossing point on $(\ov X,\ov D)$.
\end{proof}

\begin{rem}
In Lemma \ref{lem:resolution_is_peeling} the log surface $(X,rD)$ is not necessarily $(1-r)$-lc. For instance, for $r=0$ consider a smooth projective surface containing the chain $[1,3,1]$. Let $X$ be its image after the contraction of the middle curve. Then $X$ is not canonical and the contraction of the remaining part of the chain maps it onto a smooth surface $\ov X$.
\end{rem}

\begin{cor}[Resolutions of $(1-r)$-log canonical surfaces]\label{cor:lc_sing_restrictions_r}
Let $(\ov X,r\ov D)$, where $\ov D$ is reduced and $r\in [0,1]\cap \Q$, be a germ of a $(1-r)$-lc log surface. Assume that $\ov X$ is singular. Let $(X,D)\to (\ov X,\ov D)$, where $D$ is the reduced total transform of $\ov D$, be the minimal resolution. Then the exceptional divisor $E$ is as in Lemma \ref{lem:lc_singularities} and the following hold.
\begin{enumerate}[(1)]
\item If $r=1$ then $D$ is snc or $D=E$ and $E$ is a nodal rational curve.
\item If $r<1$ then $E$ is as in case (1), (2) or (5) of Lemma \ref{lem:lc_singularities} and in case (5) $E$ consists of $(-2)$-curves.
\item Assume that $r<1$ and $D$ is not snc. Then $E=[(2)_k]$, where $k\in \{1,2\}$, is a degenerate segment and $D-E\neq 0$. Moreover, denoting the unique common point of $E$ and $D-E$ by $q$, we have:
\begin{enumerate}[(a)]
\item $k=2$, $q\in D-E$ is smooth and $r\leq \frac{1}{2}$.
\item $k=1$, $q\in D-E$ has multiplicity $2$ and $r\leq \frac{1}{2}$,
\item $k=1$, $q\in D-E$ is smooth and $r\leq \frac{2}{3}$,
\end{enumerate}
If $(X,rD)$ is $(1-r)$-dlt then the inequalities are strict.
\item $(X,rD)$ is $(1-r)$-lc. It is $(1-r)$-dlt if $(\ov X,r\ov D)$ is $(1-r)$-dlt.
\end{enumerate}
\end{cor}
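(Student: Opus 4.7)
The main step is to apply Lemma \ref{lem:resolution_is_peeling} to the minimal resolution $\pi\colon(X,rD)\to(\ov X,r\ov D)$: since $(\ov X,r\ov D)$ is $(1-r)$-lc (hence GMRLC) and $X$ is smooth, $\pi$ is a partial peeling of the second kind. It is pure because minimality forces $E=\Exc\pi$ to contain no $(-1)$-curves, so $K_X$ is $\pi$-nef. Hence $\cf(E_i;\ov X,r\ov D)\leq r$ for every prime $E_i\leq E$; in particular $\cf(E_i;\ov X,\ov D)\leq 1$, and $E$ falls into one of the five cases of Lemma \ref{lem:lc_singularities}.

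For part (2), suppose $r<1$. Writing $\cf(E_i;\ov X,r\ov D)=\cf(E_i;\ov X,\ov D)-(1-r)n_i$ with $n_i=\coeff_{E_i}(\pi^*\ov D-\pi_*^{-1}\ov D)\geq 0$, the peeling bound becomes $n_i\geq 1$ whenever $\cf(E_i;\ov X,\ov D)=1$. Case (3) has $\ov D=0$, hence $n_i=0$, excluded. In case (4), the two $(-2)$-tips of the half-bench satisfy $n_i<1$ by a short computation in the intersection matrix, excluded. In case (5) the same computation forces every component of $E$ to be a $(-2)$-curve. Thus only cases (1), (2), and case (5) with $(-2)$-components survive. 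Part (3) follows at once: cases (1) and (2) always produce $D$ snc by the structure of twigs, rods and forks, so the non-snc assumption puts us into case (5) with $E=[(2)_k]$ a degenerate segment. The three configurations of Figure \ref{fig:degenerate_segment} compatible with $E$ being negative definite give subcases (a), (b), (c) with $\mult_q(D)=3,\,3,\,2$. Blowing up $q$ in $X$ produces an exceptional $F$ for which, using $\cf(E_i;\ov X,\ov D)=1$, one obtains $\cf(F;\ov X,r\ov D)=r\mult_q(D)-1$; the lc bound $\cf(F;\ov X,r\ov D)\leq r$ yields $r\leq \tfrac{1}{2},\,\tfrac{1}{2}$ in (a), (b). In (c) the tangency forces a second blowup, which after a direct calculation sharpens the bound to $r\leq \tfrac{2}{3}$. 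Strictness in the dlt case is immediate from Lemma \ref{lem:Alexeev_dlt}.

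For part (1), I inspect each case of Lemma \ref{lem:lc_singularities}: the dual-graph descriptions of twigs, rods, forks, benches, half-benches, and non-degenerate segments automatically yield $D$ snc. The remaining non-snc possibilities are the non-nodal degenerate cycles in (3b) and the degenerate segments in (5); in each such configuration a single blowup at the non-snc point produces an exceptional $F$ with $\cf(F;\ov X,\ov D)\geq 2$, contradicting log canonicity. Only $D=E$ nodal survives. For part (4), each $E_i$ has $\cf(E_i;X,rD)=r$ by definition, giving log discrepancy $1-r$; for any further blowup extracting $F$ from a point $p\in X$, the formula $\cf(F;X,rD)=r\mult_p(D)-1+(\text{contribution from }E)$ is bounded by $r$ either because $D$ is snc near $p$ (so $\mult_p(D)\leq 2$ and $\cf\leq 2r-1\leq r$) or because the tight bounds on $r$ in (3) were calibrated for exactly this. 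The dlt assertion follows by propagating the corresponding strict inequality via Lemma \ref{lem:Alexeev_dlt}. The bulk of the technical work is the explicit multiplicity and discrepancy computations in the degenerate configurations of (1) and (3); no step is conceptually deep.
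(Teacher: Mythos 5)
Your overall strategy --- realize the minimal resolution as a pure partial peeling of the second kind via Lemma \ref{lem:resolution_is_peeling}, then sift the cases of Lemma \ref{lem:lc_singularities} by writing $\cf(E_i;\ov X,r\ov D)=\cf(E_i;\ov X,\ov D)-(1-r)n_i$ --- is essentially the paper's route (the paper packages the sifting into Lemma \ref{lem:peeling_for_squeezed}(3)), and your blow-up computations for part (3) reproduce the correct bounds $r\leq\frac12,\frac12,\frac23$. But there is a genuine gap in your exclusion of case (4) of Lemma \ref{lem:lc_singularities}. You propose to exclude half-benches because ``the two $(-2)$-tips satisfy $n_i<1$''. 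That is beside the point: for a log canonical half-bench the $(-2)$-tips have $\cf(\,\cdot\,;\ov X,\ov D)=\tfrac12$, not $1$, so $n_i<1$ at a tip yields no violation of the bound $\cf(\,\cdot\,;\ov X,r\ov D)\leq r$. The components with coefficient $1$ at $r=1$ lie on the central chain, and there one must show $n_i<1$; this fails when the half-bench consists entirely of $(-2)$-curves. Concretely, take $E=T_1+C+T_2=[2,2,2]$ with $\pi_*^{-1}\ov D$ meeting the middle component $C$ once, transversally (an $A_3$ point with the boundary hitting the middle exceptional curve). Solving $(K_X+r\pi_*^{-1}\ov D+\Phi)\cdot E_k=0$ gives $\Phi=rC+\tfrac r2 T_1+\tfrac r2 T_2$, so every exceptional coefficient is $\leq r$; since the resulting pair on $X$ is log smooth with all coefficients in $[0,r]$, every further blow-up also has coefficient $\leq r$, and the germ is $(1-r)$-lc. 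The same happens for the $(-2)$-fork $\langle [2];[2],[2],[2]\rangle$ with the boundary meeting a tip. These are log canonical half-benches in the paper's sense, and they are neither twigs, rods, forks nor segments of $D$ (the relevant component has $\beta_D=3$), so an argument based solely on coefficient inequalities cannot rule them out. As it stands your proof of part (2) does not close this case; you must either supply a different reason such germs cannot occur or flag them explicitly (the same delicate point is hidden in the paper's appeal to Lemma \ref{lem:peeling_for_squeezed}).

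A second, smaller gap is part (4). Verifying $(1-r)$-log canonicity of $(X,rD)$ requires bounding the coefficient of \emph{every} divisor over $X$, not just the first blow-up at a point of $D$; your dichotomy ``snc near $p$, so $\mult_p(D)\leq 2$'' handles only the first level, and ``the bounds in (3) were calibrated for exactly this'' is an assertion, not an argument. The observation you are missing --- and which the paper uses --- is that in every surviving non-snc configuration ($r=1$ with $D=E$ a nodal rational curve, or $r<1$ with $E$ a degenerate $(-2)$-segment) the minimal resolution is log crepant, $\pi^*(K_{\ov X}+r\ov D)=K_X+rD$, so coefficients over $(X,rD)$ and over $(\ov X,r\ov D)$ coincide and $(1-r)$-lc-ness (and dlt-ness) transfers with no further computation. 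Finally, two minor imprecisions: in part (1), a \emph{single} blow-up does not produce $\cf\geq 2$ for the cuspidal and tacnodal degenerate cycles (two or three blow-ups are needed before a coefficient exceeding $1$ appears, though the conclusion stands), and the strictness claim in part (3) follows directly from the definition of $(1-r)$-dlt applied to the divisors $F_1$, $F_2$ you already constructed, not from Lemma \ref{lem:Alexeev_dlt}.
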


\begin{proof} The morphism $(X,rD)\to (\ov X,r\ov D)$ is a pure partial peeling of the second kind by Lemma \ref{lem:resolution_is_peeling}. 

(1) If $r=1$ and $D$ is not snc then by Lemma \ref{lem:lc_singularities} $E$ is a degenerate segment or a degenerate cycle, hence the minimal resolution is log crepant. For cases other than the rational nodal curve we easily find in the minimal log resolution curves with coefficients bigger than $1$. This gives (1).

(2), (3) Part (2) follows from Lemma \ref{lem:peeling_for_squeezed}. The additional numerical restrictions in case (5) come from a straightforward computation of coefficients of exceptional divisors of the minimal log resolution dominating the minimal resolution. For instance, in the case when $q\in D-E$ is a cusp, writing the exceptional divisor of the log resolution $\pi\:(\tilde X,\tilde D)\to (\ov X,\ov D)$ as $E_0+E_1+\ldots+E_n+E_{n+1}=[3,(2)_{n-2},3,1,2]$, where $n\geq 2$, we have 
\begin{equation}\label{eq:cusp-segment-resolution}
\pi^*(K_{\ov X}+r\ov D)=K_{\tilde X}+r\pi_*^{-1}\ov D+\sum_{i=0}^{n-1}(i(2r-1)+r)E_i+2n(2r-1)E_{n}+n(2r-1)E_{n+1},
\end{equation}
so the coefficients of exceptional components are not bigger than $r$ if and only if $r\leq \frac{1}{2}$.

(4) We may assume that $D$ is not snc, as otherwise the claim holds. If $r=1$ then $D=E$ is a nodal rational curve and if $r<1$ then by Lemma \ref{lem:peeling_for_squeezed}(3) $E$ is a degenerate $(-2)$-segment of $D$. In both cases the minimal resolution is log crepant, so the claim holds.
\end{proof}

We discuss the effect of squeezing on log singularities for surfaces of type $(X,rD)$, with $X$ smooth and $D$ reduced. A partial squeezing is an almost minimalization of a partial peeling (which is a partial MMP run). Since partial squeezing itself is not a partial MMP run on $(X,rD)$, it does not necessarily respect $(1-r)$-(divisorial) log terminality. This is exactly the case in the following example.

\begin{ex}[Partial squeezing does not respect $(1-r)$-log terminality of $(X,rD)$] \label{ex:(1-r)-log_terminality}
Let $r\in (0,1]\cap \Q$ and $n\in \N_+$ be such that the interval $[\frac{1}{r},\frac{1}{r}+\frac{1}{n}]$ contains some positive integer $N$. Blowing up over a point $p\in \P^2$ we create a $\P^1$-fibered surface $X$ with a unique degenerate fiber $F=[2,\ldots,2,1,n]$ and a section $U$ which is a $(-1)$-curve meeting a tip of $F$ belonging to a maximal $(-2)$-chain $\Delta\subset F$. Let $\alpha\:X\to \ov X$ be the contraction of $\Delta$ and let $D=F_1+\ldots +F_N+U+\Delta$, where $F_1,\ldots,F_N$ are distinct non-degenerate fibers. 
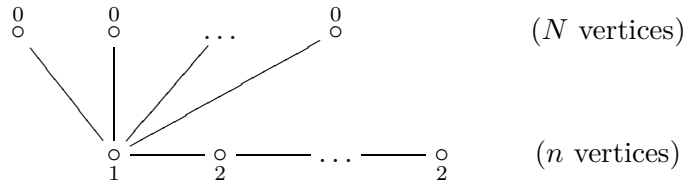
\begin{figure}[h]
{$\xymatrix{
{\overset{0}{\circ}}\ar@{-}[rd]  & {\overset{0}{\circ}}\ar@{-}[d] & {\ldots}\ar@{-}[ld] &  {\overset{0}{\circ}}\ar@{-}[lld] & {}&{(N \text{\ vertices})  }\\
{} &{\underset{1}{\circ}}\ar@{-}[r] & {\underset{2}{\circ}}\ar@{-}[r] & {\ldots}\ar@{-}[r] &{\underset{2}{\circ}} & {(n \text{\ vertices})}}$}

\caption{The dual graph of $D$ in Example \ref{ex:(1-r)-log_terminality}.}
\end{figure}
Put $\ov D=\alpha_*D$ and $\ov U=\alpha(U)$. Then $\alpha\:(X,rD)\to (\ov X,r \ov D)$ is a pure peeling, hence $(\ov X,r\ov D)$ is $(1-r)$-dlt. We have (cf. \eqref{eq:Bk_rD}) $$\ov U^2=-\frac{1}{n}<0 \quadtext{and} \ov U\cdot (K_{\ov X}+r\ov D)=r(N-\frac{1}{n})-1\leq 0,$$ so $\ov U$ is log exceptional of the first or second kind and hence $U$ is $\alpha$-redundant of the first or second kind. Let $\sigma\:X\to X'$ be the contraction of $U$. Put $D'=\sigma_*D$. Then $\cf(U;X',rD')=-1+r+rN\geq r$, so while $(X,rD)$ is $(1-r)$-dlt, $(X',rD')$ is not. If $\frac{1}{r}<N< \frac{1}{r}+\frac{1}{n}$  then $U$ is $\alpha$-redundant of the first kind and $(X',rD')$ is not even $(1-r)$-lc.

We note that although $\ov U$ is log exceptional on $(\ov X, r\ov D)$ and $U$ is redundant on $(X,rD)$, we have $\ov U\cdot (K_{\ov X}+r(\ov D-\ov U))=rN-1\geq 0$, so $\ov U$ is not log exceptional on $(\ov X, r(\ov D-\ov U))$ and hence $U$ is not almost log exceptional on $(X,r(D-U))$.
\end{ex}

\begin{ex}[Example with $\psi=\psi_{\am}$]\label{ex:psi_am_is_MMP-order-of-witnessing} 
Consider a log surface $(X,rD)$ with $X$ smooth, $r\in (0,1]\cap \Q$, $D$ reduced containing a twig $E=[(2)_{k-1}]$ for some $k\geq 1$, and containing a $(-1)$-curve $\ll$ such that $\ll\cdot E=1$ and for $R=D-\ll-E$ we have $0\leq \ll\cdot R-\frac{1}{r}< \frac{1}{k}$; see case (5) of Proposition \ref{prop:ful_description_redundant}. Let $\alpha$ be the contraction of $E$, let $\psi$ be the contraction of $E+\ll$ and let $E_1,\ldots, E_{k-1}$ be the subsequent components of $E$ enumerated so that $E_1$ is a tip of $D$ and $E_{k-1}$ meets $\ll$.
\begin{figure}[h]
	\begin{tikzpicture}[scale=1.5]
		\draw (-1,3) -- (1.6,3);
		\node at (-0.9,2.8) {\small{$\ll$}};
		\node at (-0.9,3.2) {\small{$-1$}};
		\node at (-0.65,2.1) {\small{$R$}};
		\draw[thick] (-0.4,1.9) -- (-0.4,3) to[out=90,in=180] (-0.2,3.2);
		\draw[thick, dotted] (-0.2,3.2) to[out=0,in=180] (0.2,2.8) to[out=0,in=180] (0.6,3.2);
		\draw[thick] (0.6,3.2) to[out=0,in=90] (0.8,3) -- (0.8,2.7); 
		\draw(1.4,3.2) -- (1.6,2.2);
		\node[right] at (1.4,2.7) {\small{\ $E_{k-1}$}};
		\node[left] at (1.6,2.2) {$\vdots$};
		\node[right] at (1.5,2.1) {\small{\ $[(2)_{k-1}]$}};
		\draw (1.6,2) -- (1.4,1);
		\node[right] at (1.4,1.4) {\small{\ $E_1$}};
	\end{tikzpicture}
\caption{The graph of $D$ in Example \ref{ex:psi_am_is_MMP-order-of-witnessing}.}
\end{figure}
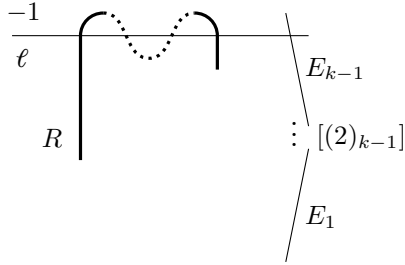
We have $\ll\cdot (K+rD)=-1+r\ll\cdot R\geq 0$, so $\ll$ is not log exceptional of the first kind. On the other hand, $\alpha(\ll)\cdot (K_{\alpha(X)}+r\alpha_*D)=-1+r(\ll\cdot R-\frac{1}{k})< 0$, so $\alpha(\ll)$ is log exceptional of the first kind, that is, $\ll$ is $\alpha$-redundant on $(X,rD)$. Therefore, contracting subsequently $E_1, E_2, \ldots, E_{k-1}, \ll$ we see that $\psi$ is a partial MMP run, a partial peeling of $(X,rD)$. On the other hand, under this contraction the image of $\ll$ is a smooth point of $\psi(X)$, hence $\psi_\am=\psi$, i.e. $\psi$ is the almost minimalization morphism, too, namely a $K_X$-MMP run. The order of contractions witnessing the latter is, of course, the opposite one: $\ll, E_{k-1},\ldots, E_1$.
\end{ex}

\begin{cor}[The effect of squeezing on log singularities]\label{cor:singularities_after_squeezing}
Assume that $X$ is smooth, $D$ is reduced, $r\in [0,1]\cap \Q$ and $(X,rD)$ is $(1-r)$-dlt ($(1-r)$-lc). Let $\sigma\:(X,rD)\to (X',rD')$ be a partial squeezing of the first (second) kind. Then $X'$ is smooth and the following hold:
\begin{enumerate}[(1)]
\item If $(X',rD')$ contains no redundant curve of the first kind meeting $\Supp \sigma_*(\Exc \sigma)$ (in particular, if $\sigma$ is a squeezing) then it is $(1-r)$-dlt (respectively, $(1-r)$-lc).
\item If $\frac{1}{r}\in \N\cup\{\8\}$ then $(X',rD')$ is $(1-r)$-lc.
\end{enumerate}
\end{cor}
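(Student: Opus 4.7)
The plan is to decompose $\sigma=\sigma_n\circ\cdots\circ\sigma_1$ into elementary contractions $\sigma_i\colon (X_i,rD_i)\to (X_{i+1},rD_{i+1})$, each the blowdown of a single $(-1)$-curve $L_i\leq D_i$. Smoothness of $X'$ is automatic because $\sigma$ is a $K_X$-MMP starting from smooth $X$. Intersecting $\sigma_i^*(K_{X_{i+1}}+rD_{i+1})$ with $L_i$ yields the coefficient formula
\begin{equation*}
\cf(L_i;X_{i+1},rD_{i+1}) = r + L_i\cdot (K_{X_i}+rD_i),
\end{equation*}
so this coefficient is at most $r$ (respectively strictly less than $r$) precisely when $L_i$ is log exceptional of the first or second kind (respectively of the first kind only) on $(X_i,rD_i)$. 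In that case $-(K_{X_i}+rD_i)$ is $\sigma_i$-nef (respectively $\sigma_i$-ample), so Corollary \ref{lem:eps-lc_is_respected} inductively propagates $(1-r)$-lc (respectively $(1-r)$-dlt) through the composition. The problem therefore reduces to showing that under the hypotheses of (1) and (2) each $L_i$ is log exceptional of the appropriate kind on the current surface; once this is established, coefficients of divisors further over $\sigma_i(L_i)$ are controlled via Corollary \ref{cor:lc_sing_restrictions_r} applied to the local $(1-r)$-lc singularity of $(X_{i+1},rD_{i+1})$.

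Because $\sigma=\alpha_{\mathrm{am}}$ for an underlying partial peeling $\alpha$, each $L_i$ is $\alpha_i$-redundant of the first kind on $(X_i,rD_i)$, where $\alpha_i$ is the peeling remaining from $X_i$ to the image of $\alpha$; Proposition \ref{prop:ful_description_redundant} then classifies the local configuration of $L_i$ together with the components of $\Exc\alpha_i$ it meets. For part (2), when $\tfrac{1}{r}\in\N\cup\{\infty\}$, I will inspect the classification case by case and show that $L_i$ itself must be log exceptional of the first or second kind on $(X_i,rD_i)$. The key case is (5): the bound $0\leq\ll\cdot R-\tfrac{1}{r}\leq\tfrac{1}{\#E+1}<1$, combined with integrality of both $\ll\cdot R$ and $\tfrac{1}{r}$, forces $\ll\cdot R=\tfrac{1}{r}$ and hence $\ll\cdot(K+rD)=0$. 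Cases (3), (4), (6) involve specific values of $r$ compatible with $\tfrac{1}{r}\in\N$ only for $r=\tfrac{1}{2}$, where a direct intersection computation on the chain shapes $[3,1,3]$, $[1,(2)_{m-1},3]$, $[3,1,2,3]$, etc.\ confirms $\ll\cdot (K+rD)=0$; cases (1), (2a), (2b) are immediate.

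For part (1), I will argue by contrapositive. Suppose some $L_{i_0}$ fails to be log exceptional of the first (respectively first or second) kind on $(X_{i_0},rD_{i_0})$. Inspecting Proposition \ref{prop:ful_description_redundant} then shows that $L_{i_0}$ meets a $(-2)$-twig $T\leq \Exc\alpha_{i_0}$; after $\sigma_{i_0}$ contracts $L_{i_0}$, the component of $T$ incident to $L_{i_0}$ becomes a $(-1)$-curve through $\sigma_{i_0}(L_{i_0})$, and Lemma \ref{lem:redundant_curve_numerics} shows this new curve is redundant of the first kind with respect to the pure partial peeling induced on $(X_{i_0+1},rD_{i_0+1})$. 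Tracking it through the remaining contractions of $\sigma$, one obtains a redundant curve of the first kind on $(X',rD')$ meeting $\sigma_{i_0}(L_{i_0})\in \Supp\sigma_*(\Exc\sigma)$, contradicting the hypothesis and completing the reduction. The main obstacle is precisely this propagation step: the newly created $(-1)$-curve could conceivably coincide with some later $L_j$ and be contracted by $\sigma$ itself, requiring iterative bookkeeping. I expect the process to terminate by finiteness of $\Exc\alpha$ together with the negative-definiteness of exceptional configurations, with Example \ref{ex:(1-r)-log_terminality} serving as the minimal guiding obstruction.
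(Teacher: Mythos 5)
Your treatment of part (2) is correct and is essentially the paper's argument: decompose $\sigma$ into blowdowns of redundant $(-1)$-curves, classify each configuration by Proposition \ref{prop:ful_description_redundant}, and observe that when $\frac{1}{r}\in\N$ the only cases in which $L_i$ could fail to be log exceptional of the first or second kind — cases (4), (5), (6) with strict left-hand inequalities — are arithmetically impossible; your integrality argument in case (5) is exactly the paper's. The coefficient formula $\cf(L_i;X_{i+1},rD_{i+1})=r+L_i\cdot(K_{X_i}+rD_i)$ and the smoothness of $X'$ are also fine.

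Part (1), however, has a genuine gap, located exactly at the step you flagged as the "main obstacle", and it cannot be repaired by bookkeeping. The newly created $(-1)$-curve (the image of the $(-2)$-component of $E$ adjacent to $L_{i_0}$) is $K$-negative and lies in the exceptional locus of the remaining morphism to the target of the peeling, so the relative $K_X$-MMP defining $\sigma$ \emph{always} contracts it: it never survives to $(X',rD')$, your contrapositive never fires, and the claim that one "obtains a redundant curve of the first kind on $(X',rD')$" is false. Moreover the underlying reduction — that each $L_i$ must be log exceptional on $(X_i,rD_i)$ — is not merely unproved but untrue in legitimate instances of part (1): in case (5) of Proposition \ref{prop:ful_description_redundant} with $0<\ll\cdot R-\frac{1}{r}<\frac{1}{\#E+1}$ (see Examples \ref{ex:psi_am_is_MMP-order-of-witnessing} and \ref{ex:optimal_ass_2}) the first intermediate model is genuinely not $(1-r)$-lc, yet the hypothesis and conclusion of (1) both hold for the final model. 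The argument must therefore be global in $\sigma$ rather than step-by-step. The paper proceeds by showing that under the hypothesis of (1) the morphism $\sigma$ coincides with a $(K_X+rD)$-MMP run: when $q=\bar\sigma\circ\alpha(\ll)$ is a smooth point, $\bar\sigma\circ\alpha$ equals its own almost minimalization, hence factors through $\sigma$ by Lemma \ref{lem:relativeMMP_unique}, and the no-redundant-curve hypothesis forces $\sigma=\bar\sigma\circ\alpha$, which is a peeling followed by a log exceptional contraction, so Corollary \ref{lem:eps-lc_is_respected} applies to $\sigma$ as a whole; when $q$ is singular one is in case (4) with $\ll+E=[2,1,4]$, the hypothesis forces $\sigma$ to contract $\ll+U_1$ with $U_1=[2]$, and contracting $U_1$ first and then the image of $\ll$ exhibits $\sigma$ as a peeling of the second kind. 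One of these two mechanisms — factorization through the peeling, or an explicit reordering of the two contractions — is the missing ingredient needed to close part (1).
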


\begin{proof}
Since $X$ is smooth, redundant curves of the first and second kind are in particular $(-1)$-curves, so the smoothness of $X'$ is clear. We may assume that $\sigma\neq \id$, hence $r\neq 0$. Let $\tilde E$ be the exceptional divisor of some pure partial peeling morphism $\alpha$ of the first (second) kind and let $\ll\leq D$ be an $\alpha$-redundant curve of the first (first or second) kind. Let $E$ be the sum of connected components of $\tilde E$ meeting $\ll$. By Corollary \ref{cor:reordering_MMP} and \eqref{eq:composing_almost_min} we may assume that $\tilde E=E$, so we have $\ll\leq \Exc \sigma\leq \ll+E$. Denote by $\sigma'\:X\to Y$ the contraction of $\ll$ and by $\ov \sigma$ the contraction of $\alpha(\ll)$. We may assume that $E\neq 0$ and that $\ll$ is not log exceptional of the first or second kind, as otherwise we are done by Corollary \ref{lem:eps-lc_is_respected} and induction. If $r=1$ then we are in case (1) of Proposition \ref{prop:ful_description_redundant} with $E\neq 0$, which implies that $\ll$ meets two distinct components of $D$ and hence we are done by induction. We may thus assume that $r<1$. We are therefore in case (4), (5) or (6) of Proposition \ref{prop:ful_description_redundant} with strict inequalities on the left in the defining conditions for the two latter cases. If $\frac{1}{r}$ is an integer this gives a contradiction. 

It remains to prove (1). Let $q=\bar \sigma\circ \alpha(\ll)$. Assume that $q\in\ov X$ is smooth. We have $(\bar \sigma\circ \alpha)_\am=\bar \sigma\circ \alpha$, so $\bar \sigma\circ \alpha$ is a partial squeezing of the first (second) kind. By Lemma \ref{lem:relativeMMP_unique} it factors through $\sigma$. By the assumption on $\sigma$ we have in fact $\sigma=\bar \sigma\circ \alpha$, so $\sigma$ is a peeling of $rD$ and hence $(X',rD')$ is $(1-r)$-dlt (respectively, $(1-r)$-lc) by Corollary \ref{lem:eps-lc_is_respected}. Assume that $q\in \ov X$ is singular. Then we are in case (4) of Proposition \ref{prop:ful_description_redundant} with $E=U_1+U_2$, where $U_1=[2]$ and $U_2=[4]$. The curve $\sigma'(U_1)$ is log exceptional of the first kind, hence by assumption $\sigma$ contracts exactly $\ll+U_1$. Contracting these curves in the reversed order we see that $\sigma$ is at the same time a peeling of the second kind, hence we are done.
\end{proof}

We have seen in Example \ref{ex:composition_of_nef} that a composition of pure peeling morphisms does not have to be pure. In the following lemma we study this problem in more detail.

\begin{lem}[Non-purity after taking a minimal resolution]\label{lem:peeling_plus_resolution}
Assume that $(X,rD)$, where $D$ is a connected reduced divisor and $r\in[0,1]\cap \Q$, is $(1-r)$-dlt (respectively, $(1-r)$-lc). Denote by $\pi\:\tilde X\to X$ the minimal resolution of singularities and put $\tilde D=\pi^{-1}_*(D)+\Exc \pi$. Let $\alpha$ be a pure partial peeling of $(X,rD)$ of the first (second) kind. Then $\tilde \alpha:=\alpha\circ \pi$ is a partial peeling of $(\tilde X,r\tilde D)$ of the first (respectively, second) kind. If $\tilde \alpha$ is not pure then the following hold.
\begin{enumerate}[(1)]
\item Every connected component of $\Exc \tilde \alpha_\am$ is a non-degenerate segment or a non-maximal twig of $\tilde D$.
\item If $\Exc \alpha$ is irreducible then  $\pi_*^{-1}\Exc \alpha\cdot \Exc \pi=2$ and $\pi_*^{-1}\Exc \alpha\cdot \Bk (\Exc \pi)\leq 1$. 
\item If $r<1$ or $\pi$ contracts no segment of $\tilde D$ then $D$ is irreducible.
\end{enumerate}
\end{lem}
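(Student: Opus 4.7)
The plan is first to verify that $\tilde\alpha$ is itself a partial peeling of the appropriate kind, and then to decompose $\tilde\alpha=\tilde\alpha_{\min}\circ\tilde\alpha_\am$ as in \eqref{eq:am-min_decomposition} and identify the components of $\Exc \tilde\alpha_\am$ using the classifications of redundant and almost log exceptional curves in Propositions \ref{prop:ful_description_redundant} and \ref{prop:ful_description_almost_log_exc}. For the initial claim, since the centers of the minimal resolution $\pi$ are singular points of $X$, the exceptional $r=1$ case of Lemma \ref{lem:resolution_is_peeling} does not arise; hence $\pi\:(\tilde X,r\tilde D)\to (X,rD)$ is a partial peeling of the first (respectively, second) kind, and its composition with the pure partial peeling $\alpha$ is a partial peeling of the same kind.

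The key structural fact is that $\tilde X$ is smooth and $\tilde\alpha$ is a partial peeling, so by Lemma \ref{lem:peeling_for_squeezed}(3) every connected component of $\Exc \tilde\alpha$ is a twig, rod, fork or $(-2)$-segment of $\tilde D$. The almost minimalization $\tilde\alpha_\am$ is a $K_{\tilde X}$-MMP run over $X_\alpha$ contracting a sequence of $(-1)$-curves inside $\Exc\tilde\alpha$. I would first argue that every such $(-1)$-curve must be the proper transform of a component of $\Exc \alpha$ meeting $\Exc \pi$: the minimality of $\pi$ forbids $(-1)$-curves inside $\Exc \pi$, and combining the $\alpha$-nefness of $K_X$ with the effectiveness of $\pi^{*}K_X-K_{\tilde X}$ yields that for any $C\leq \pi_*^{-1}\Exc\alpha$ one has $C\cdot K_{\tilde X}\geq -C\cdot(\pi^{*}K_X-K_{\tilde X})$, so $C\cdot K_{\tilde X}<0$ forces $C$ to intersect $\Exc\pi$. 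For part (1), I would then apply Proposition \ref{prop:ful_description_redundant} inductively at each step of $\tilde\alpha_\am$, with $\ll$ the contracted redundant $(-1)$-curve and $E$ the portion of the current pure partial peeling's exceptional divisor meeting $\ll$, and rule out the cases in which $\ll+E$ is a rod, maximal twig, fork, or degenerate segment of $\tilde D$. Each such case would, via \eqref{eq:Bk_rD}, match a component of the original $\Exc\alpha$ that is $K_X$-negative, contradicting purity of $\alpha$. What remains corresponds exactly to non-degenerate segments or non-maximal twigs of $\tilde D$. The main obstacle is maintaining this case-by-case analysis through the induction: after each contraction one must verify that the new pure peeling data still satisfies the same hypotheses, which requires tracking how the redundant contraction interacts with the surrounding boundary components.

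For (2), with $\Exc\alpha$ irreducible, the argument above shows the first $(-1)$-curve contracted by $\tilde\alpha_\am$ is $\pi_*^{-1}\Exc\alpha$, whose configuration with the attached portion of $\Exc\pi$ is, by (1), a non-degenerate segment or non-maximal twig of $\tilde D$; either shape forces the intersection count $\pi_*^{-1}\Exc\alpha\cdot\Exc\pi=2$. The bound $\pi_*^{-1}\Exc\alpha\cdot\Bk(\Exc\pi)\leq 1$ will follow from \eqref{eq:Bk_rD} combined with the $(1-r)$-lc hypothesis at the two points of intersection, using Lemma \ref{lem:Alexeev_dlt} to bound the coefficients of the relevant components of $\Exc\pi$. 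For (3), I would argue by contradiction: if $D$ were reducible then $\pi_*^{-1}D$ would have more than one connected component, and the non-purity of $\tilde\alpha$ would provide a redundant $(-1)$-curve $\ll$ such that $\ll+E$ fits into one of the configurations from (1). Under the hypothesis $r<1$, the numerical restrictions in Proposition \ref{prop:ful_description_redundant} combined with Corollary \ref{cor:lc_sing_restrictions_r} leave no room for $\ll$ to meet more than one connected component of $\pi_*^{-1}D$; in the case $r=1$ where $\pi$ contracts no segment of $\tilde D$, the only contribution to $\Exc\tilde\alpha_\am$ via (1) comes from non-maximal twigs, which again meet $\tilde D-\Exc\pi$ in a single component.
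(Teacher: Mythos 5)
Your skeleton (reduce via Lemma \ref{lem:resolution_is_peeling}, classify $\Exc\pi$ by Lemma \ref{lem:lc_singularities}, feed the redundant $(-1)$-curve into Proposition \ref{prop:ful_description_redundant}) matches the paper, but the engine of the actual proof is missing. Writing $\tilde\ll=\pi_*^{-1}\Exc\alpha$ and $E=\Exc\pi$, the purity of $\alpha$ gives $0\leq \ll\cdot K_X=\tilde\ll\cdot(K_{\tilde X}+E-\Bk E)<\tilde\ll\cdot E-1$, which together with the bound $\tilde\ll\cdot E\leq 2$ from Proposition \ref{prop:ful_description_redundant} yields \emph{both} halves of (2) at once: $\tilde\ll\cdot E=2$ and $\tilde\ll\cdot\Bk E\leq 1$. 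These two numerical facts are then the input for (1): they rule out $\tilde\ll$ meeting a $(-2)$-rod of $E$, pin down cases (1)--(2) of Proposition \ref{prop:ful_description_redundant} with $\tilde\ll\cdot R=0$, and — crucially — exclude the maximal-twig case by the discriminant computation $u\,d(E_1)+d(E_2)\leq d(E_1+\ll+E_2)=1$, which is absurd since $d(E_i)\geq 2$. Your proposal inverts this logic: you try to deduce $\tilde\ll\cdot E=2$ in (2) \emph{from} the shapes in (1), but a segment or twig containing $\tilde\ll$ does not by itself force that count (an end component could a priori meet $R=\tilde D-\tilde\ll-E$ rather than $E$), and your proposed mechanism for $\tilde\ll\cdot\Bk E\leq 1$ (the $(1-r)$-lc hypothesis plus Lemma \ref{lem:Alexeev_dlt}) is not what produces that inequality — it is literally the $\alpha$-nefness of $K_X$ rewritten. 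Likewise, the exclusion of rods/maximal twigs/forks in (1), which is the real content, is only asserted ("would match a $K_X$-negative component of $\Exc\alpha$"), and you yourself flag that your step-by-step induction through $\tilde\alpha_\am$ is not closed; the paper instead reduces once and for all to $\Exc\alpha$ irreducible using Lemma \ref{lem:pure_composition}.

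Part (3) has a concrete error at the outset: "if $D$ were reducible then $\pi_*^{-1}D$ would have more than one connected component" is false — two components of $D$ meeting at a smooth point of $X$ have connected strict transform. The paper's route is different: using $\tilde\ll\cdot R=0$ it shows $R=0$, then rules out segment components of $\Exc\tilde\gamma$ (a segment component would be disjoint from $\Exc\pi$, forcing $\Exc\tilde\gamma$ to equal that segment, whence $\pi=\id$ and $\alpha$ would not be pure), and finally uses $\tilde\ll\cdot\Exc\pi=2=\tilde\ll\cdot\Exc\tilde\gamma$ to conclude $\Exc\pi=\Exc\tilde\gamma$ and hence $D=\ll$. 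Without the purity inequality and the $R=0$ conclusion, your sketch of (3) does not get off the ground.
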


\begin{proof}\label{prop:psi_am-is_(X,D)_MMP_run_too}

By Lemma \ref{lem:resolution_is_peeling} $\pi$ is a peeling of the first (second) kind of $(\tilde X,r\tilde D)$, hence so is $\tilde \alpha=\alpha\circ \pi$. Assume that $\tilde \alpha$ is not pure. In particular, $\Exc \alpha\neq 0$.

(1), (2) Let $Y\to \alpha(X)$ be the minimal resolution of singularities of $\alpha(X)$. Clearly, $\tilde \alpha_\am\:\tilde X\to Y$ is the induced morphism between resolutions. A partial MMP run decreases coefficients, and by Lemma \ref{lem:pure_composition} if we decompose a pure peeling into two peelings then both of them are pure. It follows that by induction we may assume that $\ll:=\Exc \alpha$ is irreducible. Put $\tilde \ll=\alpha_*^{-1}\ll$, $E=\Exc \pi$ and $R=\tilde D-\tilde \ll-E$. Since $D$ is connected, each connected component of $E$ meets $\ll$. Since $\pi$ is pure, $\tilde \ll$ is the only redundant component of $\tilde D$. By Lemma \ref{lem:lc_singularities}  connected components of $E$ are twigs, half-benches and segments of $\tilde D$. Since $\alpha$ is pure, $\ll\cdot K_X\geq 0$. Note that twigs and segments are chains and half-benches are chains of type $[2,b,2]$ or admissible forks of type $(b;2,2,n)$, for some $b, n\geq 2$. We have $\ll\cdot K_X=\tilde \ll\cdot (K_{\tilde X}+E-\Bk E)<\tilde \ll\cdot (K_{\tilde X}+E)=\tilde \ll\cdot E-1$. By Proposition \ref{prop:ful_description_redundant} $\tilde \ll\cdot E\leq 2$, hence $\tilde \ll\cdot E=2$ and $\tilde \ll\cdot \Bk E\leq 1$. It follows that $\tilde \ll$ meets no $(-2)$-rod of $E$. Indeed, if $E_0$ is a $(-2)$-chain which is a connected component of $E$ then $\Bk E_0=E_0$, hence $\tilde \ll\cdot \Bk (E-E_0)\leq 0$, which is impossible. By Corollary \ref{cor:lc_sing_restrictions_r} $\tilde D$ is snc at the points of $E$. We infer that we are in case (1) or (2) of Proposition  \ref{prop:ful_description_redundant} and we have $\tilde \ll\cdot R=0$. Therefore, connected components of $E$ are non-degenerate segments, half-benches and twigs of $\tilde D$. Moreover, $\tilde \ll$ is superfluous in $\tilde D$, unless $r=1$ and $\tilde D=E+\tilde \ll$ is a rational cycle consisting of two components, where $E^2\leq -5$ due to the negative definiteness of $E+\tilde \ll$.

We have $\Exc \tilde \alpha_\am\leq E+\tilde \ll$. From the description of the connected components of $E$ we see that  $\Exc \tilde \alpha_\am$ is a non-degenerate segment or a twig of $\tilde D$. Suppose that it is a maximal twig (possibly a rod) of $\tilde D$. Then it is equal to $E_1+\ll+E_2$, where $E_1$ is a chain and a connected component of $E$, and $E_2$ is either a chain which is a connected component of $E$ or the long twig contained in the central chain of a half-bench meeting $\tilde \ll$. Using Definition \ref{dfn:Bark} we rewrite the inequality $\tilde \ll\cdot \Bk E\leq 1$ as $\ind\trp(E_1)+\ind\trp(E_2)+\frac{1}{d(E_1)}+\frac{u}{d(E_2)}\leq 1$, where $u=1$ if $E_2$ is a connected component of $E$ and $0<u\leq 1$ is as in Definition \ref{dfn:Bark}(3) otherwise. By Lemma \ref{lem:d(D_1+D_2)} we have \begin{equation}
1-\ind\trp(E_1)-\ind\trp(E_2)=\frac{d(E_1+\ll+E_2)}{d(E_1)d(E_2)},
\end{equation}
so we obtain $ud(E_1)+d(E_2)\leq d(E_1+\ll+E_2)$. Since $E_1+\ll+E_2$ contracts to a smooth point, we have $d(E_1+\ll+E_2)=1$, which is a contradiction, as $d(E_1), d(E_2)\geq 2$.

(3) We may assume that $\alpha=\ctr_{\ll}\circ \gamma$, where $\tilde \gamma:=\gamma\circ \pi$ is pure and $\ll$ is such that $\tilde \ll:=\tilde \gamma_*^{-1}\ll$ is redundant. Put $E=\Exc \tilde \gamma$ and $E'=\Exc \pi$. By the proof of (1) and (2) $\tilde \ll$ is a superfluous component of $\tilde D$ with $\tilde \ll\cdot E=2$, $\tilde \ll\cdot \Bk E\leq 1$ and $\tilde \ll\cdot R=0$. Suppose that $r<1$ and a connected component of $E'$ is a segment of $\tilde D$, call it $T'$. Then by Lemma \ref{lem:peeling_for_squeezed}(3) the segment consists of $(-2)$-curves, so $\Bk T'=T'$ and hence the inequality $\tilde \ll\cdot \Bk E\leq 1$ fails. We may therefore assume that $\pi$ contracts no segment of $\tilde D$. By Lemma \ref{lem:lc_singularities} connected components of $E$ are segments, half-benches and twigs of $\tilde D$.  Suppose that some connected component of $E$, call it $T$, is a segment of $\tilde D$. Since no connected component of $E'$ is a segment of $\tilde D$, we infer that $E'$ is disjoint from $T$. Then $T+\tilde \ll$ is a segment of $\tilde D$, hence it does not meet half-benches and twigs of $\tilde D$, which by the connectedness of $E+\tilde \ll$ gives $E=T$. But then $\pi=\id$ and $\alpha$ is not pure; a contradiction.

Therefore, connected components of $E$ are half-benches and twigs of $\tilde D$. Since $\tilde \ll\cdot R=0$, it follows that $R=0$, hence $D=\pi_*(E-E')+\ll$. Since $\alpha$ is pure, we have $\gamma_*^{-1}\ll\cdot K_X\geq 0$, so by the computation done in the proof of (1) and (2) we get in particular $\tilde \ll\cdot E'=2$. It follows that $E'=E$, so $D$ is irreducible. 
\end{proof}

As we have seen in Example \ref{ex:psi_am_is_MMP-order-of-witnessing}, it can happen than given an MMP run $\psi$ on a log surface $(X,rD)$ its almost minimalization $\psi_{\am}$  is also an MMP run for the same log surface (by definition it is an MMP run on the surface $X$). In this case, due to properties of the MMP run, if $(X,rD)$ is $(1-r)$-dlt or $(1-r)$-lc then so is its almost minimal model. In Lemma \ref{lem:peeling_am-is-MMP} we will show that this situation is a rule when $r<\frac{1}{2}$. The following example shows on the other hand, that for $r\geq \frac{1}{2}$ one cannot expect $\psi_{\am}$ to be an MMP run on the log surface in general.

\begin{ex}[Almost minimalization may not be an MMP run] We modify Example \ref{ex:composition_of_nef}.
Consider a smooth projective surface $X$ with a divisor $[n_1,1,n_2]$ on it, where $n_1\geq n_2\geq 2$ are positive integers such that $\frac{1}{n_1}+\frac{1}{n_2}> \frac{1}{2}$ and $(n_1,n_2)\neq (2,2)$. Let $\ll$ be the $(-1)$-curve and let $D_i$, $i=1,2$ be the component with self-intersection $-n_i$. Let $\psi\:X\to \ov X$ be the contraction of $\ll+D_1+D_2$ and let $\alpha\:X\to Y$ be the contraction of $D_1+D_2$. Then $\sigma=\psi\circ \alpha^{-1}\:Y\to \ov X$ is the contraction of $\alpha_*\ll$. We have $\alpha^*K_Y=K_X+(1-\frac{2}{n_1})D_1+(1-\frac{2}{n_2})D_2$, hence $\alpha_*\ll\cdot K_Y=1-\frac{2}{n_1}-\frac{2}{n_2}<0$. It follows that $K_X$ is $\alpha$-nef and $\alpha(\ll)$ is log  exceptional of the first kind.

Assume that $r>1-\frac{2}{n_1}$. Then $\alpha$ is a peeling of $D=D_1+D_2$ and $\psi$ is an MMP run on $(X,rD)$. If $n_2=2$ then $\Exc \psi_{\am}=\ll+D_2$, hence $\psi_{\am}$ is an MMP run on $(X,rD)$. But take $n_2=3$. Then $n_1\in \{3,4,5\}$ and the bound on $r$ reads as $r>\frac{1}{3}$. We have now $\Exc \psi_{\am}=\ll$ and since $\ll\cdot (K_X+rD)=-1+2r$, for $r\geq \frac{1}{2}$ the almost minimalization $\psi_{\am}$ is not an MMP run on $(X,rD)$.
\end{ex}

\begin{lem}\label{lem:peeling_am-is-MMP}
 Let $(X,rD)$, with $D$ reduced and $r\in [0,1]\cap \Q$, be a log surface which is $(1-r)$-dlt (respectively, $(1-r)$-lc). Let $\psi=\ctr_{\alpha(\ll)}\circ \alpha$, where $\alpha$ is a pure partial peeling of the first (second) kind of $rD$ and $\ll\subseteq X$ is such that $\alpha(\ll)$ is log exceptional on $(\alpha(X),r\alpha_*D)$ of the first (second) kind with $\ll\cdot K_X<0$. Let $\tilde \ll$ and $\tilde D$ denote the proper transform of $\ll$ and the reduced total transform of $D$ on the minimal resolution of singularities of $X$, respectively. If $\psi_{\am}$ is not an MMP run of the first (second) kind on $(X,rD)$ then the following conditions hold:
 \begin{enumerate}[(1)]
\item $\psi(\ll)\in \psi(X)$ is a singular point,
\item The image of $\tilde \ll$ on the minimal resolution of singularities of $\psi(X)$ is a normal crossing point of the image of $\tilde D$, simple normal crossing if $r<1$,  and hence $\tilde \ll$ is a superfluous component of $(\tilde D+\tilde \ll)\redd$.
\item If $\ll\leq D$ then $r=1$, $\ll$ is log exceptional of the second kind and the minimal resolution of singularities of $X$ is log crepant in a neighborhood of $\ll$.
\item If $\ll\nleq D$ then $r\geq \frac{1}{2}$; in case $r= \frac{1}{2}$ the curve $\ll$ is log exceptional of the second kind and the minimal resolution of singularities is log crepant  in a neighborhood of $\ll$.
\end{enumerate}
\end{lem}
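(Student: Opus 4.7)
The plan is to exploit the factorization $\psi=\psi_{\min}\circ\psi_{\am}$, where $\psi$ itself is a partial MMP run on $(X,rD)$ of the appropriate kind (since $\alpha$ is a partial peeling and $\ctr_{\alpha(\ll)}$ contracts a log exceptional curve), while $\psi_{\min}\colon X_{\psi}\to\psi(X)$ is also a partial MMP run whose exceptional divisor lies in $D_{\psi}=(\psi_{\am})_*D$ by Corollary \ref{cor:nef_gives_peeling}. By Corollary \ref{cor:improving_ld_gives_MMP}, $\psi_{\am}$ fails to be an MMP run on $(X,rD)$ of the first (second) kind precisely when some $\psi_{\am}$-exceptional divisor $E$ has $\cf(E;X_{\psi},(\psi_{\am})_*rD)\geq \cf(E;X,rD)$, with strict inequality forbidden in the first-kind case. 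Since the total drop through $\psi$ is strict (respectively, non-strict), the full coefficient drop must then occur under $\psi_{\min}$, so the center of $E$ on $X_{\psi}$ lies in $\Exc\psi_{\min}\subseteq\Supp D_{\psi}$.

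For (1): if $\psi(\ll)$ were a smooth point of $\psi(X)$, then $\psi$ would factor locally as an iterated blowup of a smooth point, $\psi(X)$ would already be relatively minimal over itself near $\psi(\ll)$, and $\psi_{\min}$ would be an isomorphism over a neighborhood of $\psi(\ll)$. Then the coefficient of the failure divisor $E$ cannot drop under $\psi_{\min}$, a contradiction.

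For (2), consider the minimal resolution $\tilde\pi\colon\tilde Y\to\psi(X)$. By Corollary \ref{lem:eps-lc_is_respected} the pair $(\psi(X),\psi_*rD)$ is $(1-r)$-dlt (respectively $(1-r)$-lc), so $\tilde\pi$ is a partial peeling of the corresponding log pair on $\tilde Y$ by Lemma \ref{lem:resolution_is_peeling}. Taking a common resolution of $X$ and $\tilde Y$ (noting that $\tilde X$ is already the minimal resolution of $X$) one obtains an induced morphism $\tilde\psi\colon\tilde X\to\tilde Y$ sending $\tilde\ll$ to a point over $\psi(\ll)$. The failure condition forces this point to lie on $\Exc\tilde\pi$ so that $\tilde\pi$ is log crepant there. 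Analyzing the possible local forms of $\Exc\tilde\pi$ via Lemma \ref{lem:lc_singularities} and Corollary \ref{cor:lc_sing_restrictions_r}(1)(2) yields that $\tilde\psi(\tilde\ll)$ must be a normal crossing point of the image of $\tilde D$, simple normal crossing when $r<1$; equivalently, $\tilde\ll$ is superfluous in $(\tilde D+\tilde\ll)_{\mathrm{red}}$.

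For (3) and (4), classify the local configuration at $\ll$ via Propositions \ref{prop:ful_description_redundant} and \ref{prop:ful_description_almost_log_exc}. The superfluity from (2) combined with $\ll\cdot K_X<0$ restricts attention to the log-crepant sub-cases of those classifications. When $\ll\leq D$, Corollary \ref{cor:lc_sing_restrictions_r}(1) forces $r=1$ and the corresponding cases of Proposition \ref{prop:ful_description_redundant} single out the second-kind configurations where the minimal resolution is log crepant near $\ll$, giving (3). When $\ll\not\leq D$, the numerical analysis of Proposition \ref{prop:ful_description_almost_log_exc} combined with the log crepancy constraint at $\tilde\psi(\tilde\ll)$ and $(1-r)$-log canonicity yields $r\geq\frac{1}{2}$, with equality exactly in the log-crepant second-kind configurations such as $\ll+E=[1,(2)_{m-1},3]$, giving (4). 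The main obstacle will be the combinatorial bookkeeping needed to match the ``failure divisor'' $E$ against a specific branch of these classification propositions while tracking the first- versus second-kind distinction throughout.
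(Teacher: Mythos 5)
Your overall architecture is reasonable and part (1) is essentially sound (the paper argues it by noting that if $q=\psi(\ll)$ is smooth then $\ov X$ is smooth near the relevant locus, so $\tilde\psi=\tilde\psi_\am$ and hence $\psi_\am=\psi$, which is an MMP run by hypothesis; your coefficient-drop phrasing amounts to the same thing). But there are two genuine gaps. First, part (2) is asserted rather than proved. The hard case is precisely when the image of $\tilde\ll$ on the minimal resolution $\theta\:Y\to\ov X$ is \emph{not} a normal crossing point of the image of $\tilde D$: by Corollary \ref{cor:lc_sing_restrictions_r} this forces $r<1$ and $\Exc\theta$ to be a degenerate $(-2)$-segment, so $\theta$ is both crepant and log crepant; the paper then runs a nontrivial argument (comparing $Y$ with the minimal resolution $Y'$ of $\psi_\am(X)$ and showing the induced $K_{Y'}$-MMP is the identity) to conclude that $\psi_{\min}$ is log crepant, whence $\psi_\am$ would after all satisfy the coefficient inequalities of Corollary \ref{cor:improving_ld_gives_MMP} and be an MMP run --- a contradiction. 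Your sentence \enquote{the failure condition forces this point to lie on $\Exc\tilde\pi$ so that $\tilde\pi$ is log crepant there} does not capture this; the point always lies on the exceptional locus once (1) is known, and the actual content is the exclusion of the non-nc configuration. You also skip the step showing $\tilde\ll$ is superfluous, which requires ruling out the two-component rational cycle $E+\tilde\ll$ by the computation $\alpha(\ll)\cdot K_{\alpha(X)}=1+4/E^2>0$.

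Second, for (3) and (4) you never extract the key numerical input: since $\psi_\am$ is not an MMP run of the first (second) kind, the curve $\ll$ itself cannot be log exceptional of that kind on $(X,rD)$ (otherwise $\ctr_\ll$ would be an elementary MMP contraction and induction via \eqref{eq:composing_almost_min} would apply), so $\ll\cdot(K_X+rD)\geq 0$. Combined with $\ll\cdot(K_X+rD)\leq\tilde\ll\cdot(K_{\tilde X}+r\tilde D)$ (strict unless $\pi$ is log crepant near $\ll$) and the conclusion of (2) that $\beta_{\tilde D}(\tilde\ll)=2$, resp.\ $\tilde\ll\cdot\tilde D\leq 2$, this gives $0\leq -1+r(\beta_{\tilde D}(\tilde\ll)-1)$, resp.\ $0\leq 2r-1$, which is the entire proof of (3) and (4). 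Your proposed detour through Propositions \ref{prop:ful_description_redundant} and \ref{prop:ful_description_almost_log_exc} is both unnecessary and problematic: those propositions assume $X$ is smooth, whereas the whole point of this lemma is that $X$ may be singular, so you would have to transplant everything to $\tilde X$ and verify that $\tilde\alpha=\alpha\circ\pi$ is a pure peeling there --- which it need not be (cf.\ Lemma \ref{lem:peeling_plus_resolution}).
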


\begin{proof}
Let $\ov X=\psi(X)$ and $\ov D=\psi_*D$. Let $\pi\:\tilde X\to X$ be the minimal resolution of singularities and let $\tilde \alpha=\alpha\circ \pi$. Put $\tilde \ll=\pi_*^{-1}\ll$, $\tilde D=\pi_*^{-1}D+\Exc \pi$, $\tilde \psi:=\psi\circ \pi$ and $E=\Exc \tilde \alpha$. Let $q=\psi(\ll)$. By Lemma \ref{lem:resolution_is_peeling} $\tilde \alpha$ is a partial peeling of $r\tilde D$ of the first (second) kind. We may assume that $E+\tilde \ll$ is connected, and hence that $\ov X\setminus \{q\}$ is smooth. Since $\pi$ is pure, we have $\tilde \ll\cdot K_{\tilde X}\leq \ll\cdot K_X<0$, so $\tilde \ll$ is a $(-1)$-curve.

Let $\sigma=\ctr_{\ll}$. By Corollary \ref{cor:reordering_MMP} $\psi\circ\sigma^{-1}\:(\sigma(X),r\sigma_*D)\to (\ov X,r\ov D)$ is a partial peeling of the first (second) kind. The curve $\ll$ is not log exceptional of the first (second) kind, because otherwise $(\sigma(X),r\sigma_*D)$ is $(1-r)$-dlt ($(1-r)$-lc), which
by \eqref{eq:composing_almost_min} implies that $\psi_{\am}$ is an MMP run of the first (second) kind. Since $\pi$ is a peeling of the second kind, $\ll\cdot (K_X+rD)\leq \tilde \ll\cdot (K_{\tilde X}+r\tilde D)$ and the inequality is strict, unless $\pi$ is log crepant  in a neighborhood of $\ll$. In particular, $\tilde \ll$ is not log exceptional of the first (second) kind. 

Suppose that $q\in \ov X$ is smooth. Then $\ov X$ is smooth, so $\tilde \psi=\tilde \psi_\am$, hence $\tilde \psi$ is a partial $K_{\tilde X}$-MMP run of the first (second) kind. By Corollary \ref{cor:reordering_MMP} $\psi$ is a partial $K_X$-MMP run of the first (second) kind, too. Thus $\psi_\am=\psi$, so $\psi_{\am}$ is an MMP run of the first (second) kind; a contradiction. This gives (1). 

Put $\tilde D^-=\tilde D-\tilde \ll$ if $\tilde \ll\leq \tilde D$ and $\tilde D^-=\tilde D$ otherwise. Let $\theta\:Y\to \ov X$ be the minimal resolution of singularities of $\ov X$. Then  $\tilde \psi_\am\:\tilde X\to Y$ is the induced morphism of resolutions. 
\begin{equation*}
	\begin{tikzcd}
		\widetilde{X} \ar[rr, bend left=60, looseness=0.5, "\tilde \psi_\am"] \ar[d,"\pi"]  & Y' \ar[r, "\varphi"] \ar[d,"\theta' "] & Y \ar[ddl,"\theta ", bend left=30] \\ 
		X \ar[r,"\psi_{\textnormal{am}}"] \ar[d,"\alpha"] \ar[rd,"\psi"] & X' \ar[d,"\psi_{\min}"] & \\
		\alpha(X) \ar[r,"\textnormal{ctr}_{\alpha(\ll)}"'] & \overline{X} & 
	\end{tikzcd}
	\label{fig:aMMP_not_MMP}
\end{equation*}	
Suppose that $\tilde \psi_\am(\ll)$ is not a normal crossing point of $(\tilde \psi_\am)_*\tilde D$. By Corollary \ref{cor:lc_sing_restrictions_r} $r<1$ and $\Exc \theta$ is a $(-2)$-segment, hence $\theta$ is crepant and log crepant. Let $\theta'\:Y'\to \psi_\am(X)$ be a minimal resolution of singularities. Then $\psi_{\min}\circ \theta'$ is some resolution of singularities of $\ov X$, hence $\psi_{\min}\circ \theta'=\theta\circ \varphi$ for some partial $K_{Y'}$-MMP $\varphi$. Put $X'=\psi_{\am}(X)$. Since $\psi_{\min}$ is pure and $\theta'$ is minimal, we have $\psi_{\min}^*K_{\ov X}\geq K_{X'}$ and $(\theta')^*K_{X'}\geq K_{Y'}$, hence $(\psi_{\min}\circ \theta')^*K_{\ov X}\geq K_Y'$. On the other hand, $\varphi$ is a $K_{Y'}$-MMP, so we have $K_{Y'}\geq \varphi^*K_Y$. But $\theta^*K_{\ov X}=K_Y$, hence $K_{Y'}\geq (\theta\circ \varphi)^*K_{\ov X}$. We infer that $\theta\circ \varphi$ is crepant, hence $\varphi$ is crepant. It follows that $\varphi=\id_{Y'}$. Since $\theta$ is crepant and log crepant, so is $\psi_{\min}$, too. It follows that for every component $G$ of $\Exc \psi_\am$ we have $\cf(G;\psi_\am(X), r(\psi_\am)_*D)=\cf(G;\ov X, r\ov D)$. Since $\psi$ is an MMP run of the first (second) kind, it follows by Lemma \ref{lem:ld_increasaes} that so is $\psi_\am$, too; a contradiction.

Thus $\tilde \psi_\am(\ll)$ is a normal crossing point of $(\tilde \psi_\am)_*\tilde D$. If it is not simple normal crossing then by Corollary \ref{cor:lc_sing_restrictions_r} $r=1$ and $\Exc \theta$ is a nodal rational curve. It follows that in any case $\tilde \ll$ is superfluous in $\tilde D^-+\tilde \ll$. Indeed, suppose otherwise. Then $\tilde D^-=E$ and $E+\tilde \ll$ is a rational cycle consisting of two components. Then $E^2\leq -5$ due to the negative definiteness of $E+\tilde \ll$. If $\tilde \ll\leq \tilde D$ then both $E$ and $\tilde \ll$ are log exceptional of the second kind, contrary to the assumption on $\psi_{\am}$. Thus $\tilde \ll\nleq \tilde D$ and $$\alpha(\ll)\cdot (K_{\alpha(X)}+\alpha_*D)=\alpha(\ll)\cdot K_{\alpha(X)}=\ll\cdot (K_X+(1+\tfrac{2}{E^2})E)=1+\tfrac{4}{E^2}>0;$$ a contradiction. This gives (2).

(3) Consider the case $\tilde \ll\leq \tilde D$. We have $$0\leq \ll\cdot (K_X+rD)\leq \tilde \ll\cdot (K_{\tilde X}+r\tilde D)=-1+r(\beta_{\tilde D}(\tilde\ll)-1),$$ so by (2) $\beta_{\tilde D}(\tilde\ll)=2$, $r=1$, $\ll$ is log exceptional of the second kind and $\pi$ is log crepant  in a neighborhood of $\ll$. 

(4) Consider the case $\tilde \ll\not \leq \tilde D$. By (2) we have now $$0\leq \ll\cdot (K_X+rD)\leq \tilde \ll\cdot (K_{\tilde X}+r\tilde D)=-1+r\tilde \ll\cdot \tilde D\leq 2r-1,$$ hence $r\geq \frac{1}{2}$. If the equality holds then we argue as in (3).
\end{proof}

\begin{prop}\label{prop:peeling_am-is-MMP}
 Let $(X,rD)$, with $D$ reduced and $r\in [0,1]\cap \Q$, be a log surface which is $(1-r)$-dlt (respectively, $(1-r)$-lc). Assume that $\psi$ is a partial MMP run on $(X,rD)$ of the first (respectively, second) kind. If $r<\frac{1}{2}$ or $\Exc \psi\leq D$ and $r\neq 1$ (respectively, if $r\leq \frac{1}{2}$ or $\Exc \psi\leq D$) then $\psi_{\am}$ is an MMP run of the first (respectively, second) kind on $(X,rD)$.
\end{prop}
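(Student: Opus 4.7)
My plan is to prove the statement by induction on $|\Exc\psi|$, peeling off one elementary log-MMP step at a time using Lemma~\ref{lem:peeling_am-is-MMP}. The base case $|\Exc\psi|=0$ is trivial.

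For the inductive step, let $\sigma_0=\ctr_{\ell}$ be the first elementary contraction of $\psi_{\am}$; by definition of almost minimalization, $\ell\cdot K_X<0$ and $\ell^2<0$. Using the decomposition in Lemma~\ref{lem:effective_almost_minimalization}, Corollary~\ref{cor:aMMP_algorithm}, and the uniqueness of the $K_X$-MMP from Lemma~\ref{lem:relativeMMP_unique}, I would exhibit a pure partial peeling $\alpha$ of $(X,rD)$ of the appropriate kind such that $\ell$ is $\alpha$-redundant when $\ell\leq D$ and $\alpha$-almost log exceptional when $\ell\nleq D$; the inequality $\ell\cdot K_X<0$ is automatic by Lemma~\ref{lem:a.l.e._is_redundant}(1). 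The purity of $\alpha$ forces every component of $\Exc\alpha$ to have non-negative intersection with $K_X$, so $\ell$ is the unique $K_X$-negative curve among $\Exc(\ctr_{\alpha(\ell)}\circ\alpha)=\Exc\alpha+\ell$, whence $(\ctr_{\alpha(\ell)}\circ\alpha)_{\am}$ must begin with $\ctr_{\ell}$. Lemma~\ref{lem:peeling_am-is-MMP} now applies: if $(\ctr_{\alpha(\ell)}\circ\alpha)_{\am}$ fails to be a log-MMP run of the first (respectively, second) kind on $(X,rD)$, then either case (3) holds ($\ell\leq D$, forcing $r=1$ and $\ell$ log exceptional of the second kind) or case (4) holds ($\ell\nleq D$, forcing $r\geq\tfrac{1}{2}$, with $\ell$ log exceptional of the second kind when $r=\tfrac{1}{2}$).

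The hypotheses of the proposition now rule out every obstruction. For the first-kind statement with $r<\tfrac{1}{2}$, both (3) and (4) are excluded. For the first-kind statement with $\Exc\psi\leq D$ and $r\neq 1$, the inclusion $\Exc\psi_{\am}\leq\Exc\psi\leq D$ forces $\ell\leq D$, so only (3) is relevant and it is excluded by $r\neq 1$. For the second-kind statement with $r\leq\tfrac{1}{2}$, (3) is excluded ($r\neq 1$) and (4) either is excluded ($r<\tfrac{1}{2}$) or forces $\ell$ to be log exceptional of the second kind ($r=\tfrac{1}{2}$), in which case $\sigma_0$ is already a second-kind log-MMP step; for the second-kind statement with $\Exc\psi\leq D$, only (3) applies and it either is excluded ($r\neq 1$) or forces a second-kind log-MMP step ($r=1$). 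In every branch $\sigma_0$ is a log-MMP step of the required kind on $(X,rD)$.

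With $\sigma_0$ confirmed as a log-MMP step, set $\psi'=\psi\circ\sigma_0^{-1}\colon(\sigma_0(X),r\sigma_{0*}D)\to(\ov X,\ov D)$; this is a partial log-MMP run of the same kind as $\psi$ by Corollary~\ref{cor:reordering_MMP}, its domain is $(1-r)$-dlt (respectively, $(1-r)$-lc) by Corollary~\ref{lem:eps-lc_is_respected}, and $|\Exc\psi'|<|\Exc\psi|$. The hypotheses transfer to $\psi'$: constraints on $r$ are unchanged, and $\Exc\psi\leq D$ implies $\Exc\psi'\leq\sigma_{0*}D$. The composition formula \eqref{eq:composing_almost_min} gives $\psi_{\am}=(\psi')_{\am}\circ\sigma_0$, and the inductive hypothesis applied to $\psi'$ finishes the proof. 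The main technical difficulty lies in the identification of $(\alpha,\ell)$ realizing $\sigma_0$ as the initial step of $(\ctr_{\alpha(\ell)}\circ\alpha)_{\am}$; it rests on the combinatorial description in Corollary~\ref{cor:aMMP_algorithm} combined with the purity argument above which forces the $K_X$-MMP on that composite to begin with $\ctr_\ell$.
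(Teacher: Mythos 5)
Your overall architecture is the right one -- reduce to a single block $\ctr_{\alpha(\ell)}\circ\alpha$ with $\alpha$ a pure partial peeling, apply Lemma \ref{lem:peeling_am-is-MMP}, and check that the hypotheses on $r$ and $\Exc\psi$ exclude its conclusions (3) and (4) -- and your case analysis of those conclusions is essentially correct. But the inductive step has a genuine gap: from ``$(\ctr_{\alpha(\ell)}\circ\alpha)_{\am}$ is an MMP run of the required kind on $(X,rD)$'' you infer that its first $K_X$-MMP contraction $\sigma_0=\ctr_\ell$ is an elementary log-MMP contraction of that kind on $(X,rD)$. This does not follow. Being a partial MMP run is a property of the morphism (Corollary \ref{cor:improving_ld_gives_MMP}), witnessed by \emph{some} order of contractions, and that order is in general different from -- often opposite to -- the order witnessing the $K_X$-MMP structure of the same morphism. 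Example \ref{ex:psi_am_is_MMP-order-of-witnessing} is an explicit counterexample to your inference: there $\psi=\psi_{\am}$ contracts $E_1,\dots,E_{k-1},\ell$ as a $(K_X+rD)$-MMP but $\ell,E_{k-1},\dots,E_1$ as a $K_X$-MMP, and $\ell\cdot(K_X+rD)=-1+r\,\ell\cdot R\geq 0$, so the first curve contracted by $\psi_{\am}$ is \emph{not} log exceptional of the first kind. Taking there $\ell\cdot R=2$ and $r$ slightly above $\tfrac12$ one even gets $\ell\cdot(K_X+rD)>0$ while all hypotheses of the first-kind statement ($\Exc\psi\leq D$, $r\neq 1$) hold. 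Consequently your appeal to Corollary \ref{lem:eps-lc_is_respected} to get that $(\sigma_0(X),r\sigma_{0*}D)$ is $(1-r)$-dlt fails, and the inductive hypothesis cannot be applied to $\psi'=\psi\circ\sigma_0^{-1}$; moreover assembling $\psi_{\am}=\psi'_{\am}\circ\sigma_0$ would not produce an MMP run of the first kind even if it could.

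The repair is to change the granularity of the induction: do not peel off single elementary contractions of $\psi_{\am}$, but whole blocks of $\psi$. Write $\psi=\psi_2\circ\psi_1$ with $\psi_1=\ctr_{\alpha(\ell)}\circ\alpha$ the maximal initial segment in which $\alpha$ is pure and $\ell\cdot K_X<0$; Lemma \ref{lem:peeling_am-is-MMP} (with your case analysis) then says directly that $(\psi_1)_{\am}$ -- the whole block's almost minimalization, possibly contracting several curves -- is an MMP run of the required kind on $(X,rD)$, so its target is $(1-r)$-dlt (resp.\ $(1-r)$-lc), and one concludes by \eqref{eq:composing_almost_min}, Corollary \ref{cor:reordering_MMP} and induction applied to $\psi_2\circ(\psi_1)_{\min}$. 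This is the paper's proof; at no point does it claim that an individual $K_X$-MMP step of $\psi_{\am}$ is a log-MMP step.
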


\begin{proof}
Using Lemma \ref{lem:aMMP_properties}, by induction we may assume that $\psi=\ctr_{\alpha(\ll)}\circ \alpha$, where $\alpha$ is a pure partial peeling of the first (second) kind of $rD$ and $\ll\cdot K_X<0$. Now the proposition follows from Lemma \ref{lem:peeling_am-is-MMP}.
\end{proof}

\medskip
\subsection{Proof of Theorem \ref{thm:aMM_respects_(1-r)-dlt} and a discussion of assumptions}\label{ssec:proof_of_Thm}

The following examples show that in Theorem \ref{thm:aMM_respects_(1-r)-dlt}(1),(2) the assumption that $X$ is smooth or $r\leq \frac{1}{2}$ cannot be omitted, as otherwise the $(1-r)$-log terminality and $(1-r)$-log canonicity may be destroyed by an almost minimalization. In particular, unlike in Proposition \ref{prop:peeling_am-is-MMP}, $\psi_\am$ does not have to be an MMP run for $(X,rD)$.

\begin{ex}[Non-$(1-r)$-lc almost minimal models of $(X,rD)$ for $r\neq 1$]\label{ex:aMM_not_(1-r)dlt}
Let $a\geq 2$ be an integer and let $r\in (1-\frac{1}{3a-4},1]\cap \Q$. Consider a log smooth surface $(\tilde X,\tilde D)$, where $\tilde D=[3,1,3,a,0]$. Denote by $T_1, \ldots, T_5$ subsequent components of $\tilde D$ and by $\sigma\:\tilde X\to X$ the contraction of $T_3+T_4$. Put $D=\sigma_*(\tilde D-T_2)$, $\ll=\sigma(T_2)$ and $E=\sigma(T_1)$, see Figure \ref{fig:sing_collision}.
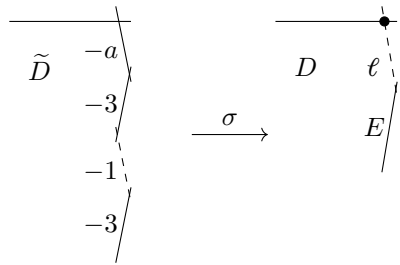
\begin{figure}[h]
\begin{tikzpicture}
	\begin{scope}
		\draw(-0.4,3) -- (1.2,3);
		\draw (1,3.2) -- (1.2,2.2);
		\node at (0.8,2.6) {\small{$-a$}};
		\draw (1.2,2.4) -- (1,1.4);
		\node at (0.8,1.9) {\small{$-3$}};
		\draw[dashed] (1,1.6) -- (1.2,0.6);
		\node at (0.8,1) {\small{$-1$}};
		\draw (1.2,0.8) -- (1,-0.2);
		\node at (0.8,0.3) {\small{$-3$}};
		\node at (0,2.4) {\small{$\tilde{D}$}};
		\draw[->] (2,1.5) -- (3,1.5);
		\node at (2.5,1.7) {\small{$\sigma$}};
	\end{scope}
	\begin{scope}[shift={(3.5,0)}]
		\draw[name path = A] (-0.4,3) -- (1.2,3);
		\draw[dashed, name path = B] (1,3.2) -- (1.2,2);
		\node at (0.9,2.4) {\small{$\ll$}};
		\draw (1.2,2.2) -- (1,1);
		\node at (0.9,1.6) {\small{$E$}};
		\node at (0,2.4) {\small{$D$}};
		\path [name intersections={of=A and B,by=C}];
		\filldraw (C) circle (0.06);
	\end{scope}
\end{tikzpicture}
\caption{The log surface $(X,D)$ in Example \ref{ex:aMM_not_(1-r)dlt}}\label{fig:sing_collision}
\end{figure}
Let $\alpha$ be the contraction of $E$ and $\psi\:X\to \ov X$ the contraction of $E+\ll$. We have $E\cdot (K_X+rD)=1-3r<0$. By \eqref{eq:Bk_rD} we have $\cf(T_3;X,rD)=\frac{2a-2+r}{3a-1}$, and  $\cf(T_4;X,rD)=\frac{3a-5+3r}{3a-1}$. Since $r>1-\frac{1}{3a-4}$, we infer that $(X,rD)$ is $(1-r)$-dlt and $\alpha$ is a pure partial peeling. Since $r>\frac{1}{2}$, we get $\ll\cdot (K_X+rD)=\frac{3ar-a-1}{3a-1}>0$. We compute $\alpha(\ll)\cdot (K_{\alpha(X)}+r\alpha_*D)=\frac{r-4/3}{3a-1}<0$. We have $\ctr_\ll(E)\cdot K_{\ctr_\ll(X)}\geq 0$, so $\psi_\am$ contracts only $\ll$. We conclude that:
\begin{enumerate}[(a)]
\item $(X,rD)$ is $(1-r)$-dlt,
\item $\ll$ is $\alpha$-almost log exceptional and not log exceptional on $(X,rD)$,
\item $(X',rD'):=\psi_\am(X,rD)$, the almost minimal model of $(X,rD)$, is not $(1-r)$-dlt; it is not $(1-r)$-lc, unless $r=1$ or $a=2$,
\item $\psi_\am$ is not an MMP run for $(X,rD)$.
\item $(X,rD)$ is $(1-r)$-dlt and $\ll$ is $\alpha$- almost log exceptional, but $(X,r(D+\ll))$ is not $(1-r)$-lc for $r\neq 1$ and not $(1-r)$-dlt for $r=1$.
\end{enumerate}
Part (d) is a consequence of (a) and (c). For (c) we compute $\cf(T_3;X',rD')=1-\frac{3}{2a-1}(1-r)\geq r$ and $\cf(T_4;X',rD')=1-\frac{(a+1)}{2a-1}(1-r)\geq r$, so $(X,rD)$ is not $(1-r)$-dlt and it is not even $(1-r)$-lc, unless $r=1$ or $a=2$. For (e) note that $\ll^2=-1+\frac{a}{3a-1}=\frac{1-2a}{3a-1}$, so $$\ll\cdot (K_X+r(D+\ll))=\tfrac{1}{3a-1}(3ar-a-1+r(1-2a))=\tfrac{(r-1)(a+1)}{3a-1}\leq 0,$$ so $\ll$ is log exceptional on $(X,r(D+\ll))$ of the first kind if $r\neq 1$ and of the second kind otherwise. It follows that if $(X,r(D+\ll))$ is $(1-r)$-lc for $r\neq 1$ or if it is $(1-r)$-dlt for $r=1$ then so is $(X',rD')$, too, contrary to part (c).
\end{ex}

\begin{ex}[Non-$(1-r)$-lc almost minimal models of $(X,rD)$ for $r=1$]\label{ex:aMM_not_(1-r)dlt_2}
Let $n\geq 2$ be an integer. Blow up over a chosen fiber of the $\P^1$-fibration of $\P^1\times \P^1$, so that its reduced total transform is a fork with branching component $B=[n+2]$ and three maximal twigs - $T_1=[3]$, $T_2=[2,2]$ and $T_3=L+E_1+\ldots+E_n+L'=[1,(2)_{n-1},3,1]$; see Figure \ref{fig:non-lc_aMM}. 
\begin{figure}[h]
	\begin{tikzpicture}
	\begin{scope} 		\draw (0,2.8) -- (0.2,1.6); 		\node at (0.3,2.4) {\small{$T_1$}}; 		\node[left] at (0.1,2.4) {\small{$-3$}}; 		\draw (0.2,1.8) -- (0,0.6); 		\node[left] at (0.1,1.2) {\small{$-n-2$}}; 		\node[left] at (0.15,0) {\small{$-2$}}; 		\node at (0.35,0.25) {\small{$T_2$}}; 		\draw (0,0.8) -- (0.2,-0.4); \draw (0,-0.3) -- (1.2,-0.1); \node at (0.9,0.05)  {\small{$-2$}};  	\draw[dashed] (0,1.2) -- (1.2,1); 		\node at (0.6,1.3) {\small{$-1$}}; 		\node at (0.5,0.9) {\small{$L$}};
	\draw (1,1) -- (2.2,1.2); 		\node at (1.5,1.3) {\small{$-2$}}; 		\node at (1.8,0.9) {\small{$E_1$}}; 		\node at (2.35,1.05) {$\dots$}; 		\draw (2.4,1.2) -- (3.6,1); 		\node at (3,1.3) {\small{$-2$}}; 		\node at (3,0.9) {\small{$E_{n-1}$}}; 		\draw (3.4,1) -- (4.6,1.2); 		\node at (3.9,1.3) {\small{$-3$}}; 		\node at (4.2,0.9) {\small{$E_n$}}; 		\draw[dashed] (4.4,1.2) -- (5.6,1); 		\node at (5,1.3) {\small{$-1$}}; 		\node at (4.9,0.9) {\small{$L'$}};  	\node at (2.8,-0.8) {$\widetilde{D}-\widetilde{F}$, $\rho(\widetilde{X})=n+7$}; 		\draw[->] (6,1.1) -- (7.4,1.1); 	\node at (6.7,1.3) {\small{$\pi$}};
	\end{scope}
	\begin{scope}[shift={(8.5,0)}] 		\draw[name path = T] (0.2,2.1) -- (-0.1,0.3); 		\node[left] at (0.2,1.8) {\small{$\pi(T_1)$}}; 		\draw[name path = L, dashed] (-0.2,1.2) -- (1.2,0.9); 		\path [name intersections={of=T and L,by=A}]; 		\filldraw (A) circle (0.07); 		\node at (0.4,0.85) {\small{$\ll$}};
		\draw[name path = L', dashed] (0.8,0.9) -- (2.2,1.2); 		\node at (1.8,0.85) {\small{$\ll'$}};
		\path [name intersections={of=L' and L,by=B}]; 		\filldraw (B) circle (0.07); 	\draw[->] (2.6,1.1) -- (4,1.1);
		\node at (3.3,1.3) {\small{$\sigma$}}; 		\node at (0.8,-0.8) {$D-F$, $\rho(X)=4$};
	\end{scope}
	\begin{scope}[shift={(13.5,0)}]
		\draw[name path = T] (0.2,2.1) -- (-0.1,0.3); 	\node[left] at (0.2,1.8) {\small{$\sigma\circ \pi(T_1)$}}; 		\draw[name path = L, dashed] (-0.2,1.2) -- (1.6,0.9); 		\path [name intersections={of=T and L,by=A}]; 		\filldraw (A) circle (0.07); 		\node at (0.8,0.7) {\small{$\sigma(\ll')$}}; 	\node at (0.5,-0.8) {$\rho(X')=3$};
	\end{scope}
	\end{tikzpicture}
	\caption{Example \ref{ex:aMM_not_(1-r)dlt_2}}	\label{fig:non-lc_aMM}
\end{figure}

Denote the resulting surface by $\tilde X$ and put $\tilde D=B+T_1+T_2+T_3-L-L'+\tilde F$, where $\tilde F$ is some smooth fiber. Let $\pi\:\tilde X\to X$ be the contraction of $B+T_2+T_3-L-L'$. Put $F=\pi(\tilde F)$, $D=\pi_*\tilde D=\pi(T_1)+F$ and $\ll=\pi(L)$. Then $(X,D)$ is dlt and $\rho(X)=4$. We have $d(B+T_2)=3n+4$ and 
$$\ll\cdot (K_X+D)=L\cdot (K_{\tilde X}+\tilde D-\Bk_{\tilde D}\Exc \pi)=1-\tfrac{2n}{2n+1}-\tfrac{1}{d(B+T_2)}>0.$$ Since $\pi(T_1)\cdot (K_X+D)<0$, the contraction of $\pi(T_1)$, which we denote by $\alpha$, is a peeling of $D$. We have $d(T_1+B+T_2)=9(n+1)$. We compute 
\begin{equation*}
\alpha(\ll)\cdot K_{\alpha(X)}=L\cdot (K_{\tilde X}+\tilde D-\Bk \tilde D)=\tfrac{1}{2n+1}-\tfrac{d(T_1)+d(T_2)}{d(T_1+B+T_2)}=\tfrac{1}{2n+1}-\tfrac{2}{3n+3}<0. 
\end{equation*}
Since $\tilde D-\tilde F+L$ is negative definite, we infer that $\ll$ is $\alpha$-almost log exceptional but not log exceptional. Let $\psi\:(X,D)\to (\ov X,\ov D)$ be the contraction of $\pi(T_1)+\ll$. Note that $\psi\circ \pi(T_1)$ necessarily a log terminal singularity (the exceptional divisor of the minimal resolution is a fork of type $(2;2,3,3)$). We have $\rho(\ov X)=2$ and $\ov D=\psi(F)$, which implies that $(\ov X,\ov D)$ is minimal, hence $\psi$ is an MMP. Let $\sigma\:(X,D)\to (X',D')$ be the contraction of $\ll$. We have $\ll\cdot K_X<0$ and $\sigma\circ\pi(T_1)\cdot K_{X'}\geq 0$, so $\sigma$ is the almost minimalization of $\psi$. The singular point of $X'$ is a canonical singularity with the exceptional divisor of the minimal resolution $[2,2,2,2]$. But the latter chain meets the proper transform of $D$ in the middle component, hence  $(X',D')$, the almost minimal model of $(X,D)$, is not log canonical.
\end{ex}

Theorem \ref{thm:aMM_respects_(1-r)-dlt} follows from the following result.

\begin{thm}\label{thm:aMM_respects_(1-r)-dlt_2}
Let $(X,D)$ be a log surface with a reduced boundary and let $r\in [0,1]\cap \Q$. Assume that $(X,rD)$ is $(1-r)$-lc and that $X$ is smooth or $r\leq \frac{1}{2}$. Let $\psi\:(X,rD)\to (\ov X,r\ov D)$ be a partial MMP run of the second kind and let $\psi_{\am}$ be the almost minimalization of $\psi$. Put $X'=\psi_{\am}(X)$ and $D'=(\psi_{\am})_*D$. Then the following hold.
\begin{enumerate}[(1)]
\item $(X',rD')$ is $(1-r)$-lc.
\item If $\psi$ is of the first kind and $(X,rD)$ is $(1-r)$-dlt then $(X',rD')$ is $(1-r)$-dlt.
\item If $X$ is smooth and $\frac{1}{r}\in \N\cup \{\8\}$ then $\psi_{\am}$ can be decomposed into elementary steps so that every intermediate model is $(1-r)$-lc.
\end{enumerate}
\end{thm}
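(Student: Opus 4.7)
The plan is to reduce the theorem to single elementary steps of the effective almost minimalization (Lemma \ref{lem:effective_almost_minimalization} and Corollary \ref{cor:aMMP_algorithm}), and check preservation of $(1-r)$-lc (or $(1-r)$-dlt) at each step. Write $\psi_\am = \psi_n \circ \cdots \circ \psi_0$ with $\psi_0$ a squeezing of $(X,rD)$ and each $\psi_i = \tau_i \circ \ctr_{A_i}$ for $i \geq 1$, where $A_i$ is an $\alpha_i$-almost log exceptional curve on $(X_i,rD_i)$ and $\tau_i$ is a subsequent squeezing. A key observation for the smooth-$X$ case is that every intermediate surface $X_i$ is automatically smooth, because $\psi_\am$ is a $K_X$-MMP on a smooth $X$ and hence contracts only $(-1)$-curves.

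For the range $r \leq \tfrac{1}{2}$, Proposition \ref{prop:peeling_am-is-MMP} implies directly that $\psi_\am$ is itself an MMP run on $(X,rD)$---of the second kind whenever $r \leq \tfrac{1}{2}$, and of the first kind when $\psi$ is and $r < \tfrac{1}{2}$ (or $\Exc\psi \leq D$). Parts (1) and (2) in this range therefore follow from Corollary \ref{lem:eps-lc_is_respected} on the preservation of $(1-r)$-lc and $(1-r)$-dlt under MMP runs of the appropriate kind. The borderline first-kind case $r=\tfrac{1}{2}$ with $\Exc\psi\not\leq D$ is subsumed by the smooth-$X$ analysis below.

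For the smooth-$X$ case, induct on $i$. The base step $\psi_0$ preserves $(1-r)$-lc and $(1-r)$-dlt by Corollary \ref{cor:singularities_after_squeezing}(1). For the inductive step $\psi_i$ with $i \geq 1$, apply Lemma \ref{lem:peeling_am-is-MMP}: either $\psi_i$ is an MMP run on $(X_i, rD_i)$ of the corresponding kind (and Corollary \ref{lem:eps-lc_is_respected} finishes), or the image of $A_i$ on the minimal resolution of $\psi_i(X_i)$ is at worst a (simple) normal crossing point of the image of $D_i$. In the second alternative, Proposition \ref{prop:ful_description_almost_log_exc} restricts $A_i + \Exc\alpha_i$ to the explicit list of rational chains and trees indexed by cases (1)--(12), and in each case the coefficient divisor $\cf_{X_{i+1}}(\ov X_{i+1}, r\ov D_{i+1})$ can be computed via formula \eqref{eq:Bk_rD} and shown to have all coefficients bounded by $r$ (strictly, in the dlt case). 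The main obstacle is part (2) with $X$ smooth and $r > \tfrac{1}{2}$, where several numerically delicate configurations must each be checked; the counterexamples of Examples \ref{ex:aMM_not_(1-r)dlt}--\ref{ex:aMM_not_(1-r)dlt_2}, which all have singular $X$, serve as a guide to isolate the point where smoothness of $X_i$ prevents the accumulation of singularities driving those counterexamples.

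For part (3), the assumption $\tfrac{1}{r} \in \N \cup \{\infty\}$ upgrades Corollary \ref{cor:singularities_after_squeezing}(1) to its part (2): every \emph{partial} squeezing preserves $(1-r)$-lc, not only the maximal ones. We therefore decompose $\psi_\am$ as a sequence of elementary $(-1)$-curve contractions, interleaving the contractions of each $A_i$ with the single blow-downs that constitute the squeezings $\tau_i$. Between consecutive steps, the partial-squeezing factor preserves $(1-r)$-lc by Corollary \ref{cor:singularities_after_squeezing}(2); the contraction of $A_i$ preserves $(1-r)$-lc because for $\tfrac{1}{r} \in \N$ the numerical constraints of Proposition \ref{prop:ful_description_almost_log_exc} exclude most configurations and force $A_i$ to be log exceptional on $(X_i, rD_i)$ itself, so this contraction is an ordinary MMP step covered by Corollary \ref{lem:eps-lc_is_respected}.
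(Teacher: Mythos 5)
Your skeleton --- induct over the elementary steps of Lemma \ref{lem:effective_almost_minimalization} and use Proposition \ref{prop:peeling_am-is-MMP} to dispose of the range $r\leq\frac{1}{2}$ --- is consistent with the paper's induction on $\#\Exc\psi$, and the base squeezing step is fine. But the core of parts (1)--(2) for $X$ smooth is handled very differently in the paper and your substitute has a gap. The paper does not verify the cases of Proposition \ref{prop:ful_description_almost_log_exc} one by one; it first reduces, via Lemma \ref{lem:a.l.e._is_redundant}(2)--(3), to the situation where $(X,r(D+\ll))$ is \emph{not} $(1-r)$-dlt (resp.\ $(1-r)$-lc) --- for otherwise $\psi$ is an MMP run on $(X,r(D+\ll))$ and one is back in the already-settled case $\ll\leq D$ --- and then Lemma \ref{lem:peeling_am-is-MMP}(2), which makes $\tilde\ll$ superfluous in $(\tilde D+\tilde\ll)\redd$, combined with $\tilde X=X$ gives an immediate contradiction. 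Your replacement, computing the coefficient divisor via \eqref{eq:Bk_rD} in each case, is not carried out and is insufficient as stated: \eqref{eq:Bk_rD} only yields the coefficients of the components of $\Exc\alpha_i$, whereas $(1-r)$-log canonicity of the image requires bounding the coefficients of \emph{all} divisors over the new point, in particular those extracted over non-normal intersections of $A_i$ with $D_i$; that is exactly what Lemma \ref{lem:multiplicitity_lemma} is for, and you never invoke it. Separately, the borderline case of part (2) with $r=\frac{1}{2}$, $\psi$ of the first kind, $\Exc\psi\nleq D$ and $X$ \emph{singular} is not ``subsumed by the smooth-$X$ analysis'': one must first show, as the paper does from Lemma \ref{lem:peeling_am-is-MMP}(4) together with $\frac{1}{2}$-divisorial log terminality, that the minimal resolution is an isomorphism near $\ll$ before any smoothness argument applies.

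Part (3) contains an outright false step. You claim that for $\frac{1}{r}\in\N$ the numerical constraints of Proposition \ref{prop:ful_description_almost_log_exc} ``force $A_i$ to be log exceptional on $(X_i,rD_i)$ itself''. In case (6) with $\frac{1}{r}\in\N$ and $A_i$ not already log exceptional one gets $s=1$ and $A_i\cdot D_i=\frac{1}{r}+1$, hence $A_i\cdot(K_{X_i}+rD_i)=r>0$, so $A_i$ is log exceptional of \emph{neither} kind; the same happens in cases (8) and (10)--(12), where $r=\frac{1}{2}$ and $A_i\cdot(K_{X_i}+\frac{1}{2}D_i)=\frac{1}{2}$. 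The contraction of such an $A_i$ is therefore not an MMP step on $(X_i,rD_i)$ and Corollary \ref{lem:eps-lc_is_respected} does not apply. One must instead check directly that $\cf(A_i;\sigma(X_i),r\sigma_*D_i)=r$ and, since in case (6) $A_i$ may meet $D_i$ non-normally, that every infinitely near point of the image of $D_i+A_i$ has multiplicity at most $1+\frac{1}{r}$. This multiplicity estimate, Lemma \ref{lem:multiplicitity_lemma}, is the real content of part (3) and is absent from your argument.
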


\begin{proof}
We argue by induction on $\#\Exc \psi$. By Lemma \ref{lem:aMMP_properties} we may assume that $\psi=\ctr_{\alpha(\ll)}\circ \alpha$, where $K_X$ is $\alpha$-nef and $\ll\subseteq X$ is such that $\ll\cdot K_X<0$. By Corollary \ref{cor:nef_gives_peeling} $\alpha=\alpha_2\circ \alpha_1$, where $\alpha_1$ is a pure partial peeling of the first (second) kind and $\alpha_2$ is crepant and log crepant. Then $\alpha_1(\ll)$ is log exceptional on $(\alpha_1(X),(\alpha_1)_*D)$. By induction and by Corollary \ref{cor:reordering_MMP} we may assume that $\alpha_2=\id$, i.e.\ $\alpha$ is a pure partial peeling of the first (second) kind. Let $\pi\:\tilde X\to X$ be the minimal resolution of singularities and let $\tilde \alpha=\alpha\circ \pi$. Put $\tilde \ll=\pi_*^{-1}\ll$, $\tilde D=\pi_*^{-1}D+\Exc \pi$, $\tilde \psi:=\psi\circ \pi$ and $E=\Exc \tilde \alpha$. Let $q=\psi(\ll)$. We denote the contraction of $\ll$ by $\sigma$.  

(1), (2) Note that (1) holds if $\psi$ is log crepant. Indeed, then $\psi_\am$ is log crepant, and since $(X,rD)$ is $(1-r)$-lc, so is its image under $\psi_\am$, too. By Remark \ref{rem:aMM_2nd_kind} it follows that we may further assume that $\psi$ is of the first kind. Then $\alpha$ and $\alpha(\ll)$ are of the first kind. We need to show that if $(X,rD)$ is $(1-r)$-dlt (respectively, $(1-r)$-lc) then so is $(X',rD')$, too.

Clearly, we may assume that $E+\tilde \ll$ is connected, hence $\ov X\setminus \{q\}$ is smooth. We may assume that $\psi_{\am}$ is not an MMP run on $(X,rD)$, as otherwise the claim follows from Lemma \ref{lem:ld_increasaes}. In particular, $E\neq 0$. We are thus in the situation of Lemma \ref{lem:peeling_am-is-MMP}. Then $q\in \ov X$ is a singular point and $\tilde \ll$ is a superfluous component of $(\tilde D+\tilde \ll)\redd$.

Consider the case $\ll\leq D$. By Lemma \ref{lem:peeling_am-is-MMP}(3) $r=1$, $\ll$ is of the second kind and $\pi$ is log crepant  in a neighborhood of $\ll$. In particular, $\psi_{\am}$ is of the second kind. Thus we are in the situation when $\psi$ is of the first kind and $(X,D)$ is dlt, hence $\pi$ is the identity. It follows that $X$ is smooth in a neighborhood of $\ll$ and $\ll$ is superfluous in $D$, which implies that $(\sigma(X),\sigma_*D)$ is dlt. Then $\psi':=\psi\circ \sigma^{-1}$ is a partial MMP run on $(\sigma(X),\sigma_*D)$ of the first kind and the theorem follows in this case by induction on $\#\Exc \psi$.

We may thus assume that $\ll\nleq D$. Consequently, we may assume that $(X,r(D+\ll))$ is not $(1-r)$-dlt (respectively, not $(1-r)$-lc). Indeed, by Lemma \ref{lem:a.l.e._is_redundant}(2) $\psi$ is an MMP run on $(X,r(D+ \ll))$, so the theorem follows in this case from the case we have just settled.

Since $\tilde \ll$ is superfluous in $(\tilde D+\tilde \ll)\redd$, we get a contradiction if $X$ is smooth. By assumptions of the theorem and by Lemma \ref{lem:peeling_am-is-MMP}(4) we are left with the case $r=\frac{1}{2}$ with $\psi$ of the first kind and $(X,\frac{1}{2}D)$ being $\frac{1}{2}$-log terminal. Again, since $\pi$ is log crepant  in a neighborhood of $\ll$, it is in fact the identity, so $X$ is smooth in the neighborhood of $\ll$. It follows that the image of $(X,\frac{1}{2}D)$ remains $\frac{1}{2}$-log terminal after the contraction of $\ll$, so the theorem follows again from the case when $\ll\leq  D$.

(3) For $r=0$ the claim is clear, so assume that $X$ is smooth and $\frac{1}{r}\in \N$. By induction it is sufficient to argue that $(\sigma(X),r\sigma_*D)$ is $(1-r)$-lc. By Corollary \ref{lem:eps-lc_is_respected} we may assume that $\ll$ is not log exceptional of the first or second kind. In particular, $r\neq 0$. Also, if $\alpha$ is log crepant then we are done. By Remark \ref{rem:aMM_2nd_kind} we may therefore assume that $\alpha$ is of the first kind. By Lemma \ref{cor:singularities_after_squeezing}(2) we may also assume that $\ll\nleq D$. Since $\alpha$ is of the first kind, we are in case (1), (6), (8), or (10)-(12) of  Proposition \ref{prop:ful_description_almost_log_exc}. In case (1) the claim is clear. In cases (8) and (10)-(12) we have $r=\frac{1}{2}$, $\ll$ meets $D$ normally and $\cf(\ll;\sigma(X),r\sigma_*D)=\frac{1}{2}$, so we are done. 

We are left with case (6) with $\frac{1}{r}<\ll\cdot D\leq \frac{1}{r}+s\leq \frac{1}{r}+1$ and $0<r<1$. Since $\frac{1}{r}$ is an integer, we get $s=1$ and $\ll\cdot D=\frac{1}{r}+1$. Then $\cf(\ll;\sigma(X),r\sigma_*D)=r$. This time $\ll$ can meet $D$ non-normally. To show that $(\sigma(X),r\sigma_*D)$ is $(1-r)$-lc, it remains to argue that for every exceptional prime divisor $U$ over $X$ we have $\cf(U;X,r(D+\ll))\leq r$. Let $Y\to X$ be a minimal log resolution such that $U$ is a divisor on $Y$ and let $\tau$ be the contraction of $U$. Since $(X,rD)$ is $(1-r)$-lc, it is sufficient to consider the case when the image of $U$ on $X$ is contained in $\ll\cap R$. Denote the reduced total transform of $D+\ll$ on $Y$ by $B$ and put $Y'=\tau(Y)$ and $B'=\tau_*B$. We have $\tau^*(K_{Y'}+rB')=K_Y+r\tau_*^{-1}B'+u\cdot U$ with $u=\cf(U;X,r(D+\ll))=r \tau_*^{-1}B'\cdot U-1=r\mult_{\tau(U)}(\tau_*B)-1$, where $\mult_q(\tau_*B)$ denotes the multiplicity of the divisor $\tau_*B$ at the point $q$. Put $R=D-E$. Since $r=\frac{1}{\ll\cdot R}$, we see that $u\leq r$ if and only if $\mult_{\tau(U)}(\tau_*B)\leq \ll\cdot R+1$. We are therefore done by Lemma \ref{lem:multiplicitity_lemma} applied to $R+\ll$.
\end{proof}

We denote the multiplicity of a divisor $D$ at $q$ by $\mult_q(D)$ and the local intersection index of divisors $D$ and $B$ at a point $q$ by $D\cdot_q B$. 

\begin{lem}\label{lem:multiplicitity_lemma}
Let $D$ be a reduced divisor on a smooth projective surface $X$. Let $\tau\:(Y,D^Y)\to (X,D)$ be a proper birational morphism from a smooth surface, where $D^Y$ is the reduced total transform of $D$. If $D$ has a smooth component $L$ then for every $q\in Y$ over $L\cap (D-L)$ we have
\begin{equation}
\mult_q(D^Y)\leq L\cdot_{\tau(q)}(D-L)+1.
\end{equation}

\end{lem}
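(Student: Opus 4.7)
I would prove the lemma by induction on the number $n$ of point blowups in the factorization of $\tau$. For $n=0$ the map $\tau$ is an isomorphism, $q=p$, and since $L$ is smooth at $p$ one has
\[
\mult_p(D)=\mult_p(L)+\mult_p(D-L)=1+\mult_p(D-L)\leq 1+L\cdot_p(D-L),
\]
the last inequality because $L\cdot_p C\geq \mult_p L\cdot \mult_p C=\mult_p C$ for any curve $C$ through $p$.

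For the inductive step I would factor $\tau=\sigma\circ \tau'$, where $\sigma\:X_1\to X$ is the first blowup (of some point $p_0\in X$) and $\tau'\:Y\to X_1$ uses $n-1$ blowups. If $p_0\neq p$ the morphism $\sigma$ is an isomorphism over a neighborhood of $p$ and the induction hypothesis applied to $\tau'$ with the unchanged local data $(X_1,D^{X_1},L^{X_1},p)$ yields the bound immediately. So assume $p_0=p$; let $E=\sigma^{-1}(p)$, let $p_L\in E$ be the unique point of $L^{X_1}:=\sigma^{-1}_*L$ on $E$ (well-defined since $L$ is smooth), put $R^{X_1}:=\sigma^{-1}_*(D-L)$ and $a:=\mult_p(D-L)$. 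Then $1\leq a\leq m$, $E\cdot R^{X_1}=a$, and $D^{X_1}=L^{X_1}+R^{X_1}+E$ is the reduced total transform of $D$. Set $p':=\tau'(q)\in E$.

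The three positions of $p'$ on $E$ give three sub-cases. First, if $p'=p_L$, I would apply the induction hypothesis to $\tau'$ using $L^{X_1}$ as the smooth component of $D^{X_1}$. Using $\sigma^*L=L^{X_1}+E$ and the projection formula $\sigma^*L\cdot R^{X_1}=L\cdot(D-L)$ applied locally over $p$ (the only point of $L^{X_1}\cap R^{X_1}$ above $p$ is $p_L$), one gets $L^{X_1}\cdot_{p_L} R^{X_1}=m-a$, hence
\[
L^{X_1}\cdot_{p_L}(D^{X_1}-L^{X_1})=(m-a)+1,
\]
and induction yields $\mult_q(D^Y)\leq m-a+2\leq m+1$ since $a\geq 1$. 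Second, if $p'\in E\cap R^{X_1}$ and $p'\neq p_L$, I would apply induction with smooth component $E$; since $p'\notin L^{X_1}$,
\[
E\cdot_{p'}(D^{X_1}-E)=E\cdot_{p'}R^{X_1}\leq E\cdot R^{X_1}=a\leq m,
\]
giving $\mult_q(D^Y)\leq a+1\leq m+1$. Finally, if $p'\in E$ is neither $p_L$ nor on $R^{X_1}$, then $D^{X_1}$ is smooth at $p'$ (locally just $E$); since blowing up a point on a smooth surface preserves snc-ness of any snc divisor, $D^Y=(D^{X_1})^Y$ is snc near $\tau'^{-1}(p')$, so $\mult_q(D^Y)\leq 2\leq m+1$ (using $m\geq 1$).

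The main delicacy is to pick the right smooth component of $D^{X_1}$ to induct with in each sub-case and to verify the local form of the projection formula $L^{X_1}\cdot_{p_L}R^{X_1}=m-a$ used in sub-case (i); everything else is routine bookkeeping with multiplicities and intersection indices, resting on the basic numerical fact $1\leq \mult_p(D-L)\leq L\cdot_p(D-L)=m$ available because $L$ is smooth at $p$.
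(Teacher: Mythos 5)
Your proof is correct and follows essentially the same route as the paper's: induction on the number of point blowups, peeling off the blowup $\sigma$ nearest $X$ and relating local intersection numbers via the standard blowup formula $L\cdot_p(D-L)=\mult_p(D-L)+\sum_{x\in E}L^{X_1}\cdot_x R^{X_1}$. The only difference is organizational: the paper uniformly re-applies the lemma on $X_1$ with the exceptional curve as the smooth component and tracks the strict transform of $L$ through a correction term $[q\in L']$, whereas you branch on the position of $\tau'(q)$ and choose the smooth component ($L^{X_1}$ or $E$) accordingly; both versions close the induction correctly.
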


\begin{proof}
We argue by induction with respect to $\#\Exc \tau$.  If $\tau=\id$ then, since $L$ is smooth at $q$, we have $\mult_q(D-L)\leq L\cdot_{q}(D-L)$, so we are done. Assume that $\#\Exc \tau\neq 0$ and write $\tau=\sigma\circ\tau'$, where $\sigma\:Z\to X$ is a blowup. Put $q'=\tau'(q)$ and $U=\Exc \sigma$. Write $D^Z=D'+L'$, where $L'$ is the proper transform of $L$. By assumption $L\cdot_{\tau(q)}(D-L)\geq 1$, so to prove the above inequality we may assume that $q$ is not a normal crossing point of $D^Y$ and hence we may assume that all exceptional curves of $\tau'$ lie over $D'-U$. In particular, $q'\in D'-U$ and we have $D^Y=D'^Y+L'$. Then $\mult_q(D^Y)=\mult_q(D'^Y)+[q\in L']$, where by definition $[q\in L']$ equals $1$ if $q\in L'$ and equals $0$ otherwise. By induction we obtain $$\mult_q(D^Y)\leq U\cdot_ {q'}(D'-U)+1+[q\in L'].$$ The total transform of $L$ on $Z$ is $L'+U$ and the proper transform of $D-L$ is $D'-U$, so by the projection formula $$L\cdot_{\tau(q)} (D-L)=\sum_{s\in U} (L'+U)\cdot_s (D'-U)\geq (L'+U)\cdot_{q'} (D'-U),$$ which gives $\mult_q(D^Y)\leq L\cdot_{\tau(q)}(D-L)+1+[q\in L']-L'\cdot_{q'} (D'-U)$. But since $q'\in D'-U$, we have $[q\in L']\leq L'\cdot_{q'} (D'-U)$, which completes the proof.
\end{proof}

\begin{rem} Assume that a log surface $(X,rD)$, with $r\in [0,1]\cap \Q$ and $D$ reduced is $(1-r)$-dlt (respectively, $(1-r)$-lc). Decompose an almost minimalization of $(X,rD)$ as in Lemma \ref{lem:effective_almost_minimalization}. If $X$ is smooth or $r\leq \frac{1}{2}$ then Theorem \ref{thm:aMM_respects_(1-r)-dlt_2} implies that each of the intermediate models $(X_i,D_i)$, $i\geq 1$ in that lemma is $(1-r)$-dlt (respectively, $(1-r)$-lc).
\end{rem}

The following two examples show that in Theorem \ref{thm:aMM_respects_(1-r)-dlt}(3) the assumption $\frac{1}{r}\in \N\cup \{\8\}$ cannot be omitted, as otherwise the intermediate models may be non-$(1-r)$-lc.

\begin{ex}[Almost minimalization of $(X,rD)$ with non-$(1-r)$-lc intermediate models; $r> \frac{1}{2}$]\label{ex:optimal_ass_1}
 Let $\ov D\subseteq \ov X$ be a cuspidal cubic on $\P^2$ with the cusp at $p\in \ov D$ and let $\psi\:(X,D)\to (\ov X,\ov D)$ be the minimal log resolution, with $D$  being the reduced total transform of $\ov D$. We have $\Exc \psi=E_1+\ll+E_2=[2,1,3]$ and $(D-\Exc \psi)\cdot \ll =1$. Assume that $r\in [\frac{1}{2},\frac{4}{5}]$, as in case (6) of Proposition \ref{prop:ful_description_redundant}, see Fig.\ \ref{fig:bad_intermediate_model}.

\begin{figure}[h]
	\begin{tikzpicture}
		\path[use as bounding box] (0,-1) rectangle (2.6,1.2);
		\draw[dashed] (0,1) -- (2.6,1);
		\draw[thick] (0.2,1.2) -- (0.2,-0.8);
		\draw (1.2,1.2) -- (1.2,0);
		\node[left] at (1.2,0.2) {\small{$-2$}};
		\draw (2.4,1.2) -- (2.4,0);
		\node[left] at (2.4,0.2) {\small{$-3$}};
	\end{tikzpicture}
\caption{Example \ref{ex:optimal_ass_1}. Thick line is $D-\Exc \psi$, dashed line is $\ll$; $\frac{1}{2}\leq r\leq \frac{4}{5}$.}\label{fig:bad_intermediate_model}
\end{figure}
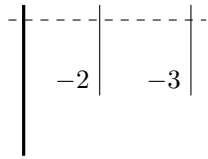
Let $\alpha$ be the contraction of $E_1+E_2$. It is a pure peeling of the first kind if $r\neq \frac{1}{2}$ and of the second kind otherwise. Since $\alpha(\ll)\cdot \alpha_*(K_X+rD)=\frac{5}{6}(r-\frac{4}{5})$, $\ll$ is $\alpha$-redundant of the first kind if $r\neq \frac{4}{5}$ and of the second kind otherwise. Thus $\psi$ is an MMP run (complete, as $\rho(\P^2)=1$) of the first kind if $r\in (\frac{1}{2},\frac{4}{5})$ and of the second kind if $r=\frac{1}{2}$ or $r=\frac{4}{5}$. The almost minimalization is by definition a $K_X$-MMP over $\ov X$. The unique decomposition of $\psi_{\am}$ as a $K_X$-MMP is $\psi_{\am}=\sigma_3\circ \sigma_2\circ \sigma_1$, where $\sigma_1$ contracts $\ll$, $\sigma_2$ contracts $\sigma_1(E_1)$ and $\sigma_3$ contracts $\sigma_2\circ\sigma_1(E_2)$. Since $p\in \ov X$ is smooth, $\psi$ is a minimalization and (its own) almost minimalization at the same time, that is, $\psi_{\am}=\psi$, and hence the same holds for $\sigma_3\circ \sigma_2$ and for $\sigma_1$. The intermediate models in the process of almost minimalization are $(X_i,rD_i)=(\sigma_i(X_{i-1}),r(\sigma_i)_*D_{i-1})$, $i=1,2,3$, where $(X_0,D_0)=(X,D)$ and $(X_3,D_3)=(\ov X,\ov D)$. We compute
\begin{equation}
 \cf(\ll;X_3,rD_3)=6r-4<\cf(\ll;X_2,rD_2)=4r-2<\cf(\ll;X_1,rD_1)=3r-1,
\end{equation}
\begin{equation}
\cf(E_1;X_3,rD_3)=3r-2<\cf(E_1;X_2,rD_2)=2r-1
\end{equation}
and
\begin{equation}
\cf(E_2;X_3,rD_3)=2r-1
\end{equation}
Since $\psi$ is an MMP run of the second kind, $(\ov X,r\ov D)$, which is a minimal and at the same time an almost minimal model, is $(1-r)$-lc, which is visible explicitly from the above computation, as $2r-1<r$, $3r-2<r$ and $6r-4\leq r$. However, in general the intermediate models in the process of almost minimalization are not $(1-r)$-lc. Indeed, we have $\cf(\ll;X_1,rD_1)=3r-1\geq r$, as $r\geq \frac{1}{2}$, so $(X_1,rD_1)$ is not $(1-r)$-lc for $r\neq \frac{1}{2}$ and not $(1-r)$-dlt for $r=\frac{1}{2}$.
\end{ex}

\begin{rem}[Almost minimalization of $(X,rD)$ with non-log canonical intermediate models] We note that in the (unique) process of almost minimalization of the $(1-r)$-lc surface $(X,rD)$ in Example \ref{ex:optimal_ass_1} the intermediate models may be even non-lc. Indeed, we have $\cf(E_1;X_1,rD_1)=3r-1>1$ for $r>\frac{2}{3}$, so this happens for all $r\in  (\frac{2}{3},\frac{4}{5}]$. 
\end{rem}

\begin{ex}[Almost minimalization of $(X,rD)$ with non-$(1-r)$-lc intermediate models; $r\leq \frac{2}{3}$]\label{ex:optimal_ass_2}

Let $m\geq 2$ and $k\geq 2$ be integers. Pick $m+1$ fibers $F_1,\ldots, F_m, F_\8$ of the $\P^1$-fibration of $\F_1=\Proj(\cO_{\P^1}\oplus \cO_{\P^1}(1))$. Let $\ov \ll$ be the $(-1)$-section. Blow up over $F_\8\setminus \ll$ so that the reduced total transform of $F_\8$ is the chain $M+E_1+\ldots+E_{k-1}+M'=[1,(2)_{k-1},1]$.  Let $X$ be the resulting surface. Let $R$ and $\ll$ be the proper transforms of $F_1+\ldots+F_m$ and $\ov \ll$.
Put $D=R+\ll+E$, where $E=E_1+\ldots+E_{k-1}$, see Figure \ref{fig:bad_intermediate_model_2}. Then $\rho(X)=2+k$ and $\ll\cdot R=m$. 
\begin{figure}[h]
\begin{tikzpicture}[scale=1.4]
			\draw[dashed] (0,3) -- (1.6,3);
			\draw[thick] (0,2.1) -- (0,3.2);
			\draw[thick] (0.2,2.1) -- (0.2,3.2);
			\node[left] at (0.8,2.6) {$\cdots$};
			\draw[thick] (0.8,2.1) -- (0.8,3.2);
			\draw(1.4,3.2) -- (1.6,2.2);
			\node[left] at (1.6,2.2) {$\vdots$};
			\node[right] at (1.5,2.1) {\small{\ $[(2)_{k-1}]$}};
			\draw (1.6,2) -- (1.4,1);
\end{tikzpicture}
\caption{Example \ref{ex:optimal_ass_2}. Thick line is $R$, dashed line is $\ll$; $\frac{1}{\ll\cdot R}< r\leq \frac{1}{\ll\cdot R-1/k}$.}\label{fig:bad_intermediate_model_2}
\end{figure}
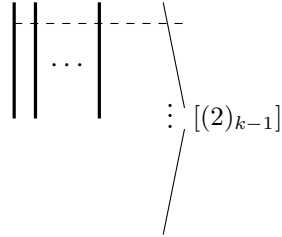

Let $\psi\:X\to \ov X$ be the contraction of $\ll+E$, let $\ov D=\psi_*D$ and let $\alpha$ be the contraction of $E$. Then $\ov X$ is smooth and $\rho(\ov X)=2$. Since $(X,D)$ is log smooth, $(X,rD)$ is $(1-r)$-dlt for every $r\in [0,1]\cap \Q$. Assume that, as in case (5) of Proposition \ref{prop:ful_description_redundant},  $\frac{1}{\ll\cdot R}< r\leq \frac{1}{\ll\cdot R-1/k}$. With this choice of $r$ we check that $\ll$ is not log exceptional on $(X,rD)$, but it is $\alpha$-redundant. Since $\ov X$ is smooth, we have $\psi_{\am}=\psi$. Moreover, the log surface $(\ov X,r\ov D)$ is now minimal. Indeed, suppose that there exists a log exceptional curve $L$ on it. Since components of $\ov D$ have positive self-intersection numbers, $L\nleq \ov D$. The inequality $L\cdot (K_{\ov X}+r\ov D)<0$ implies that $L$ is a $(-1)$-curve with $L\cdot \psi_*F_1=\frac{1}{m}L\cdot \ov D<\frac{1}{rm}<1$. After the contraction of $L$ components of $\ov D$ become lines on $\P^2$ intersecting each other with multiplicity $k\geq 2$; a contradiction. Thus $(\ov X, r\ov D)$ is minimal and $\psi$ is an MMP run.

Let $\sigma\:X\to X'$ be the contraction of $\ll$ and let $D'=\sigma_*D$. The log surface $(X',rD')$ is an intermediate model in the process of almost minimalization of $(X,rD)$. We have $\cf(\ll;X',rD')=r+r\ll\cdot R-1> r$, so  this model is not $(1-r)$-lc. Still, since $\ll\cdot R\geq 2$, we have $\frac{1}{\ll\cdot R-1/k}< \frac{2}{\ll\cdot R+1}$, hence $\cf(\ll;X',rD')=r+r\ll\cdot R-1< 1$, so $(X',rD')$ is dlt. 
\end{ex}

\medskip
\section{Uniform boundaries with coefficient $r\leq \frac{1}{2}$}\label{ssec:half}

Recall that exceptional divisors of minimal resolutions of canonical (du Val) singularities are $(-2)$-chains and admissible $(-2)$-forks. Their Dynkin diagrams are $\rA_k$ for $k\geq 1$, $\rD_k$ for $k\geq 1$ and $\rE_k$ for $k=6,7,8$. We say that a singular point is of \emph{type $\Ast_{k}$, $k\geq 1$} if the exceptional divisor of the minimal resolution is $E=[3,(2)_{k-1}]$  for some $k\geq 1$. If $E$ is a connected component of $D$ we say that it is an $\Ast_{k}$-rod of $D$. A singularity is of \emph{type $\Ast$} if it is of type $\Ast_k$ for some $k\geq 1$.  If the base field has characteristic zero then an $\Ast_{k}$-singularity is exactly a Hirzebruch--Jung surface singularity $\tfrac{1}{2k+1}(1,k)$, that is, a germ of $\{z^{2k+1}=xy^{k}\}$, see \cite[III.5]{BHPV_complex_surfaces}. For the definition of the coefficient of $(X,D)$ see Section \ref{ssec:discrepancies}.

\begin{lem}[Log surface germs with $\cf\leq \frac{1}{2}$]\label{lem:cf<=1/2}
Let $(\ov X,\ov D)$ be a germ of a log surface. Let $E$ be the exceptional divisor of the minimal resolution of singularity $\pi\:X\to \ov X$. Put $D=\pi_*^{-1}D+E$. 
Then the following hold:
\begin{enumerate}[(1)]
\item $\cf(\ov X,\frac{1}{2}\ov D)<\frac{1}{2}$ if and only if 
one of the following holds:
	\begin{enumerate}[(a)]
		\item $\ov D=0$ and $E$ is a $(-2)$-fork or a $(-2)$-rod of $D$ or a rod of $D$ with $E=[3,2,\ldots,2]$,
		\item $\ov D\neq 0$ and $E$ is a $(-2)$-twig of $D$.
		\end{enumerate}
\item $\cf(\ov X,\frac{1}{2}\ov D)=\frac{1}{2}$ if and only if one of the following holds:
	\begin{enumerate}[(a)]
	\item $\ov D=0$ and $E=[4]$, $E=[3,2,\ldots,2,3]$, $E=[2,3,2]$ or $E=\langle [2];[2],[2],[3,2,\ldots,2]\rangle $,
	\item $\ov D\neq 0$ and $E$ is a $(-2)$-segment of $D$ or a twig of $D$ with $E=[3,2,\ldots,2]$.
	\end{enumerate}
\end{enumerate}
\end{lem}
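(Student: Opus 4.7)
The strategy is to reformulate $\cf(\ov X,\tfrac{1}{2}\ov D)\leq \tfrac{1}{2}$ as saying that $\pi\:(X,\tfrac{1}{2}D)\to (\ov X,\tfrac{1}{2}\ov D)$ is a pure partial peeling of the second kind, with strict inequality corresponding to the first kind. This equivalence follows from Corollary \ref{cor:improving_ld_gives_MMP} provided one knows that the supremum defining $\cf$ is attained on components of $E$ when $D$ is snc on $X$, or on exceptional curves of a fixed log resolution dominating $\pi$ otherwise; for $r=\tfrac{1}{2}$ the ancillary log-resolution calculation (analogous to \eqref{eq:cusp-segment-resolution}) shows that the extra blow-ups needed to snc-resolve degenerate $(-2)$-segments contribute only components with coefficient at most $\tfrac{1}{2}$, so the check reduces to $E$. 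Then Lemma \ref{lem:peeling_for_squeezed}(3) restricts each connected component of $E$ to be an admissible twig, rod, fork of $D$, or a $(-2)$-segment of $D$, and combined with Lemma \ref{lem:lc_singularities} this narrows the candidates to those appearing in the statement (configurations from Lemma \ref{lem:lc_singularities}(3)-(4) with non-$(-2)$ components have $\cf=1$ and are automatically excluded).

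The computational heart uses Lemma \ref{lem:Bk=ld(pi)}: with $r=\tfrac{1}{2}$,
\begin{equation*}
\cf_X(\ov X,\tfrac{1}{2}\ov D)=E-\Bk_DE-\tfrac{1}{2}\Bk\trp T,
\end{equation*}
where $T$ is the sum of twigs of $D$ inside $E$ not forming rods, and the required condition at a component $U$ of $E$ reads $(\Bk_DE)_U+\tfrac{1}{2}(\Bk\trp T)_U\geq \tfrac{1}{2}$. Easy cases are dealt with first: $(-2)$-rods and $(-2)$-forks of $D$ produce $\Bk_DE=E$ and hence $\cf=0$, giving part of (1a); non-degenerate $(-2)$-segments of $D$ satisfy $\pi^*\ov D=\pi_*^{-1}\ov D+E$, so coefficients of $E$ equal $\tfrac{1}{2}$, giving part of (2b), and the degenerate $(-2)$-segments of Corollary \ref{cor:lc_sing_restrictions_r}(3) are handled by a short direct log-resolution computation. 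For an admissible twig $E=[a_1,\ldots,a_m]$ of $D$ with $\ov D\neq 0$ ordered from a tip of $D$, the inequality at $E\cp i$ reduces via $c_i(\Bk'E)=d\cp i(E)/d(E)$ and $c_i(\Bk\trp E)=d_{(i)}(E)/d(E)$ to $2d\cp i(E)+d_{(i)}(E)\geq d(E)$; the recursion \eqref{eq:d(chain)} together with induction on $m$ forces $E\in\{[(2)_m],[3,(2)_{m-1}]\}$, giving the twig parts of (1b) and (2b). A parallel analysis with the symmetric formula $\Bk_DE=\Bk'E+\Bk\trp E$ for rods of $D$ isolates the rod parts of (1a) and the rod cases $E=[4]$, $E=[3,(2)_{k},3]$, $E=[2,3,2]$ of (2a). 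For forks of $D$, the formula of Definition \ref{dfn:Bark}(3) gives $(\Bk_DE)_{E_0}=u$, so the condition at the central component reduces to $u\geq \tfrac{1}{2}$, while at the tip of a twig $T_i$ it becomes $u\cdot \delta(T_i)+\ind(T_i)\geq \tfrac{1}{2}$; solving these under the admissibility constraint $\delta(F)>1$ yields only $(-2)$-forks (covered in (1a)) and the family $\langle[2];[2],[2],[3,(2)_{k-1}]\rangle$, for which one computes $u=\tfrac{1}{2}$ directly using \eqref{eq:fork_d}.

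The main obstacle is the combinatorial bookkeeping in the fork case, where admissibility, the discriminant formula \eqref{eq:fork_d}, and the coefficient inequalities at $E_0$ and at all three twig tips must be tracked simultaneously, and log canonical forks (where $\delta(F)=1$) must be excluded. Lemma \ref{lem:Alexeev_dlt} provides the crucial monotonicity tool: coefficients strictly increase when the dual graph is enlarged, so once the borderline configurations of part (2) have been identified, every admissible configuration strictly smaller in the dual-graph sense satisfies $\cf<\tfrac{1}{2}$ automatically and contributes to part (1); conversely, no configuration strictly larger than those of part (2) can satisfy $\cf\leq \tfrac{1}{2}$. This reduces the entire classification to exhibiting and verifying the finite list of maximal borderline graphs.
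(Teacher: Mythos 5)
Your proposal is correct and follows essentially the same route as the paper's proof: both rest on (i) direct verification of the borderline configurations via the coefficient formula \eqref{eq:Bk_rD} and (ii) Alexeev's monotonicity (Lemma \ref{lem:Alexeev_dlt}) to show the list is complete, with Corollary \ref{cor:lc_sing_restrictions_r}/Lemma \ref{lem:peeling_for_squeezed} supplying the structural restriction to twigs, rods, forks and $(-2)$-segments when $\ov D\neq 0$. You simply carry out more of the chain and fork arithmetic explicitly than the paper, which compresses the combinatorial completion of the list into ``it follows easily.''
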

\begin{proof} Assume first that $\ov D=0$. We check that for $E$ is as in (2a) we have $\cf(\ov X,0)=\frac{1}{2}$. Then we infer from Lemma \ref{lem:Alexeev_dlt} that if $\cf(E;\ov X, \frac{1}{2}\ov D)\leq \frac{1}{2}$ then the associated weighted decorated graph of the germ $(\ov X, \frac{1}{2}\ov D)$ does not have a proper subgraph associated with the listed exceptional divisors. In particular, all components have self-intersection numbers not lower than $(-3)$ and there is at most one equal to $(-3)$, necessarily a tip of $D$. It follows easily that the list is complete and that (1a) holds. Assume that $\ov D\neq 0$ and that $E$ is not a $(-2)$-segment of $D$. By Corollary \ref{cor:lc_sing_restrictions_r}(2) $E$ is a twig of $D$. It remains to show that if $E$ contains a curve $E_0$ which is not a $(-2)$-curve then $\cf(E_0;\ov X,r\ov D)\geq\frac{1}{2}$ and the equality holds if only if $E=[3,2,\ldots,2]$. By Lemma \ref{lem:Alexeev_dlt} we may assume that $E=[3,2,\ldots,2]$. Then $\cf(E_0;\ov X,\frac{1}{2}\ov D)=\frac{1}{2}$ by \eqref{eq:Bk_rD}.
\end{proof}

The coefficient of an exceptional prime divisor $U\leq E$ over $(\ov X,r\ov D)$ is its coefficient in $\cf_X(\ov X,r\ov D)$, where by definition $K_X+r\pi_*^{-1}\ov D+\cf_X(\ov X,r\ov D)\sim \pi^*(K_{\ov X}+r\ov D).$ By Lemma \ref{lem:Bk=ld(pi)} if $\pi$ contracts rods, twigs, forks and $(-2)$-segments of $D$ then 
\begin{equation}
\cf_X(\ov X,r\ov D)=E-\Bk_D(E)-(1-r)\Bk\trp T,
\end{equation}
where $T$ is the sum of twigs which are not rods.  Here is a complete computation of coefficients for the above graphs using the above equation and the explicit formulas computing the barks like \eqref{eq:bark}.

\begin{lem}[Formulas for coefficients, $r\leq \frac{1}{2}$] \label{lem:cf_1/2_computations}
Let the notation be as in Lemma \ref{lem:cf<=1/2} and let $r\in [0,\frac{1}{2}]\cap \Q$. The following formulas for the coefficients over $(\ov X,r\ov D)$ hold.
\begin{enumerate}[(1)]
\item[(1a)] If $E$ is a $(-2)$-fork or a $(-2)$-rod of $D$ then $\cf_X=0$.
\item[(1b)] If  $E$ is a rod of $D$ and $E=E_1+\ldots+E_k=[3,(2)_{k-1}]$ then $\cf(E_i)=\frac{k+1-i}{2k+1}$, $1\leq i\leq k$.
\item[(1c)] If $E=E_1+\ldots+E_k$ is a $(-2)$-twig of $D$ then $\cf(E_i)=\frac{ir}{k+1}$, $1\leq i\leq k$. 
\item[(2a)] If $E$ is a rod of $D$ and $E=[4]$ or $E=[3,2,\ldots,2,3]$ then $\cf_X=\frac{1}{2}E$.
\item[(2b)] If  $E$ is a rod or a fork of $D$ and $E=[2,3,2]$ or $E=\langle [2];[2],[2],[3,2,\ldots,2]\rangle $ then $\cf_X=\frac{1}{2}E-\frac{1}{4}T_0$, where $T_0$ is the sum of $(-2)$-tips of $D$ contained in $E$.
\item[(2c)] If $E$ is $(-2)$-segment of $D$ then $\cf_X=rE$.
\item[(2d)] If $E$ is a twig of $D$ and $E=[3,2,\ldots, 2]$ then $\cf_X\geq rE$ and the equality holds only for $r=\frac{1}{2}$.
\end{enumerate}
\end{lem}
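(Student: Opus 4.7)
The plan is to treat each case as a direct computation built on the two inputs provided by the earlier material: the identity
\[
\cf_X(\ov X,r\ov D)=E-\Bk_D(E)-(1-r)\Bk\trp T
\]
from Lemma \ref{lem:Bk=ld(pi)}, and the explicit definition of the bark in \eqref{eq:bark} and Definition \ref{dfn:Bark}. In each configuration listed in Lemma \ref{lem:cf<=1/2}, the divisor $E$ is a disjoint sum of pieces of very restricted combinatorial type (a $(-2)$-chain, a $(-2)$-fork, a chain of type $[3,(2)_{k-1}]$ or $[3,(2)_{k-1},3]$, $[4]$, $[2,3,2]$, or the fork $\langle[2];[2],[2],[3,(2)_{k-1}]\rangle$), so all discriminants $d(T)$, $d\cp{i}(T)$ that enter the bark formula can be written down in closed form. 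The bulk of the proof will be a case-by-case substitution.

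First I would dispose of cases (1a), (1c) and (2c), which require no fork computation. For a $(-2)$-rod or $(-2)$-fork of $D$, we are in Definition \ref{dfn:Bark}(2) or (3); direct calculation from \eqref{eq:bark} and \eqref{eq:delta_def} shows $\Bk_D E=E$ (use $d([(2)_k])=k+1$ and $d\cp{i}=k+1-i$ for the rod, and Remark \ref{rmk:Bark} together with \eqref{eq:fork_d} which gives $u=1$ and the bark equal to $E_0+\sum \Bk\trp T_i+\sum\Bk' T_i=E$ for the $(-2)$-fork). Since $T=0$ in (1a), we obtain $\cf_X=0$. In (1c) $E$ itself is the relevant twig, so $T=E$ and $\cf_X=E-\Bk'E-(1-r)\Bk\trp E$; with the same numerics as above this simplifies to $\sum_{i=1}^k\tfrac{ir}{k+1}E_i$. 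Case (2c) is identical, except that $E$ is a non-maximal twig (a $(-2)$-segment), so again $T=E$ and the computation collapses to $rE$ because in $\Bk' E+\Bk\trp E$ the $i$th coefficient is $\tfrac{k+1-i}{k+1}+\tfrac{i}{k+1}=1$.

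Next I would compute the chain cases (1b), (2a), (2d). For a chain $E=[3,(2)_{k-1}]$ the recursion \eqref{eq:d(chain)} gives $d(E)=2k+1$ and $d\cp{i}(E)=2(k-i)+1$ for $i\geq 1$ when read from the $(-3)$-end, while $d\cp{i}(E\trp)=k+1-i$; this yields the formula in (1b) for a rod (where $T=0$), gives exactly $\tfrac12 E$ for a rod $[3,(2)_{k-1},3]$ in (2a) after a symmetric Lemma \ref{lem:d(D_1+D_2)} computation, and yields the bound in (2d) when $E=[3,(2)_{k-1}]$ is a twig (so $T=E$): one checks that $E-\Bk' E=rE$ only in the limit $r=\tfrac12$, using the explicit coefficients. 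The case $E=[4]$ in (2a) is immediate from $d(E)=4$, $d'(E)=1$.

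The only remaining inputs are the two exceptional fork/bench configurations in (2b): $E=[2,3,2]$ viewed as a rod and $E=\langle[2];[2],[2],[3,(2)_{k-1}]\rangle$ viewed as a fork of $D$. For the rod $[2,3,2]$ the formula $\Bk' E+\Bk\trp E$ gives the $\tfrac12$-weight on the central $(-3)$-curve and the asserted correction $-\tfrac14$ on each $(-2)$-tip. For the fork, one applies Definition \ref{dfn:Bark}(3): the twigs $[2]$, $[2]$, $[3,(2)_{k-1}]$ have $d=2,2,2k+1$, $\delta=\tfrac12,\tfrac12,\tfrac{1}{2k+1}$, $\ind\trp=\tfrac12,\tfrac12,\tfrac{k}{2k+1}$, and substituting into the formula for $u$ in Definition \ref{dfn:Bark}(3) gives $u=\tfrac12$; the central $(-2)$-curve then receives coefficient $\tfrac12$, the twig $[3,(2)_{k-1}]$ receives $\tfrac12$ on every component (by the same chain formula as in (2a)), and the two $(-2)$-tips of $D$ receive $\tfrac12\cdot\tfrac12+\tfrac12=\tfrac34$, i.e.\ the total coefficient in $\cf_X=E-\Bk_D E$ on those tips is $1-\tfrac34=\tfrac14$, matching the claimed $\tfrac12 E-\tfrac14 T_0$. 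The only step I expect to be delicate is keeping track of the contribution of $(1-r)\Bk\trp T$ in (2b) and (2d), where $T\neq 0$; but since $r\leq\tfrac12$ and the relevant twigs are short, this reduces to the elementary check above and to recording the strict inequality in (2d).
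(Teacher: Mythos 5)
Your proposal is correct and follows exactly the route the paper intends: the paper gives no separate argument for this lemma beyond the sentence preceding it, namely that one substitutes the explicit barks into $\cf_X(\ov X,r\ov D)=E-\Bk_D(E)-(1-r)\Bk\trp T$, which is precisely what you do, and all of your final coefficients check out. Two bookkeeping slips are worth fixing before this could stand as a written-out proof. First, in case (2b) you record $\ind(T_3\trp)=\tfrac{k}{2k+1}$ for $T_3=[3,(2)_{k-1}]$; that is $\ind(T_3)$, not $\ind(T_3\trp)$ (the twig is ordered with the $(-3)$-curve at the tip of $D$, so the component adjacent to the branching curve is a $(-2)$-curve and $\ind(T_3\trp)=\tfrac{2k-1}{2k+1}$). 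With the value you wrote, the formula of Definition \ref{dfn:Bark}(3) yields $u=\tfrac{1}{k+1}$ rather than the (correct) $u=\tfrac12$ that you assert; the same transposition appears in (1b), where your stated $d\cp{i}$'s correspond to the opposite ordering and, taken literally, would return $\cf(E_i)=\tfrac{i}{2k+1}$ instead of $\tfrac{k+1-i}{2k+1}$ with $E_1=[3]$. Second, in (2c) a $(-2)$-segment is by definition not a twig, so $T\neq E$ and Definition \ref{dfn:Bark} does not literally assign it a bark; the clean fix is to verify $\cf_X=rE$ directly from the defining equations $E_j\cdot\bigl(K_X+r\pi_*^{-1}\ov D+\cf_X\bigr)=0$, which is immediate since $E\cdot E_j=-(\delta_j^1+\delta_j^k)=-E_j\cdot\pi_*^{-1}\ov D$ for a $(-2)$-segment. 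None of this affects the validity of the approach or the final formulas.
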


\begin{cor}[Peeling for $(X,rD)$ with $r\leq \frac{1}{2}$]\label{cor:peel-squeeze_r=half}
Let $X$ be a smooth projective surface and $D$ a reduced divisor on $X$. Let $r\in [0,\frac{1}{2}]\cap \Q$ and let $\alpha\:(X,rD)\to (\ov X,r\ov D)$ be a birational contraction. Then the following hold.
\begin{enumerate}[(1)]
\item If $r=\frac{1}{2}$ then $\alpha$ is a pure partial peeling of the first (second) kind if and only if every connected component of $\Exc \alpha$ is as in part (1) (respectively, part (1) or (2)) of Lemma \ref{lem:cf<=1/2}.
\item If $r<\frac{1}{2}$ then $\alpha$ is a pure partial peeling of the first kind if and only if every connected component of $\Exc \alpha$ is as in Lemma \ref{lem:cf_1/2_computations} part (1a), (1c) or (1b) with $k<\frac{r}{1-2r}$.
\item If $r<\frac{1}{2}$ then $\alpha$ is a pure partial peeling of the second kind if and only if every connected component of $\Exc \alpha$ is as in Lemma \ref{lem:cf_1/2_computations} part (1a), (1c), (2c) or (1b) with $k\leq \frac{r}{1-2r}$. 
\item A curve $L\leq D$ is $\alpha$-redundant of the first or second kind for some pure partial peeling $\alpha$ if and only if it is a $(-1)$-curve meeting at most one $(-2)$-twig $\Delta$ contracted by $\alpha$ and such that 
\begin{equation}\label{eq:Lambda_k-bound}
\beta_{D-\Delta}(L)\leq  \frac{1}{r}+\frac{1}{d(\Delta)},
\end{equation}
where the equality holds exactly when $L$ is of the second kind. 
\end{enumerate}
\end{cor}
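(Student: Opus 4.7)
For parts (1)--(3), the plan is to reduce everything to Corollary~\ref{cor:improving_ld_gives_MMP}, which says that a birational morphism from $X$ is a partial peeling of the first (respectively second) kind exactly when every exceptional component is $\Q$-Cartier (automatic, since $X$ is smooth) and satisfies $\cf(E;\ov X,r\ov D)<r$ (respectively $\leq r$). The purity assumption is exactly that $K_X$ is $\alpha$-nef. First I would invoke Lemma~\ref{lem:peeling_for_squeezed}(3) to note that every connected component of $\Exc\alpha$ must be a twig, rod, fork, or $(-2)$-segment of $D$, and moreover, in the first-kind case, is as in Lemma~\ref{lem:lc_singularities}(1)-(2). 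Then Lemma~\ref{lem:cf<=1/2} enumerates which germs satisfy $\cf\leq \tfrac12$, and Lemma~\ref{lem:cf_1/2_computations} gives the explicit coefficient formulas. For $r=\tfrac12$ the classification translates immediately into (1). For $r<\tfrac12$, the $(-2)$-twigs/rods/forks (cases 1a, 1c) have coefficients $0$ or $\leq r$ regardless; the $(-2)$-segment (case 2c) has coefficient exactly $r$, contributing only to the second kind; and the rod $[3,(2)_{k-1}]$ (case 1b) has its top coefficient equal to $\tfrac{k}{2k+1}$, which satisfies $<r$ iff $k<\tfrac{r}{1-2r}$ and $\leq r$ iff $k\leq \tfrac{r}{1-2r}$. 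All remaining classes from Lemma~\ref{lem:cf<=1/2}(2) have a coefficient $\geq \tfrac12>r$, hence are excluded.

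For part (4) I would invoke Proposition~\ref{prop:ful_description_redundant} directly. The restriction $r\leq \tfrac12$ eliminates case (1) (which required $r=1$) and case (4) (which required $r=\tfrac23$), and forces $r=\tfrac12$ in case (6). In the remaining cases I would verify that up to replacing $\alpha$ by a smaller pure partial peeling $\alpha'$, the exceptional divisor $\Exc\alpha'$ meeting $L$ consists of at most a single $(-2)$-twig $\Delta$. Concretely: cases (3) and (6) at $r=\tfrac12$, and the $E\cdot R\neq 0$, $\ll+E=[2,1,3]$ sub-case of (2a), all force $L$ to already be log exceptional on $(X,rD)$ (either as observed in Proposition~\ref{prop:ful_description_redundant} or by direct computation $L\cdot(K_X+rD)\leq 0$), so one may take $\alpha'=\id$ (i.e.\ $\Delta=0$, using the convention $d(0)=1$). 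In the $(-2)$-segment sub-case of (2a) and in case (2b), the divisor $E$ is a chain meeting $L$, and one checks that only the $(-2)$-part forming a tip meeting $L$ is needed to witness redundancy. Case (5) of Proposition~\ref{prop:ful_description_redundant} is precisely the statement for $\Delta=E\neq 0$. The inequality (\ref{eq:Lambda_k-bound}) is then just the uniform reformulation of the numerical criterion of Lemma~\ref{lem:redundant_curve_numerics}: with $\Delta=T_1$ a single $(-2)$-twig of length $k$, one has $\delta=\tfrac{1}{d(\Delta)}$, $\ind\trp=1-\tfrac{1}{d(\Delta)}$, and the redundancy inequality becomes $r\beta_{D-\Delta}(\ll)\leq 1+\tfrac{r}{d(\Delta)}$, with equality exactly for the second kind. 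For $\Delta=0$ the formula degenerates to the log exceptionality condition $\beta_D(L)\leq \tfrac{1}{r}+1$.

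Conversely, if $L$ is a $(-1)$-curve satisfying (\ref{eq:Lambda_k-bound}) and meeting at most one $(-2)$-twig $\Delta$ contracted by some $\alpha$, I would verify directly, using the formula for $\cf_X$ in Lemma~\ref{lem:Bk=ld(pi)} and the purity of the contraction of $\Delta$ (itself falling under case (1c) of Lemma~\ref{lem:cf_1/2_computations}), that the inequality $\alpha(L)\cdot(K_{\ov X}+r\ov D)\leq 0$ follows from (\ref{eq:Lambda_k-bound}) and that $\alpha(L)^2<0$ holds automatically since $L+\Delta$ is negative definite.

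The main obstacle is in part (4): proving that every pure partial peeling witnessing the redundancy of $L$ can be replaced by one of the simple form described. This is not hard case-by-case using Proposition~\ref{prop:ful_description_redundant}, but demands checking each surviving configuration, keeping track of whether the replacement gives redundancy of the first or second kind, and verifying compatibility with the uniform inequality (\ref{eq:Lambda_k-bound}) under the convention $d(0)=1$.
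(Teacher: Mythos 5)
Your treatment of (1)--(3) is correct and follows the paper's own route: both directions reduce to Corollary \ref{cor:improving_ld_gives_MMP} combined with the coefficient computations of Lemmas \ref{lem:cf<=1/2} and \ref{lem:cf_1/2_computations} (your value $\cf(E_1)=\tfrac{k}{2k+1}$ for the $\Ast_k$-rod is the right one; the paper's proof misprints it as $\tfrac{k}{k+1}$). For (4) you take a genuinely different and much heavier route. The paper never invokes Proposition \ref{prop:ful_description_redundant} here: it observes that, by parts (1)--(2) just established, the connected components of $\Exc\alpha$ for a pure partial peeling of the first kind with $r\leq\frac{1}{2}$ are $(-2)$-twigs, $(-2)$-rods, $(-2)$-forks or $\Ast_k$-rods; since rods and forks are by definition connected components of $D$, they cannot meet $L\leq D$, so after discarding components disjoint from $L$ one is left with a sum of $(-2)$-twigs, which the negative definiteness of $\Exc\alpha+L$ collapses to a single twig $\Delta$. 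The inequality \eqref{eq:Lambda_k-bound} is then a one-line specialization of Lemma \ref{lem:redundant_curve_numerics}. Your detour through the full classification of Proposition \ref{prop:ful_description_redundant} does work --- under $r\leq\frac{1}{2}$ and a first-kind witnessing peeling most of its cases are vacuous (those involving a contracted $(-2)$-segment or $[3]$-twig force the peeling to be of the second kind, which the definition of redundancy does not permit) --- but it obliges you to re-verify configuration by configuration what the paper's self-bootstrap gives immediately. In particular, the step you single out as the main obstacle, namely replacing an arbitrary witnessing peeling by one of the canonical form, dissolves entirely once one notes that $\Exc\alpha$ near $L$ can only ever be a single $(-2)$-twig; no case analysis of the surviving configurations of Proposition \ref{prop:ful_description_redundant} is needed.
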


\begin{proof} (1)-(3). We may assume that $E=\Exc \alpha$ is connected and nonzero. If $\alpha$ is a peeling of the second kind then for every component $U$ of $E$ we have $\cf(U;\ov X,r\ov D)\leq \cf(U;X,rD)=r$ by Lemma \ref{lem:ld_increasaes} and the inequality is strict if $\alpha$ is of the first kind. Note that in case (1b) the maximal coefficient is $\cf(E_1)=\frac{k}{k+1}$ and $\cf(E_1)\leq r$ if and only if $k\leq \frac{r}{1-2r}$. Hence we are done by the above computations.  The converse implication follows from Corollary \ref{cor:improving_ld_gives_MMP}.

(4) Let $\alpha\:X\to \ov X$ be a pure partial peeling (of the first kind) and $L\leq D$ be such that $\ov L:=\alpha(L)$ is log exceptional of the first or second kind. By Lemma \ref{lem:a.l.e._is_redundant}, $L$ is a $(-1)$-curve. By parts (1) and (2) we may assume that $\Delta:=\Exc \alpha$ is a sum of $(-2)$-twigs meeting $L$. Since $\Delta+L$ is negative definite, $\Delta$ is a single $(-2)$-twig meeting $L$, possibly zero. We compute 
$\ov L\cdot (K_{\ov X}+r\ov D)= -1+r(\beta_{D-\Delta}(L)-\frac{1}{d(\Delta)})$, which gives (4).
\end{proof}

Note that a partial squeezing of $(X,rD)$ for $r\leq \frac{1}{2}$ is a composition of contractions of $(-1)$-curves as in Corollary \ref{cor:peel-squeeze_r=half}(4).

\begin{rem}
We note that the condition $k<\frac{r}{1-2r}$ is empty for $r=\frac{1}{2}$ and gives $k=0$ for $r\leq \frac{1}{3}$. Also, while for $r=1$ peeling contracts all admissible twigs, for $r\leq \frac{1}{2}$ only $(-2)$-twigs are contracted. On the other hand, while an snc-minimal divisor $D$ is automatically squeezed, it is not so for $r\leq \frac{1}{2}$. In fact, the smaller $r$ is the more $(-1)$-curves in $D$ are contracted by a squeezing morphism for $(X,rD)$.
\end{rem}

\begin{notation}\label{not:Delta_Gamma_Lambda}
Assume that $X$ is smooth, $D$ is reduced and $0<r\leq \frac{1}{2}$. We define: 
\begin{enumerate}[(1)]
	\item\label{item:not_Phi} $\Gamma$ as the sum of all $(-2)$-rods and admissible $(-2)$-forks of $D$,
	\item\label{item:not_Omega} $\Lambda$ as the sum of all $\Ast_k$-rods of $D$ with $k<\frac{r}{1-2r}$. 
	\item\label{item:not_Delta} $\Delta$ as the sum of maximal $(-2)$-twigs in $D-\Gamma-\Lambda$, 
\end{enumerate}
\end{notation}

For a connected component of $\Lambda$, say $\Lambda_0=[3,(2)_{k-1}]$ we fix the order of components as written, that is, $[3]$ is the first component.

\begin{cor}\label{cor:squeezing_r=half}
Assume that $X$ is smooth, $D$ is reduced and $0<r\leq \frac{1}{2}$. Let $\alpha\:(X,rD)\to (\ov X,r\ov D)$ be a (unique) maximal pure partial peeling. Then:
\begin{enumerate}[(1)]
\item $\Exc \alpha=\Gamma+\Lambda+\Delta$,
\item $\cf_X(\ov X,r\ov D)=\Bk\trp \Lambda+r\Bk\trp \Delta$.
\end{enumerate}
\end{cor}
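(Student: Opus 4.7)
My plan is to derive both parts from the machinery already built up in Section 5 for uniform boundaries. Part~(1) is a direct structural consequence of the classification of pure partial peelings, while part~(2) is a computation using the bark formula from Lemma~\ref{lem:Bk=ld(pi)}.

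For (1), I would invoke Corollary~\ref{cor:peel-squeeze_r=half}(2), which forces every connected component of $\Exc\alpha$ to be one of: a $(-2)$-rod of $D$, an admissible $(-2)$-fork of $D$, an $\Ast_k$-rod with $k<\tfrac{r}{1-2r}$, or a $(-2)$-twig of $D$. The first two types are exactly the connected components of $\Gamma$, the third type is exactly $\Lambda$, and maximal $(-2)$-twigs inside $D-\Gamma-\Lambda$ form $\Delta$. Since each of these divisors is itself contractible as a pure partial peeling, and since distinct maximal $(-2)$-twigs attached to a connected component of $D-\Gamma-\Lambda$ leave through distinct tips and are therefore disjoint, maximality of $\alpha$ forces $\Exc\alpha=\Gamma+\Lambda+\Delta$. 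Uniqueness of $\alpha$ was already supplied by Corollary~\ref{cor:peeling_unique_for_squeezed}, so there is nothing further to verify.

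For (2), the strategy is to apply Lemma~\ref{lem:Bk=ld(pi)}, which gives
\[
\cf_X(\ov X,r\ov D)=\Exc\alpha-\Bk_D(\Exc\alpha)-(1-r)\Bk\trp T,
\]
where $T$ is the sum of those connected components of $\Exc\alpha$ that are twigs but not rods of $D$. By (1), $T=\Delta$. Definition~\ref{dfn:Bark} gives the additive decomposition $\Bk_D(\Exc\alpha)=\Bk_D\Gamma+\Bk_D\Lambda+\Bk_D\Delta$, with $\Bk_D\Gamma=\Gamma$ (since by Lemma~\ref{lem:cf_1/2_computations}(1a) the coefficient divisor vanishes on $(-2)$-rods and $(-2)$-forks, i.e.\ $\Gamma-\Bk_D\Gamma=0$), $\Bk_D\Lambda=\Bk'\Lambda+\Bk\trp\Lambda$ (rod case of the definition), and $\Bk_D\Delta=\Bk'\Delta$ (twig-not-rod case). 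Substituting and cancelling $\Gamma$ yields
\[
\cf_X=(\Lambda-\Bk'\Lambda-\Bk\trp\Lambda)+(\Delta-\Bk'\Delta-(1-r)\Bk\trp\Delta).
\]

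The task then reduces to verifying componentwise the two identities $\Lambda_0-\Bk'\Lambda_0-\Bk\trp\Lambda_0=\Bk\trp\Lambda_0$ for each $\Ast_k$-rod $\Lambda_0=[3,(2)_{k-1}]$, and $\Delta_0-\Bk'\Delta_0-(1-r)\Bk\trp\Delta_0=r\Bk\trp\Delta_0$ for each $(-2)$-twig $\Delta_0=[(2)_k]$. The second is immediate: for a $(-2)$-chain one checks from \eqref{eq:bark} that $\Bk'\Delta_0+\Bk\trp\Delta_0=\Delta_0$, so the left-hand side collapses to $r\Bk\trp\Delta_0$. The first is the only genuine calculation, and it follows by plugging $d(\Lambda_0)=2k+1$ and the successive discriminants $d\cp{i}(\Lambda_0)=k+1-i$, $d\cp{i}(\Lambda_0\trp)=2(k-i)+1$ into \eqref{eq:bark} and comparing coefficients at each component $E_i$; equivalently, this identity can be read off Lemma~\ref{lem:cf_1/2_computations}(1b), which already computes $\cf_X|_{\Lambda_0}$ explicitly. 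The only obstacle is thus bookkeeping of the chosen orientation of $\Lambda_0$, which is fixed once and for all by Notation~\ref{not:Delta_Gamma_Lambda}; after that the formula follows by summing over connected components of $\Lambda$ and $\Delta$.
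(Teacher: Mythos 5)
Your overall route coincides with the paper's (the paper's proof is precisely ``part (1) from Corollary \ref{cor:peel-squeeze_r=half}, part (2) from Lemma \ref{lem:cf_1/2_computations}''), and part (1) is fine apart from a citation slip: for $r=\tfrac12$ you need Corollary \ref{cor:peel-squeeze_r=half}(1), since (2) is stated only for $r<\tfrac12$. The genuine problem is the $\Lambda$-part of (2). The componentwise identity you assert, $\Lambda_0-\Bk'\Lambda_0-\Bk\trp\Lambda_0=\Bk\trp\Lambda_0$, is false for $k\geq 2$. With the orientation fixed in Notation \ref{not:Delta_Gamma_Lambda} (the $(-3)$-curve is $E_1=\Lambda_0\cp{1}$), plugging $d(\Lambda_0)=2k+1$, $d\cp{i}(\Lambda_0)=k+1-i$ and $d\cp{i}(\Lambda_0\trp)=2(k-i)+1$ into \eqref{eq:bark} --- exactly the computation you describe but do not carry out --- gives $\coeff_{E_i}(\Bk'\Lambda_0)=\frac{k+1-i}{2k+1}$ and $\coeff_{E_i}(\Bk\trp\Lambda_0)=\frac{2i-1}{2k+1}$, hence
\[
\Lambda_0-\Bk'\Lambda_0-\Bk\trp\Lambda_0=\Bk'\Lambda_0,
\]
whereas your identity would force $3i=k+2$ for every $i$. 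The appeal to Lemma \ref{lem:cf_1/2_computations}(1b) does not rescue this: (1b) gives $\cf(E_i)=\frac{k+1-i}{2k+1}$, which is the coefficient of $E_i$ in $\Bk'\Lambda_0$, not in $\Bk\trp\Lambda_0$. Already for $\Lambda_0=[3,2]$, solving $E_j\cdot(K_X+\cf_X)=0$ gives $\cf_X=\frac{2}{5}E_1+\frac{1}{5}E_2=\Bk'[3,2]$, while $\Bk\trp[3,2]=\frac{1}{5}E_1+\frac{3}{5}E_2$.

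So the computation you set up actually yields $\cf_X(\ov X,r\ov D)=\Bk'\Lambda+r\Bk\trp\Delta$ under the orientation of Notation \ref{not:Delta_Gamma_Lambda}; the formula with $\Bk\trp\Lambda$ is correct only if each $\Lambda_0$ is ordered with the $(-3)$-curve \emph{last}, i.e., with the opposite orientation. You flagged the orientation bookkeeping as ``the only obstacle'' and then waved it away --- that is exactly where the argument breaks, and a correct write-up must either reverse the orientation convention or replace $\Bk\trp\Lambda$ by $\Bk'\Lambda$ in the statement being proved. The $\Gamma$- and $\Delta$-parts are correct: $\cf_X$ does vanish on $\Gamma$, and $\Bk'\Delta_0+\Bk\trp\Delta_0=\Delta_0$ for a $(-2)$-chain, so the $\Delta$-term collapses to $r\Bk\trp\Delta_0$ as you claim.
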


\begin{proof}Part (1) follows from Corollary \ref{cor:peel-squeeze_r=half} and part (2) from Lemma \ref{lem:cf_1/2_computations}.
\end{proof}

\begin{rem}[Multiplicity for $(1-r)$-dlt singularities] Assume that $(\ov X,r\ov D)$, with $\ov D$ reduced, is $(1-r)$-dlt (respectively, $(1-r)$-lc) and $p\in \ov X$ is smooth. Then
\begin{equation}\label{eq:mult_for_eps-dlt}
\mult_p(\ov D)<1+\frac{1}{r} \quad (\text{respectively, \ }\mult_p(\ov D)\leq 1+\frac{1}{r})
\end{equation}
If $\frac{1}{r}$ is an integer then for every infinitely near point $q$ of $p$ we have
\begin{equation}\label{eq:mult_higher_for_eps-dlt}
\mult_q(D)\leq 1+\frac{1}{r},
\end{equation} 
where $D$ is the reduced total transform of $\ov D$. 
\end{rem}

\begin{proof} Let $\sigma$ be a blowup at $p$ with exceptional divisor $E$. Then $$K_X+rD-\sigma^*(K_{\ov X}+r\ov D)=(1-rE\cdot D)E=-r(\mult_p(\ov D)-1-\tfrac{1}{r})E,$$ which gives \eqref{eq:mult_for_eps-dlt}. Assume that $\frac{1}{r}$ is an integer. Let $\alpha\:(X,D)\to (X,D)$ be a minimal log resolution. By Lemma \ref{lem:resolution_is_peeling} $\alpha$ is an peeling of the second kind. Since $p\in \ov X$ is smooth, it is also a squeezing of the second kind. Then by Corollary \ref{cor:singularities_after_squeezing}(2) each step of $\alpha$ considered as squeezing leads to an $(1-r)$-lc log surface. Thus the above computation gives \eqref{eq:mult_higher_for_eps-dlt}.
\end{proof}

If $\frac{1}{r}$ is not an integer then the condition \eqref{eq:mult_higher_for_eps-dlt} may fail for infinitely near points of $p$. 

\begin{ex}
Let $(X,D)\to (\P^2,D)$ be the minimal log resolution of a cuspidal planar cubic. The log surface $(\ov X,r\ov D)$ is $(1-r)$-lc for $r\leq \frac{4}{5}$, see Example \ref{ex:optimal_ass_1}. Since one of the the infinitely near points of the cusp has multiplicity $3$, the inequality  \eqref{eq:mult_higher_for_eps-dlt} fails for $r\in (\frac{1}{2},\frac{4}{5}]$.
\end{ex}

We now describe almost log exceptional curves for $r=\frac{1}{2}$. A similar characterization is possible for smaller $r$, but the number of cases grows as $r$ decreases. Since on squeezed log surfaces of type $(X,rD)$ the peeling morphism is unique by Corollary \ref{cor:peeling_unique_for_squeezed}, we may and will speak about \emph{almost log exceptional curves} meaning that they are almost log exceptional with respect to this unique peeling. 

\begin{lem}[Almost log exceptional curves for $r=\frac{1}{2}$]\label{lem:a.l.e.1_r=1/2}
Let $X$ be a smooth surface and $D$ a reduced divisor. Let $E$ be the exceptional divisor of a (unique) maximal pure partial peeling of $(X,\frac{1}{2}D)$. We have $E=\Gamma+\Lambda+\Delta$ (see Notation \ref{not:Delta_Gamma_Lambda} and Corollary \ref{cor:squeezing_r=half}). A curve $A\not\leq D$ is almost log exceptional of the first kind on $(X,\frac{1}{2}D)$ if and only if it is a $(-1)$-curve such that one of the following holds (see Fig.\ \ref{fig:lem:a.l.e.1_r=1/2-first}): 
\begin{enumerate}[(1)]
\item $A\cdot D\leq 1$. If $A\cdot T=1$ for some component $T\leq E$ then $T$ is a tip of $\Delta$ (not necessarily of $D$) or of a rod of $\Gamma+\Lambda$ or it is the middle component of $[2,2,3]$ - a connected component of $\Lambda$.
\item $A\cdot D=2$, and $A$ meets two different components $T_{1}$, $T_{2}$ of $D$, such that
	\begin{enumerate}[(a)]
	\item \label{item:ale_C*_R} $T_{1}\leq D-E$ and $T_{2}$ is a tip of $\Delta$, of $\Lambda$, or of a $(-2)$-rod of $\Gamma$,
	\item \label{item:ale_C*_OmegaR} $T_{1}\leq D-E$  and $T_{2}$ is the middle curve of a connected component $[2,2,3]$ of $\Lambda$,
	\item \label{item:ale_C*_Omega} $T_{1}$, $T_{2}$ are $(-3)$-curves in $\Lambda$,
	\item \label{item:ale_C*_Delta-Omega} $T_{1}$ is a $(-3)$-curve in $\Lambda$, and $T_{2}=[2]$ is a connected component of $\Delta$ or $\Gamma$. 
	\end{enumerate}
\item $A\cdot D=3$ and $A\cdot (D-E)=1$, $A$ meets a connected component $[2]$ of $\Gamma$ and a $(-3)$-curve in $\Lambda$.
\end{enumerate}
A curve $A\nleq D$ is almost log exceptional of the second kind on $(X,\frac{1}{2}D)$ if and only if it is a $(-1)$-curve such that (see Fig.\ \ref{fig:lem:a.l.e.1_r=1/2-second})
\begin{enumerate}[(1)]
\item[(4)] $A\cdot D=2$ and $A\cdot E=0$ or
\item[(5)] $A\cdot D=3$ and $A$ meets $E$ once, in a tip of a rod in $\Gamma$.
\end{enumerate}
\end{lem}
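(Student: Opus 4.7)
The plan is to deduce the characterization from the general Proposition \ref{prop:ful_description_almost_log_exc} specialized to $r = \tfrac{1}{2}$, combined with the explicit form of the peeling's exceptional divisor $E = \Gamma + \Lambda + \Delta$ supplied by Corollary \ref{cor:squeezing_r=half} (uniqueness of a maximal pure partial peeling follows from Corollary \ref{cor:peeling_unique_for_squeezed} together with this explicit form). Since $A$ is almost log exceptional of the first or second kind, Proposition \ref{prop:ful_description_almost_log_exc} restricts the possible configurations of $A + E_A$, where $E_A$ is the sum of connected components of $E$ meeting $A$.

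First I would eliminate the cases of Proposition \ref{prop:ful_description_almost_log_exc} incompatible with $r = \tfrac{1}{2}$: cases (3), (7), (9) and (12) are ruled out by their explicit constraints on $r$ (case (3) forces $r = m/(2m+1)$; case (7): $r = \tfrac{1}{3}$; case (9) at $r = \tfrac{1}{2}$ would require $k \leq \tfrac{1}{2}$, impossible for $k \geq 1$; case (12): $r \leq \tfrac{7}{15}$). The surviving cases (1), (2), (4), (5), (6), (8), (10), (11) give a finite list of explicit numerical configurations for $A + E_A$. I would then match each such configuration with the combinatorial types allowed for connected components of $E_A$ by Corollary \ref{cor:squeezing_r=half}: $(-2)$-twigs of $D$ belonging to $\Delta$, $(-2)$-rods or admissible $(-2)$-forks belonging to $\Gamma$, or $\Ast_k$-rods belonging to $\Lambda$. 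Organizing the resulting configurations by the intersection number $A \cdot D$ yields cases (1)--(3) of the Lemma for the first kind (drawn from Proposition cases (1), (5), (6), (8)) and cases (4)--(5) for the second kind (drawn from Proposition cases (1), (2), (4), (6), (10), (11)).

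For the converse, a direct numerical verification suffices. For each configuration listed, I would compute
\begin{equation*}
\alpha(A) \cdot (K_{\alpha(X)} + \tfrac{1}{2}\alpha_*D) = A \cdot \bigl(K_X + \tfrac{1}{2}(D - E) + \cf_X(\alpha(X), \tfrac{1}{2}\alpha_*D)\bigr)
\end{equation*}
using the explicit coefficient formulas of Lemma \ref{lem:cf_1/2_computations}, summarized in Corollary \ref{cor:squeezing_r=half}(2) as $\cf_X = \Bk\trp \Lambda + \tfrac{1}{2}\Bk\trp \Delta$, together with $\alpha(A)^2 = -1 + A \cdot \Bk\trp E_A$, and then check the appropriate sign condition (strict inequality for the first kind, equality for the second kind). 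The main obstacle will be the case analysis itself: distinguishing situations where $A$ meets a tip of an $\Ast_k$-rod in $\Lambda$ from those where it meets the middle component of a connected component $[2,2,3]$ of $\Lambda$ (whose coefficient in $\cf_X$ equals exactly $\tfrac{1}{2}$, giving borderline behavior that explains case (2)(b) of the statement), and carefully tracking the equality conditions that separate the first- and second-kind subcases.
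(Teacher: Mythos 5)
Your route is genuinely different from the paper's. The paper does not pass through Proposition \ref{prop:ful_description_almost_log_exc} at all: it converts $A$ into an $\alpha$-redundant curve on $(X,\tfrac12(D+A))$ via Lemma \ref{lem:a.l.e._is_redundant}(3), applies the clean numerical criterion of Corollary \ref{cor:peel-squeeze_r=half}(4) to get $A\cdot D\leq 3$ and the restriction to at most one $(-2)$-rod $\Delta_A$, and then finishes with negative definiteness of $E+A$ and direct computations against $E^\flat=\cf_X(\ov X,\tfrac12\ov D)$. Specializing Proposition \ref{prop:ful_description_almost_log_exc} instead is admissible in principle, but note that almost all of the Lemma's fine structure (the subcases of (1), (2a)--(2d), (3)) sits inside the Proposition's coarse cases (1), (5), (6), so you end up redoing essentially the same coefficient computations anyway; the Proposition mainly buys you the bound $A\cdot D\leq 3$ and the list of exotic chains, which the paper gets more cheaply from Corollary \ref{cor:peel-squeeze_r=half}(4).

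There is, however, a concrete error in your case-matching. You filter the Proposition's cases only by the value of $r$, but not by the requirement that the $E$ appearing there must consist of components of $\Gamma+\Lambda+\Delta$, i.e.\ of the exceptional divisor of the maximal pure partial peeling \emph{of the first kind}. Proposition cases (2), (4) and (11) (and (10) as written) involve components whose coefficient over the contracted germ equals exactly $\tfrac12$: rods $[4]$, $[2,3,2]$, $[3,(2)_{m-2},3]$, and twigs $[3,2,\ldots,2]$ whose $(-3)$-curve is a tip of $D$ (see Lemma \ref{lem:cf<=1/2}(2)). These are contracted only by peelings of the second kind and do not occur in $E=\Gamma+\Lambda+\Delta$, so those cases cannot arise for the $\alpha$ fixed in the Lemma and must be discarded, not matched to cases (4)--(5); for instance the configuration $[3,1,2,3,2]$ of case (11) gives $\alpha(A)\cdot(K+\tfrac12\alpha_*D)=-1+\tfrac12\cdot 2+\tfrac13=\tfrac13>0$ for the Lemma's $\alpha$, so it is not almost log exceptional in the sense being characterized. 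The curves underlying case (10) do survive, but with the correct $\alpha$ their $E$ shrinks to the $(-2)$-rod alone and they reappear as instances of case (6) with $\ll\cdot D=3$, which is the actual source of the Lemma's case (5). Your converse verification against $\cf_X=\Bk\trp\Lambda+\tfrac12\Bk\trp\Delta$ would eventually catch these discrepancies, but as stated the forward bookkeeping is wrong and would, if trusted, yield a strictly larger list for the second kind than the Lemma asserts.
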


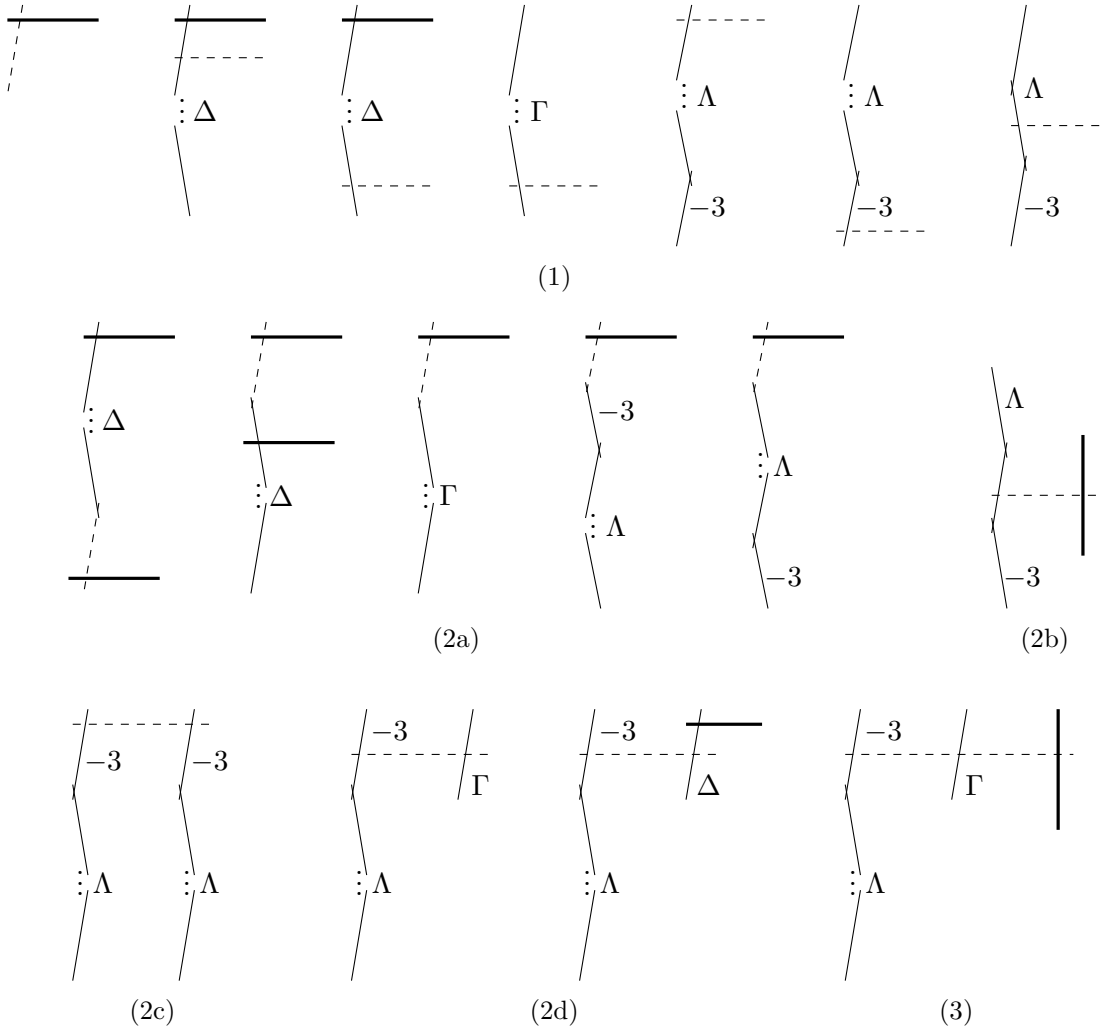
\begin{figure}[!h]
\subcaptionbox*{(1)}[\textwidth]{
\begin{tikzpicture}
	\begin{scope} 	\draw[thick] (0,3) -- (1.2,3); 	\draw[dashed] (0.2,3.2) -- (0,2);
	\end{scope}
	\begin{scope}[shift={(2.2,0)}] 	\draw[thick] (0,3) -- (1.2,3); 	\draw (0.2,3.2) -- (0,2); 	\node at (0.1,1.9) {$\vdots$}; 	\node at (0.4,1.8) {$\Delta$}; 	\draw (0,1.6) -- (0.2,0.4); \draw[dashed] (0,2.5) -- (1.2,2.5);
	\end{scope}
	\begin{scope}[shift={(4.4,0)}] 	\draw[thick] (0,3) -- (1.2,3); 	\draw (0.2,3.2) -- (0,2); 	\node at (0.1,1.9) {$\vdots$}; 	\node at (0.4,1.8) {$\Delta$}; 	\draw (0,1.6) -- (0.2,0.4); \draw[dashed] (0,0.8) -- (1.2,0.8);
	\end{scope}
	\begin{scope}[shift={(6.6,0)}] 	\draw (0.2,3.2) -- (0,2); 	\node at (0.1,1.9) {$\vdots$}; 	\node at (0.4,1.8) {$\Gamma$}; 	\draw (0,1.6) -- (0.2,0.4); \draw[dashed] (0,0.8) -- (1.2,0.8);
	\end{scope}
	\begin{scope}[shift={(8.8,0)}] \draw (0.2,3.2) -- (0,2.2); 	\node at (0.1,2.1) {$\vdots$}; 	\node at (0.4,2) {$\Lambda$}; 	\draw (0,1.8) -- (0.2,0.8); 	\draw (0.2,1) -- (0,0); 	\node at (0.4,0.5) {$-3$}; 	\draw[dashed] (0,3) -- (1.2,3);
	\end{scope}
	\begin{scope}[shift={(11,0)}] 	\draw (0.2,3.2) -- (0,2.2); 	\node at (0.1,2.1) {$\vdots$}; 	\node at (0.4,2) {$\Lambda$}; 	\draw (0,1.8) -- (0.2,0.8); 	\draw (0.2,1) -- (0,0); 	\node at (0.4,0.5) {$-3$}; 	\draw[dashed] (-0.1,0.2) -- (1.1,0.2); 
	\end{scope}
	\begin{scope}[shift={(13.2,0)}] 	\draw (0.2,3.2) -- (0,2); 	\node at (0.3,2.1) {$\Lambda$}; 	\draw (0,2.2) -- (0.2,1); 	\draw (0.2,1.2) -- (0,0); 	\node at (0.4,0.5) {$-3$}; 	\draw[dashed] (0,1.6) -- (1.2,1.6);
	\end{scope}
\end{tikzpicture}
}\bigskip

\subcaptionbox*{(2a)}[0.75\textwidth]{
\begin{tikzpicture}
	\begin{scope} 		\draw[thick] (0,3) -- (1.2,3); 		\draw (0.2,3.2) -- (0,2); 		\node at (0.1,2) {$\vdots$};
		\node at (0.4,1.9) {$\Delta$}; 		\draw (0,1.8) -- (0.2,0.6); 		\draw[dashed] (0.2,0.8) -- (0,-0.4);
		\draw[thick] (-0.2,-0.2) -- (1,-0.2);
	\end{scope}
	\begin{scope}[shift={(2.2,0)}] 		\draw[thick] (0,3) -- (1.2,3); 		\draw[thick] (-0.1,1.6) -- (1.1,1.6); 		\draw[dashed] (0.2,3.2) -- (0,2); 		\draw (0,2.2) -- (0.2,1); 			\node at (0.1,1) {$\vdots$}; 			\node at (0.4,0.9) {$\Delta$}; 		\draw (0.2,0.8) -- (0,-0.4);
	\end{scope} 	
	\begin{scope}[shift={(4.4,0)}] 		\draw[thick] (0,3) -- (1.2,3); 		\draw[dashed] (0.2,3.2) -- (0,2);
		\draw (0,2.2) -- (0.2,1); 		\node at (0.1,1) {$\vdots$}; 		\node at (0.4,0.9) {$\Gamma$};
		\draw (0.2,0.8) -- (0,-0.4); 
	\end{scope}
	\begin{scope}[shift={(6.6,0)}] 		\draw[thick] (0,3) -- (1.2,3); 		\draw[dashed] (0.2,3.2) -- (0,2.2); 		\draw (0,2.4) -- (0.2,1.4); 		\node at (0.4,2) {$-3$}; 		\draw (0.2,1.6) -- (0,0.6); 		\node at (0.1,0.6) {$\vdots$}; 		\node at (0.4,0.5) {$\Lambda$}; 		\draw (0,0.4) -- (0.2,-0.6);
	\end{scope}
	\begin{scope}[shift={(8.8,0)}] 		\draw[thick] (0,3) -- (1.2,3); 		\draw[dashed] (0.2,3.2) -- (0,2.2); 		\draw (0,2.4) -- (0.2,1.4); 		\node at (0.1,1.4) {$\vdots$}; 		\node at (0.4,1.3) {$\Lambda$};  \draw (0.2,1.2) -- (0,0.2); 		\node at (0.4,-0.2) {$-3$}; 		\draw (0,0.4) -- (0.2,-0.6);
	\end{scope}
\end{tikzpicture}
}\bigskip
\subcaptionbox*{(2b)}[0.15\textwidth]{
\begin{tikzpicture}
		\draw[thick] (1.2,1.9) -- (1.2,0.3); 		\draw (0,2.8) -- (0.2,1.6); 		\node at (0.3,2.4) {$\Lambda$}; 		\draw (0.2,1.8) -- (0,0.6); 		\node at (0.4,0) {$-3$}; 		\draw (0,0.8) -- (0.2,-0.4); 		\draw[dashed] (0,1.1) -- (1.4,1.1); 
	\end{tikzpicture}
}\bigskip

\subcaptionbox*{(2c)}[0.2\textwidth]{
	\begin{tikzpicture}
			\draw[dashed] (0,3) -- (1.8,3); 			\draw (0.2,3.2) -- (0,2); 			\node at (0.4,2.5) {$-3$}; 			\draw (0,2.2) -- (0.2,1); 			\node at (0.1,1) {$\vdots$}; 			\node at (0.4,0.9) {$\Lambda$}; 			\draw (0.2,0.8) -- (0,-0.4); 			\draw (1.6,3.2) -- (1.4,2); 			\node at (1.8,2.5) {$-3$}; 			\draw (1.4,2.2) -- (1.6,1); 			\node at (1.5,1) {$\vdots$}; 			\node at (1.8,0.9) {$\Lambda$}; 			\draw (1.6,0.8) -- (1.4,-0.4);
	\end{tikzpicture}
}
\subcaptionbox*{(2d)}[0.4\textwidth]{
	\begin{tikzpicture}
		\begin{scope} 			\draw[dashed] (0,2.6) -- (1.8,2.6); 			\draw (0.2,3.2) -- (0,2); 			\node at (0.5,2.9) {$-3$}; 			\draw (0,2.2) -- (0.2,1);  			\node at (0.1,1) {$\vdots$};			\node at (0.4,0.9) {$\Lambda$}; 			\draw (0.2,0.8) -- (0,-0.4); 			\draw (1.6,3.2) -- (1.4,2); 			\node at (1.7,2.2) {$\Gamma$};
		\end{scope}
		\begin{scope}[shift={(3,0)}] 			\draw[dashed] (0,2.6) -- (1.8,2.6); 			\draw (0.2,3.2) -- (0,2); 			\node at (0.5,2.9) {$-3$}; 			\draw (0,2.2) -- (0.2,1); 			\node at (0.1,1) {$\vdots$}; 			\node at (0.4,0.9) {$\Lambda$}; 			\draw (0.2,0.8) -- (0,-0.4); 			\draw (1.6,3.2) -- (1.4,2); 	\node at (1.7,2.2) {$\Delta$}; \draw[thick] (1.4,3) -- (2.4,3);
		\end{scope}
	\end{tikzpicture}
}
\subcaptionbox*{(3)}[0.2\textwidth]{
	\begin{tikzpicture}
		\draw[dashed] (0,2.6) -- (3,2.6); 		\draw (0.2,3.2) -- (0,2); 		\node at (0.5,2.9) {$-3$}; 		\draw (0,2.2) -- (0.2,1); 		\node at (0.1,1) {$\vdots$}; 		\node at (0.4,0.9) {$\Lambda$}; 		\draw (0.2,0.8) -- (0,-0.4); 	\draw (1.6,3.2) -- (1.4,2); 		\node at (1.7,2.2) {$\Gamma$}; 		\draw[thick] (2.8,3.2) -- (2.8,1.6);
	\end{tikzpicture}
}\medskip
\caption{Lemma \ref{lem:a.l.e.1_r=1/2}(1)-(3). Thick line is $R-E$, dashed line is $A$.}\label{fig:lem:a.l.e.1_r=1/2-first}
\end{figure}

\begin{figure}[h]
\subcaptionbox*{(4)}[\textwidth]{
	\begin{tikzpicture}
		\begin{scope} 			\draw[thick] (0,0) -- (1.6,0); 			\draw[dashed] (0.2,-0.6) -- (0.2,0.2) to[out=90,in=90] (1.4,0.2) -- (1.4,-0.6); 
		\end{scope} 
		\begin{scope}[shift={(2.5,0)}] 			\draw[thick] (0,0) -- (1.6,0); 			\draw[dashed] (0.2,-0.6) to[out=90,in=180] (0.8,0) to[out=0,in=90] (1.4,-0.6);
		\end{scope}
		\begin{scope}[shift={(5,0)}] 			\draw[thick] (0,0.4) -- (1.6,-0.4); 			\draw[thick] (0,-0.4) -- (1.6,0.4); 			\draw[dashed] (0.8,0.6) -- (0.8,-0.6);
		\end{scope}
		\begin{scope}[shift={(7.5,0)}] 			\draw[thick] (0,0.4) to[out=-60,in=180] (0.8,0) to[out=180,in=60] (0,-0.4); 			\draw[dashed] (0.8,0.6) -- (0.8,-0.6);
		\end{scope}
	\end{tikzpicture}
}
\medskip
\subcaptionbox*{(5)}[\textwidth]{
	\begin{tikzpicture}
		\begin{scope} 			\draw[thick] (0,0) -- (1.6,0); 			\draw[dashed] (0.2,-0.6) -- (0.2,0) -- (0.2,0.2) to[out=90,in=90] (1.4,0.2) -- (1.4,0) to[out=-90,in=100] (1.5,-0.7); 			\draw (1.5,-0.4) -- (1.3,-1.4); 			\node at (1.4,-1.45) {$\vdots$};  			\node at (1,-1.5) {$\Gamma$}; 			\draw (1.3,-1.7) -- (1.5,-2.7); 
		\end{scope}
		\begin{scope}[shift={(2.5,0)}] 			\draw[thick] (0,0) -- (1.6,0); 			\draw[dashed] (0.2,-0.6) to[out=90,in=180] (0.8,0) to[out=0,in=100] (1.5,-0.7); 			\draw (1.5,-0.4) -- (1.3,-1.4); 			\node at (1.4,-1.45) {$\vdots$}; 			\node at (1,-1.5) {$\Gamma$}; 			\draw (1.3,-1.7) -- (1.5,-2.7); 
		\end{scope}
		\begin{scope}[shift={(5,0)}]
			\draw[thick] (0,0.4) -- (1.6,-0.4); 			\draw[thick] (0,-0.4) -- (1.6,0.4); 			\draw[dashed] (0.8,0.6) -- (0.8,0) to[out=-90,in=100] (0.9,-0.7); 			\draw (0.9,-0.4) -- (0.7,-1.4); 			\node at (0.8,-1.45) {$\vdots$}; 			\node at (0.4,-1.5) {$\Gamma$}; 			\draw (0.7,-1.7) -- (0.9,-2.7);  
		\end{scope}
		\begin{scope}[shift={(7.5,0)}]
			\draw[thick] (0,0.4) to[out=-60,in=180] (0.8,0) to[out=180,in=60] (0,-0.4); 			\draw[dashed] (0.8,0.6) -- (0.8,0) to[out=-90,in=100] (0.9,-0.7); 			\draw (0.9,-0.4) -- (0.7,-1.4); 			\node at (0.8,-1.45) {$\vdots$}; 			\node at (0.4,-1.5) {$\Gamma$}; 			\draw (0.7,-1.7) -- (0.9,-2.7);
		\end{scope}
	\end{tikzpicture}
}
\caption{Lemma \ref{lem:a.l.e.1_r=1/2}(4)-(5). Thick line is $R-E$, dashed line is $A$.}\label{fig:lem:a.l.e.1_r=1/2-second}
\end{figure}
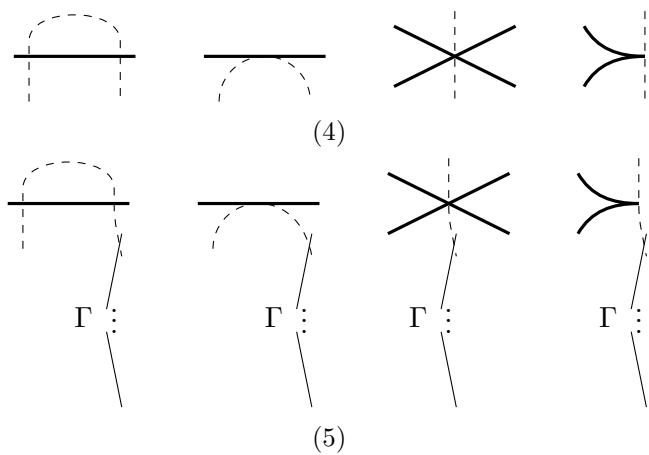

\begin{proof}
Let $\alpha\:(X,\frac{1}{2}D)\to (\ov X,\frac{1}{2}\ov D)$ be the unique maximal pure partial peeling morphism. Put $R=D-E$ and $E^\flat=\cf_X(\ov X,\frac{1}{2}\ov D)$. Let $A$ be an almost log exceptional curve of the first or second kind on $(X,\frac{1}{2}D)$. By Lemma \ref{lem:a.l.e._is_redundant}(3), $A$ is an $\alpha$-redundant $(-1)$-curve of the first kind on $(X,\frac{1}{2}(D+A))$.  By Corollary \ref{cor:peel-squeeze_r=half}(4) it meets at most one $(-2)$-twig $\Delta_A$ of $D+A$ and this twig satisfies $$A\cdot (D-\Delta_A)< 2+\frac{1}{d(\Delta_A)}.$$ It follows that $A\cdot D\leq 2+A\cdot \Delta_A\leq 3$. Moreover, since $\Delta_A$ is a twig of $D+A$ met by $A$, it is a $(-2)$-rod of $D$ (in particular a connected component of $\Gamma$) met by $A$ in a tip.  

The negative definiteness of $E+A$ implies that $A$ does not meet a $(-2)$-fork in $\Gamma$ and if it meets $\Delta+\Gamma$ then only once, in a tip. Similarly, if it meets $\Lambda$ then each connected component at most once, either in a tip or in the middle component of $[2,2,3]$. For $A\cdot D\leq 1$ we get (1). We may thus assume that $A\cdot D\in \{2,3\}$. If $A\cdot E=0$ then $A\cdot (K_X+\frac{1}{2}R+E^\flat)=-1+\frac{1}{2}A\cdot D\geq 0$, which gives (4). We may thus assume that $A\cdot E\neq 0$, too.

Consider the case $A\cdot D=2$. Note that the coefficients of $E^\flat$ are all smaller than $\frac{1}{2}$. We infer that $A$ is of the first kind, because otherwise $A\cdot (R+2E^\flat)=-2A\cdot K_X=2$, hence $A\cdot E>2A\cdot E^\flat=2-A\cdot R=A\cdot E$, which is impossible. Let $T_1$, $T_2$ be the components of $D$ meeting $A$. By the negative definiteness of $E+A$, we see that $T_1\neq T_2$ and that in case $A\cdot R=1$ we have (2a) or (2b). Similarly, in case $A\cdot R=0$ we get (2c) or (2d).

Consider the case $A\cdot D=3$. Then $A\cdot \Delta_A=1$ and $A\cdot (D-\Delta_A)=2$. In particular, $A\cdot R\leq 2$. The negative definiteness of $E+A$ implies that $A\cdot (\Gamma-\Delta_A)=A\cdot \Delta=0$ and $A\cdot R\in \{1,2\}$, hence we get (3) or (5). The $(-1)$-curve $A$ is log exceptional of the second kind on $(X,\frac{1}{2}D)$ if and only if $A\cdot E^\flat=1-\frac{1}{2}A\cdot R$, which holds for (5) and fails for (3).
\end{proof}

The characterization in Lemma \ref{lem:a.l.e.1_r=1/2} gives the following corollary.

\begin{cor}\label{cor:Euler(ALE)}
Let $X$ be a smooth projective surface and $D$ a reduced divisor which contains no superfluous $(-1)$-curve (for instance, $D$ is snc-minimal or $(X,\frac{1}{2}D)$ is squeezed). If a curve $A\not\leq D$ is almost log exceptional of the first kind on $(X,\frac{1}{2}D)$ then $A\cap (X\setminus D)$ is isomorphic to $\P^1$, $\A^1$, $\A^*$ or $\A^{**}=\A^1\setminus\{0,1\}$. In the last case $A$ meets three connected components of $D$ (in particular, $X\setminus D$ is not affine), each once in the sense of intersection theory; one of them is $[2]$, and the second one is $[3,2,\ldots,2]$ and $A$ meets the $(-3)$-curve.
\end{cor}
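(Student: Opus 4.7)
My plan is to derive the corollary directly from the case analysis in Lemma \ref{lem:a.l.e.1_r=1/2}. By Lemma \ref{lem:a.l.e._is_redundant}(1) we have $A\cdot K_X<0$, and since $A^2<0$ (from negative definiteness of $A+\Exc\alpha$), $A$ is a $(-1)$-curve on the smooth surface $X$ and hence $A\cong\P^1$. The hypothesis that $D$ has no superfluous $(-1)$-curve, combined with Corollary \ref{cor:peeling_unique_for_squeezed}, makes the maximal pure partial peeling $\alpha$ of $(X,\tfrac12 D)$ unique, so Lemma \ref{lem:a.l.e.1_r=1/2} applies; almost log exceptionality of the first kind places $A$ in one of the cases (1), (2), (3) of that lemma, and identifying $A\cap(X\setminus D)$ reduces to counting the distinct points of $A\cap D$.

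In case (1), $A\cdot D\leq 1$, so $A\cap D$ has at most one point and $A\cap(X\setminus D)$ is $\P^1$ or $\A^1$. In case (2), $A\cdot D=2$ and $A$ meets two distinct components $T_1,T_2$ with $A\cdot T_1=A\cdot T_2=1$. I would verify subcase by subcase (2a)--(2d) that $T_1\cap T_2=\emptyset$: in each subcase $T_2$ is a tip of $\Delta$, a tip of a $\Lambda$-rod, a tip of a $(-2)$-rod of $\Gamma$, the middle curve of a $[2,2,3]$-rod of $\Lambda$, or a $(-3)$-curve of $\Lambda$; in all such positions its only neighbors in $D$ lie within the same contracted rod or twig, never in $D-E$, and in (2c)--(2d) both $T_1,T_2$ belong to different connected components of $D$ altogether. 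Therefore the two intersections are at distinct points of $A$, and $A\cap(X\setminus D)\cong\A^*$.

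Case (3) gives $A\cdot D=3$ with $A\cdot(D-E)=1$; here $A$ meets a connected component $C_1=[2]$ of $\Gamma$, the $(-3)$-tip of a $\Lambda$-rod $C_2=[3,2,\ldots,2]$, and one component $T_3\leq D-E$ lying in some connected component $C_3$ of $D$. Since $C_1,C_2$ are whole connected components of $D$ (they are rods), and $T_3\notin E\supseteq C_1\cup C_2$, the three connected components $C_1,C_2,C_3$ are pairwise distinct and pairwise disjoint. The count $A\cdot E=2$ is accounted for by $A\cdot C_1=1$ and $A\cdot T_2=1$ (with $A\cdot(C_2-T_2)=0$), while $A\cdot C_3=A\cdot T_3=1$; this produces three distinct points of $A\cap D$, so $A\cap(X\setminus D)\cong\P^1\setminus\{3\text{ pts}\}\cong\A^{**}$. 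To deduce that $X\setminus D$ is not affine, I would invoke the classical fact that on a smooth projective surface affinity of $X\setminus D$ forces $D$ to support an ample divisor and therefore to be connected (Goodman--Hartshorne, or the Lefschetz hyperplane theorem for an effective ample divisor); the existence of three pairwise disjoint pieces $C_1,C_2,C_3\subseteq D$ then contradicts connectedness.

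The main obstacle is the combinatorial distinctness check in cases (2) and (3): one must confirm, using Notation \ref{not:Delta_Gamma_Lambda} and the dual graphs in Figures \ref{fig:lem:a.l.e.1_r=1/2-first} and \ref{fig:lem:a.l.e.1_r=1/2-second}, that the contracted subdivisors $\Gamma$, $\Lambda$, $\Delta$ genuinely share no points with $D-E$ in the manner that would collapse two distinct intersections of $A$ with $D$ onto a single point. All remaining verifications are immediate consequences of Lemma \ref{lem:a.l.e.1_r=1/2} and the connectedness principle for ample boundaries.
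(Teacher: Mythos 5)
Your proposal is correct and follows the same route as the paper, which derives the corollary in one line from the case analysis of Lemma \ref{lem:a.l.e.1_r=1/2}: the first assertion is immediate from $A\cong\P^1$ and $A\cdot D\le 3$, and the $\A^{**}$ alternative is exactly case (3) of that lemma, where the components $[2]\leq\Gamma$, the $\Lambda$-rod $[3,2,\ldots,2]$ and the component of $D-E$ met by $A$ are automatically three pairwise disjoint connected components of $D$, so non-affineness follows as you say. One small remark: your disjointness check $T_1\cap T_2=\emptyset$ in case (2) is both unnecessary (a coincidence of the two intersection points would only produce $\A^1$, which is already on the allowed list) and slightly overstated, since the tip of a $\Delta$-twig at its attaching end does have a neighbour in $D-E$.
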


\medskip
\section{Appendix: Graphs, discriminants and coefficients}\label{sec:d_and_ld}

For the convenience of the reader we recall some terminology and elementary results concerning discriminants and coefficients (equivalently, log discrepancies) in graph-theoretic terms. 

Let $D$ be an undirected graph with vertices $|D|=\{D_1,\ldots,D_n\}$ of weight $w_i\in \Z$ and unique edges between $D_i$ and $D_j$ for (not necessarily all) $i\neq j$ of weight $w_{i,j}\in \Z_{>0}$. We put $D_i\cdot D_j=w_{i,j}$ if $i\neq j$ and $D_i^2:=D_i\cdot D_i=-w_i$ otherwise and we extend this product linearly to $\span_\Q\{|D|\}$. For instance, $D$ could be the dual graph of a reduced divisor on a smooth projective surface with '$\cdot$' being the intersection product. Put $Q(D)=[D_i\cdot D_j]_{i,j\leq n}$ and $d(\emptyset)=1$ and call $$d(D):=\det(-Q(D))$$ the \emph{discriminant of $D$}; it does not depend on the chosen order of vertices. By a \emph{weighted subgraph} of $D$ we mean a subgraph of the underlying unweighted graph of $D$ together with the associated integral weights of vertices and positive weights of edges not exceeding the weights of respective vertices and edges of $D$. Given $T$ - a subset of vertices of $|D|$ or a subgraph of $D$, we denote by $D-T$ the subgraph remaining from $D$ after removing vertices belonging to $T$ and edges incident to them. 
We call 
\begin{equation*}
\beta_D(T)=\sum_{U\in T}U\cdot \sum_{V\in |D-T|} V
\end{equation*}
the \emph{branching number of $T$ in $D$}. A vertex with $\beta_D\leq 1$ is called a \emph{tip} of $D$. A \emph{twig} of $D$ is a connected subgraph $T$ of $D$ containing a tip of $D$ and such that $\beta_D\leq 2$ for all vertices of $T$. It is a \emph{maximal twig} of $D$ if $|T|$ is a maximal subset of $|D|$ with this property. Assume $D$ is connected. If $\beta_D\leq 2$ for every vertex of $D$ then $D$ is called a \emph{chain} in case at least one of these inequalities is strict and is \emph{circular} otherwise. Clearly, a twig is a chain. If $D$ has no circular subgraph (in particular all its edges have weight $1$) then it is called a \emph{tree}. A tree with only one branching component is a \emph{fork}. 

The following lemma follows from the additivity of the determinant function with respect to column addition and its behavior on block-triangular matrices. 

\begin{lem} \label{lem:d(D1+D2)_graph}
Assume that for a weighted graph $D$ as above we have two subgraphs $D_1$ and $D_2$ such that $|D|$ is a disjoint sum of $|D_1|$ and $|D_2|$ and such that there exist unique vertices $T_1\in |D_1|$ in $T_2\in |D_2|$ joined by an edge in $D$. Then
\begin{equation}\label{eq:d(D1+D2)_graph}
d(D)=d(D_1)d(D_2)-(T_1\cdot T_2) d(D_1-T_1)d(D_2-T_2).
\end{equation}
\end{lem}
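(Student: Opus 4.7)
The plan is a direct linear-algebra computation on the matrix $-Q(D)$, parallel to the divisorial Lemma~\ref{lem:d(D_1+D_2)}. First I would order the vertex set $|D|$ so that the vertices of $D_1$ precede those of $D_2$. The hypothesis that $T_1,T_2$ provide the unique edge of $D$ joining $|D_1|$ to $|D_2|$ then gives the block decomposition
\[
-Q(D) \;=\; \begin{pmatrix} -Q(D_1) & B \\ B^{\top} & -Q(D_2) \end{pmatrix},
\]
in which the off-diagonal block $B \in \Z^{|D_1|\times|D_2|}$ has only one nonzero entry, located at position $(T_1,T_2)$ and equal to $-(T_1\cdot T_2)$.

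The cleanest evaluation goes via the Schur complement formula
\[
\det(-Q(D)) \;=\; \det(-Q(D_1))\cdot\det\!\bigl(-Q(D_2) - B^{\top}(-Q(D_1))^{-1}B\bigr),
\]
valid when $-Q(D_1)$ is invertible. The rank-at-most-one structure of $B$ forces $B^{\top}(-Q(D_1))^{-1}B$ to be supported only at position $(T_2,T_2)$, where its value is a scalar multiple of the diagonal entry $\bigl[(-Q(D_1))^{-1}\bigr]_{T_1,T_1}$, which Cramer's rule identifies as $d(D_1-T_1)/d(D_1)$. A final application of multilinearity of the determinant in the $T_2$-column of the resulting $|D_2|\times|D_2|$ matrix reduces its determinant to $d(D_2)$ minus a correction proportional to the $(T_2,T_2)$-minor of $-Q(D_2)$, namely $d(D_2-T_2)$. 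Assembling the pieces yields the asserted identity.

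The only minor obstacle is the degenerate case where $-Q(D_1)$ is singular. Since both sides of \eqref{eq:d(D1+D2)_graph} are polynomial functions of the entries of $-Q(D)$, this case follows from the generic one by continuity. Alternatively, one can bypass invertibility altogether by following the author's hint directly: use multilinearity to split the $T_2$-column of $-Q(D)$ as the sum of the $T_2$-column of $-Q(D_2)$ (padded by zeros on the $|D_1|$-positions) and a single column supported only at position $T_1$; perform the symmetric split on the $T_2$-row of each summand; and observe that the two surviving contributions are determinants of block-triangular matrices with diagonal blocks $-Q(D_1),\,-Q(D_2)$ in the first case and $-Q(D_1-T_1),\,-Q(D_2-T_2)$ in the second, with an overall sign coming from the cofactor expansions used to isolate the single nonzero entry of $B$.
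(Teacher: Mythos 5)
Your two routes are both sound in method, and the second one is exactly the paper's own (one\nobreakdash-line) argument: split the $T_2$-column of $-Q(D)$ by multilinearity and read off block-triangular determinants. The Schur-complement route is also legitimate, including the reduction of the singular case to the generic one via a polynomial identity. The problem is your last step, ``assembling the pieces yields the asserted identity'': it does not. Carry your own computation one line further. With $w=T_1\cdot T_2$ the off-diagonal block is $B=-w\,e_{T_1}e_{T_2}^{\top}$, so
\[
B^{\top}(-Q(D_1))^{-1}B\;=\;w^{2}\,\bigl[(-Q(D_1))^{-1}\bigr]_{T_1,T_1}\,e_{T_2}e_{T_2}^{\top},
\]
i.e.\ the scalar you left implicit is $w^{2}$, not $w$, and the matrix determinant lemma then yields
\[
d(D)\;=\;d(D_1)\,d(D_2)-(T_1\cdot T_2)^{2}\,d(D_1-T_1)\,d(D_2-T_2).
\]
The same square appears in the multilinearity route: after isolating the single nonzero entry $-w$ of the $T_2$-column you must still expand along the $T_1$-column (or row) to reach the block-diagonal matrix with blocks $-Q(D_1-T_1)$ and $-Q(D_2-T_2)$, and that expansion contributes a second factor of $-w$. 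The smallest case already shows the discrepancy: for $|D_1|=\{T_1\}$, $|D_2|=\{T_2\}$ one has $\det\left(\begin{smallmatrix}w_1&-w\\-w&w_2\end{smallmatrix}\right)=w_1w_2-w^{2}$, whereas \eqref{eq:d(D1+D2)_graph} would give $w_1w_2-w$.

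So the identity as printed is correct only when $T_1\cdot T_2=1$, which is the case in every application in the paper (adjacent components of the relevant configurations meet once); for $T_1\cdot T_2\geq 2$, which the setup explicitly allows (edge weights in $\Z_{>0}$, degenerate cycles and segments), the coefficient must be $(T_1\cdot T_2)^{2}$. Indeed, the two-component degenerate $(-2)$-cycle with $T_1\cdot T_2=2$ has discriminant $0$, which only the squared formula reproduces. Every computation in your proposal up to the final sentence is correct, but a blind proof should have surfaced this mismatch rather than asserting agreement with the stated formula; as written, the concluding claim is false.
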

In particular, if $T_1$ is a tip of $D$ and $T_2$ is the vertex adjacent to $T_1$ then
\begin{equation}\label{lema:d(D-tip)_graph}
d(D)=(-T_1^2)d(D-T_1)-d(D-T_1-T_2).
\end{equation}

\medskip
To study graphs associated with exceptional divisors of resolutions of surface singularities we use some additional notions. A \emph{decorated weighted graph} is a triple $(|D|, \cdot, \vartheta)$ where $(|D|,\cdot)$ is a weighted graph as above and $\vartheta$, the \emph{decoration}, is a function $\vartheta\:|D|\to \Q_{\geq 0}$. By a weighted decorated subgraph we mean a weighted subgraph together with a decoration not exceeding the original decoration. 

\begin{rem*}
Given a resolution $\pi\:X\to \ov X$ of a singular projective surface $\ov X$ and an effective divisor $\ov D$ on $\ov X$, on the weighted graph $E$ associated to the exceptional divisor of $\pi$ we have a decoration function defined on vertices $E_j\in |E|$ by $$\vartheta(E_j)=2p_a(E_j)+E_j\cdot \pi^{-1}_*\ov D,$$ where $p_a$ denotes the arithmetic genus.
\end{rem*}

\begin{dfn}\label{dfn:resolution_graph} A weighted decorated graph $E=(|E|,\cdot,\vartheta)$ is called a \emph{resolution graph} if all its weights and decorations are non-negative and $Q(E)$ is negative definite.
\end{dfn}

A resolution graph is \emph{minimal} if all vertex weights are at least $2$. If they are all equal to $2$ and additionally $\vartheta=0$ (as a function) then the graph is called \emph{du Val}. Every connected du Val resolution graph has Dynkin type A-D-E and comes from a du Val surface singularity. Any minimal chain $E$ is a resolution graph by \eqref{lema:d(D-tip)_graph}. The latter formula implies also that if $E_1$ is a tip of a minimal chain $E$ then $d(E-E_1)< d(E)$. A minimal twig is called \emph{admissible}. A minimal fork $E$ with branching component $B$ and maximal twigs $T_i$, $i=1,2,3$ is \emph{admissible} if $\beta_E(B)=3$ and $\sum_{i=1}^3\frac{1}{d(T_i)}>1$. We note that an admissible fork $E$ has a negative definite intersection matrix. Indeed, by Sylvester's criterion it is sufficient to argue that $d(E)>0$. Denoting by $C_i$ the vertex of $T_i$ adjacent to $B$, by Lemma \ref{lem:d(D1+D2)_graph}
\begin{equation}\label{eq:fork_d_v2}
d(E)=d(T_1)d(T_2)d(T_3)(-B^2-\sum_{i=1}^3\frac{d(T_i-C_i)}{d(T_i)}).
\end{equation}
Since $d(T_i-C_i)\leq d(T_i)-1$, we have $\sum_{i=1}^3\frac{d(T_i-C_i)}{d(T_i)}\leq 3-\sum_{i=1}^3\frac{1}{d(T_i)}<2\leq -B^2$, hence $d(E)>0$.

\medskip

To every resolution graph $E$ we associate the \emph{coefficient function} $\cf_E$ on the set of vertices defined uniquely by the equations
\begin{equation}\label{eq:discrep_equations_with_d}
\sum_i \cf_E(E_i)(-E_i\cdot E_j)=k_{E}(E_j), \text{\quad where\quad} k_{E}(E_j)=\vartheta(E_j)-E_j\cdot E_j-2,\ \ j=1,\ldots, n.
\end{equation}
This agrees with equations for coefficients of prime divisors for resolution graphs coming from birational morphisms of log surfaces. For a minimal connected resolution graph we have $k_{E}\geq 0$ and the inequality is strict unless $E$ is du Val. We write the equations equivalently as 
\begin{equation}\label{eq:discrep_equations}
\sum_i (1-\cf_E(E_i))(-E_i\cdot E_j)=u_{E}(E_j), \text{\quad where\quad} u_{E}(E_j)=2-\beta_E(E_j)-\vartheta(E_j),\ \ j=1,\ldots, n.
\end{equation}

\begin{lem}[Negativity Lemma, {\cite[3.39]{KollarMori-bir_geom}}] \label{lem:minimal_res} Let $E$ be a connected resolution graph with vertices $E_i$, $i=1,\ldots,n$.
\begin{enumerate}[(1)]
\item If $A=\sum_{j=1}^n a_j E_j$ is such that $(-A)\cdot E_i\geq0 $ for every $i=1,\ldots,n$ then either $A=0$ or $a_j>0$ for every $j=1,\ldots,n$.
\item All entries of $(-Q(E))^{-1}$ are positive. 
\item If $E$ is minimal then either $\cf_E>0$ or $E$ is du Val and then $\cf_E=0$.
\end{enumerate}
\end{lem}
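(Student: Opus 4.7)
My plan is to prove the three parts in order, using standard negativity arguments adapted to the abstract graph setting.

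For part (1), I would use the classical positive/negative part decomposition. Suppose $A\neq 0$ and write $A=A^+-A^-$ with $A^+,A^-\geq 0$ supported on disjoint subsets of $|E|$. The aim is to rule out $A^-\neq 0$. Compute
\begin{equation*}
(A^-)^2=A^-\cdot(A^+-A)=A^-\cdot A^+-A^-\cdot A.
\end{equation*}
Since $A^+$ and $A^-$ have disjoint supports and all off-diagonal entries of $Q(E)$ are non-negative (edge weights are positive), $A^-\cdot A^+\geq 0$. Moreover, $-A^-\cdot A=\sum_{j\:a_j<0}(-a_j)(-A\cdot E_j)\geq 0$ by hypothesis. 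Therefore $(A^-)^2\geq 0$, which together with negative definiteness of $Q(E)$ forces $A^-=0$. Hence $A\geq 0$. Now I would use the connectedness of $E$: if some $a_j=0$ while some $a_k>0$, choose such a pair with $E_j$ adjacent to $E_k$ (possible by connectedness); then $A\cdot E_j\geq a_k(E_k\cdot E_j)>0$, contradicting the hypothesis $A\cdot E_j\leq 0$. So either $A=0$ or $a_j>0$ for all $j$.

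For part (2), I would apply (1) to the columns of $(-Q(E))^{-1}$. Fix $i$ and let $v$ be the $i$-th column, so that $\sum_j v_j(-E_j\cdot E_k)=\delta_{ik}$. Setting $A=\sum_j v_j E_j$, we have $(-A)\cdot E_k=\delta_{ik}\geq 0$ for all $k$, and $A\neq 0$ because $(-A)\cdot E_i=1$. By (1) all $v_j$ are strictly positive.

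For part (3), I would apply (1) to $A=\sum_i\cf_E(E_i)E_i$ using the defining equations \eqref{eq:discrep_equations_with_d}, which read $(-A)\cdot E_j=k_E(E_j)$. Minimality of $E$ gives $-E_j^2\geq 2$, and combined with $\vartheta(E_j)\geq 0$ this yields $k_E(E_j)=\vartheta(E_j)-E_j^2-2\geq 0$. So (1) applies on each connected component of $E$ and gives: on each component either $\cf_E\equiv 0$ or $\cf_E>0$ at every vertex. If $\cf_E$ vanishes on a component then $k_E(E_j)=0$ for each vertex $E_j$ in that component, forcing both $\vartheta(E_j)=0$ and $E_j^2=-2$, i.e.\ that component is du Val. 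The main (though minor) care point is the passage between connected components in case $E$ is not assumed connected; since parts (2) and (3) are block-diagonal in the components, I would simply reduce to the connected case at the outset, which is the setting in which (1) is stated.
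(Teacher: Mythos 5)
Your proposal is correct and follows essentially the same route as the paper: the $A=A^{+}-A^{-}$ decomposition with $(A^{-})^{2}\geq 0$ forcing $A^{-}=0$, the connectedness argument for strict positivity, and the application of (1) to the columns of $(-Q(E))^{-1}$. The only cosmetic difference is that in part (3) you invoke (1) directly on $A=\sum_i\cf_E(E_i)E_i$ whereas the paper routes through (2); both yield the same dichotomy, since $k_E\geq 0$ with $k_E=0$ exactly in the du Val case.
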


\begin{proof}(1) Write $A=A^+-A^-$ where $A^+$ and $A^-$ have non-negative coefficients and contain no common vertex. We have $(A^-)^2=(A^+-A)\cdot A^- \geq (-A)\cdot A^-\geq 0$, so $A^-=0$, because $Q(E)$ is negative definite. Thus $A\geq 0$. Suppose that $A\neq 0$ and $A$ does not contain some vertex of $E$. By the connectedness of $E$ there is a vertex $E_{i_0}$ of $E$ not contained in $A$ but meeting $A$. Then $A\cdot E_{i_0}>0$; a contradiction.  

(2) Apply (1) to $A$ such that $[a_1,\ldots,a_n]^T$ is a column of the matrix $-Q(E)$.  

(3) Since $k_E(E_j)\geq 0$, the statement is a consequence of part (2) and  \eqref{eq:discrep_equations_with_d}.
\end{proof}

If $E$ is a tree then between any two vertices $E_i$, $E_j$, $i\neq j$ there exists a unique shortest path in the graph joining them, which we denote by $\path(E_i,E_j)$.

\begin{lem}[{\cite[3.1.10]{Flips_and_abundance}}]
 If a resolution graph  $E=(\{E_1,\ldots,E_n\},\cdot,\vartheta)$ is a tree then
 \begin{equation}\label{ex:Alexeev_formula}
 d(E)(1-\cf_E(E_j))=\sum_{i=1}^n u_E(E_i)\cdot d(E-\path(E_i,E_j)),
 \end{equation}
where $u_{E}(E_j)=2-\beta_E(E_j)-\vartheta(E_j).$
\end{lem}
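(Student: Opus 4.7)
The plan is to exploit the uniqueness of solutions to the defining system and to verify that the proposed formula satisfies it. Concretely, equations \eqref{eq:discrep_equations} read $(-Q(E))\,v=u$, where $v_j:=1-\cf_E(E_j)$ and $u_j:=u_E(E_j)$; since $E$ is a resolution graph, $-Q(E)$ is positive definite and $v$ is unique. Hence it suffices to check that
\[
\tilde v_j \;:=\; \frac{1}{d(E)}\sum_{i=1}^n u_E(E_i)\,d(E-\path(E_i,E_j))
\]
satisfies the same linear system. Substituting into $\sum_j(-E_k\cdot E_j)\,\tilde v_j=u_E(E_k)$ and interchanging summations, we must prove that for every pair of indices $(i,k)$
\begin{equation}\label{eq:key_claim_proposal}
(-E_k^2)\,d\bigl(E-\path(E_i,E_k)\bigr)\;-\;\sum_{j\sim k} d\bigl(E-\path(E_i,E_j)\bigr) \;=\; d(E)\cdot \delta_{i,k}.
\end{equation}

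As an auxiliary step I would first prove the general \emph{vertex removal formula}: for any tree $H$ and every vertex $v\in |H|$,
\[
(-v^2)\,d(H-v)\;-\;\sum_{w\sim v}d(H-v-w)\;=\;d(H).
\]
This is obtained by writing $H$ as the vertex $v$ glued to the connected components of $H-v$ along the respective adjacent vertices, and iterating the splitting formula \eqref{eq:d(D1+D2)_graph}; the fork identity \eqref{eq:fork_d_v2} is exactly the case of three components.

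Using this, \eqref{eq:key_claim_proposal} is verified in two cases. When $k=i$, one has $\path(E_i,E_i)=\{E_i\}$ and $\path(E_i,E_j)=\{E_i,E_j\}$ for $j\sim i$, so \eqref{eq:key_claim_proposal} becomes exactly the vertex removal formula applied to $H=E$ with $v=E_i$, yielding $d(E)$ on the right. When $k\neq i$, let $E_\ell$ denote the unique neighbor of $E_k$ lying on $\path(E_i,E_k)$, and set $F:=E-\path(E_i,E_k)$. Using that $E$ is a tree, one checks that for a neighbor $j\sim k$ with $j=\ell$ one has $\path(E_i,E_j)=\path(E_i,E_k)\setminus\{E_k\}$, hence $E-\path(E_i,E_j)=F+E_k$; while for $j\sim k$ with $j\neq\ell$ one has $\path(E_i,E_j)=\path(E_i,E_k)\cup\{E_j\}$, hence $E-\path(E_i,E_j)=F-E_j$. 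After substitution, \eqref{eq:key_claim_proposal} reduces to the vertex removal formula applied to $H=F+E_k$ at the vertex $v=E_k$, whose neighbors in $H$ are exactly $\{E_j:j\sim k,\,j\neq \ell\}$, and gives zero on the right, as required.

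The main obstacle is the combinatorics in the case $k\neq i$: one must carefully check that removing the path $\path(E_i,E_k)$ leaves $E_k$'s remaining neighbors distributed among the connected components of $F$ in a way that exactly matches the hypothesis of the vertex removal formula applied to $F+E_k$. Once this bookkeeping is in place, the whole proof collapses to a single application of the auxiliary formula, combined with the linear uniqueness of $v$ and the splitting Lemma \ref{lem:d(D1+D2)_graph}.
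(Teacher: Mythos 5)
Your proof is correct. It differs from the paper's in execution, though the two arguments are close in spirit: the paper proves the cofactor identity $(-Q(E))^{[i,j]}=d(E-\path(E_i,E_j))$ by induction on $n$ (peeling off a tip and using column expansion together with \eqref{lema:d(D-tip)_graph}), and then reads off the formula from Cramer's rule; you instead verify directly that the candidate vector solves the system $(-Q(E))v=u$, which amounts to checking $(-Q(E))\,M=d(E)\cdot I$ for the matrix $M_{ij}=d(E-\path(E_i,E_j))$ rather than identifying $M$ with the adjugate. Your reduction is clean: the identity \eqref{eq:key_claim_proposal} collapses in both cases ($k=i$ and $k\neq i$) to a single ``vertex removal'' expansion $d(H)=(-v^2)d(H-v)-\sum_{w\sim v}d(H-v-w)$, valid for forests with unit edge weights and obtained by iterating the splitting formula \eqref{eq:d(D1+D2)_graph} over the components of $H-v$ (the fork identity \eqref{eq:fork_d_v2} being the three-component instance). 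The tree bookkeeping in the case $k\neq i$ is right: uniqueness of paths guarantees that $E_\ell$ is the only neighbor of $E_k$ on $\path(E_i,E_k)$, so $E-\path(E_i,E_\ell)=F+E_k$ and $E-\path(E_i,E_j)=F-E_j$ for the remaining neighbors, which is exactly the hypothesis of the vertex removal formula for $F+E_k$ at $E_k$. What your route buys is a more self-contained verification with no induction on the number of vertices; what the paper's route buys is the cofactor formula \eqref{eq:cofactor} itself, stated separately and reusable. One small point worth making explicit in a write-up: all edge weights are $1$ because $E$ is a tree (the paper's definition builds this in), which is what lets you write the off-diagonal terms of $(-Q(E))v$ as $-\sum_{j\sim k}v_j$ and state the vertex removal formula without edge-weight factors.
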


\begin{proof}
Denote by $M^{[i,j]}$ the $(i,j)$-th cofactor of a square matrix $M$. Given \eqref{eq:discrep_equations} it is sufficient to argue that 
\begin{equation}\label{eq:cofactor}
(-Q(E))^{[i,j]}=d(E-\path(E_i,E_j)).
\end{equation}
The proof is by induction on $n$. We may assume that $n\geq 2$, $E_n$ is a tip of $E$ joined by an edge with $E_{n-1}$ and $i\leq j$. If $j=n$ then by induction $(-Q(E))^{[i,n]}=(-Q(E-E_n))^{[i,n-1]}=d(E-E_n-\path(E_i,E_{n-1}))=d(E-\path(E_i,E_n))$. If $i,j<n$ then the formula is obtained by applying successively expansion of the determinant along a column, inductive assumption and then  \eqref{lema:d(D-tip)_graph}.
\end{proof}

We say that $E$ is $\epsilon$-dlt if $\cf_E<1-\epsilon$. It is \emph{terminal} if it is $1$-dlt. The following lemma follows from \cite[3.1.3]{Flips_and_abundance} and \cite[3.7(ii)-(iv)]{Alexeev-Fractional_del_Pezzo}, cf.\ \cite[L.1(2) p.147]{Keel-McKernan_rational_curves}. We add some details in the proof. 

\begin{lem}\label{lem:Alexeev} Let $E$ be a connected minimal resolution graph and $F$ a proper minimal subgraph of $E$. Assume that one of the following holds:
\begin{enumerate}[(1)]
\item $F$ has the same vertex weights as $E$,
\item $F$ arises from $E$ by decreasing the weight of a single vertex $E_0$ and we have $d(F)>0$ and ${\cf_F(E_0)\leq 1}$.
\end{enumerate} 
Then $F$ is a resolution graph and $\cf_F\leq \cf_E.$ Moreover, either the inequality is strict or $\cf_E=\cf_F$ and $E$ (and hence $F$) is du Val in case (1) or $\cf_F(E_0)=1$ in case (2).
\end{lem}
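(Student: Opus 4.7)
The plan is to apply the Negativity Lemma (Lemma \ref{lem:minimal_res}(1)) to the formal cycle $A:=\sum_{E_i\in|F|}(\cf_E(E_i)-\cf_F(E_i))E_i$ and read off the conclusion. Before that I would check that $F$ is a resolution graph. In case (1) the matrix $Q(F)$ is a principal submatrix of the negative definite matrix $Q(E)$, so it is negative definite, and the remaining data is non-negative and minimal by hypothesis. In case (2) the matrix $-Q(F)$ agrees with $-Q(E)$ on the codimension one subspace $\{v:v_0=0\}$ and is therefore positive definite there; combined with the hypothesis $d(F)>0$ and the eigenvalue interlacing inequality for rank one symmetric perturbations, this forces $-Q(F)$ to be positive definite on the whole space.

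To prove $\cf_F\le \cf_E$ on $|F|$, I would subtract the two linear systems \eqref{eq:discrep_equations} defining $\cf_E$ and $\cf_F$ and evaluate an appropriate intersection of $A$. Denote by $\cdot_E$ and $\cdot_F$ the intersection products on $E$ and $F$, respectively. In case (1) these agree on $|F|$, and for $E_j\in|F|$ the cancellation $\beta_E(E_j)-\beta_F(F_j)=\sum_{E_i\in|E\setminus F|}E_i\cdot E_j$ combines with the contributions of the vertices of $|E\setminus F|$ to give
\begin{equation*}
-A\cdot_F F_j=(\vartheta_E-\vartheta_F)(E_j)+\sum_{E_i\in|E\setminus F|}\cf_E(E_i)(E_i\cdot E_j)\ge 0,
\end{equation*}
where non-negativity uses $\cf_E\ge 0$ from Lemma \ref{lem:minimal_res}(3) and $\vartheta_F\le\vartheta_E$. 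In case (2) the two products differ only at the diagonal entry $E_0$; using $\cdot_E$ rather than $\cdot_F$ (the latter would place $(1-\cf_E(E_0))$ in the correction, which is not controlled) the analogous computation yields
\begin{equation*}
-A\cdot_E E_j=(\vartheta_E-\vartheta_F)(E_j)+c\bigl(1-\cf_F(E_0)\bigr)\delta_{j,0}\ge 0,
\end{equation*}
where $c:=w_0(E)-w_0(F)>0$ is the weight drop at $E_0$ and non-negativity uses the hypothesis $\cf_F(E_0)\le 1$. Applying Lemma \ref{lem:minimal_res}(1) to each connected component of $F$ in case (1), and to $E$ in case (2), then gives $A\ge 0$, i.e.\ $\cf_F\le \cf_E$.

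For the rigidity assertion I would invoke the dichotomy in Lemma \ref{lem:minimal_res}(1): on each connected component either $A$ is strictly positive or identically zero. In case (2), the vanishing of $A$ at $E_0$ immediately forces $c(1-\cf_F(E_0))=0$, hence $\cf_F(E_0)=1$. In case (1), if $A$ vanishes on some connected component $F^{(k)}$ of $F$, then all the inequalities above become equalities on $F^{(k)}$, so $\cf_E(E_i)(E_i\cdot E_j)=0$ for every $E_i\in|E\setminus F|$ and every $E_j\in F^{(k)}$; since $F$ is a proper subgraph and $E$ is connected, some such $E_i$ is adjacent to $F^{(k)}$, forcing $\cf_E(E_i)=0$, and Lemma \ref{lem:minimal_res}(3) applied to the connected $E$ forces $E$ (and hence $F$, inheriting all weights equal to $2$ and zero decoration) to be du Val, which propagates $A\equiv 0$ to every component.

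The main obstacle I anticipate is the bookkeeping in the key computation: the intersections $\cdot_E$ and $\cdot_F$ agree on $|F|$ in case (1) but differ by a rank one correction at $E_0$ in case (2), and in each case one must pick the intersection in which the correction term enters with the sign controlled by the available hypothesis ($\cf_E\ge 0$ in case (1) and $\cf_F(E_0)\le 1$ in case (2)). Once the right intersection is chosen, everything else is a direct application of the Negativity Lemma.
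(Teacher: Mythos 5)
Your overall strategy is the paper's own: subtract the linear systems \eqref{eq:discrep_equations_with_d} for $E$ and $F$ and feed the resulting sign information into the Negativity Lemma (the paper uses the positivity of the entries of $(-Q(E))^{-1}$, i.e.\ Lemma \ref{lem:minimal_res}(2), where you use part (1) componentwise, but this is the same mechanism). In particular you correctly isolate the one genuinely delicate point of case (2): the correction term must be computed against $\cdot_E$ so that it comes out as $c\bigl(1-\cf_F(E_0)\bigr)$, which the hypothesis controls, rather than $c\bigl(1-\cf_E(E_0)\bigr)$, which nothing controls; this matches the paper's computation $\Delta_n=u(1-x_n')$, and your rigidity conclusion $\cf_F(E_0)=1$ in the equality case is the right one.

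The gap is in case (1). By the paper's definition, a weighted decorated subgraph may have \emph{smaller edge weights} than $E$, and case (1) only pins down the vertex weights; the paper's proof accordingly treats a separate sub-case in which $F$ arises from $E$ by decreasing the weight of an edge. Three of your steps silently assume the edge weights are unchanged. First, $Q(F)$ is then \emph{not} a principal submatrix of $Q(E)$, so your negative-definiteness argument does not apply; a separate argument is needed (the paper reduces to the $2\times 2$ case by symmetric Gaussian elimination — note that decreasing an off-diagonal entry of an arbitrary negative definite symmetric matrix need not preserve definiteness, so the sign structure of $Q(E)$ is genuinely used here). Second, your formula for $-A\cdot_F F_j$ omits the terms $\sum_{i\in|F|,\,i\neq j}\cf_E(E_i)\bigl((E_i\cdot_E E_j)-(E_i\cdot_F E_j)\bigr)$; they are non-negative, so the inequality survives, but they must be recorded because they enter the equality analysis. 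Third, your rigidity argument locates an $E_i\in|E\setminus F|$ adjacent to the component on which $A$ vanishes, which is impossible when $|F|=|E|$ (an edge weight or a decoration decreased, no vertex removed); in the edge sub-case one must instead extract $\cf_E(E_b)=0$ from the vanishing of the omitted terms just mentioned, where $E_b$ is an endpoint of the modified edge lying outside the component in question. All of this is patchable, since every correction term has the right sign, but as written the proposal does not cover this sub-case of (1).
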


\begin{proof} Let $|E|=\{E_1,\ldots, E_n\}$ and $|F|=\{E_1,\ldots, E_{n'}\}$, $n'\leq n$. Put $x_i=\cf_E(E_i)$, $e_{ij}:=-Q(E)_{i,j}=-E_i\cdot_E E_j$, $k_i=k_E(E_i)$ and $\vartheta_i=\vartheta_E(E_i)$. Similarly, put $x_i'=\cf_F(E_i)$, $e_{ij}'=-E_i\cdot_F E_j$, $k_i'=k_F(E_i)$  and $\vartheta_i'=\vartheta_F(E_i)$. Due to Sylvester's criterion, to prove that $-Q(F)$ is positive definite it is sufficient to argue that $d(F)>d(E)$ when $F$ arises from $E$ by decreasing the $(n-1,n)$-th (and $(n,n-1)$-th) entry by $u>0$. Via symmetric Gaussian elimination we change all other non-diagonal entries to zero. The operation keeps non-diagonal entries non-negative and (due to the positivity of leading principal minors) keeps diagonal ones positive, hence reduces the proof to the easy case $n=2$.

For $E$ the equations \eqref{eq:discrep_equations} give $$\sum_{i=1}^n  x_ie_{ij}=k_j, \quad j=1,\ldots,n $$ and analogously for $F$. Put $x_i'=k_i'=e_{ij}'=0$ if $i>n'$. Subtracting the equations we get:
$$\sum_{i=1}^{n}e_{ij}(x_i-x_i')=\Delta_j,\quad \text{where} \quad \Delta_j:=k_j-k_j'+\sum_{i=1}^{n'}x_i'(e_{ij}'-e_{ij}), \quad j=1,\ldots,n.$$ 
If $E$ is du Val then $k_E=0$, hence $k_F=0$ and $F$ is du Val, so $\cf_F=\cf_E=0$. We may therefore assume that $E$ is not du Val. By Lemma \ref{lem:minimal_res} all entries of $(-Q(E))^{-1}$ are positive. Since $E$ is connected, we get $\cf_E>0$, so $x_i>0$ for each $i$. To prove the inequality $\cf_F<\cf_E$, which is equivalent to the conjunction of inequalities $x_i-x_i'>0$ for each $i$, it is sufficient to show that the numbers $\Delta_j$ are non-negative and not all of them vanish.

If $F$ arises from $E$ by decreasing some decoration, then $\Delta_j=k_j-k_j'=\vartheta_j-\vartheta_j'\geq 0$ and the latter inequality is strict for some $j$, so we are done. 

Assume that $F$ arises from $E$ by removing some vertex, say the last one. Then $\Delta_j=k_j-k_j'=0$ for $j<n$ and $\Delta_n=k_n+\sum_{i=1}^{n-1}x_i' (-e_{in})$. We have $k_n, x_i', -e_{in}\geq 0$, so $\Delta_n\geq 0$. Suppose that the equality holds. Then $k_n=0$ and $x_i'\cdot e_{in}=0$ for each $i<n$. Since $E$ is not du Val and since $k_n=0$, $F$ has a connected component which is not du Val. Since $E$ is connected, this component contains some $E_i$ with $e_{in}\neq 0$. Then $x_i'\cdot e_{in}\neq 0$ by Lemma \ref{lem:minimal_res}; a contradiction. 

Assume that $F$ arises from $E$ by decreasing the weight of an edge, say $e_{n-1,n}-e'_{n-1,n}=u<0$. Then $\Delta_j=0$ for $j\leq n-2$, $\Delta_{n-1}=-ux'_n\geq 0$ and $\Delta_n=-ux'_{n-1}\geq 0$. Suppose that $x'_{n}=x'_{n-1}=0$. The definition implies that $F$ has at most two connected components and if it has two then one contains $E_{n-1}$ and the other $E_n$. Since $x'_{n}=x'_{n-1}=0$, we infer that $F$ is du Val. But then $E$ is not du Val; a contradiction. 

Finally, assume that $F$ arises by decreasing the weight of a vertex. Put $E_n:=E_0$ and $e_{n,n}-e'_{n,n}=u>0$. Then $\Delta_j=0$ for $j\leq n-1$ and $\Delta_n=k_n-k'_n-ux'_n=u(1-x'_n)=u(1-\cf_F(E_n))$. By assumption $\cf_F(E_n)\leq 1$, so we have $\cf_F\leq \cf_E$. Moreover, the inequality is strict if $\cf_F(E_n)<1$ and it is an equality if $\cf_F(E_n)=1$.
\end{proof}

\begin{lem}\label{lem:log_terminal} Let $E$ be a connected minimal resolution graph which is log terminal, i.e.\ $\cf_E<1$. Then $\vartheta_E<2$ and $E$ is a chain or an admissible fork. For every proper minimal subgraph $F$ one has $\cf_F\leq \cf_E$ and the inequality is strict, unless $E$ is du Val. Moreover, if $F$ arises from $E$ by lowering some vertex weights only, then $d(F)<d(E)$. 
\end{lem}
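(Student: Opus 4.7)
The plan is to derive~(3) directly from Lemma~\ref{lem:Alexeev}(1), to deduce~(1) and~(4) from it together with simple linear algebra, and to handle the structural statement~(2) by exhibiting obstructing minimal subgraphs. Statement~(3) is precisely Lemma~\ref{lem:Alexeev}(1) applied to the given $F$, with the equality case transcribed from the same lemma. For~(1), I would specialize~(3) to the single-vertex subgraph $F=\{E_j\}$ carrying the original weight and decoration: formula~\eqref{eq:discrep_equations_with_d} evaluates to $\cf_F(E_j)=(\vartheta(E_j)-E_j^{2}-2)/(-E_j^{2})$, and combining $\cf_F(E_j)\le\cf_E(E_j)<1$ with $-E_j^{2}\ge 2$ gives $\vartheta(E_j)<2$. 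For~(4), the matrix $-Q(E)+Q(F)$ is a nonzero diagonal matrix with non-negative entries, so $0<-Q(F)\le -Q(E)$ in the L\"owner order; strict monotonicity of $\det$ on the cone of positive-definite matrices then yields $d(F)<d(E)$.

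Statement~(2) is the main obstacle. The strategy is to assume $E$ is neither a chain nor an admissible fork and to produce a proper minimal subgraph $F\subseteq E$ (with inherited weights and decorations) for which $\cf_F(\cdot)\ge 1$; this contradicts~(3) and log-terminality of $E$. The forbidden configurations to exclude are a cycle, a branching vertex of valence $\ge 4$, two branching vertices, and a trivalent branching with $\sum 1/d(T_i)\le 1$. Each borderline configuration matches a log-canonical graph listed in Lemma~\ref{lem:lc_singularities}, on which a direct computation from~\eqref{eq:discrep_equations_with_d} yields $\cf\equiv 1$; an snc $(-2)$-cycle is excluded outright, since $d=0$ would contradict positive-definiteness of the principal submatrix of $-Q(E)$. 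The admissibility bound $\sum 1/d(T_i)>1$ at a trivalent vertex $B$ follows from~\eqref{ex:Alexeev_formula}: because $u_E$ equals $+1$ at each twig-tip, $0$ at internal twig nodes, and $-1$ at $B$ (decorations vanish by~(1) combined with minimality of weights), the formula collapses to
\[
d(E)\bigl(1-\cf_E(B)\bigr)=\Bigl(\prod_{i=1}^{3} d(T_i)\Bigr)\Bigl(\sum_{i=1}^{3}\tfrac{1}{d(T_i)}-1\Bigr),
\]
so $\cf_E(B)<1$ and $d(E)>0$ jointly force $\sum 1/d(T_i)>1$.

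The most delicate step is ruling out a branching vertex $B$ of weight $-2$ with valence $k\ge 4$. Taking $F$ to be the star with centre $B$, inherited weights on the $k$ leaves $N_i$, and only the $B$--$N_i$ edges, a short Gaussian elimination gives $d(F)=\prod(-N_i^{2})\bigl(-B^{2}-\sum 1/(-N_i^{2})\bigr)$, and a parallel application of~\eqref{ex:Alexeev_formula} yields
\[
d(F)\bigl(1-\cf_F(B)\bigr)=\Bigl(\prod(-N_i^{2})\Bigr)\Bigl(\sum\tfrac{1}{-N_i^{2}}-(k-2)\Bigr).
\]
When $F$ is positive definite, the second factor on the right is non-positive while $d(F)$ is positive, so $\cf_F(B)\ge 1$ and~(3) produces the contradiction. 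If $F$ is not positive definite, then either the induced subgraph of $E$ on $\{B,N_1,\ldots,N_k\}$ inherits a non-positive-definite principal submatrix from $-Q(E)$ (impossible), or $E$ contains an $N_i$--$N_j$ edge, placing us in the cycle case already handled. Two-branching-vertex and non-admissible-twig configurations are dispatched analogously by inserting the appropriate log-canonical fork, bench, or half-bench from Lemma~\ref{lem:lc_singularities} as $F$.
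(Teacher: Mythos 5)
Your reduction of the monotonicity statement to Lemma \ref{lem:Alexeev}(1) does not work: a \emph{minimal subgraph} $F$ of $E$ is allowed to have \emph{lower} vertex weights (only the bound $\geq 2$ is imposed), and Lemma \ref{lem:Alexeev}(1) applies only when $F$ carries the same vertex weights as $E$. For a weight-lowering step one must invoke Lemma \ref{lem:Alexeev}(2), whose hypothesis $\cf_F(E_0)\leq 1$ is not automatic --- verifying it is the actual content of the lemma, and your proposal never addresses it. The paper's proof supplies it by observing that in \eqref{ex:Alexeev_formula} neither $u_E$ nor any of the discriminants $d(E-\path(E_i,E_0))$ involves the weight of $E_0$, so the product $d(E)(1-\cf_E(E_0))$ is independent of that weight; hence $d(F)(1-\cf_F(E_0))=d(E)(1-\cf_E(E_0))>0$, and since the classification forces $d(F)>0$, one gets $\cf_F(E_0)<1$ and may then apply Lemma \ref{lem:Alexeev}(2). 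The same identity immediately yields $d(F)<d(E)$. Your L\"owner-order argument for the last claim is a correct alternative (once positive definiteness of $-Q(F)$ is secured from the classification), but without the weight-independence observation the statement $\cf_F\leq\cf_E$, including its equality case, is unproved for every $F$ obtained by lowering a weight.

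The gap propagates into your proof of the structure statement: excluding two branching vertices and non-admissible twigs by ``inserting'' a log canonical bench or fork as a subgraph requires lowering twig weights to $2$, i.e.\ exactly the case of the monotonicity statement you have not established, so the argument is circular as written. The remaining pieces are fine and close to the paper: the single-vertex subgraph computation for $\vartheta_E<2$ is the paper's argument; the direct use of \eqref{ex:Alexeev_formula} at a trivalent branch point to force $\sum 1/d(T_i)>1$ is sound (note only that $\vartheta<2$ does not make the decorations vanish --- they contribute non-positive terms, so the inequality survives a fortiori); and the star-subgraph exclusion of valence $\geq 4$ is a legitimate same-weight application of Lemma \ref{lem:Alexeev}(1). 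To repair the proposal, establish the classification first using only same-weight subgraphs and direct evaluations of \eqref{ex:Alexeev_formula}, then derive the weight-lowering case of monotonicity from the identity $d(F)(1-\cf_F(E_0))=d(E)(1-\cf_E(E_0))$.
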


\begin{proof} The classification of log terminal graphs is well-known and follows from the above Lemmas \ref{lem:d(D_1+D_2)} and \ref{lem:Alexeev}. (Note that for a circular graph resolution with $\vartheta=0$ one has $\cf=1$ by \eqref{eq:discrep_equations}, which implies that $E$ is a tree. Then using \eqref{ex:Alexeev_formula} one checks that $E$ does not contain a subgraph with two branching vertices). We infer that $E$ is a chain or an admissible fork and hence the same holds for every connected component of $F$, too. In particular, $d(F)>0$. The inequality $\vartheta<2$ holds, because it holds for a graph with one vertex. Assume that $E$ is not du Val and let $F$ be a subgraph of $E$. By Lemma \ref{lem:Alexeev}, to show that $\cf_F<\cf_E$ we may assume that $F$ arises from $E$ be decreasing the weight of a single vertex $E_0$. By \eqref{ex:Alexeev_formula} the product $d(E)(1-\cf_E(E_0))$ does not depend on the weight of $E_0$, so $$d(F)(1-\cf_F(E_0))=d(E)(1-\cf_E(E_0))>0.$$ We get $\cf_F(E_0)<1$ and hence $\cf_F<\cf_E<1$ by Lemma \ref{lem:Alexeev}. We have $d(F)<d(E)$ in this case, too.
\end{proof}

\medskip
\bibliographystyle{amsalpha} \bibliography{C:/KAROL/PRACA/PUBLIKACJE/BIBL/bibl2023}
\end{document}